\newtheorem{lemma}{Lemma}
\newtheorem{theorem}[lemma]{Theorem}
\newtheorem*{theorem*}{Theorem}
\newtheorem{proposition}[lemma]{Proposition}
\newtheorem{corollary}[lemma]{Corollary}
\newtheorem*{TheoremA}{Theorem A}
\newtheorem*{TheoremB}{Theorem B}
\newtheorem*{TheoremC}{Theorem C}
\newtheorem*{TheoremD}{Theorem D}
\theoremstyle{definition}
\newtheorem{definition}[lemma]{Definition}
\newtheorem{note}[lemma]{Note}
\newtheorem*{corollary*}{Corollary}
\numberwithin{lemma}{section}
\newcommand{\ev}{\mathrm{ev}}
\newcommand{\Th}{\mathrm{Th}}
\newcommand{\CP}{\mathbb{C}\mathrm{P}}
\DeclareMathOperator{\rank}{rank}
\newcommand{\StringLieGroups}{StringLieGroups}
\newcommand{\ChasSullivan}{ChasSullivan}
\newcommand{\CohenJones}{MR1942249}
\newcommand{\CohenJonesYan}{MR2039760}
\newcommand{\CohenKlein}{MR2475820}
\newcommand{\EvenBott}{MR0087035}
\newcommand{\MilnorAndStasheff}{MR0440554}
\newcommand{\Spanier}{MR666554}
\newcommand{\TamanoiCap}{TamanoiCap}
\newcommand{\Menichi}{MR2466078}
\newcommand{\TamanoiBV}{MR2211159}
\newcommand{\Yang}{Yang}
\newcommand{\Vaintrob}{Vaintrob}
\newcommand{\Tradler}{MR2498354}
\newcommand{\FelixThomas}{MR2415345}
\title{String topology for complex projective spaces}
\author{Richard Hepworth\footnote{The author is supported by E.P.S.R.C.~Postdoctoral Research Fellowship EP/D066980.}\\  Department of Pure Mathematics\\ University of Sheffield}
\date{}
\begin{document}
\maketitle

\begin{abstract}
\noindent In 1999 Chas and Sullivan showed that the homology of the free loop space of an oriented manifold admits the structure of a Batalin-Vilkovisky algebra.  In this paper we give a complete description of this Batalin-Vilkovisky algebra for complex projective spaces.  This builds on a description of the ring structure that is due to Cohen, Jones and Yan.  In the course of the proof we establish several new general results.  These include a description of how symmetries of a manifold can be used to understand its string topology, and a relationship between characteristic classes and circle actions on sphere bundles.
\end{abstract}

\section{Introduction}

\subsection{Background and results}\label{ResultSubsection}
Let $M$ be a closed oriented smooth manifold of dimension $m$ and let $LM$ denote the space of free loops in $M$.  Chas and Sullivan \cite{\ChasSullivan}
described how the homology of $LM$ can be endowed with a \emph{loop product}
\[H_\ast(LM)\otimes H_\ast(LM)\to H_{\ast -m}(LM)\]
and a \emph{BV operator}
\[\Delta\colon H_\ast(LM)\to H_{\ast+1}(LM)\]
that together make $\mathbb{H}_\ast(LM):=H_{\ast +m}(LM)$ into a Batalin-Vilkovisky algebra, or BV algebra.  This means that the loop product is associative and graded commutative, that $\Delta^2=0$, and that the Batalin-Vilkovisky identity
\begin{align}\Delta(x\cdot y\cdot z)&=
\nonumber \Delta(x\cdot y)\cdot z+(-1)^{|x|}x\cdot\Delta(y\cdot z)+(-1)^{(|x|-1)|y|}y\cdot\Delta(x\cdot z)\\
\label{BVIdentityEquation}&-(\Delta x)\cdot y\cdot z-(-1)^{|x|}x\cdot (\Delta y)\cdot z-(-1)^{|x|+|y|}x\cdot y\cdot (\Delta z)\end{align}
holds.
We will refer to $\mathbb{H}_\ast(LM)$ as the \emph{string topology BV algebra of $M$}.

The string topology BV algebra has by now been computed for several classes of manifolds.  These are the complex Stiefel manifolds \cite{\TamanoiBV} and  spheres \cite{\Menichi}, where coefficients were taken in the integers; the complex and quaternionic projective spaces \cite{\Yang} and surfaces of genus $g>1$ \cite{\Vaintrob}, where coefficients were taken in the rational numbers;  and the compact Lie groups \cite{\StringLieGroups}, where coefficients were taken in any commutative ring.  The aim of this paper is to describe the string topology BV algebra for $\CP^n$, $n\geqslant 1$, with coefficients in $\mathbb{Z}$.   The ring structure has already been computed by Cohen, Jones and Yan:

\begin{theorem*}[Cohen, Jones, Yan \cite{\CohenJonesYan}]
As a ring,
\begin{equation}\label{RingDescription}\mathbb{H}_\ast(L\CP^n;\mathbb{Z})=\frac{\Lambda_\mathbb{Z}[w]\otimes\mathbb{Z}[c,v]}{\langle c^{n+1}, (n+1)c^n\cdot v, w\cdot c^n\rangle},\end{equation}
where $|w|={-1}$, $|c|={-2}$ and $|v|={2n}$.
\end{theorem*}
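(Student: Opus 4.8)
The plan is to run the multiplicative Serre spectral sequence of the evaluation fibration $\Omega\CP^n\to L\CP^n\xrightarrow{\ev}\CP^n$. By the work of Cohen and Jones \cite{\CohenJones} this is a spectral sequence of algebras converging to the loop homology ring $\mathbb{H}_\ast(L\CP^n;\mathbb{Z})$ whose $E^2$-page, as a bigraded ring, is $H^\ast(\CP^n;\mathbb{Z})\otimes H_\ast(\Omega\CP^n;\mathbb{Z})$ with the cup product on the first factor and the Pontryagin product on the second (equivalently, one uses the intersection product on $H_\ast(\CP^n)$). From the principal circle bundle $S^1\to S^{2n+1}\to\CP^n$ one gets $H_\ast(\Omega\CP^n;\mathbb{Z})\cong\Lambda_\mathbb{Z}[a]\otimes\mathbb{Z}[b]$, torsion free, with $|a|=1$ from the fibre $S^1$ and $|b|=2n$ from $\Omega S^{2n+1}$. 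Writing $c$ for the degree-$2$ generator of $H^\ast(\CP^n)$ (so $|c|=-2$ in loop homology), $v$ for $b$ ($|v|=2n$), and $w'$ for the ring element corresponding to $a$ (which sits in loop degree $+1$, the fibre class $a$ itself being $c^nw'$ in degree $1-2n$), we obtain $E^2\cong\mathbb{Z}[c]/(c^{n+1})\otimes\Lambda_\mathbb{Z}[w']\otimes\mathbb{Z}[v]$.

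Next I would identify the differentials. The constant-loop section $\CP^n\to L\CP^n$ forces the bottom row $H_\ast(\CP^n)$ to consist of permanent cycles, so the powers of $c$ survive; tensoring with $\mathbb{Q}$ and comparing with the known rational loop homology shows $v$ is also a permanent cycle. Since $H_q(\Omega\CP^n)=0$ for $0<q<2n$ with the sole exception $q=1$, and $H^\ast(\CP^n)$ is concentrated in even degrees, a bookkeeping of bidegrees shows that the only differential which can be nonzero is $d^{2n}$, with $d^{2n}(c)=d^{2n}(v)=0$ and $d^{2n}(w')=\kappa\,c^nv$ for some integer $\kappa$; by the Leibniz rule the only nonzero values on monomials are then $d^{2n}(w'v^j)=\kappa\,c^nv^{j+1}$. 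The whole computation therefore rests on the value of $\kappa$, and \emph{this is the step I expect to be the main obstacle.} Comparison with the rational answer shows $\kappa\neq0$; to see that in fact $\kappa=\pm(n+1)$ I would either compute the single group $H_{2n}(L\CP^n;\mathbb{Z})\cong\mathbb{Z}\oplus\mathbb{Z}/(n+1)$ independently — for instance from the homotopy pull-back square exhibiting $L\CP^n$ as $\CP^n\times^h_{\CP^n\times\CP^n}\CP^n$ and its Eilenberg--Moore spectral sequence, or from the fibration $\Omega S^{2n+1}\to L\CP^n\to\CP^n\times S^1$ — or invoke the identification of $\kappa$ with the self-intersection number $\Delta\cdot\Delta=\chi(\CP^n)=n+1$ of the diagonal in $\CP^n\times\CP^n$. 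In any case one concludes $d^{2n}(w')=\pm(n+1)\,c^nv$.

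Granting this, the proof finishes by routine homological algebra. The differential $d^{2n}$ kills every $w'v^j$, replaces each $c^nv^j$ with $j\geqslant1$ by $\mathbb{Z}/(n+1)$, and leaves the classes $c^iv^j$ ($0\leqslant i\leqslant n$) together with $c^iw'v^j=c^{i-1}wv^j$ ($1\leqslant i\leqslant n$), where $w:=cw'$ has degree $-1$; for bidegree reasons there are no further differentials, so $E^{2n+1}=E^\infty$, and as a ring $E^\infty$ is precisely $\Lambda_\mathbb{Z}[w]\otimes\mathbb{Z}[c,v]/\langle c^{n+1},(n+1)c^nv,wc^n\rangle$, i.e. the associated graded of $\mathbb{H}_\ast(L\CP^n;\mathbb{Z})$ for the evaluation filtration. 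It remains to remove the words ``associated graded''. Every group in sight is free or $\mathbb{Z}/(n+1)$, with at most one torsion summand per degree and always in the lowest filtration, so all additive extensions split and $\mathbb{H}_\ast(L\CP^n;\mathbb{Z})$ is abstractly the correct graded abelian group. Lifting generators: $c$ is the image of the degree-$2$ class under the ring homomorphism $H^\ast(\CP^n)\to\mathbb{H}_\ast(L\CP^n)$, hence $c^{n+1}=0$; graded commutativity together with torsion-freeness of $\mathbb{H}_{-2}(L\CP^n)=\mathbb{Z}$ gives $w^2=0$; $\mathbb{H}_{-2n-1}(L\CP^n)=0$ gives $wc^n=0$; and $(n+1)c^nv=0$ because $c^nv$ lies in the $\mathbb{Z}/(n+1)$ at the bottom of $\mathbb{H}_0(L\CP^n)$. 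These relations yield a surjective ring map from $\Lambda_\mathbb{Z}[w]\otimes\mathbb{Z}[c,v]/\langle c^{n+1},(n+1)c^nv,wc^n\rangle$ onto $\mathbb{H}_\ast(L\CP^n;\mathbb{Z})$; since source and target are abstractly isomorphic finitely generated graded abelian groups, this surjection is an isomorphism, which is the assertion of the theorem.
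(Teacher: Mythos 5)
You should note first that the paper itself does not prove this statement: it is quoted from Cohen--Jones--Yan, and their own proof is precisely the route you propose, namely the multiplicative loop-homology Serre spectral sequence with $E^2\cong H^\ast(\CP^n;H_\ast(\Omega\CP^n))$ converging to $\mathbb{H}_\ast(L\CP^n)$ as algebras. Granting the value of the one nontrivial differential, your outline is sound: the section of $\ev$ makes the bottom row permanent, the rational comparison does rule out the a priori possible differential $d^2$ on the powers of $v$ (a block-by-block dimension count over $\mathbb{Q}$), the resulting $E^\infty$ is the stated ring, all higher filtration quotients are free so the additive extensions split, and your lifting of $c$, $w=cw'$, $v$ together with the relations $c^{n+1}=0$, $w^2=0$, $wc^n=0$, $(n+1)c^n\cdot v=0$ (the last from the multiplicativity of the filtration) plus the Hopfian argument closes the loop. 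One small slip: for $n=1$ the Pontryagin ring is $H_\ast(\Omega S^2)=\mathbb{Z}[a]$ with $a^2\neq 0$, not $\Lambda_\mathbb{Z}[a]\otimes\mathbb{Z}[b]$; setting $b=a^2$ the argument survives, but the $E^2$ ring as you state it is wrong in that case.

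The genuine gap is exactly the step you flag: establishing $d^{2n}(w')=\pm(n+1)\,c^nv$. This is the entire arithmetic content of the theorem --- it is the sole source of the relation $(n+1)c^n\cdot v=0$ --- and neither of your proposed routes is actually carried out. In particular, ``invoking the identification of $\kappa$ with the self-intersection number $\Delta\cdot\Delta=\chi(\CP^n)$'' is not available as a citation: there is no off-the-shelf theorem identifying this spectral-sequence differential with the diagonal self-intersection, and proving such an identification (for instance by tracing the Cohen--Jones Pontrjagin--Thom construction of the product through the filtration, or via the Gysin sequence of the unit tangent bundle) is essentially the substance of the missing step, not a shortcut around it. The alternative you mention --- an independent computation of $H_{2n}(L\CP^n;\mathbb{Z})\cong\mathbb{Z}\oplus\mathbb{Z}/(n+1)$ via the integral Eilenberg--Moore spectral sequence of the homotopy pullback square, or via the fibration $\Omega S^{2n+1}\to L\CP^n\to\CP^n\times S^1$ --- would indeed determine $|\kappa|$ and complete the proof, but each is a nontrivial computation in its own right (torsion in $\mathrm{Tor}$ over $H^\ast(\CP^n\times\CP^n;\mathbb{Z})$, respectively a local-coefficient analysis), and as written it is only gestured at. Until one of these is executed, your argument determines $\mathbb{H}_\ast(L\CP^n;\mathbb{Z})$ only up to the unknown order of the torsion class $c^n\cdot v$.
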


This theorem allows for some freedom in the choice of generators $c$, $w$ and $v$.  Any or all of the generators may be negated, and any multiple of $c^n\cdot v^2$ may be added to $v$, without altering the result.  We complete the description of $\mathbb{H}_\ast(L\CP^n)$ with the following theorem.

\begin{TheoremA}
After making an appropriate choice of $c$, $w$ and $v$, the BV operator $\Delta$ on $\mathbb{H}_\ast(L\CP^n;\mathbb{Z})$ is given by $\Delta(c^p\cdot v^q)=0$ and
\[\Delta(c^p\cdot w\cdot v^q)= \left[(n-p)+q(n+1)\right] c^p\cdot v^q + (q+1)\binom{n+1}{2}c^{n+p}\cdot v^{q+1}\]
for all $p,q\geqslant 0$.  Note that $\binom{n+1}{2}c^{n+p}\cdot v^{q+1}$ vanishes when $p>0$ and when $n$ is even, and that it has order $2$ otherwise.
\end{TheoremA}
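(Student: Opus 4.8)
The operator $\Delta$ raises degree by one, and $|w|$ is odd while $|c|,|v|$ are even, so $\Delta$ sends the $\mathbb{Z}$-span of the monomials $c^pv^q$ into the span of the $c^pwv^q$ and conversely. Comparing degrees against the module basis read off from (\ref{RingDescription}), one finds that $\Delta(c^pv^q)$ is forced to be a $\mathbb{Z}$-multiple of $c^{p-1}wv^q$ when $p\geqslant 1$ and of $c^{n-1}wv^{q+1}$ when $p=0$, while $\Delta(c^pwv^q)$ must be a $\mathbb{Z}$-combination of $c^pv^q$ and $c^{n+p}v^{q+1}$, the latter term dying unless $p=0$ because $c^{n+1}=0$. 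So the whole operator is encoded in a small array of integers, and the plan is: (i) compute the ``seeds'' $\Delta w$, $\{c,w\}$ and $\{v,w\}$ (equivalently $\Delta w$, $\Delta(cw)$, $\Delta(vw)$) geometrically; (ii) propagate these across all monomials using the BV identity (\ref{BVIdentityEquation}) and the fact that the induced loop bracket $\{-,-\}$ is a graded biderivation; and (iii) deduce $\Delta(c^pv^q)=0$ from $\Delta^2=0$.

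\textbf{The seed computations.} First, $\Delta$ kills the image of the constant-loop inclusion $\iota\colon\CP^n\hookrightarrow L\CP^n$, since the reparametrisation circle fixes $\iota(\CP^n)$ pointwise; as $c$ is (up to sign) $\iota_\ast$ of a generator of $H_{2n-2}(\CP^n)$, this gives $\Delta c=0$. The substantive seeds are $\Delta w\in\mathbb{H}_0(L\CP^n)=\mathbb{Z}\cdot 1\oplus(\mathbb{Z}/(n+1))\cdot c^nv$ and the brackets $\{c,w\}\in\mathbb{H}_{-2}=\mathbb{Z}\cdot c$ and $\{v,w\}\in\mathbb{H}_{2n}$. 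Here I would use the paper's general results: one identifies $w$, $cw$ and $vw$ with fundamental classes of explicit finite-dimensional submanifolds of $L\CP^n$ produced by the round metric --- built from the unit tangent bundle $S(T\CP^n)$ and the space of (iterated) closed geodesics --- on which the reparametrisation circle acts via the geodesic flow. The relationship between the geodesic-flow circle action on the sphere bundle $S(T\CP^n)\to\CP^n$ and the Chern classes of $T\CP^n$ --- recall $c(T\CP^n)=(1+x)^{n+1}$, so the relevant inputs are the Euler class, contributing $\chi(\CP^n)=n+1$, and $c_2(T\CP^n)=\binom{n+1}{2}x^2$ --- then yields $\Delta w=n\cdot 1+\binom{n+1}{2}c^nv$, $\{c,w\}=-c$, and $\{v,w\}=(n+1)v+\binom{n+1}{2}c^nv^2$. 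The residual freedom in the choice of generators noted after (\ref{RingDescription}) (the sign of $w$, and $v$ modulo $c^nv^2$) is precisely what is needed to normalise these into clean formulas rather than formulas with correction terms.

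\textbf{Propagation and vanishing.} Using the derivation-defect form of the BV identity, $\Delta(xy)=(\Delta x)y+(-1)^{|x|}x\,\Delta y+(-1)^{|x|}\{x,y\}$, together with the Poisson property of $\{-,-\}$, an easy induction gives $\{c^p,w\}=-p\,c^p$ and $\{v^q,w\}=q(n+1)v^q+q\binom{n+1}{2}c^nv^{q+1}$, hence $\{c^pv^q,w\}$, and finally --- using crucially that $w^2=0$, so that the still-unknown term $\Delta(c^pv^q)\cdot w$ drops out ---
\[\Delta(c^pwv^q)=c^pv^q\,\Delta w+\{c^pv^q,w\}=\big[(n-p)+q(n+1)\big]c^pv^q+(q+1)\binom{n+1}{2}c^{n+p}v^{q+1},\]
which is the asserted formula for the odd-degree classes. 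For the even-degree classes, write $\Delta(c^pv^q)=\lambda_{p,q}\,c^{p-1}wv^q$ (read as $\lambda_{0,q}\,c^{n-1}wv^{q+1}$ when $p=0$), the general form from the degree count. Applying $\Delta$ again and using the formula just established, $\Delta^2(c^pv^q)=\lambda_{p,q}\big([(n-p+1)+q(n+1)]c^{p-1}v^q+(\text{torsion})\big)$, whose leading coefficient is a strictly positive integer on the torsion-free basis element $c^{p-1}v^q$; since $\Delta^2=0$, we get $\lambda_{p,q}=0$, i.e.\ $\Delta(c^pv^q)=0$. The vanishing of $\binom{n+1}{2}c^{n+p}v^{q+1}$ for $p>0$ and for $n$ even, and its order $2$ otherwise, then follow directly from $c^{n+1}=0$ and $(n+1)c^nv=0$.

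\textbf{The main obstacle.} Steps (ii) and (iii) are routine algebra; the crux is step (i) --- exhibiting the submanifolds of $L\CP^n$ representing $w$, $cw$ and $vw$, proving the general relationship between fibrewise circle actions on sphere bundles and characteristic classes, and above all obtaining the coefficient $\binom{n+1}{2}=c_2(T\CP^n)$ exactly over $\mathbb{Z}$. One cannot divide by $n+1$ or by $\binom{n+1}{2}$, so the rational computation of \cite{\Yang} does not suffice and the integral statement genuinely requires the geometric input.
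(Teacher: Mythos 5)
Your \emph{propagation} and \emph{vanishing} steps are correct and verifiable. In particular, the argument that $\Delta(c^pv^q)=0$ follows from $\Delta^2=0$ is a genuinely nice alternative to the paper's route: once $\Delta(c^{p-1}wv^q)$ is known, the coefficient $(n-p+1)+q(n+1)$ is always a positive integer hitting the torsion-free basis element $c^{p-1}v^q$, so the multiplier $\lambda_{p,q}$ must vanish. The paper instead derives $\Delta(c^pv^q)=0$ from equation~\eqref{OmegaDeltaEquation} of Theorem~\ref{ActionTheorem} applied with $v=E_{2n}1$ (Lemma~\ref{PartialLemma}). Both work, and yours is arguably slicker once the odd-degree formula is in hand.

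However, the proposal has a genuine gap at exactly the place you flag: the seed computations are asserted, not proved. The issue is more serious than ``I haven't done the geometry,'' for two reasons.

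First, your claim that all three seeds $\Delta w$, $\{c,w\}$, $\{v,w\}$ come from the circle action on the unit tangent bundle $\mathbb{S}T\CP^n$ does not match how the paper obtains them, and it is not clear the third one can be gotten that way. The sphere-bundle machinery of \S\ref{SphereBundlesSection} and the finite-dimensional approximation $\mathbb{L}^n$ land only inside the subring generated by $c$ and $w$; they pin down $\Delta(c^{n-1}w)=c^{n-1}$ and the appearance of $\binom{n+1}{2}c^{n+i}v$ in $\Delta(c^iw)$ (Theorem~B/D), and the Batalin--Vilkovisky identity then propagates to $\Delta(c^iw)=(n-i)c^i+\binom{n+1}{2}c^{n+i}v$ (Lemma~\ref{DeltaLemma}). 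But the information about $v$ --- equivalently your seed $\{v,w\}$ --- does not come from a sphere-bundle computation at all. It comes from the $\Omega U(n+1)$ module structure: $v=E_{2n}1$, and equation~\eqref{OmegaDeltaEquation} of Theorem~\ref{ActionTheorem} converts $\Delta(v\cdot x)$ into a sum controlled by the homology suspension and the Hopf-algebra coproduct on $H_\ast(\Omega U(n+1))$. You would need to either reproduce that general result or explain a direct geometric representative for $vw$; neither is sketched. (Note also that without Theorem~\ref{ActionTheorem}, it is not even clear a priori that $\Delta$ respects the $H_\ast(\Omega G)$ action in the way needed; this is one of the paper's new results.)

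Second, a smaller point: the constant $\binom{n+1}{2}$ enters the paper as $\langle c_{n-1}(T\CP^n),[\CP^{n-1}]\rangle$, not as $c_2(T\CP^n)$. For $\CP^n$ these happen to give the same number since $c(T\CP^n)=(1+u)^{n+1}$, but they are different geometric quantities; the relevant one is $c_{d-1}$ of a rank-$d$ bundle, arising in Theorem~\ref{SphereBundlesTheorem} as the Euler class of the quotient of $\pi^\ast\xi$ by the tautological line. Identifying which characteristic class actually controls the $\Delta$-operator is part of the content you are taking on faith.

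So: the reduction and algebraic propagation are fine, but the proposal leaves unproved the entire geometric input --- both the representatives of $w$ and $cw$ by a finite-dimensional model whose circle action is computable, and the mechanism producing the $v$-seed, which in the paper is a separate and new general theorem about Lie group actions rather than a sphere-bundle calculation.
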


The constants $(n+1)$ and $\binom{n+1}{2}$ appearing in the description of $\Delta$ arise from characteristic classes of $\CP^n$.  To be precise, they are the quantities $\langle c_n(T\CP^n),[\CP^n]\rangle$ and $\langle c_{n-1}(T\CP^n),[\CP^{n-1}]\rangle$ respectively.  It would be interesting to see whether there is a general relationship between the Chern classes of a complex manifold $M$ and the BV algebra $\mathbb{H}_\ast(LM)$.  

The graded ring underlying any Batalin-Vilkovisky algebra, when equipped with the bracket operation $\{a,b\}=(-1)^{|a|}\Delta(a\cdot b)-(-1)^{|a|}(\Delta a)\cdot b - a\cdot (\Delta b)$, becomes a \emph{Gerstenhaber algebra}.  This means in particular that the bracket is a derivation in both variables and that $\{a,b\}=-(-1)^{(|a|+1)(|b|+1)}\{b,a\}$.    As an immediate consequence of the two theorems above we have the following description of the Gerstenhaber algebra $\mathbb{H}_\ast(L\CP^n)$:
\begin{corollary*}
As a Gerstenhaber algebra, $\mathbb{H}_\ast(L\CP^n;\mathbb{Z})$ is determined by \eqref{RingDescription}, by
\begin{align*}
\{c,w\}=-\{w,c\}=&-c,\qquad\\
\{v,w\}=-\{w,v\}=&(n+1)v+\binom{n+1}{2}c^n\cdot v^2,
\end{align*}
and by the fact that the bracket vanishes on all other pairs among the generators $c$, $w$, $v$.
\end{corollary*}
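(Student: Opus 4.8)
The plan is to derive the corollary directly from Theorem~A and the ring presentation \eqref{RingDescription}, using two standard structural facts. First, on any BV algebra the Gerstenhaber bracket is given by the formula $\{a,b\}=(-1)^{|a|}\Delta(a\cdot b)-(-1)^{|a|}(\Delta a)\cdot b-a\cdot(\Delta b)$ quoted above, and it is a derivation in each variable. Second, a biderivation of a graded-commutative ring is completely determined, together with the multiplication, by its values on pairs of ring generators. Since \eqref{RingDescription} exhibits $\mathbb{H}_\ast(L\CP^n;\mathbb{Z})$ as generated by $c$, $w$ and $v$, it suffices to compute the six brackets $\{c,c\}$, $\{c,w\}$, $\{c,v\}$, $\{w,w\}$, $\{w,v\}$, $\{v,v\}$; the remaining three generator brackets then follow from graded antisymmetry $\{a,b\}=-(-1)^{(|a|+1)(|b|+1)}\{b,a\}$.

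To carry this out I would first read off from Theorem~A the values of $\Delta$ needed as input: $\Delta c=\Delta v=0$, $\Delta w=n+\binom{n+1}{2}c^n\cdot v$, $\Delta(c\cdot w)=(n-1)c$ and $\Delta(v\cdot w)=(2n+1)v+2\binom{n+1}{2}c^n\cdot v^2$, where in the third of these the term $\binom{n+1}{2}c^{n+1}\cdot v$ predicted by the general formula vanishes because $c^{n+1}=0$. Substituting these into the bracket formula, using that $|c|$ and $|v|$ are even so that the signs $(-1)^{|c|}$ and $(-1)^{|v|}$ equal $+1$, and reducing modulo the relations $c^{n+1}=0$ and $w\cdot c^n=0$, gives $\{c,w\}=\Delta(c\cdot w)-c\cdot\Delta w=(n-1)c-nc=-c$ and $\{v,w\}=\Delta(v\cdot w)-v\cdot\Delta w=(n+1)v+\binom{n+1}{2}c^n\cdot v^2$. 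The vanishing brackets are easier still: $\{c,v\}=\Delta(c\cdot v)=0$, and $\{c,c\}=\{v,v\}=0$, follow at once from $\Delta(c^p\cdot v^q)=0$; while $\{w,w\}=0$ because $w^2=0$ and $w\cdot c^n=0$ force $(\Delta w)\cdot w=w\cdot(\Delta w)=nw$. Graded antisymmetry then turns these into $\{w,c\}=c$ and $\{w,v\}=-(n+1)v-\binom{n+1}{2}c^n\cdot v^2$, which is exactly the asserted description.

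This argument is essentially bookkeeping, and I expect no real obstacle. The only points needing a little care are that over $\mathbb{Z}$ graded antisymmetry only yields $2\{w,w\}=0$, so the vanishing of $\{w,w\}$ really does require the short computation above rather than being automatic; and that the relations $c^{n+1}=0$ and $w\cdot c^n=0$ must be invoked at each step to cancel the terms that would otherwise disturb the clean formulas. No consistency conditions need to be verified, since the existence of the Gerstenhaber structure — and hence the legitimacy of reconstructing it from its values on generators — is already guaranteed by the theorem of Chas and Sullivan together with Theorem~A.
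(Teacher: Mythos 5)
Your computation is correct and is exactly the argument the paper intends: the corollary is stated there as an immediate consequence of the ring presentation \eqref{RingDescription} and Theorem~A, obtained by substituting the values $\Delta c=\Delta v=0$, $\Delta w$, $\Delta(c\cdot w)$, $\Delta(w\cdot v)$ into the bracket formula and reducing by the relations $c^{n+1}=0$, $w\cdot c^n=0$, with the biderivation property justifying that values on generator pairs determine the bracket. Your extra care about $\{w,w\}$ over $\mathbb{Z}$ and the vanishing of the $c^{n+1}$ term is sound and fills in precisely the bookkeeping the paper leaves implicit.
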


\subsection{Relationship with previous computations}

Theorem A, combined with the theorem of Cohen, Jones and Yan, allows us to recover two of the existing computations mentioned in the last subsection.

\begin{theorem*}[Menichi \cite{\Menichi}]
The string topology BV algebra of $S^2$ is given by
\[\mathbb{H}_\ast(LS^2;\mathbb{Z})=\frac{\Lambda_\mathbb{Z}[b]\otimes\mathbb{Z}[a,v]}{\langle a^2,a\cdot b,2a\cdot v\rangle}\]
with $|b|=-1$, $|a|=-2$ and $|v|=2$.  For all $k\geqslant 0$ we have $\Delta(v^k)=0$, $\Delta(a\cdot v^k)=0$ and $\Delta(b\cdot v^k)=(2k+1)v^k+a\cdot v^{k+1}$.
\end{theorem*}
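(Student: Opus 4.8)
The plan is to derive this statement from Theorem A together with the theorem of Cohen, Jones and Yan by specialising to $n=1$ and using the diffeomorphism $\CP^1\cong S^2$. Since the loop product and BV operator are natural with respect to orientation-preserving diffeomorphisms, I would begin by fixing an orientation-preserving diffeomorphism $S^2\cong\CP^1$ (composing with a reflection of $S^2$ if necessary), which yields an isomorphism $\mathbb{H}_\ast(LS^2;\mathbb{Z})\cong\mathbb{H}_\ast(L\CP^1;\mathbb{Z})$ of BV algebras. It then remains to match the $n=1$ cases of \eqref{RingDescription} and of Theorem A with the presentation in the statement.

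Setting $n=1$ in \eqref{RingDescription} gives $\Lambda_\mathbb{Z}[w]\otimes\mathbb{Z}[c,v]/\langle c^2,\ 2c\cdot v,\ w\cdot c\rangle$ with $|w|=-1$, $|c|=-2$, $|v|=2$, which is exactly the ring in the statement under $a\mapsto c$ and $b\mapsto w$. Setting $n=1$ in Theorem A gives $\Delta(c^p\cdot v^q)=0$ and, since $\binom{2}{2}=1$, $\Delta(c^p\cdot w\cdot v^q)=[(1-p)+2q]\,c^p\cdot v^q+(q+1)\,c^{p+1}\cdot v^{q+1}$. Because $c^2=0$ only $p\in\{0,1\}$ can contribute; the case $p=1$ gives $\Delta(c\cdot w\cdot v^q)=2q\,c\cdot v^q$, which is consistent since both sides vanish (using $w\cdot c=0$ and $2c\cdot v=0$), so the content is $\Delta(w\cdot v^q)=(2q+1)v^q+(q+1)c\cdot v^{q+1}$.

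This does not yet literally agree with the statement, because of the coefficient $q+1$ in place of $1$, and the last step is a change of generator. Using the freedom recorded immediately after the theorem of Cohen, Jones and Yan, I would replace $v$ by $v'=v+c\cdot v^2$; since $c^2=0$ this is a ring automorphism with inverse $v=v'-c\cdot(v')^2$, and it preserves the relations $c^2=0$, $2c\cdot v'=0$, $w\cdot c=0$. One computes $(v')^k=v^k+k\,c\cdot v^{k+1}$, hence $w\cdot(v')^k=w\cdot v^k$ (as $w\cdot c=0$), $c\cdot(v')^{k+1}=c\cdot v^{k+1}$, and $v^k=(v')^k-k\,c\cdot(v')^{k+1}$. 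Substituting these into the formula above gives $\Delta(w\cdot(v')^k)=(2k+1)(v')^k+(1-2k^2)\,c\cdot(v')^{k+1}$, while $\Delta((v')^k)=0$ and $\Delta(c\cdot(v')^k)=0$ follow at once from $\Delta(c^p\cdot v^q)=0$. Finally $c\cdot(v')^{k+1}$ is annihilated by $2$ because $2c\cdot v'=0$, so $(1-2k^2)\,c\cdot(v')^{k+1}=c\cdot(v')^{k+1}$, and with $a=c$, $b=w$, $v=v'$ we obtain precisely the asserted description of $\mathbb{H}_\ast(LS^2;\mathbb{Z})$. The only delicate point — and the step I would take most care over — is this last reconciliation: it is exactly the $2$-torsion coming from the relation $2c\cdot v=0$ that allows the coefficient $q+1$ (equivalently $1-2k^2$) to be replaced by $1$, so the change of variables must be carried out before, rather than after, discarding multiples of $2$.
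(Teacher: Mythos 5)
Your proposal is correct and follows essentially the same route as the paper: specialise the Cohen--Jones--Yan presentation and Theorem A to $n=1$ and then replace $v$ by $v+c\cdot v^2$ (with $a=c$, $b=w$), which is exactly the paper's one-line substitution. You have merely carried out the change-of-generator computation explicitly, including the $2$-torsion observation that turns the coefficient $q+1$ into $1$, which the paper leaves to the reader.
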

\begin{proof}
Since $S^2=\CP^1$, we may take $n=1$ in 
the two theorems of \S\ref{ResultSubsection} to find that $\mathbb{H}_\ast(LS^2;\mathbb{Z})$ is equal to ${(\Lambda_\mathbb{Z}[w]\otimes\mathbb{Z}[c,v])}/{\langle c^{2}, 2c\cdot v, w\cdot c\rangle}$, that $\Delta(v^k)=0$, that $\Delta(c\cdot v^k)=0$, and that $\Delta(w\cdot v^k)=(2k+1)v^q+(k+1)c\cdot v^{k+1}$.  The result follows by taking $a=c$, $b=w$, and by replacing $v$ with $v+c\cdot v^2$.
\end{proof}

\begin{theorem*}[Yang \cite{\Yang}]
The rational string topology BV algebra of $\CP^n$ is given by
\[\mathbb{H}_\ast(L\CP^n;\mathbb{Q})=\frac{\mathbb{Q}[x,u,t]}{\langle x^{n+1},u^2,x^n\cdot t,u\cdot x^n\rangle}\]
with $|x|=-2$, $|u|=-1$ and $|t|=2n$, and $\Delta(t^k\cdot x^l)=0$, $\Delta(t^k\cdot u\cdot x^l)=[-(k+1)n-k+l]t^k\cdot x^l$.
\end{theorem*}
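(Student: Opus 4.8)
The plan is to deduce the statement directly from Theorem~A and the theorem of Cohen, Jones and Yan, in the same spirit as the proof of Menichi's theorem above. \emph{Step one: rationalise the ring.} In $\mathbb{H}_\ast(L\CP^n;\mathbb{Q})$ the integer $n+1$ is invertible, so the relation $(n+1)c^n\cdot v=0$ in \eqref{RingDescription} simplifies to $c^n\cdot v=0$, and hence
\[\mathbb{H}_\ast(L\CP^n;\mathbb{Q})=\frac{\Lambda_\mathbb{Q}[w]\otimes\mathbb{Q}[c,v]}{\langle c^{n+1},\,c^n\cdot v,\,w\cdot c^n\rangle}.\]
Since $|w|$ is odd we have $w^2=0$ automatically over $\mathbb{Q}$, so $\Lambda_\mathbb{Q}[w]\otimes\mathbb{Q}[c,v]$ agrees with $\mathbb{Q}[w,c,v]/\langle w^2\rangle$; under $x\leftrightarrow c$, $u\leftrightarrow w$, $t\leftrightarrow v$ this is precisely Yang's ring, with the relations $x^{n+1}$, $u^2$, $x^n t$, $ux^n$ corresponding to $c^{n+1}$, $w^2$, $c^n v$, $wc^n$, and the degrees matching as well.

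\emph{Step two: rationalise the BV operator.} The correction term appearing in Theorem~A satisfies $c^{n+p}\cdot v^{q+1}=(c^p\cdot v^q)\cdot(c^n\cdot v)$, which is zero in $\mathbb{H}_\ast(L\CP^n;\mathbb{Q})$ by Step one, regardless of the coefficient $(q+1)\binom{n+1}{2}$ (this is consistent with the remark in Theorem~A that the term is always $2$-torsion). Consequently Theorem~A reduces, over $\mathbb{Q}$, to $\Delta(c^p\cdot v^q)=0$ and
\[\Delta(c^p\cdot w\cdot v^q)=\bigl[(n-p)+q(n+1)\bigr]\,c^p\cdot v^q\]
for all $p,q\geqslant 0$.

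\emph{Step three: match conventions.} Because $|c|$ and $|v|$ are even, the monomials $c^l\cdot v^k$ are central, so $v^k\cdot w\cdot c^l=c^l\cdot w\cdot v^k$ with no sign, and the formula of Step two reads $\Delta(c^l\cdot w\cdot v^k)=[n-l+k(n+1)]\,c^l\cdot v^k$. Yang's statement instead asks for $\Delta(t^k\cdot u\cdot x^l)=[-(k+1)n-k+l]\,t^k\cdot x^l$, and the two integers $n-l+k(n+1)$ and $-(k+1)n-k+l$ are negatives of one another. The remedy is to take $x=c$, $t=v$ and $u=-w$: this still satisfies all of Yang's relations ($u^2=w^2=0$, $x^nt=c^nv=0$, $ux^n=-wc^n=0$), it gives $\Delta(t^k\cdot x^l)=\Delta(v^k\cdot c^l)=0$, and it negates $\Delta$ on the classes $c^l\cdot w\cdot v^k$, yielding $\Delta(t^k\cdot u\cdot x^l)=-[n-l+k(n+1)]\,t^k\cdot x^l=[-(k+1)n-k+l]\,t^k\cdot x^l$, exactly as required. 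The only delicate point in the whole argument is this final sign bookkeeping between the two choices of generators; there is no substantive obstacle, since Theorem~A already carries out all the real work.
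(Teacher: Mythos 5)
Your proposal is correct and takes exactly the same route as the paper's own proof: tensor the integral result with $\mathbb{Q}$, note that the torsion correction term $(q+1)\binom{n+1}{2}c^{n+p}v^{q+1}$ dies because $c^n\cdot v=0$ rationally, and finish by setting $x=c$, $u=-w$, $t=v$. You have simply spelled out the sign and degree bookkeeping that the paper leaves implicit.
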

\begin{proof}
Since $\mathbb{H}_\ast(L\CP^n;\mathbb{Q})=\mathbb{H}_\ast(L\CP^n;\mathbb{Z})\otimes\mathbb{Q}$ as BV algebras, the two theorems of \S\ref{ResultSubsection} give us $\mathbb{H}_\ast(L\CP^n;\mathbb{Q})={\mathbb{Q}[c,v,w]}/{\langle c^{n+1},c^n\cdot v, w\cdot c^n,w^2\rangle}$, $\Delta(c^l\cdot v^k)=0$ and $\Delta(c^l\cdot w\cdot v^k)=[(n-l)+k(n+1)]c^l\cdot v^k$ for $k,l\geqslant 0$.  The result follows by setting $x=c$, $u=-w$ and $t=v$.
\end{proof}

Even though Theorem A extends these results of Menichi and Yang, our proof is rather different from the proofs given by those authors.  Both Menichi and Yang take as their starting point the ring isomorphism of Cohen and Jones \cite{\CohenJones}
\[\mathbb{H}_\ast(LM)\cong HH^\ast(S^\ast(M),S^\ast(M)),\]
where $M$ is simply connected and $S^\ast(M)$ denotes singular cochains. They then proceed by largely algebraic methods.  On the other hand our proof, as we shall see below, is entirely topological, taking place always at the level of spaces and their homology.

\subsection{Relationship with Hochschild cohomology}

One of the most important issues in string topology is to understand the relationship between the string topology of a manifold and the Hochschild cohomology of its singular cochains.  As mentioned above, for any simply connected $M$ there is an isomorphism of rings $\mathbb{H}_\ast(LM)\cong HH^\ast(S^\ast(M),S^\ast(M))$.  Both sides of this isomorphism are in fact BV algebras --- the left hand side is the string topology BV algebra, and for the right hand side the BV structure has been defined by Tradler~\cite{\Tradler}.  It is therefore natural to ask whether there is an isomorphism
\begin{equation}\label{ConjectureEquation}\mathbb{H}_\ast(LM)\cong HH^\ast(S^\ast(M),S^\ast(M))\end{equation}
of \emph{BV algebras}.  Felix and Thomas \cite{\FelixThomas} have shown that this is true when one takes coefficients in a field of characteristic zero.  On the other hand, Menichi \cite{\Menichi} has shown that this is not the case when $M=S^2$ and coefficients are taken in $\mathbb{Z}/2\mathbb{Z}$.  

We will answer the question of whether there is an isomorphism \eqref{ConjectureEquation} when $M$ is a complex projective space and cofficients are taken in $\mathbb{Z}$.  This is a matter of comparing the two BV algebras $\mathbb{H}_\ast(L\CP^n)$ and $HH^\ast(S^\ast(\CP^n),S^\ast(\CP^n))$.  The description of the first was given in \S\ref{ResultSubsection}, and Yang has computed the second:
\begin{theorem*}[Yang \cite{\Yang}]
Take coefficients in $\mathbb{Z}$.  As a BV algebra,
\[HH^\ast(S^\ast(\CP^n),S^\ast(\CP^n))=\frac{R[x,u,t]}{\langle x^{n+1},u^2,(n+1)x^nt,ux^n\rangle},\]
where $|x|=-2$, $|u|=-1$ and $|t|=2n$, and
$\Delta(t^kx^l)=0$, $\Delta(t^k u x^l)=(-(k+1)n-k+l)t^kx^l$.
\end{theorem*}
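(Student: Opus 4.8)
The plan is to compute the Hochschild cohomology of the singular cochain algebra of $\CP^n$ directly, using a small explicit model for the cochain algebra and the standard bar-type resolution. First I would replace $S^\ast(\CP^n;\mathbb{Z})$ by a quasi-isomorphic commutative (or $A_\infty$) model: over $\mathbb{Z}$ one cannot use the minimal model $\mathbb{Z}[x]/(x^{n+1})$ as a \emph{strictly} commutative dga (the relevant formality holds rationally), so the technical heart is to fix an $A_\infty$-structure on the cohomology $H^\ast(\CP^n;\mathbb{Z})=\mathbb{Z}[x]/(x^{n+1})$, $|x|=2$, transferred from the cochains. Because $\CP^n$ has cells only in even degrees, its cochain algebra is formal as an $A_\infty$-algebra over $\mathbb{Z}$ as well (all higher products $m_k$ for $k\geqslant 3$ can be taken to vanish for degree reasons once one checks the relevant obstruction groups), so $S^\ast(\CP^n)$ is $A_\infty$-quasi-isomorphic to the honest commutative ring $\mathbb{Z}[x]/(x^{n+1})$. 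This reduces the computation to $HH^\ast(\mathbb{Z}[x]/(x^{n+1}))$ with its standard Gerstenhaber/BV structure, where the BV operator comes from the Poincaré duality $\CP^n$-fundamental class via Tradler's construction.

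Next I would compute the Hochschild cohomology ring of $A=\mathbb{Z}[x]/(x^{n+1})$. One uses the classical periodic-type free resolution of $A$ as an $A\otimes A$-module: the complex with terms $A\otimes A$ and differentials alternately multiplication by $x\otimes 1-1\otimes x$ and by $\sum_{i+j=n}x^i\otimes x^j$. Applying $\mathrm{Hom}_{A\otimes A}(-,A)$ and taking cohomology yields, after a standard computation, the ring $R[x,u,t]/\langle x^{n+1},u^2,(n+1)x^nt,ux^n\rangle$ with $u$ in the first resolution step (cohomological degree contributing $|u|=-1$ after the internal regrading that makes $S^\ast$ homological) and $t$ the "periodicity" generator of degree $|t|=2n$, while $x$ survives with $|x|=-2$. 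The relation $(n+1)x^nt=0$ is exactly where the integral torsion enters, reflecting that $\sum x^i\otimes x^j$ composed appropriately hits $(n+1)x^n$; this matches the analogous relation $(n+1)c^n v=0$ in \eqref{RingDescription}. Identifying the Gerstenhaber bracket is then a matter of computing with the cup product and the explicit comparison maps for the resolution.

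Finally I would pin down the BV operator $\Delta$. Here I would invoke the description of Tradler's BV structure on $HH^\ast(S^\ast(M))$ in terms of the $\infty$-inner-product coming from the fundamental class, and compute $\Delta$ on the generators $t$, $u$, $x$ using the explicit resolution and the cap product with $[\CP^n]$. Since $\Delta$ lowers Hochschild degree by one, $\Delta(t^kx^l)$ lands in the class of bidegree without the $u$-factor and vanishes by a degree/parity count, while $\Delta(t^kux^l)$ is a multiple of $t^kx^l$; the coefficient $-(k+1)n-k+l$ is determined by computing $\Delta(u)=-n$, $\Delta(t)=0$, $\Delta(x)=0$ and applying the BV identity \eqref{BVIdentityEquation} together with the Leibniz rule for the induced bracket. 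The main obstacle I anticipate is the integral $A_\infty$-formality step: verifying that the higher products on $H^\ast(\CP^n;\mathbb{Z})$ genuinely vanish (rather than merely rationally), since this is what licenses replacing $S^\ast(\CP^n)$ by a strictly associative ring and is exactly the kind of integral subtlety that makes the $\mathbb{Z}/2$ counterexample of Menichi possible; once formality is in hand, the rest is a controlled, if lengthy, resolution computation.
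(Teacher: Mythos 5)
This statement is quoted in the paper as a theorem of Yang (\cite{\Yang}); the paper does not prove it and offers no argument for it, so there is no ``paper's own proof'' to compare against. What you have written is therefore a proposal for a proof of a cited external result, and I can only assess it on its own terms.

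Your overall plan --- replace $S^\ast(\CP^n;\mathbb{Z})$ by a small model, compute $HH^\ast$ of $\mathbb{Z}[x]/(x^{n+1})$ via the standard two-step periodic bimodule resolution, and then read off Tradler's BV operator from the cyclic/Poincar\'e-duality structure --- is the natural route and almost certainly close in spirit to what Yang does.  There is, however, a genuine gap in the first step as you state it.  You claim the higher $A_\infty$-products $m_k$ on $H^\ast(\CP^n;\mathbb{Z})$ ``can be taken to vanish for degree reasons.''  Since $m_k$ has degree $2-k$ and the cohomology is concentrated in even degrees, this argument only kills $m_k$ for \emph{odd} $k$; for even $k\geqslant 4$ the target degree is even, and $m_4(x^{i_1},\dots,x^{i_4})$ could perfectly well be a nonzero multiple of some $x^j$.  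Establishing integral intrinsic formality of $\mathbb{Z}[x]/(x^{n+1})$ therefore requires an actual computation of the relevant obstruction groups $HH^{k,3-k}$, not a parity count; you allude to this but treat it as routine, and it is exactly the kind of integral subtlety (you yourself mention Menichi's $\mathbb{Z}/2$ counterexample for $S^2$) where one should not be cavalier.  One must also transfer not just the $A_\infty$-structure but a \emph{cyclic} $A_\infty$-structure compatible with the fundamental class, since this is what feeds Tradler's $\Delta$; that step needs to be made explicit rather than folded into ``formality.''

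A smaller imprecision: you say the coefficient in $\Delta(t^k u x^l)$ is ``determined by $\Delta(u)=-n$, $\Delta(t)=0$, $\Delta(x)=0$ and applying the BV identity together with the Leibniz rule.''  The BV identity reduces $\Delta$ on triple (and longer) products to $\Delta$ on pairs and singletons, so you also need $\Delta(tu)$, $\Delta(ux)$, $\Delta(tx)$ --- equivalently the brackets $\{u,t\}$, $\{u,x\}$, $\{t,x\}$ --- as independent inputs.  You do say you would compute the bracket from the resolution, so this is more a matter of phrasing than a missing idea, but as written the sentence overstates what the BV identity alone determines.  Likewise $\Delta(x)$ and $\Delta(t)$ are not killed by degree/parity alone (the target degrees $-1$ and $2n-1$ are both nonzero, spanned by $u$ and $ut$ respectively), so those vanishings also need the explicit resolution computation rather than a counting argument.
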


\begin{corollary*}
Take coefficients in $\mathbb{Z}$.  When $n$ is even there is an isomorphism of BV algebras
\begin{equation}\label{SupposedIsomorphism}\mathbb{H}_\ast(L\CP^n)\xrightarrow{\cong} HH^\ast(S^\ast(\CP^n),S^\ast(\CP^n)),\end{equation}
and when $n$ is odd no such isomorphism exists.
\end{corollary*}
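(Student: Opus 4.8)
The plan is to compare the two BV algebras directly, using the explicit presentations of their underlying rings from \eqref{RingDescription} and from Yang's computation, together with the formulas for $\Delta$ from Theorem A and from Yang's theorem. The first step is to determine all graded ring homomorphisms between $\mathbb{H}_\ast(L\CP^n)$ and $HH^\ast(S^\ast(\CP^n),S^\ast(\CP^n))$. Writing $\Lambda_\mathbb{Z}[w]\otimes\mathbb{Z}[c,v]$ as the graded-commutative ring on $w,c,v$ with $w^2=0$, a degree count shows that in each ring the degree $-1$ part is infinite cyclic on $w$ (resp.\ $u$), the degree $-2$ part is infinite cyclic on $c$ (resp.\ $x$), and the degree $2n$ part is $\mathbb{Z}\oplus\mathbb{Z}/(n+1)$ with generators $v,c^nv^2$ (resp.\ $t,x^nt^2$). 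Consequently any graded ring homomorphism $\phi$ is determined by
\[\phi(c)=\beta x,\qquad \phi(w)=\alpha u,\qquad \phi(v)=\gamma t+\delta x^nt^2\]
with $\alpha,\beta,\gamma\in\mathbb{Z}$ and $\delta\in\mathbb{Z}/(n+1)$; a quick check against the defining relations shows every such assignment extends to a homomorphism, and it is an isomorphism precisely when $\alpha,\beta,\gamma\in\{\pm1\}$.

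For $n$ even I would produce the isomorphism explicitly by taking $\phi(c)=x$, $\phi(v)=t$ and $\phi(w)=-u$. By the previous paragraph this is a ring isomorphism, so it remains only to verify $\phi\circ\Delta=\Delta\circ\phi$ on the $\mathbb{Z}$-basis $\{c^pv^q\}\cup\{c^pwv^q\}$. On $c^pv^q$ both sides vanish. On $c^pwv^q$, Theorem A gives $\Delta(c^pwv^q)=[(n-p)+q(n+1)]c^pv^q+(q+1)\binom{n+1}{2}c^{n+p}v^{q+1}$, and the remark following Theorem A records that the second term is zero when $n$ is even; applying $\phi$ then yields $[(n-p)+q(n+1)]x^pt^q$, which by Yang's formula for $\Delta$ equals $\Delta(-x^put^q)=\Delta(\phi(c^pwv^q))$. (The sign change $w\mapsto -u$ is exactly what is needed to reconcile the two sign conventions in the formulas for $\Delta$.)

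For $n$ odd I would argue by contradiction. Suppose $\phi$ were a BV-algebra isomorphism. By the first step, $\phi(w)=\alpha u$, $\phi(c)=\beta x$ and $\phi(v)=\gamma t+\delta x^nt^2$ with $\alpha,\beta,\gamma\in\{\pm1\}$. Evaluating $\phi\circ\Delta=\Delta\circ\phi$ on $w$, using $\Delta(w)=n\cdot 1+\binom{n+1}{2}c^nv$ from Theorem A and $\Delta(u)=-n\cdot 1$ from Yang's theorem, and using $x^{2n}=0$, gives
\[n\cdot 1+\binom{n+1}{2}\beta^n\gamma\,x^nt=-\alpha n\cdot 1\]
in the degree $0$ group $\mathbb{Z}\langle 1\rangle\oplus\mathbb{Z}/(n+1)\langle x^nt\rangle$ of $HH^\ast(S^\ast(\CP^n),S^\ast(\CP^n))$. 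The free part forces $\alpha=-1$, and the torsion part forces $\binom{n+1}{2}\equiv 0\pmod{n+1}$; but for $n$ odd one has $\binom{n+1}{2}=n\cdot\tfrac{n+1}{2}\equiv -\tfrac{n+1}{2}\equiv\tfrac{n+1}{2}\not\equiv 0\pmod{n+1}$, a contradiction. (More conceptually: writing $\eta$ for a generator of the degree $-1$ group of a BV algebra, the class $\Delta(\eta)$ modulo its free part, taken up to sign, is an invariant preserved by any BV isomorphism; for $\mathbb{H}_\ast(L\CP^n)$ with $n$ odd it is a nonzero element of order $2$, whereas for the Hochschild side it is zero.)

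The main obstacle, as usual for a non-existence statement, is the $n$ odd case: one must be careful to establish that every ring isomorphism is forced to have the stated form on generators, so that $\Delta(w)$ is pushed into the small and completely understood degree $0$ group, and one must pin down the congruence $\binom{n+1}{2}\equiv\tfrac{n+1}{2}\pmod{n+1}$ that makes the resulting obstruction nonzero. The $n$ even case is then a routine verification once Theorem A and its accompanying remark are in hand.
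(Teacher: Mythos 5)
Your proposal is correct and follows essentially the same route as the paper: for $n$ even it exhibits the isomorphism $c\mapsto x$, $w\mapsto -u$, $v\mapsto t$ and checks compatibility with $\Delta$ using Theorem A and Yang's formula, and for $n$ odd it derives a contradiction by applying a hypothetical isomorphism to $\Delta(w)=n\cdot 1+\binom{n+1}{2}c^n\cdot v$ and comparing with $\Delta(u)=-n\cdot 1$, the obstruction being the nonvanishing of $\binom{n+1}{2}x^nt$ in the torsion summand. Your explicit classification of graded ring homomorphisms and the congruence $\binom{n+1}{2}\equiv\tfrac{n+1}{2}\pmod{n+1}$ just make precise steps the paper leaves as routine.
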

\begin{proof}
A ring isomorphism \eqref{SupposedIsomorphism} is given by sending  $c$, $w$ and $v$ to $x$, $-u$ and $t$ respectively.  Using Theorem A and Yang's result above it is simple to verify that, when $n$ is even, this is an isomorphism of BV algebras.

Now take $n$ to be odd and suppose for a contradiction that an isomorphism \eqref{SupposedIsomorphism} exists.   It must send $w$ to $\pm u$ and $c$ to $\pm x$.  The unit element is sent to the unit element.  Since $\Delta(w)=n1 + \binom{n+1}{2}c^n\cdot v$ by Theorem A, we can apply the isomorphism to find that $\Delta(u)=\pm n 1 \pm\binom{n+1}{2} x^n t$.  But $\Delta(u)=-n1$ by Yang's result above, and $1$ and $x^n t$ generate distinct summands in degree $0$, so we must have $\binom{n+1}{2} x^n t=0$.  This is not the case, and consequently no isomorphism \eqref{SupposedIsomorphism} exists.
\end{proof}

\subsection{Outline of the paper}

The paper begins by establishing several new general results that we hope will be of independent interest.  The results will be applied later as part of the proof of Theorem A. They are as follows:
\begin{enumerate}
\item \emph{String topology and Lie group actions.}
When a Lie group $G$ acts on a manifold $M$, one obtains actions of both $G$ and $\Omega G$ on $LM$. Thus $\mathbb{H}_\ast(LM)$ becomes a module over the two rings $H_\ast(G)$ and $H_\ast(\Omega G)$.  We study this situation in \S\ref{GroupActionsSection} and prove a new result, Theorem~\ref{ActionTheorem}, which describes how the two module structures interact with the BV algebra structure.  

\item \emph{Compactifications of vector bundles.}
In \S\ref{CompactificationsSection} we introduce the \emph{pinched compactification} of a complex vector bundle $\xi$.  This is obtained by adding a `point at infinity' in each fibre, and then identifying it with the origin in that fibre.  We relate the pinched compactification to the more familiar projective and spherical compactifications, and explain how the homology of the pinched compactification of $\xi$ can be understood in terms of the unit sphere bundle $\mathbb{S}\xi$.

\item \emph{Circle actions on sphere bundles.}
Given complex vector bundles $\xi,\eta$ over a space $X$, one can form the unit sphere bundle $\mathbb{S}(\xi\oplus\eta)$.  The circle acts on this space by scalar multiplication in the first summand.  We study this situation in \S\ref{SphereBundlesSection} and prove Theorem~\ref{SphereBundlesTheorem}, which uses the Chern classes of $\xi$ and $\eta$ to describe the associated degree-raising operator $H_\ast(\mathbb{S}(\xi\oplus\eta))\to H_{\ast+1}(\mathbb{S}(\xi\oplus\eta))$.  
\end{enumerate}
After establishing these general results we are able to begin the proof of Theorem A.  The proof is arranged as a series of successive reformulations --- Theorems B,C and D --- of Theorem A.  Broadly speaking, the steps are as follows:
\begin{enumerate}
\item \emph{The action of $U(n+1)$ on $\CP^n$.}
 In \S\ref{FirstReformulationSection} we consider the natural action of $U(n+1)$ on $\CP^n$.  We state Theorem B, which describes how $H_\ast(U(n+1))$, $H_\ast(\Omega U(n+1))$ and $\Delta$ act on the subring of $\mathbb{H}_\ast(L\CP^n)$ generated by $c$ and $w$.  We then apply the results of \S\ref{GroupActionsSection} to show that Theorem A follows from Theorem B.

\item
\emph{A finite dimensional approximation of $L\CP^n$.}
In \S\ref{ApproximationSection} we construct a finite dimensional space $\mathbb{L}^n$ and a map $\mathbb{L}^n\to L\CP^n$ that induces an injection $H_\ast(\mathbb{L}^n)\to H_\ast(L\CP^n)$.  We can thus regard $\mathbb{L}^n$ as a finite dimensional approximation to $L\CP^n$.  Then in \S\ref{SecondReformulationSection} we state Theorem C, which describes the homology of $\mathbb{L}^n$, and use it to prove Theorem B.  The crucial point is that the homology of $\mathbb{L}^n$ is large enough that each of $c$, $w$ and $v$ must lie within its image.

\item
\emph{Relating $\mathbb{L}^n$ to $\mathbb{S}T\CP^n$.}
In \S\ref{LnIsACompactificationSection} we show that $\mathbb{L}^n$ is the pinched compactification of the tangent bundle $T\CP^n$.  We apply the general results of \S\ref{CompactificationsSection} to describe the homology of $\mathbb{L}^n$ in terms of $\mathbb{S}T\CP^n$.  Then in \S\ref{ThirdReformulationSection} we state Theorem D, which describes the homology of $\mathbb{S}T\CP^n$, and use it to prove Theorem C.

\item
\emph{Computing $H_\ast(\mathbb{S}T\CP^n)$.}
It remains to prove Theorem D, which describes $H_\ast(\mathbb{S}T\CP^n)$ and various module structures on it.  We do this by rephrasing the problem in terms of the situation considered in \S\ref{SphereBundlesSection} and then applying Theorem~\ref{SphereBundlesTheorem} several times.  The connection between $\Delta$ and characteristic classes of $\CP^n$ arises at this point.  This completes the proof of Theorem A.
\end{enumerate}

It is interesting to ask whether our methods can be applied to other manifolds besides $\CP^n$.  This is certainly the case for Lie groups, since Theorem~\ref{ActionTheorem} directly generalises the main result of  \cite{\StringLieGroups}.  We also believe that the string topology BV algebra for spheres and for real and quaternionic projective spaces could be computed in the same way; for spheres the result is known \cite{\Menichi}, but only partial information is available for the other projective spaces.

\section{String topology and Lie group actions}\label{GroupActionsSection}

In this section we will see how to exploit symmetries of a manifold when trying to compute its string topology BV algebra.  Applying this to the action of  $U(n+1)$ on $\CP^n$, we will be able to reformulate Theorem A in terms of slightly more accessible information.  Throughout the section homology groups are taken with coefficients in any commutative ring $R$.

Suppose that a closed, oriented manifold $M$ is equipped with an orientation preserving left action of a compact, connected Lie group $G$.  Then $LM$ inherits a left action of $LG$, so that $\mathbb{H}_\ast(LM)$ becomes a module over $H_\ast(LG)$.  It is natural to ask how this module structure interacts with the BV algebra structure described in the last paragraph.

It is convenient to write $LG$ as the semidirect product $\Omega G\rtimes G$, where the first factor is the subgroup of loops based at the identity and the second factor is the subgroup of constant loops.  Then $\mathbb{H}_\ast(LM)$ becomes a module both $H_\ast(\Omega G)$ and $H_\ast (G)$. We will describe how the BV algebra structure on $\mathbb{H}_\ast(LM)$ interacts with these two module structures, and to do so we must recall two quantities:
\begin{enumerate}
\item \emph{The Hopf algebra $H_\ast(\Omega G)$.}
For any pointed space $X$, the homology groups $H_\ast(\Omega X)$ form a graded ring under the Pontrjagin product.  When $X=G$, a theorem of Bott \cite{\EvenBott} states that these homology groups are free and concentrated in even degrees.  $H_\ast(\Omega G)$ then becomes a commutative, cocommutative Hopf algebra with coproduct
\[H_\ast(\Omega G)\xrightarrow{D_\ast} H_\ast(\Omega G\times\Omega G)\cong H_\ast(\Omega G)\otimes H_\ast(\Omega G)\]
obtained using the diagonal map and the K\"unneth isomorphism.  We denote the coproduct of $a\in H_\ast(\Omega G)$ by $D_\ast a=\sum a_{1}\otimes a_{2}$.

\item \emph{The homology suspension $\sigma\colon H_\ast(\Omega G)\to H_{\ast+1}(G)$.}
Let $X$ be a pointed space and let $\sigma\colon S^1\times\Omega X\to X$ denote the evaluation map.  The homology suspension, which we also denote by $\sigma$, is the homomorphism $H_\ast(\Omega X)\to H_{\ast+1}(X)$ defined by $\sigma(a)=\sigma_\ast([S^1]\times a)$ for $a\in H_\ast(\Omega X)$.  
\end{enumerate}
With this notation established we can state the main result of this section.

\begin{theorem}\label{ActionTheorem}
Let $x,y\in\mathbb{H}_\ast(LM)$.
\begin{enumerate}
\item For $a\in H_\ast(\Omega G)$ we have
\begin{equation}\label{OmegaProductEquation} a(x\cdot y)=(ax)\cdot y = x\cdot (ay)\end{equation}
and
\begin{equation}\label{OmegaDeltaEquation}\Delta(ax) = a\Delta(x) + \sum a_{1} \sigma(a_{2}) x.\end{equation}
\item For $b\in H_\ast(G)$ we have
\begin{equation}\label{GProductEquation}b(x\cdot y) = \sum (-1)^{|b_{2}| |x|}(b_{1}x)\cdot (b_{2}y)\end{equation}
and
\begin{equation}\label{GDeltaEquation}\Delta(bx) = (-1)^{|b|} b\Delta(x),\end{equation}
where in the first equation we have assumed that the image of $b$ under the diagonal $D_\ast\colon H_\ast(G)\to H_\ast(G\times G)$ can be written $\sum b_{1}\times b_{2}$.
\end{enumerate}
\end{theorem}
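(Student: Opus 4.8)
The plan is to argue entirely at the level of spaces, by tracking the interaction of three structure maps on $LM$ with the two action maps. Recall that the loop product is built from the concatenation map $\mu\colon LM\times_M LM\to LM$ together with the Pontrjagin--Thom umkehr map $e^{!}\colon H_*(LM\times LM)\to H_{*-m}(LM\times_M LM)$ associated to the finite-codimension embedding $e\colon LM\times_M LM\hookrightarrow LM\times LM$, whose normal bundle is $\ev_0^*TM$; that $\Delta x=\rho_*([S^1]\times x)$ for the rotation action $\rho\colon S^1\times LM\to LM$; and that the module structures come from the action maps $\alpha_\Omega\colon\Omega G\times LM\to LM$ and $\alpha_G\colon G\times LM\to LM$. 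Throughout I will use the naturality of the umkehr map: given a square that is a pullback with horizontal maps embeddings of our type, the umkehr map commutes with the push-forwards along the vertical maps. The hypothesis that $G$ acts by orientation-preserving diffeomorphisms is exactly what is needed to make the relevant Thom isomorphisms compatible with all the actions in sight.

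Equations~\eqref{OmegaProductEquation} and~\eqref{GProductEquation} follow the same recipe. For~\eqref{OmegaProductEquation}, one notes that $(\gamma,\ell_1,\ell_2)\mapsto(\gamma\ell_1,\ell_2)$ defines a map $\Omega G\times(LM\times_M LM)\to LM\times_M LM$ (the $\Omega G$-action preserves $\ev_0$), that it fits into a pullback square with $e$ along the ``act on the first loop'' map of $\Omega G\times LM\times LM$, and that reparametrising $\gamma$ so that it is traversed over the first half of the concatenation gives a homotopy $\gamma\cdot(\ell_1*\ell_2)\simeq(\gamma\ell_1)*\ell_2$. Feeding $a\otimes x\otimes y$ through umkehr-naturality and this homotopy yields $a(x\cdot y)=(ax)\cdot y$, and the mirror-image argument gives $a(x\cdot y)=x\cdot(ay)$. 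For~\eqref{GProductEquation} one uses instead that constant loops act uniformly, so $g\cdot(\ell_1*\ell_2)=(g\ell_1)*(g\ell_2)$ on the nose and $e$ is again the horizontal in a pullback square for the diagonal $G$-action on $LM\times LM$; that diagonal action factors as $\alpha_G\times\alpha_G$ precomposed with the diagonal of $G$, and pushing $b$ through that diagonal and past the class $x$ produces the summation $\sum b_1\otimes b_2$ and the Koszul sign $(-1)^{|b_2||x|}$. Equation~\eqref{GDeltaEquation} is immediate from the fact that constant loops commute strictly with rotation, $\rho_s(g\ell)=g\cdot\rho_s\ell$: one obtains $\Delta(bx)=\alpha_{G*}(\mathrm{id}_G\times\rho)_*(\mathrm{swap})_*([S^1]\otimes b\otimes x)$, and swapping $[S^1]$ past $b$ introduces the sign $(-1)^{|b|}$.

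The real content is~\eqref{OmegaDeltaEquation}, where $\Delta(ax)=\Psi_*([S^1]\otimes a\otimes x)$ for the map $\Psi\colon S^1\times\Omega G\times LM\to LM$ sending $(s,\gamma,\ell)$ to the loop $t\mapsto\gamma(s+t)\ell(s+t)$. The identity $\gamma(s+t)=[\gamma(s+t)\gamma(s)^{-1}]\cdot\gamma(s)$ in $G$ lets one factor $\Psi$ as the diagonal $S^1\to S^1\times S^1$, followed by $\mathrm{id}_{S^1}\times\Xi\times\mathrm{id}_{LM}$ where $\Xi\colon S^1\times\Omega G\to\Omega G\times G$ is $(s,\gamma)\mapsto\big(t\mapsto\gamma(s+t)\gamma(s)^{-1},\ \gamma(s)\big)$, followed by the map $(s,\eta,g,\ell)\mapsto\alpha_\Omega\big(\eta,\alpha_G(g,\rho_{s}\ell)\big)$ on the surviving $S^1$-coordinate. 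Splitting $[S^1]$ by the coproduct of $H_*(S^1)$ then splits $\Delta(ax)$ into two terms. In the first, $\Xi$ is fed the point class, which restricts $\Xi$ to $\{0\}$, where $\Xi(0,\gamma)=(\gamma,e)$; using that $H_*(\Omega G)$ is concentrated in even degrees, so the relevant shuffle of $[S^1]$ past $a$ carries no sign, this term works out to $a\Delta(x)$. In the second, $\Xi$ is fed $[S^1]$; the homotopy $H_u(s,\gamma)(t)=\gamma(us+t)\gamma(us)^{-1}$ shows that $\Xi$ is homotopic to $(\mathrm{pr}_{\Omega G},\ev)$, where $\ev(s,\gamma)=\gamma(s)$ is precisely the map defining the homology suspension, so that $\Xi_*([S^1]\otimes a)=\sum a_1\otimes\sigma(a_2)$ and this term contributes $\sum a_1\,\sigma(a_2)\,x$. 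Adding the two terms gives~\eqref{OmegaDeltaEquation}.

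I expect the bulk of the work, and the main obstacle, to be in this last step: arranging the factorisation of $\Psi$ so that the coproduct of $H_*(\Omega G)$ and the homology suspension land in exactly the positions that appear in~\eqref{OmegaDeltaEquation}, checking the (here mostly trivial) Koszul signs, and confirming that the homotopy $H_u$ and the various pullback squares can be realised simultaneously inside whichever point-set model of $LM$ one uses to define the loop product; the latter is of the routine ``thicken the figure-eight space'' variety but does need to be addressed carefully so that the umkehr-naturality statements above are strictly valid.
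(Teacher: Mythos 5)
Your treatment of \eqref{OmegaProductEquation}, \eqref{GProductEquation} and \eqref{GDeltaEquation} is essentially the argument the paper gives (umkehr naturality for the composable-pairs square, a reparametrisation homotopy trading the pointwise $\Omega G$-action for ``act on the first loop, then concatenate'', and the strict commutation of constant loops with rotation), and those parts are sound. The gap is exactly where you yourself located the main content, in \eqref{OmegaDeltaEquation}: the homotopy $H_u(s,\gamma)(t)=\gamma(us+t)\gamma(us)^{-1}$ is not a well-defined map on $[0,1]\times S^1\times\Omega G$, because $us$ has no meaning for $s\in S^1=\mathbb{R}/\mathbb{Z}$ and $0<u<1$; choosing the representative $s\in[0,1)$ makes $H_u$ discontinuous at $s=0$, where the two one-sided limits are $\gamma(\,\cdot\,)$ and $\gamma(u+\cdot)\gamma(u)^{-1}$. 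Nor should you expect a cheap repair: a genuine homotopy $\Xi\simeq(\mathrm{pr}_{\Omega G},\ev)$ would trivialise, up to homotopy, the ``rotate and translate back to the basepoint'' circle action on $\Omega G$, and the obstructions to that live in homotopy groups invisible to homology (already for $G=SU(2)$, restricting to a generator of $\pi_2(\Omega G)$ the difference class lies in $[S^1\wedge S^2,\Omega G]\cong\pi_4(SU(2))=\mathbb{Z}/2$). So as stated this step is unproved, and possibly false.

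Fortunately you only need the homological shadow of that homotopy, and it follows from a parity argument, which is precisely how the paper proceeds. Write $\Xi=(\tau,\ev)$ with $\tau(s,\gamma)=\gamma(s+\cdot)\gamma(s)^{-1}$, factor $\Xi$ through the diagonal of $S^1\times\Omega G$, and expand, using $\tau(0,\gamma)=\gamma$, $\ev(0,\gamma)=e$, and the fact that $\ev_\ast(1\times a_2)=0$ unless $|a_2|=0$:
\[\Xi_\ast([S^1]\times a)=\tau_\ast([S^1]\times a)\times[e]+\sum a_1\times\sigma(a_2).\]
The first term vanishes because $\tau_\ast([S^1]\times a)$ is an odd-degree class in $H_\ast(\Omega G)$, which by Bott's theorem is free and concentrated in even degrees; hence $\Xi_\ast([S^1]\times a)=\sum a_1\times\sigma(a_2)$, and the rest of your computation of $\Delta(ax)$ goes through unchanged. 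With this substitution your proof of \eqref{OmegaDeltaEquation} coincides in substance with the paper's, which uses a three-fold diagonal of $S^1$ together with the maps $\tau$, $\sigma$, $\rho$ and discards the $\tau_\ast([S^1]\times a)$ term for exactly this degree reason rather than by any homotopy.
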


The crucial aspect of Theorem~\ref{ActionTheorem} is the following.
If $\Delta$ were a derivation of the loop product and we knew a set of generators for the ring $\mathbb{H}_\ast(LM)$, then $\Delta$ could be computed from its values on the generators.  In general $\Delta$ is not a derivation and only satisfies the weaker condition \eqref{BVIdentityEquation}.  This means that we must also compute its value on the product of each \emph{pair} of generators of $\mathbb{H}_\ast(LM)$.  Therefore the usefulness of formula \eqref{OmegaDeltaEquation}
\[\Delta((a1)\cdot x)=\Delta(ax)=\sum a_1\sigma(a_2)x + a\Delta(x),\]
is that it computes the value of $\Delta$ on certain products of elements of $\mathbb{H}_\ast(LM)$.

By taking $M=G$ under the natural left action, one can use Theorem~\ref{ActionTheorem} to give a complete account of the BV algebra $\mathbb{H}_\ast(LG)$.  This is the subject of \cite{\StringLieGroups}.  Another simple corollary identifies a particularly nice subring of $\mathbb{H}_\ast(LM)$:

\begin{corollary}
Let $1\in \mathbb{H}_0(LM)$ denote the unit.  Then the map
\begin{gather*}
H_\ast(\Omega G)\to\mathbb{H}_\ast(LM),\\
a\mapsto a1
\end{gather*}
is a ring homomorphism and $\Delta$ vanishes on its image.
\end{corollary}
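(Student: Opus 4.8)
The plan is to deduce everything directly from Theorem~\ref{ActionTheorem}(1) by specialising $x = y = 1$. First I would check that $a \mapsto a1$ is a ring homomorphism. Equation~\eqref{OmegaProductEquation} with $x = y = 1$ gives $a(1 \cdot 1) = (a1)\cdot 1$, and since $1$ is the unit of the loop product this reads $a1 = (a1)\cdot 1 = (a1)\cdot(b1)$ once we also use that the $H_\ast(\Omega G)$-action is by a module structure — more precisely, for $a, b \in H_\ast(\Omega G)$ we have $(ab)1 = a(b1)$ (associativity of the module action), and combining this with $(a1)\cdot(b1) = a\bigl((b1)\cdot 1\bigr) = a(b1)$ from \eqref{OmegaProductEquation} yields $(a1)\cdot(b1) = (ab)1$. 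Compatibility with units is immediate since the identity of $H_\ast(\Omega G)$ acts trivially and is sent to $1 \in \mathbb{H}_0(LM)$. So the map is a ring homomorphism from the Pontrjagin ring to the loop-product ring.

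Next I would show $\Delta$ vanishes on the image. Apply \eqref{OmegaDeltaEquation} with $x = 1$:
\[
\Delta(a1) = a\,\Delta(1) + \sum a_1\,\sigma(a_2)\,1.
\]
The first term vanishes because $\Delta(1) = 0$ — this is a standard fact for the string topology BV algebra (the constant loops form a copy of $M$ on which $\Delta$ acts trivially, or equivalently $1$ is the image of the fundamental class under the unit $M \hookrightarrow LM$ and $\Delta$ kills it). For the second term, $\sigma(a_2) \in H_{\ast+1}(G)$ acts on $\mathbb{H}_\ast(LM)$ through the inclusion $G \hookrightarrow LG$ of constant loops, so $\sigma(a_2)\,1$ is, up to the module action, the image of $\sigma(a_2)$ under the map $H_\ast(G) \to \mathbb{H}_\ast(LM)$, $b \mapsto b1$; and then $a_1$ acts on this. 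The point is that each summand $a_1 \sigma(a_2) 1$ lies in the image of the $H_\ast(\Omega G)$-action on $\mathbb{H}_\ast(LM)$ applied to something coming from $H_\ast(G)$, and I would argue this whole expression vanishes. Concretely: $\sigma(a_2)1$ is the homology suspension's image acting on the unit, which represents a class supported on the subspace $G \cdot (\text{constant loop at basepoint}) \subseteq LM$; since $\Delta$ only involves rotating loops and these are constant, one expects the relevant term to be zero. The cleanest route is to observe that the corollary for $\mathbb{H}_\ast(LG)$ in \cite{\StringLieGroups} already identifies the image of $H_\ast(\Omega G)$ as a subalgebra on which $\Delta \equiv 0$, and the general case follows by naturality under the orbit map $G \to M$, $g \mapsto g\cdot(\text{basepoint})$, which sends the action of $\Omega G$ on $LG$ to its action on $LM$.

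The main obstacle is the vanishing of the correction term $\sum a_1 \sigma(a_2) 1$ in \eqref{OmegaDeltaEquation}. It is not formally zero from the stated identities alone: one needs the geometric input that $\sigma(a_2)$, acting on the unit $1 \in \mathbb{H}_0(LM)$ via the $H_\ast(G)$-module structure, produces a class on which the remaining $a_1$-action and the product structure conspire to give zero. I would handle this either (i) by the naturality argument above, pulling back from the universal case $M = G$ where \cite{\StringLieGroups} does the computation, or (ii) by a direct argument: the element $\sigma(a_2)1$ lies in the image of $H_\ast(G) \otimes H_0(\{pt\}) \to \mathbb{H}_\ast(LM)$ coming from constant loops, hence is annihilated when we then apply $\Delta$-type rotations; but since we are not applying $\Delta$ here, the real assertion is that the \emph{sum} over the coproduct telescopes to zero. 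The honest statement is that $\sum a_1 \otimes \sigma(a_2) = (\mathrm{id} \otimes \sigma) D_\ast a$, and applying the module actions gives an element that, paired against the unit, collapses because $\sigma$ precomposed with the coproduct and followed by the $G$-action on $1$ factors through the augmentation. I would spell this out using that $D_\ast$ is counital: writing $D_\ast a = a \otimes 1 + 1 \otimes a + (\text{higher})$ and noting $\sigma(1) = 0$, the surviving terms are $\sum a_1 \sigma(a_2) 1$ with both $a_1$ and $a_2$ of positive degree, and these are shown to vanish by the analogous computation in $\mathbb{H}_\ast(LG)$ transported via the orbit map.
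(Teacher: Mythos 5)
Your first half (the ring homomorphism) matches the paper's argument and is fine. The gap is in the second half: you never actually prove that the correction term $\sum a_1\sigma(a_2)1$ in \eqref{OmegaDeltaEquation} vanishes, and the two routes you offer do not close it. The paper's point is a short degree argument that your proposal circles around but never states: since the $G$-action on $LM$ preserves constant loops, each class $\sigma(a_2)1$ lies in the subring $\mathbb{H}_\ast(M)=H_{\ast+m}(M)$ coming from the inclusion of constant loops $M\hookrightarrow LM$; that subring is concentrated in non-positive degrees, whereas $\sigma(a_2)1$ has degree $|a_2|+1>0$, so each $\sigma(a_2)1$ is already zero and hence $\Delta(a1)=\sum a_1\sigma(a_2)1=0$. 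Nothing about "rotation acting trivially" or the coproduct telescoping is needed, and the vanishing is termwise, not a cancellation in the sum.

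Your substitute arguments fail. (i) Transport along the orbit map $G\to M$, $g\mapsto g\cdot(\text{basepoint})$: the induced map $LG\to LM$ is indeed $\Omega G$- and $S^1$-equivariant, but it does not send the unit of $\mathbb{H}_\ast(LG)$ to the unit of $\mathbb{H}_\ast(LM)$ --- the unit of $\mathbb{H}_\ast(LG)$ is the class of constant loops at $[G]\in H_{\dim G}(LG)$, whose image in $H_{\dim G}(LM)$ is $c_\ast f_\ast[G]$, which has the wrong degree to be $1\in H_m(LM)$ (and is typically zero, e.g.\ for $U(n+1)$ acting on $\CP^n$). So the vanishing of $\Delta$ on $H_\ast(\Omega G)\cdot 1$ inside $\mathbb{H}_\ast(LG)$, as in \cite{\StringLieGroups}, cannot be pushed forward to give $\Delta(a1)=0$ in $\mathbb{H}_\ast(LM)$; the loop product and unit are not natural for such maps. (ii) The claim that ``$\sigma$ precomposed with the coproduct and followed by the $G$-action on $1$ factors through the augmentation'' is asserted, not proved, and the counitality reduction only removes the terms with $a_2$ of degree zero via $\sigma(1)=0$ (and even that needs care, since $H_0(\Omega G)$ can be larger than $\mathbb{Z}$ when $\pi_1(G)\neq 0$); the positive-degree terms are exactly the ones at issue. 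Replacing these appeals by the degree argument above repairs the proof.
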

\begin{proof}
Let $a,a'\in H_\ast(\Omega G)$.  Then by applying equation \eqref{OmegaProductEquation} we have $(a1)\cdot(a'1)=a(1\cdot(a'1))=a(a'1)=(aa')1$ so that $a\mapsto a1$ is indeed a ring homomorphism.  By equation \eqref{OmegaDeltaEquation}, $\Delta(a1)=\sum a_{1}\sigma(a_{2})1$.  Note that each $\sigma(a_{2})1$ has positive degree.  However $\sigma(a_{2})1$ lies in the subring $\mathbb{H}_\ast (M):=H_{\ast+m}(M)$ of $\mathbb{H}_\ast(LM)$ obtained from the inclusion of constant loops $M\hookrightarrow LM$.  Since $\mathbb{H}_\ast(M)$ is concentrated in non-positive degrees each $\sigma(a_{2})1$ must vanish.  Thus $\Delta(a1)=0$.
\end{proof}

The rest of this section is given to the proof of Theorem~\ref{ActionTheorem}.  In \S\ref{PontrjaginThomSubsection} we recall the Pontrjagin-Thom construction and several of its properties.  In \S\ref{LoopProductSubsection} we recall Cohen and Jones' definition of the loop product in terms of Pontrjagin-Thom maps and prove equations \eqref{OmegaProductEquation} and \eqref{GProductEquation}.  Finally, in \S\ref{DeltaSubsection} we recall the definition of the BV operator $\Delta$ and prove equations~\eqref{OmegaDeltaEquation} and \eqref{GDeltaEquation}.

\subsection{Pontrjagin-Thom maps}\label{PontrjaginThomSubsection}
Our proof of equations \eqref{OmegaProductEquation} and \eqref{GProductEquation} will use the homotopy theoretic description of the loop product due to Cohen and Jones \cite{\CohenJones}.  Their construction makes use of Pontrjagin-Thom maps, which we now recall.

Cohen and Klein \cite{\CohenKlein} explain that to a cartesian diagram
\begin{equation}\label{PTDiagram}\xymatrix{
E_1\ar[r]^{\tilde f}\ar[d]_{\pi_1} & E_2\ar[d]^{\pi_2}\\
P\ar@{^{(}->}[r]_f & N
}\end{equation}
in which $f$ is an embedding of smooth closed manifolds and $E_2\to N$ is a fibre bundle (though $E_2$ need not be a manifold), there is an induced Pontrjagin-Thom map
\[\tilde f_!\colon E_2\to E_1^{\nu_f}\]
that is well defined up to homotopy.  Here $\nu_f\to P$ denotes the normal bundle of $f$ and also its pullback to $E_1$.  The Pontrjagin-Thom map $\tilde f_!$ is obtained in the following way.  Identify the total space of $\nu_f\to P$ as a tubular neighbourhood of $f(P)$ in $N$, and correspondingly identify the total space of $\nu_f\to E_1$ as a neighbourhood of $\tilde f E_1$ inside $E_2$.  Then by collapsing the complement of this neighbourhood to a point we obtain the required map $\tilde f_!\colon E_2\to E_1^{\nu_f}$.  If $\nu_f$ is oriented, for example if $P$ and $N$ were oriented, then we obtain an umkehr map in homology, also denoted by $\tilde f_!$,
\[\tilde f_!\colon H_\ast(E_2)\to H_{\ast-d}(E_1)\]
as the composite $H_\ast(E_2)\to \tilde H_\ast(E_1^{\nu_f})\cong H_{\ast-d}(E_1)$ of the map induced by $\tilde f_!\colon E_2\to E_1^{\nu_f}$ with the Thom isomorphism.  Here $d=\dim\nu_f$.

In proving \eqref{OmegaProductEquation} and \eqref{GProductEquation} we will need to use several properties of the Pontrjagin-Thom construction.  We list these now.  First, we require the following naturality property.  Suppose we are given a second cartesian diagram
\[\xymatrix{
F_1 \ar[r]^{\tilde f}\ar[d]_{\rho_1} & F_2 \ar[d]^{\rho_2}\\
P \ar[r]_f & N
}\]
with $\rho_2$ a fibre bundle.  Suppose further that we have maps $g_1\colon F_1\to E_1$ and $g_2\colon F_2\to E_2$ such that $\pi_1\circ g_1=\rho_1$, $\pi_2\circ g_2 = \rho_2$.  Under $g_1$ the vector bundle $\nu_f\to E_1$ is pulled back to $\nu_f\to F_1$ and so there is an induced map of Thom spaces $g_1^{\nu_f}\colon F_1^{\nu_f}\to E_1^{\nu_f}$.  Then the diagram
\[\xymatrix{
F_2 \ar[r]^{\tilde f_!}\ar[d]_{g_2} & F_1^{\nu_f}\ar[d]^{g_1^{\nu_f}}\\
E_2 \ar[r]_{\tilde f_!}& E_1^{\nu_f}
}\]
commutes. If $\nu_f$ is oriented then we have the corresponding result
\begin{equation}\label{NaturalityEquation}
{g_1}_\ast\circ\tilde f_! = \tilde f_!\circ {g_2}_\ast
\end{equation}
for maps of homology groups.

Second, we require a factorisation property.  Suppose that diagram \eqref{PTDiagram} can be factorised into two cartesian squares
\begin{equation}\label{FactorizationDiagram}\xymatrix{
E_1 \ar[r]^{\tilde f =\tilde f'}\ar[d]_{\sigma_1} & E_2\ar[d]^{\sigma_2}\\
P'\ar@{^{(}->}[r]^{f'}\ar[d]_{} & N'\ar[d]^{}\\
P\ar@{^{(}->}[r]_f & N
}\end{equation}
in which $N'\to N$ is a fibre bundle of manifolds and $\sigma_2$ is any other fibre bundle.  We therefore obtain two Pontrjagin-Thom maps
\begin{eqnarray*}
\tilde f_!\colon E_2&\to& E_1^{\nu_f},\\
\tilde f'_!\colon E_2&\to& E_1^{\nu_{f'}},
\end{eqnarray*}
the first from \eqref{PTDiagram} and the second from the upper square of \eqref{FactorizationDiagram}.  However $\nu_{f'}\to P'$ can be identified with the pullback of $\nu_f\to P$, and so $\nu_f\to E_1$ and $\nu_{f'}\to E_1$ coincide.  Consequently $E_1^{\nu_f}=E_1^{\nu_{f'}}$.  Also, the tubular neighbourhood of $f'(P')$ in $N'$ can be chosen as the preimage of the tubular neighbourhood of $f(P)$ in $N$.  It follows that $\tilde f_!=\tilde f'_!$.  So long as the orientations of $\nu_f$ and $\nu_{f'}$ are chosen in a compatible way, the same formula
\begin{equation}\label{FactorizationEquation}
\tilde f'_!=\tilde f_!
\end{equation}
holds for the maps of homology groups.

Third, let $K$ be any other space and modify diagram \eqref{PTDiagram} to obtain a new cartesian square
\[\xymatrix{
K\times E_1 \ar[r]^{1\times\tilde f}\ar[d]_{\tau_1} &K\times E_2\ar[d]^{\tau_2}\\
P\ar@{^{(}->}[r]_f & N
}\]
in which $\tau_i\colon K\times E_i\to N$, defined by $\tau_i(k,e_i)=\pi_i(e_i)$, is a fibre bundle.  Then $\nu_f\to K\times E_2$ is the pullback of $\nu_f\to E_2$ and so $(K\times E_1)^{\nu_f}=K_+\wedge E_1^{\nu_f}$.  Then the Pontrjagin-Thom map $(1\times\tilde f)_!\colon K\times E_2\to K_+\wedge E_1^{\nu_f}$ can be identified as the composite $K\times E_2\to K\times E_1^{\nu_f}\to K_+\wedge E_1^{\nu_f}$ of $1\times \tilde f_!$ with the collapse map $K\times E_2^{\nu_f}\to K_+\wedge E_1^{\nu_f}$.  It follows that if $\nu_f$ is oriented, then in homology we have
\begin{equation}\label{ProductEquation}
(1\times\tilde f)_!(k\times e_2)=k\times\tilde f_!e_2
\end{equation}
for $k\in H_\ast(K)$ and $e_2\in H_\ast(E_2)$.

\subsection{The loop product}\label{LoopProductSubsection}
In this subsection we recall the definition of the loop product and prove equations \eqref{OmegaProductEquation} and \eqref{GProductEquation} from Theorem~\ref{ActionTheorem}.  By reversing the degree shift, we may regard the loop product $\mathbb{H}_\ast(LM)\otimes\mathbb{H}_\ast(LM)\to \mathbb{H}_\ast(LM)$ as a map $H_\ast(LM)\otimes H_\ast(LM)\to H_{\ast-m}(LM)$.
Cohen and Jones \cite{\CohenJones} identify this map in terms of the Pontrjagin-Thom construction, as follows.  The evaluation map $\ev\colon LM\to M$ that sends a loop in $M$ to its value at the basepoint $0\in S^1$ is a fibre bundle.  We write $L^2M$ for the space of pairs of composable loops in $M$.  In other words $L^2M\subset LM\times LM$ is the subset consisting of pairs $(\delta_1,\delta_2)$ with $\ev(\delta_1)=\ev(\delta_2)$.  Sending a pair of composable loops to their common basepoint defines a map $\ev\colon L^2M\to M$, and composing the loops defines a map $\gamma\colon L^2M\to LM$.  If we write $\tilde D\colon L^2M\to LM\times LM$ for the inclusion then we have a cartesian diagram
\[\xymatrix{
L^2M \ar[r]^-{\tilde D}\ar[d]_{\ev}& LM\times LM\ar[d]^{\ev\times\ev}\\
M\ar[r]_-{D} & M\times M
}\]
in which the lower map is an embedding of manifolds with normal bundle $TM$.   
We therefore have a Pontrjagin-Thom map
\[\tilde D_!\colon H_\ast(LM\times LM)\to H_{\ast-m}(L^2M).\]
as in the last section.  Cohen and Jones identify the loop product of $x,y\in\mathbb{H}_\ast(LM)=H_{\ast +m}(LM)$ as
\begin{equation}\label{LoopEquation}x\cdot y = (-1)^{m|y|+m}\gamma_\ast\circ \tilde D_!(x\times y).\end{equation}
Note that here $|y|$ denotes the degree of $y$ as an element of $\mathbb{H}_\ast(LM)$.

\begin{proof}[Proof of Theorem~\ref{ActionTheorem}, equation \eqref{GProductEquation}.]
Write $\alpha\colon G\times M\to M$ for the action of $G$ on $M$.  Consider the following commutative diagram.
\[
\xymatrix{ 
& G\times L^2M\ar[rr]\ar'[d][dd] & &G\times LM\times LM \ar[dd]
\\ 
L^2M \ar@{<-}[ur]^{L^2\alpha}\ar[rr]\ar[dd]& & LM\times LM \ar@{<-}[ur]^{L\alpha^2}\ar[dd] 
\\ 
& G\times M \ar'[r][rr] & & G\times M\times M
\\ 
M\ar[rr]\ar@{<-}[ur]_{\alpha} & & M\times M \ar@{<-}[ur]_{\alpha^2} 
}  
\]
Here the horizontal maps are given by $D\colon M\to M\times M$, $\tilde D\colon L^2M\to LM\times LM$, and their products with $1\colon G\to G$.  The vertical maps are given by $\ev\colon L^2M\to M$, $\ev\times\ev \colon LM\times LM\to M\times M$, and products of these with $1\colon G\to G$.  If we set $L\alpha(g,\delta)(t)=g\delta(t)$ then the various diagonal maps are defined by $\alpha^2(g,m_1,m_2)=(gm_1,gm_2)$, $L^2\alpha(g,(\delta_1,\delta_2))=(L\alpha(g,\delta_1),L\alpha(g,\delta_2))$, $L\alpha^2(g,(\delta_1,\delta_2))=(L\alpha(g,\delta_1),L\alpha(g,\delta_2))$ for $g\in G$, $m_1,m_2\in M$, $\delta,\delta_1,\delta_2\in LM$ and $t\in S^1$.  In particular we have
\[bx = L\alpha_\ast(b\times x).\]
for any $b\in H_\ast(G)$ and $x\in\mathbb{H}_\ast(LM)$.

To begin we claim that
\[\sum (-1)^{|b_{2}||x|}(b_{1}x)\cdot (b_{2}y)=(-1)^{m|y|+m}\gamma_\ast\circ\tilde D_!\circ L\alpha^2_\ast(b\times x\times y).\]
To see this, apply the definition of the loop product to the left hand side above and use the fact that $\sum (-1)^{|b_{2}|(|x|+m)}(b_{1}x)\times (b_{2}y)=L\alpha^2_\ast(b\times x\times y)$ (recall that $|x|$ refers to the degree of $x$ as an element of $\mathbb{H}_\ast(LM)$).  By naturality of umkehr maps \eqref{NaturalityEquation} in the above diagram we have
\[\tilde D_!\circ L\alpha^2_\ast(b\times x\times y)=L^2\alpha_\ast\circ (1\times\tilde D)_!(b\times x\times y).\]
By the factorisation formula \eqref{FactorizationEquation} it follows that $(1\times \tilde D)_!$ can be computed using just the rear square of the above diagram, so that
\[(1\times\tilde D)_!(b\times x\times y)= b\times\tilde D_!(x\times y).\]
It is easily seen that $\gamma \circ L^2\alpha= L\alpha\circ(1\times\gamma)$, so that 
\[\gamma_\ast \circ L^2\alpha_\ast = L\alpha_\ast\circ(1\times\gamma_\ast).\]
By combining the four equations obtained in this paragraph we find that
\begin{eqnarray*}
\sum(-1)^{|b_{2}| |x|}(b_{1}x)\cdot(b_{2}y)
&=&(-1)^{m|y|+m}L\alpha_\ast(b\times \gamma_\ast\circ \tilde D_!(x\times y))\\
&=&b(x\cdot y)
\end{eqnarray*}
as required.
\end{proof}

\begin{definition}
Let $\mu\colon\Omega G\times LM\to LM$ be defined by
\[\mu(\varepsilon,\delta)(t)=\left\{\begin{array}{lc}\varepsilon(2t)\delta(0), & 0\leqslant t\leqslant 1/2 \\ \delta(2t-1),& 1/2\leqslant t\leqslant 1       \end{array}\right.\]
for $\varepsilon\in\Omega G$, $\delta\in LM$, $t\in S^1$.  This definition makes sense since $\varepsilon(0)=\varepsilon(1)$ is the identity of $G$.
\end{definition}

\begin{lemma}\label{MuLemma}
The action of $H_\ast(\Omega G)$ on $\mathbb{H}_\ast(LM)$ can be described using $\mu$.  In other words, for $a\in H_\ast(\Omega G)$ and $x\in \mathbb{H}_\ast(LM)$ we have $ax = \mu_\ast(a\times x)$.
\end{lemma}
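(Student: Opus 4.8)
The plan is to reduce the statement to a straightforward homotopy between two maps $\Omega G\times LM\to LM$. By construction the $H_\ast(\Omega G)$-module structure on $\mathbb{H}_\ast(LM)$ arises from the inclusion of $\Omega G$ into $LG$ as the subgroup of based loops, together with the pointwise action of $LG$ on $LM$. Concretely, let $\rho\colon\Omega G\times LM\to LM$ be the map $\rho(\varepsilon,\delta)(t)=\varepsilon(t)\delta(t)$; then, writing $\times$ for the homology cross product, $ax=\rho_\ast(a\times x)$ by definition. So it suffices to show that $\rho$ and $\mu$ are homotopic, since homotopic maps induce equal maps on homology.

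To build the homotopy I would continuously squeeze the ``support'' of $\varepsilon$ into the first half of the circle and that of $\delta$ into the second half. Explicitly, for $s\in[0,1]$ set
\[H_s(\varepsilon,\delta)(t)=\varepsilon\bigl(\min((1+s)t,\,1)\bigr)\cdot\delta\bigl(\max((1+s)t-s,\,0)\bigr),\qquad t\in S^1.\]
Then $H_0=\rho$; and $H_1=\mu$, since for $t\leqslant 1/2$ the two factors are $\varepsilon(2t)$ and $\delta(0)$, while for $t\geqslant 1/2$ the first factor is $\varepsilon(1)$, the identity of $G$, and the second is $\delta(2t-1)$. This presents $\mu$ and the honest action $\rho$ as the two ends of one homotopy, which is all that is needed.

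The points that must be verified are routine. First, each $H_s(\varepsilon,\delta)$ is a genuine loop and $H_s$ descends to $S^1$: the value at $t=1$ is $\varepsilon(1)\delta(1)=\varepsilon(0)\delta(0)$, matching the value at $t=0$, because $\varepsilon(0)=\varepsilon(1)$. Second, $H\colon[0,1]\times\Omega G\times LM\to LM$ is continuous; by adjunction this comes down to continuity of $(s,\varepsilon,\delta,t)\mapsto\varepsilon(\min((1+s)t,1))\cdot\delta(\max((1+s)t-s,0))$ as a map to $M$, which is a composite of the evaluation maps $\Omega G\times S^1\to G$ and $LM\times S^1\to M$ (continuous because $S^1$ is compact), the continuous reparametrisation functions of $(s,t)$, and the action $G\times M\to M$. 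The only delicacy in the whole argument is keeping everything $1$-periodic in $t$ while checking continuity in the compact--open topology, and this is not a genuine obstacle. With $H$ in hand we obtain $\rho_\ast=\mu_\ast$, hence $ax=\mu_\ast(a\times x)$, as required.
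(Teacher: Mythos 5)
Your proof is correct and follows essentially the same route as the paper: both start from the fact that the $H_\ast(\Omega G)$-action is induced by the pointwise action $(\varepsilon,\delta)\mapsto(t\mapsto\varepsilon(t)\delta(t))$ and then write down an explicit homotopy squeezing $\varepsilon$ onto the first half of the circle and $\delta$ onto the second, deforming this pointwise action into $\mu$; only the parametrisation of the homotopy differs from the one in the paper.
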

\begin{proof}
Consider the homotopy $h\colon[0,1]\times\Omega G\times LM\to LM$ defined by $h(s,\varepsilon,\delta)(t)=E(\varepsilon,s,t)D(\varepsilon,s,t)$ for $s\in[0,1]$, $\varepsilon\in\Omega G$, $\delta\in LM$, $t\in S^1$.  Here
\[E(\varepsilon,s,t)=\left\{\begin{array}{lc}\varepsilon(t/(1-s/2)), & 0\leqslant t\leqslant 1-s/2 \\ \varepsilon(1),& 1-s/2\leqslant t\leqslant 1       \end{array}\right.\]
and
\[D(\delta,s,t)=\left\{\begin{array}{lc} \delta(0), & 0\leqslant t\leqslant s/2 \\ \delta((t-s/2)/(1-s/2)),& s/2\leqslant t\leqslant 1.   \end{array}\right.\]
Then $h(0,\varepsilon,\delta)(t)=\varepsilon(t)\delta(t)$ and $h(1,\varepsilon,\delta)(t)=\mu(\varepsilon,\delta)(t)$, and the result follows.
\end{proof}

\begin{proof}[Proof of Theorem~\ref{ActionTheorem}, equation \eqref{OmegaProductEquation}.]
Let $\mu^2\colon\Omega G\times L^2M \to L^2M$ be defined by $\mu^2(\varepsilon,(\delta_1,\delta_2))=(\mu(\varepsilon,\delta_1),\delta_2)$.  Consider the diagram below.
\[
\xymatrix{ 
& \Omega G\times L^2M\ar[rr]^{1\times\tilde D}\ar'[d][dd] & &\Omega G\times LM\times LM \ar[dd]
\\ 
L^2M \ar@{<-}[ur]^{\mu^2}\ar[rr]^{\tilde D}\ar[dd]& & LM\times LM \ar@{<-}[ur]^{\mu\times 1}\ar[dd]
\\ 
& M \ar'[r][rr] & & M\times M
\\ 
M\ar[rr]\ar@{=}[ur] & & M\times M \ar@{=}[ur]
}  
\]
Here the horizontal maps are given by $D\colon M\to M\times M$, $\tilde D\colon L^2M\to LM\times LM$, and the product of $\tilde D$ with $1\colon \Omega G\to \Omega G$.  The vertical maps are given by $\ev\colon L^2M\to M$, $\ev\times\ev\colon LM\times LM\to M\times M$, and their compositions with the projections $\Omega G\times L^2M\to L^2M$, $\Omega G\times LM\times :M\to LM\times LM$.  This diagram is commutative, the vertical maps are fibre bundles, and the front and back faces are cartesian.

By Lemma~\ref{MuLemma} we have $(ax)\times y=(\mu\times 1)_\ast(a\times x\times y)$, and so by definition
\[(ax)\cdot y =  \gamma_\ast\circ\tilde D_!\circ(\mu\times 1)_\ast(a\times x\times y).\]
By the naturality property of umkehr maps \eqref{NaturalityEquation} we have
\[\tilde D_!(\mu\times 1)_\ast(a\times x\times y)=\mu^2_\ast\circ(1\times\tilde D)_!(a\times x\times y)\]
and by the product formula \eqref{ProductEquation} we have
\[(1\times\tilde D)_!(a\times x\times y)=a\times\tilde D_!(x\times y).\]
It is easy to verify that $\gamma\circ\mu^2$ is homotopic to $\mu\circ(1\times\gamma)$, and so
\[\gamma_\ast\circ\mu^2_\ast = \mu_\ast \circ (1\times\gamma)_\ast.\]
Combining the last four equations we find that
\[(ax)\cdot y = \mu_\ast(a\times\gamma_\ast\circ\tilde D_!(x\times y))=a(x\cdot y)\]
as required, and the formula $x\cdot(ay)=a(x\cdot y)$ follows similarly.
\end{proof}

\subsection{The BV operator}\label{DeltaSubsection}
In this subsection we recall the definition of the BV operator $\Delta$ and prove equations \eqref{OmegaDeltaEquation} and \eqref{GDeltaEquation} from Theorem~\ref{ActionTheorem}.  Let $\rho\colon S^1\times LM\to LM$ be the rotation action defined by $\rho(s,\delta)(t)=\delta(s+t)$ for $\delta\in LM$ and $s,t\in S^1$.  Then $\Delta$ is defined by $\Delta(x) = \rho_\ast([S^1]\times x)$ for $x\in\mathbb{H}_\ast(LM)$.

\begin{proof}[Proof of Theorem~\ref{ActionTheorem}, equations \eqref{OmegaDeltaEquation} and \eqref{GDeltaEquation}.]
Let us denote the action of $G$ on $M$ by $\alpha\colon G\times M\to M$, and write $L\alpha\colon G\times LM\to LM$ for action described by $L\alpha(g,\delta)(t)=g\delta(t)$ for $g\in G$, $\delta\in LM$ and $t\in S^1$.  Thus $bx=L\alpha_\ast(b\times x)$ for $b\in H_\ast G$ and $x\in\mathbb{H}_\ast(LM)$.  We have a commutative diagram
\[\xymatrix{
S^1\times G\times LM\ar[d]_{1\times L\alpha}\ar[r]^{t} & G\times S^1\times LM\ar[r]^-{1\times\rho} & G\times LM\ar[d]^{L\alpha} \\
S^1\times LM\ar[rr]_{\rho} & & LM
}\]
in which $t$ transposes factors.  It immediately follows that $\Delta(bx)=(-1)^{|b|}b\Delta x$, which proves \eqref{GDeltaEquation}.

Let us write $\Omega\alpha\colon\Omega G\times LM\to LM$ for the action of $\Omega G$ on $LM$ defined by $\Omega\alpha(\varepsilon,\delta)(t)=\varepsilon(t)\delta(t)$ for $\varepsilon\in\Omega G$, $\delta\in LM$ and $t\in S^1$.  Then for $a\in H_\ast(\Omega G)$ and $x\in\mathbb{H}_\ast(LM)$ we have $ax=\Omega\alpha_\ast(a\times x)$.  Let $\tau\colon S^1\times\Omega G\to\Omega G$ be defined by $\tau(s,\varepsilon)(t)=\varepsilon(s+t)\varepsilon(s)^{-1}$ for $\varepsilon\in\Omega G$ and $s,t\in S^1$.  Recall that $\sigma\colon S^1\times\Omega G\to G$ is defined by $\sigma(t)(\varepsilon)=\varepsilon(t)$ for $\varepsilon\in\Omega G$ and $t\in S^1$.  Then we have a commutative diagram
\[\xymatrix{
S^1\times\Omega G\times LM\ar[d]_{D\times D\times 1}\ar[r]^{1\times\Omega\alpha} & S^1\times LM\ar[r]^\rho & LM \\
{\scriptstyle(S^1\times S^1 \times S^1)\times (\Omega G\times \Omega G)\times LM}\ar[r]_-{s} & {\scriptstyle(S^1\times\Omega G)\times(S^1\times\Omega G)\times (S^1\times LM)}\ar[r]_-{\tau\times\sigma\times\rho} & {\scriptstyle\Omega G \times G \times LM}\ar[u]_{\Omega\alpha\circ(1\times L\alpha)}
}\]
in which $s$ shuffles the factors.

Note that $\rho_\ast(1\times x)=x$, that $\tau_\ast(1\times a_1)=a_1$, and that $\sigma_\ast(1\times a_2)=0$ unless $|a_2|=0$.  Also $D_\ast[S^1]=([S^1]\times 1\times 1)+(1\times[S^1]\times 1)+(1\times 1\times [S^1])$.  Then the commutativity of the diagram above shows that
\[\Delta(ax)=\tau_\ast([S^1]\times a)x+(-1)^{|a_1|}\sum a_1\sigma(a_2)x + (-1)^{|a|}a\Delta x.\]
However, since $H_\ast(\Omega G)$ is concentrated in even degrees, we have that $(-1)^{|a_1|}=(-1)^{|a|}=1$ and $\tau_\ast([S^1]\times a)=0$.  Equation \eqref{OmegaDeltaEquation} follows.
\end{proof}

\section{Compactifications of vector bundles}\label{CompactificationsSection}

In this section we continue to establish general results that will be used later in the proof of Theorem A.  In \S\ref{CompactificationsDefinitionSubsection} we will describe three different fibrewise compactifications of a complex vector bundle $\xi\to X$, the third being the `pinched compactification' that we are most interested in.  Then in \S\ref{CompactificationsHomologySubsection} we will show how to compute the homology of the pinched compactification of $\xi$ in terms of the unit sphere bundle $\mathbb{S}\xi$.  We will see in later sections that the pinched compactification of $T\CP^n$ serves as a useful approximation to the free loop space $L\CP^n$.  Throughout the section homology groups are taken with coefficients in any commutative ring $R$.

\subsection{Three compactifications of complex vector bundles}\label{CompactificationsDefinitionSubsection}

Let $\xi\to X$ be a complex vector bundle over a space $X$.  We will consider three different fibrewise compactifications of the total space $\xi$ and the relationship between them.

\begin{definition}
Let $d$ denote the complex rank of $\xi$.
\begin{enumerate}
\item The \emph{projective compactification} $\xi^\mathbb{P}$ of $\xi$ is the projectivisation $\mathbb{P}(\xi\oplus\mathbb{C})$ of the vector bundle $\xi\oplus\mathbb{C}\to X$.  It is a bundle of projective spaces $\CP^{d}$ over $X$. 
\item The \emph{spherical compactification} $\xi^\mathrm{S}$ of $\xi$ is the spherical part $\mathbb{S}(\xi\oplus\mathbb{R})$ of the real vector bundle $\xi\oplus\mathbb{R}\to X$.  It is a bundle of spheres of dimension $2d$ over $X$.
\item The \emph{pinched compactification} $\xi^\mathrm{P}$ of $\xi$ is the space obtained from $\xi^\mathrm{S}$ by identifying, for each $x\in X$, the points $(0,\pm 1)$ in the fibre over $x$.  It is a fibre bundle over $X$ whose typical fibre is a sphere of dimension $2d$ with its north and south poles identified.
\end{enumerate}
We have referred to $\xi^\mathbb{P}$, $\xi^\mathrm{S}$ and $\xi^\mathrm{P}$ as \emph{compactifications} of $\xi$ because each admits a natural inclusion of $\xi$ as a dense open subset.  In each case the inclusion commutes with the projections to $X$ and each fibre of $\xi$ is a dense open subset of the fibre of the compactification over $x$.  The inclusions are defined as follows:
\begin{enumerate}
\item The inclusion $\xi\hookrightarrow\xi^\mathbb{P}$ sends a vector $v\in\xi_x$ to the line spanned by $(v,1)$ in $\xi_x\oplus\mathbb{C}=(\xi\oplus\mathbb{C})_x$.
\item The inclusion $\xi\hookrightarrow\xi^\mathrm{S}$ sends a vector $v\in\xi_x$ to the unit vector
\[\left(\frac{2v}{\|v\|^2+1},\frac{\|v\|^2-1}{\|v\|^2+1}\right)\]
in $\xi_x\oplus\mathbb{R}$.
\item The inclusion $\xi\hookrightarrow \xi^\mathrm{P}$ is obtained by composing the inclusion $\xi\hookrightarrow\xi^\mathrm{S}$ with the identification map $\xi^\mathrm{S}\to\xi^\mathrm{P}$.
\end{enumerate}
The projective compactification $\xi^\mathbb{P}$ is obtained by adding to each fibre $\xi_x$ a copy of $\mathbb{P}(\xi_x)$ `at infinity'.  Indeed, there is a natural inclusion $\mathbb{P}\xi\hookrightarrow\mathbb{P}(\xi\oplus\mathbb{C})=\xi^\mathbb{P}$.  The spherical compactification is obtained by adding a single `point at infinity' in each fibre.  It therefore admits two distinct sections: the usual zero section, as well as what one might call the `section at infinity'.  The pinched compactification is obtained from the spherical compactification by identifying these two sections pointwise.  One  does not have to add any points to $\xi$ in order to form the pinched compactification; rather, one specifies that in each fibre a sequence of vectors tending to infinity has its limit at the origin.
\end{definition}

\begin{proposition}
There are maps
\[ \xi^\mathbb{P} \xrightarrow{p}  \xi^\mathrm{S} \xrightarrow{q} \xi^\mathrm{P}\]
that commute with the projections to $X$ and with the inclusions from $\xi$.  The map $p$ induces a homeomorphism between $\xi^\mathrm{S}$ and the space obtained from $\xi^\mathbb{P}$ by identifying, for each $x\in X$, the subset $\mathbb{P}\xi_x\subset\mathbb{P}\xi\subset\xi^\mathbb{P}$ to a point.  The map $q$ is the identification map obtained from the definition of $\xi^\mathrm{P}$.  It identifies, for each $x\in X$, the points $(0,\pm 1)$ in the fibre over $x$.
\end{proposition}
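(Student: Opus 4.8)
The plan is to construct the maps $p$ and $q$ fibrewise and then verify the stated homeomorphism properties by working in a single fibre, since everything in sight is a fibre bundle over $X$ and the constructions are natural in $x$. The map $q\colon\xi^{\mathrm S}\to\xi^{\mathrm P}$ is already given to us: it is precisely the identification map defining $\xi^{\mathrm P}$, so there is nothing to construct and the last two sentences of the proposition concerning $q$ are immediate from the definition. The substance of the proof therefore lies entirely in constructing $p\colon\xi^{\mathbb P}\to\xi^{\mathrm S}$ and identifying $\xi^{\mathrm S}$ as the quotient of $\xi^{\mathbb P}$ that collapses each $\mathbb P\xi_x$ to a point.

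First I would write down $p$ explicitly on fibres. Over $x\in X$ the fibre of $\xi^{\mathbb P}$ is $\mathbb P(\xi_x\oplus\mathbb C)=\CP^d$, and the fibre of $\xi^{\mathrm S}$ is the unit sphere in $\xi_x\oplus\mathbb R$, a $2d$-sphere. A point of $\CP^d$ is a line $\ell=[(v,z)]$ with $(v,z)\in\xi_x\oplus\mathbb C$ not both zero. I would send this to the unit vector
\[
p(\ell)=\left(\frac{2\bar z\,v}{\|v\|^2+|z|^2},\frac{|z|^2-\|v\|^2}{\|v\|^2+|z|^2}\right)\in\xi_x\oplus\mathbb R,
\]
which is well defined since scaling $(v,z)$ by $\lambda\in\mathbb C^\times$ multiplies numerator and denominator of the first slot by $|\lambda|^2$ (the factor $\bar z v$ transforms as $|\lambda|^2\,\bar z v$) and the second slot is manifestly invariant. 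One checks that the displayed vector has norm $1$. Restricting to the affine chart $z=1$ recovers exactly the inclusion formula $\xi\hookrightarrow\xi^{\mathrm S}$ quoted in the definition, and restricting to the chart $z\ne 0$ more generally shows $p$ intertwines the two inclusions from $\xi$; it visibly commutes with the projections to $X$, and since the formula depends continuously on $(v,z)$ and on the bundle data it assembles to a continuous bundle map. The locus at infinity $\mathbb P\xi_x=\{z=0\}$ is sent entirely to $(0,-1)$, while the $\bar z v$ in the first coordinate ensures that on the complement $p$ is injective.

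The main obstacle — though it is a mild one — is verifying that $p$ induces a \emph{homeomorphism} from the quotient $\xi^{\mathbb P}/(\mathbb P\xi_x\sim\mathrm{pt})$ onto $\xi^{\mathrm S}$, rather than merely a continuous bijection. Fibrewise this is a continuous bijection between compact Hausdorff spaces (a quotient of $\CP^d$ mapping onto $S^{2d}$), so it is automatically a homeomorphism on each fibre; the global statement then follows because a fibrewise homeomorphism of fibre bundles over the same base, compatible with the projections, is a homeomorphism of total spaces. To see the fibre map is a bijection onto $S^{2d}$ I would exhibit the inverse: a unit vector $(w,t)\in\xi_x\oplus\mathbb R$ with $t\ne 1$ corresponds to the line $[(w,1-t)]$, and $(0,1)$ corresponds to the collapsed point; checking that these two assignments are mutually inverse to $p$ is a short computation using $\|w\|^2+t^2=1$. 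Finally I would observe that all of these fibrewise formulas are given by the same universal expressions in local trivialisations of $\xi$, so $p$ is a genuine map of bundles over $X$ and the induced map on the fibrewise quotient is the claimed homeomorphism; since $q\circ p$ then agrees on each fibre with the composite of the stated inclusions, the whole diagram commutes with the inclusions from $\xi$, completing the proof.
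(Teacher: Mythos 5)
Your overall route is sound and is in fact a more explicit version of the paper's argument: the paper constructs $p$ by covering $\xi^{\mathbb{P}}$ with two charts (the image of $\xi$ and the image of the tautological bundle $\tau_\xi$ over $\mathbb{P}\xi$), forces the formula on the $\tau_\xi$-chart by requiring compatibility with the inclusions, and then checks it extends; you instead write one global formula in homogeneous coordinates $[(v,z)]$ and exhibit an explicit inverse, reducing the homeomorphism claim to a fibrewise continuous bijection between compact Hausdorff fibres. That fibrewise-to-global step is legitimate precisely because the fibre of $\xi^{\mathbb{P}}$ is compact (without compactness the assertion that a fibre-preserving fibrewise homeomorphism is a homeomorphism of total spaces is not automatic), and your level of point-set care is comparable to the paper's.

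There is, however, a concrete sign error that makes two of your verification claims false as written. With your displayed formula, the composite $\xi\hookrightarrow\xi^{\mathbb{P}}\xrightarrow{p}\xi^{\mathrm{S}}$ sends $v$ to $\bigl(\tfrac{2v}{\|v\|^2+1},\tfrac{1-\|v\|^2}{\|v\|^2+1}\bigr)$, whereas the inclusion $\xi\hookrightarrow\xi^{\mathrm{S}}$ in the paper's definition has second coordinate $\tfrac{\|v\|^2-1}{\|v\|^2+1}$; so your $p$ is the required map composed with the fibrewise reflection $(w,t)\mapsto(w,-t)$ and does \emph{not} commute with the inclusions from $\xi$, which is part of the statement being proved. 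The slip also surfaces internally: you correctly note that your formula sends $\{z=0\}=\mathbb{P}\xi_x$ to $(0,-1)$, yet later you declare $(0,1)$ to be the collapsed point and claim $(w,t)\mapsto[(w,1-t)]$ inverts $p$; a direct computation with your displayed formula gives $p([(w,1-t)])=(w,-t)$, so that check fails. Everything is repaired by taking the second coordinate of $p$ to be $\tfrac{\|v\|^2-|z|^2}{\|v\|^2+|z|^2}$: then the restriction to the chart $z=1$ is exactly the stated inclusion, $\mathbb{P}\xi_x$ is collapsed to $(0,1)$ (the point at infinity, as it should be), your inverse formula for $t\neq 1$ is correct, and your map agrees with the one constructed in the paper. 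With that one sign corrected, the proof is complete.
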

\begin{proof}
The assertions about $q$ are immediate from the definition of $\xi^\mathrm{P}$.  To construct $p$ we will begin with a description of $\xi^\mathbb{P}$.  A point of $\xi^\mathbb{P}$ is a line in $\xi\oplus\mathbb{C}$, and so can be projected to both $\xi$ and to $\mathbb{C}$.  The image of this projection is either itself a line, or is zero, and it is not possible for both projections to vanish.  Thus $\xi^\mathbb{P}$ is the union of two open subsets.  The first is the set of lines whose projection to $\mathbb{C}$ is nonzero.  The second is the set of lines whose projection to $\xi$ is nonzero.

The first subset of $\xi^\mathbb{P}$ described above is simply the image of the open embedding $\xi\hookrightarrow\xi^\mathbb{P}$.  To describe the second we must consider the tautological line bundle $\tau_\xi$ over $\mathbb{P}\xi$.  The fibre of $\tau_\xi$ at the point represented by a line $l$ in some $\xi_x$ is simply the line $l$ itself.  There is an open embedding $\tau_\xi\hookrightarrow\xi^\mathbb{P}$ that sends a vector $v$ in a line $l$ in $\xi_x$ to the span of $(v,\|v\|^2)$ if $v\neq 0$ and to $l\oplus 0$ if $v=0$. The second subset described above is simply the image of this embedding.  The intersection of the two open subsets corresponds under this embedding to the complement of the zero section in $\tau_\xi$.

The map $p\colon\xi^\mathbb{P}\to\xi^\mathrm{S}$, if it exists, is determined by the requirement that it should commute with the inclusions from $\xi$.  It is easy to compute that its restriction to $\tau_\xi$ would then be a map $\tau_\xi\to\xi^\mathbb{S}$ that on the complement of the zero-section sends an element $v\in l\subset\xi_x$ to
\[\left(\frac{2v}{1+\|v\|^2},\frac{1-\|v\|^2}{1+\|v\|^2}\right)\]
in $\xi_x\oplus\mathbb{C}$.  But this clearly extends to a continuous map $\tau_\xi\to\xi^\mathbb{S}$ that induces a homeomorphism between its image and the space obtained from $\tau_\xi$ by identifying, for each $x\in X$, the subset $\mathbb{P}\xi_x$ of the zero section to a single point.  But, conversely, this is sufficient to guarantee the existence of $p$, to verify that it commutes with the inclusions from $\xi$ and the projections to $X$, and to guarantee that it induces the required homeomorphism.
\end{proof}

\subsection{The homology of $\xi^\mathrm{P}$}\label{CompactificationsHomologySubsection}

In this subsection we wish to discuss the homology groups of the compactification $\xi^\mathrm{P}$ of $\xi$.  There are well-known techniques for describing the homology of $\xi^\mathbb{P}$ and $\xi^\mathrm{S}$.  We are also interested in investigating the effect in homology of the collapse map $q\circ p\colon\xi^\mathbb{P}\to\xi^\mathrm{P}$.  The results will be phrased in terms of the spherical bundle $\mathbb{S}\xi\to X$, which consists of all unit vectors in $\xi$.  We begin by introducing some notation and then stating the main results.  The rest of the subsection is given to proving these results, as well as a short lemma which we state and prove at the end.

\begin{definition}\label{TauDefinition}
$\xi^\mathbb{P}$ admits a tautological line bundle $\tau_{\xi}$.  A point of $\xi^\mathbb{P}$ is a line $l$ in a fibre $\xi_x\oplus\mathbb{C}$, and the fibre of $\tau_\xi$ over this point is simply $l$ itself.  We write $u_{\xi}\in H^2(\xi^\mathbb{P})$ for the first Chern class of $\tau_\xi$.
\end{definition}

\begin{note}
Suppose that $X$ is an oriented manifold.  Then $\xi$ and $\xi^\mathbb{P}$, as bundles of complex manifolds over $X$, inherit natural orientations from $X$.  The unit sphere bundle $\mathbb{S}\xi$ also inherits a natural orientation as the boundary of the unit ball bundle $\mathbb{B}\xi$, which like $\xi$ and $\xi^\mathbb{P}$ inherits an orientation from $X$.  Note in particular that $\partial_\ast[\mathbb{B}\xi,\mathbb{S}\xi]=[\mathbb{S}\xi]$.
\end{note}

\begin{proposition}\label{PinchedHomologyProposition}
There is a split short exact sequence of abelian groups
\begin{equation}\label{ShortExactHomologySequence}0\to H_\ast(X)\to H_\ast(\xi^\mathrm{P})\to H_{\ast-1}(\mathbb{S}\xi)\to 0.\end{equation}
Moreover, since each of $X$, $\xi^\mathrm{P}$ and $\mathbb{S}\xi$ has a projection map to $X$, each of the groups in \eqref{ShortExactHomologySequence} is naturally a $H^\ast(X)$ module, and \eqref{ShortExactHomologySequence} is in fact a split short exact sequence of $H^\ast(X)$ modules.
\end{proposition}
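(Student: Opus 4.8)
The plan is to build the exact sequence \eqref{ShortExactHomologySequence} by relating $\xi^{\mathrm{P}}$ to a pair of spaces whose relative homology is already understood, namely $(\xi^{\mathrm{S}},\xi^{\mathrm{S}}\setminus 0)$ or equivalently to the Thom space of $\xi$. Recall that $\xi^{\mathrm{P}}$ is obtained from $\xi^{\mathrm{S}}=\mathbb{S}(\xi\oplus\mathbb{R})$ by identifying the zero section $X_0$ with the section at infinity $X_\infty$. Write $A\subset\xi^{\mathrm{P}}$ for the common image of these two sections; $A$ is a copy of $X$ and the map $q\colon\xi^{\mathrm{S}}\to\xi^{\mathrm{P}}$ restricts to the two-to-one fold map $X_0\sqcup X_\infty\to A$. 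First I would analyse the quotient $\xi^{\mathrm{P}}/A$. Collapsing $A$ in $\xi^{\mathrm{P}}$ is the same as collapsing $X_0\sqcup X_\infty$ in $\xi^{\mathrm{S}}$, and $\xi^{\mathrm{S}}\setminus(X_0\cup X_\infty)$ is the open mapping cylinder of $\mathbb{S}\xi\to X$; one checks that $\xi^{\mathrm{P}}/A\simeq \Sigma_X(\mathbb{S}\xi)_+$ in an appropriate fibrewise sense, so that $\widetilde H_\ast(\xi^{\mathrm{P}}/A)\cong H_{\ast-1}(\mathbb{S}\xi)$ by the suspension isomorphism. Meanwhile the inclusion $A=X\hookrightarrow\xi^{\mathrm{P}}$ admits the bundle projection $\xi^{\mathrm{P}}\to X$ as a retraction, so the long exact sequence of the pair $(\xi^{\mathrm{P}},A)$ splits into short exact sequences
\[0\to H_\ast(X)\to H_\ast(\xi^{\mathrm{P}})\to H_\ast(\xi^{\mathrm{P}},A)\to 0,\]
and the pair $(\xi^{\mathrm{P}},A)$ is a good pair so $H_\ast(\xi^{\mathrm{P}},A)\cong\widetilde H_\ast(\xi^{\mathrm{P}}/A)\cong H_{\ast-1}(\mathbb{S}\xi)$. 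This yields \eqref{ShortExactHomologySequence}, and the splitting is the one coming from the projection.

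For the module statement, the point is that every map in sight commutes with the projection to $X$, so each term carries an $H^\ast(X)$-module structure via cap/cup with the pullback of classes from $X$, and all the maps just described (the inclusion $X\hookrightarrow\xi^{\mathrm{P}}$, the quotient map, the collapse, the fibrewise suspension iso) are maps over $X$. Naturality of the cap product over $X$ then makes the whole long exact sequence of the pair a sequence of $H^\ast(X)$-modules; since the connecting homomorphism drops out (the sequence splits), what remains is a short exact sequence of $H^\ast(X)$-modules. The retraction $\xi^{\mathrm{P}}\to X$ is itself a map over $X$, so the splitting $H_\ast(\xi^{\mathrm{P}})\to H_\ast(X)$ it induces is $H^\ast(X)$-linear as well.

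The main obstacle I anticipate is the identification $\xi^{\mathrm{P}}/A\simeq$ (fibrewise unreduced suspension of $\mathbb{S}\xi$), and more precisely checking that the resulting iso $H_\ast(\xi^{\mathrm{P}},A)\cong H_{\ast-1}(\mathbb{S}\xi)$ is genuinely $H^\ast(X)$-linear rather than merely additive --- one must be a little careful because two copies of $X$ are being glued and the fibrewise suspension coordinate needs a consistent orientation. I would handle this by working fibrewise: $\mathbb{S}(\xi\oplus\mathbb{R})$ with its two distinguished sections identified is, in each fibre, a $2d$-sphere with north and south poles glued, which is the unreduced suspension $S(\mathbb{S}\xi_x)$ with its two cone points identified, i.e. $S^1\wedge$-ish; deleting the glued point recovers the cylinder on $\mathbb{S}\xi_x$. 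Assembling over $X$ and using that $\mathbb{S}\xi\times[0,1]\to X$ is still a bundle, the collapse $\xi^{\mathrm{P}}\to\xi^{\mathrm{P}}/A$ followed by excision gives the suspension iso fibrewise, hence after applying the homology of the total spaces it is a map of $H^\ast(X)$-modules by naturality of excision and of the suspension iso over the base. A clean alternative, if the geometry gets fiddly, is to run the relative Leray--Hirsch / Gysin argument directly: the pair $(\xi^{\mathrm{S}},X_0\cup X_\infty)$ fibrewise collapses to $(S^{2d},\{N,S\})$, whose reduced homology is $H_{\ast}(S^{2d})\oplus H_{\ast-1}(S^0$-worth$)$... more simply, the cofibre sequence $\mathbb{S}\xi_+\to \xi^{\mathrm{B}}/X_\infty$-type argument — but the suspension route above is the most transparent and I would present that.

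Finally, to nail down that \eqref{ShortExactHomologySequence} splits as claimed: the composite $H_\ast(X)\to H_\ast(\xi^{\mathrm{P}})\to H_\ast(X)$ of the inclusion with the projection-induced retraction is the identity, which both proves exactness-with-splitting on the left and identifies a preferred splitting; this is what makes the sequence split as $H^\ast(X)$-modules, completing the proof.
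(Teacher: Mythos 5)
Your proposal follows essentially the same route as the paper: split the long exact sequence of the pair $(\xi^{\mathrm{P}},X)$ using the retraction provided by the bundle projection, identify the relative term with $H_{\ast-1}(\mathbb{S}\xi)$ by an excision/suspension argument, and get $H^\ast(X)$-linearity from naturality over $X$. The only real difference is cosmetic: the paper stays in relative homology and excises down to $(I,\partial I)\times\mathbb{S}\xi$ with $I=[\tfrac12,1]$, whereas you pass to the quotient $\xi^{\mathrm{P}}/X$ and recognise it as a fibrewise suspension; these are the same computation. Two small cautions: $\xi^{\mathrm{S}}\setminus(X_0\cup X_\infty)$ is a fibrewise \emph{open cylinder} on $\mathbb{S}\xi$, not the mapping cylinder of $\mathbb{S}\xi\to X$; and the $H^\ast(X)$-linearity of the suspension isomorphism, which you rightly flag as the delicate point, is pinned down in the paper by the explicit formula $\alpha\bigl([I,\partial I]\times y\bigr)=[I,\partial I]\times(\pi^\ast\alpha\cap y)$ --- it is worth recording that identity rather than appealing only to ``naturality over the base.''
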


\begin{proposition}\label{ProjectiveToPinchedProposition}
For any $x\in H_\ast(\xi^\mathbb{P})$ the class $u_{\xi}\cap x$ vanishes under the composition
\[ H_\ast(\xi^\mathbb{P})\to H_\ast(\xi^\mathrm{P})\to H_{\ast-1}(\mathbb{S}\xi).\]
Moreover, when $X$ is a closed oriented manifold, so that $\xi^\mathbb{P}$ and $\mathbb{S}\xi$ become closed oriented manifolds, the composition above sends $[\xi^\mathbb{P}]$ to $[\mathbb{S}\xi]$.
\end{proposition}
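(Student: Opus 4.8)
The plan is to treat the two assertions separately, in both cases combining the behaviour of $q\circ p$ on the pieces of $\xi^\mathbb{P}$ with Proposition~\ref{PinchedHomologyProposition}. Write $\mathbb{P}\xi\subset\xi^\mathbb{P}$ for the hyperplane at infinity, $X_{\mathbb{C}}\subset\xi^\mathbb{P}$ for the origin section, and $X\subset\xi^\mathrm{P}$ for the pinched section. From the descriptions of $p$ and $q$: the map $p$ collapses each $\mathbb{P}\xi_x$ to one of the two poles of the fibre of $\xi^\mathrm{S}$ and is a homeomorphism on the complement $\xi^\mathbb{P}\setminus\mathbb{P}\xi=\xi$, while $q$ identifies the two poles in each fibre; hence $q\circ p$ carries each of $\mathbb{P}\xi$ and $X_{\mathbb{C}}$ onto $X$, collapsing each $\mathbb{P}\xi_x$ to the pinched point over $x$. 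I will also use that, by the construction in the proof of Proposition~\ref{PinchedHomologyProposition}, the first map in \eqref{ShortExactHomologySequence} is induced by the inclusion of the pinched section, so that its image is the kernel of the second map $H_\ast(\xi^\mathrm{P})\to H_{\ast-1}(\mathbb{S}\xi)$.

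For the first assertion the key point is that $u_{\xi}$ is, up to sign, dual to the hyperplane at infinity. The normal bundle of $\mathbb{P}\xi=\mathbb{P}(\xi\oplus 0)$ in $\xi^\mathbb{P}=\mathbb{P}(\xi\oplus\mathbb{C})$ is the restriction of $\tau_{\xi}^{\vee}$ (fibrewise, the normal bundle of $\CP^{d-1}$ in $\CP^{d}$), so there is a Thom class $\mathrm{th}\in H^2(\xi^\mathbb{P},\xi^\mathbb{P}\setminus\mathbb{P}\xi)$, and I claim its image $\bar{\mathrm{th}}\in H^2(\xi^\mathbb{P})$ is $-u_{\xi}$. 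To see this, use the projective bundle (Leray--Hirsch) descriptions of $H^\ast(\xi^\mathbb{P})$ and $H^\ast(\mathbb{P}\xi)$ as free $H^\ast(X)$-modules on the powers of $u_{\xi}$: writing $\bar{\mathrm{th}}=\pi^\ast a+m\,u_{\xi}$ and restricting to $\mathbb{P}\xi$, where $\bar{\mathrm{th}}$ becomes the Euler class $c_1(\tau_{\xi}^{\vee})$ of the normal bundle while $u_{\xi}$ becomes $c_1(\tau_{\xi})$, forces $a=0$ and $m=-1$. The standard fact that the composite
\[ H_\ast(\xi^\mathbb{P})\xrightarrow{\;\cap\,\mathrm{th}\;}H_{\ast-2}(\mathbb{P}\xi)\longrightarrow H_{\ast-2}(\xi^\mathbb{P}) \]
of the umkehr map of $\mathbb{P}\xi\hookrightarrow\xi^\mathbb{P}$ with the pushforward is $x\mapsto\bar{\mathrm{th}}\cap x=-(u_{\xi}\cap x)$ then shows that $u_{\xi}\cap x$ lies in the image of $H_{\ast-2}(\mathbb{P}\xi)\to H_{\ast-2}(\xi^\mathbb{P})$. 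Applying $(q\circ p)_\ast$, and using that $q\circ p$ carries $\mathbb{P}\xi$ into the pinched section, we conclude that $(q\circ p)_\ast(u_{\xi}\cap x)$ lies in the image of $H_\ast(X)\to H_\ast(\xi^\mathrm{P})$, hence in the kernel of $H_\ast(\xi^\mathrm{P})\to H_{\ast-1}(\mathbb{S}\xi)$.

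For the second assertion suppose $X$ is closed and oriented. First, $p\colon\xi^\mathbb{P}\to\xi^\mathrm{S}$ has degree one, since it restricts to an orientation-preserving homeomorphism on the dense open subset $\xi$, whose complement $\mathbb{P}\xi$ has positive codimension; hence $(q\circ p)_\ast[\xi^\mathbb{P}]=q_\ast[\xi^\mathrm{S}]$. Decompose $\xi^\mathrm{S}$ as the union of its northern and southern closed ball subbundles $\mathbb{B}_N$ and $\mathbb{B}_S$, which meet along the equatorial sphere bundle $\mathbb{S}\xi$, and split the fundamental cycle of $\xi^\mathrm{S}$ accordingly as $\zeta_N+\zeta_S$, with $\zeta_S$ a fundamental cycle of $\mathbb{B}_S$ relative to $\mathbb{S}\xi$; since the inclusion from $\xi$ identifies $\mathbb{B}_S$ orientedly with $\mathbb{B}\xi$, the Note gives $\partial\zeta_S=[\mathbb{S}\xi]$. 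Now $q$ is injective on each of $\mathbb{B}_N$ and $\mathbb{B}_S$ (it only identifies their zero sections) and is the identity on $\mathbb{S}\xi$, so $q_\ast\zeta_N+q_\ast\zeta_S$ is a decomposition of $q_\ast[\xi^\mathrm{S}]$ subordinate to the covering $\xi^\mathrm{P}=q(\mathbb{B}_N)\cup q(\mathbb{B}_S)$, whose two pieces meet in $\mathbb{S}\xi\sqcup X$. The Mayer--Vietoris connecting homomorphism therefore sends $q_\ast[\xi^\mathrm{S}]$ to $[\partial(q_\ast\zeta_S)]=q_\ast[\mathbb{S}\xi]=[\mathbb{S}\xi]$, lying in the $\mathbb{S}\xi$-summand of $H_{\ast-1}(\mathbb{S}\xi\sqcup X)$; identifying this connecting homomorphism with the map $\partial$ of \eqref{ShortExactHomologySequence}, we obtain $\partial\circ(q\circ p)_\ast[\xi^\mathbb{P}]=[\mathbb{S}\xi]$.

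I expect the main obstacle to be the orientation and sign bookkeeping in the second assertion: one must verify that the inclusions of $\xi$ into $\xi^\mathbb{P}$ and $\xi^\mathrm{S}$ are orientation-preserving (so that $p$ has degree one and $\mathbb{B}_S\cong\mathbb{B}\xi$ respects the conventions of the Note), and that the Mayer--Vietoris connecting homomorphism used above is $\partial$ rather than $-\partial$. These checks are routine but must be carried through consistently so that the final sign is $+1$. The one genuinely structural step is the identification $\bar{\mathrm{th}}=-u_{\xi}$ in the first assertion, where the projective bundle structure of $\xi^\mathbb{P}$ is used.
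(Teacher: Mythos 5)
Your argument for the first assertion is essentially sound, and it in fact converges with the paper's proof at the key intermediate step: both arguments show that $u_{\xi}\cap x$ lies in the image of $H_{\ast-2}(\mathbb{P}\xi)\to H_{\ast-2}(\xi^\mathbb{P})$, after which the class dies because $q\circ p$ carries $\mathbb{P}\xi$ into the pinched section, whose homology is the kernel of the map to $H_{\ast-1}(\mathbb{S}\xi)$. The paper reaches that step without any coefficient computation, by lifting $u_{\xi}$ to $H^2(\xi^\mathbb{P},X)$ (using triviality of $\tau_{\xi}$ over the zero section) and factoring $u\cap-$ through $H_\ast(\mathbb{P}\xi)$ via a homology equivalence $(\mathbb{B}\tau_\xi,\mathbb{S}\tau_\xi)\to(\xi^\mathbb{P},X)$; your route through the Thom class of the normal bundle of the section at infinity is a legitimate alternative, but the step ``restricting to $\mathbb{P}\xi$ forces $a=0$ and $m=-1$'' fails when $\mathrm{rank}\,\xi=1$: there $\mathbb{P}\xi\to X$ is a homeomorphism, $H^2(\mathbb{P}\xi)$ is free over $H^\ast(X)$ on $1$ alone, and $u_{\xi}|_{\mathbb{P}\xi}=c_1(\xi)$ already lies in $\pi^\ast H^2(X)$, so the restriction does not separate $\pi^\ast a$ from $m\,u_{\xi}$. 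This case is not vacuous, since the paper applies the proposition to $\xi=T\CP^n$ including $n=1$. The gap is easily repaired: restrict instead to the zero section (where both $\bar{\mathrm{th}}$ and $u_{\xi}$ vanish, giving $a=0$) and to a fibre $\CP^{d}$ (where $\bar{\mathrm{th}}$ is the positive generator and $u_{\xi}$ its negative, giving $m=-1$); and note that your argument only needs $a=0$ and $m$ a unit.

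For the second assertion your route --- factor through $\xi^\mathrm{S}$ using $\deg p=1$, split into hemispherical ball bundles, and take a Mayer--Vietoris boundary --- is genuinely different from the paper's, which pushes $[\xi^\mathbb{P}]$ directly through the zig-zag that \emph{defines} the map $H_\ast(\xi^\mathrm{P})\to H_{\ast-1}(\mathbb{S}\xi)$ in the proof of Proposition~\ref{PinchedHomologyProposition} and then performs a single orientation check on the shell $(I,\partial I)\times\mathbb{S}\xi$, using the almost complex orientation of $\xi\subset\xi^\mathbb{P}$ and the convention $\partial_\ast[\mathbb{B}\xi,\mathbb{S}\xi]=[\mathbb{S}\xi]$. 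The genuine gap in your version is the clause ``identifying this connecting homomorphism with the map $\partial$ of \eqref{ShortExactHomologySequence}'': that map is not characterised abstractly, but is constructed by a specific zig-zag through $(\xi^\mathrm{P},X)$ and $(\xi^\mathrm{P},\xi^\mathrm{P}\setminus A)$ followed by $\Theta$ (the boundary projected onto the outer component $\{1\}\times\mathbb{S}\xi$), so matching it against a Mayer--Vietoris boundary for the cover $\{q(\mathbb{B}_N),q(\mathbb{B}_S)\}$ --- including which hemisphere's boundary is taken, with which sign, and together with the orientation of $\xi^\mathrm{S}$, which the paper never fixes and which you must choose via $\xi$ --- is exactly where the content of this half of the proposition sits. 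Since the assertion is the exact equality $[\xi^\mathbb{P}]\mapsto[\mathbb{S}\xi]$ and not an equality up to sign, deferring this to ``routine bookkeeping'' leaves the proof unfinished. The comparison can be bypassed: your observation that $q_\ast\zeta_N$ is supported in the complement of the shell $A$ shows that, in $H_\ast(\xi^\mathrm{P},\xi^\mathrm{P}\setminus A)$, the image of $(q\circ p)_\ast[\xi^\mathbb{P}]$ is represented by $q_\ast\zeta_S$, and then the defining zig-zag and $\Theta$, together with $\partial\zeta_S=[\mathbb{S}\xi]$, give the result directly --- which is in substance the paper's own computation.
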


Proposition~\ref{PinchedHomologyProposition} gives a complete description of $H_\ast(\xi^\mathrm{P})$.  There is also a classic description of $H^\ast(\xi^\mathbb{P})$ in terms of the Chern classes of $\xi\oplus\mathbb{C}$ that, when $X$ is a closed oriented manifold, can be Poincar\'e dualised to give a complete description of $H_\ast(\xi^\mathbb{P})$.  Proposition~\ref{ProjectiveToPinchedProposition} can then be used to give an exact description of $(q\circ p)_\ast\colon H_\ast(\xi^\mathbb{P})\to H_\ast(\xi^\mathrm{P})$.  Although we have not formulated such a corollary here, we will still need the full strength of Propositions~\ref{PinchedHomologyProposition} and \ref{ProjectiveToPinchedProposition} in later sections.

\begin{proof}[Proof of Proposition~\ref{PinchedHomologyProposition}]
The projection $\xi^\mathrm{P}\to X$ is a left-inverse to the zero section $X\hookrightarrow\xi^\mathrm{P}$.  The long exact sequence in homology for $(\xi^\mathrm{P},X)$ therefore splits into a collection of split short exact sequences
\begin{equation}\label{ProofShortExactSequence}0\to H_\ast(X)\to H_\ast(\xi^\mathrm{P})\to H_\ast(\xi^\mathrm{P},X)\to 0.\end{equation}
We must identify $H_\ast(\xi^\mathrm{P},X)$.

Let $I$ denote $[\frac{1}{2},1]$.  There is an inclusion $I\times \mathbb{S}\xi\hookrightarrow \xi\hookrightarrow \xi^\mathrm{P}$ that sends $(r,v)$ to $rv$.  Let $A\subset\xi\subset \xi^\mathrm{P}$ denote the subset consisting of vectors with length in $(\frac{1}{2},1)$.  We then have homology equivalences of pairs of subsets of $\xi^\mathrm{P}$
\begin{equation}\label{HomologyEquivalencesEquation}
(\xi^\mathrm{P},X)\hookrightarrow(\xi^\mathrm{P},\xi^\mathrm{P}\setminus A)\hookleftarrow(I,\partial I)\times\mathbb{S}\xi.
\end{equation}
There is also an isomorphism $\Theta\colon H_\ast((I,\partial I)\times\mathbb{S}\xi)\to H_{\ast-1}(\mathbb{S}\xi)$ given by composing $\partial_\ast$ with the projection $H_{\ast-1}(\partial I\times\mathbb{S}\xi)\to H_{\ast-1}(\mathbb{S}\xi)$ onto the summand corresponding to $1\in\partial I$.  We therefore have an isomorphism $H_\ast(\xi^\mathrm{P},X)\cong H_{\ast-1}(\mathbb{S}\xi)$ obtained from the zig-zag
\begin{equation}\label{ZigZagEquation}H_\ast(\xi^\mathrm{P},X)\xrightarrow{\cong} H_\ast(\xi^\mathrm{P},\xi^\mathrm{P}\setminus A)\xleftarrow{\cong} H_\ast((I,\partial I)\times\mathbb{S}\xi)\xrightarrow{\Theta} H_{\ast-1}(\mathbb{S}\xi).\end{equation}
Applying this isomorphism to \eqref{ProofShortExactSequence} provides us with the required short exact sequence \eqref{ShortExactHomologySequence}.  It remains to show that this is split as a short exact sequence of $H^\ast(X)$ modules.  

The sequence \eqref{ProofShortExactSequence} is certainly split as a sequence of $H^\ast(X)$ modules, since it is precisely the projection $\xi^\mathrm{P}\to X$ that provides the splitting.  We must therefore show that the isomorphism $H_\ast(\xi^\mathrm{P},X)\cong H_{\ast-1}(\mathbb{S}\xi)$ is an isomorphism of $H^\ast(X)$ modules.  Each of the pairs in \eqref{HomologyEquivalencesEquation}, as a pair of subsets of $\xi^\mathrm{P}$, admits a projection to the pair $(X,X)$, and the inclusion maps commute with these projections.  Consequently the first two isomorphisms in \eqref{ZigZagEquation} are isomorphisms of $H^\ast(X)$ modules.  We must now show that $\Theta$ is an isomorphism of $H^\ast(X)$ modules.  If we orient $I$ so that $\partial_\ast[I,\partial I]=[1]-[\frac{1}{2}]$ then $\Theta^{-1}$ is the map that sends that sends $x\in H_{\ast-1}(\mathbb{S}\xi)$ to $[I,\partial I]\times x$.  Now note that for $(I,\partial I)\times\mathbb{S}\xi$ the projection to $X$ factors through the projection $(I,\partial I)\times\mathbb{S}\xi\to (\mathbb{S}\xi,\mathbb{S}\xi)$, so that for $x\in H_\ast((I,\partial I)\times \mathbb{S}\xi)$ and $\alpha\in H^\ast(X)$ we have $\alpha x = (1\times\pi^\ast\alpha)\cap x$, where $\pi\colon\mathbb{S}\xi\to X$ denotes the projection.  In particular, for $y\in H_\ast(\mathbb{S}\xi)$ we have
\[\alpha ([I,\partial I]\times y) = (1\times\pi^\ast\alpha)\cap([I,\partial I]\times y)=[I,\partial I]\times (\pi^\ast\alpha\cap y).\]
(For the sign convention relating cross and cap products see \cite[p.255]{\Spanier}.)  It follows that $\Theta^{-1}$ is an isomorphism of $H^\ast(X)$ modules, and so the same is true of $\Theta$.  This completes the proof.
\end{proof}

\begin{proof}[Proof of Proposition~\ref{ProjectiveToPinchedProposition}]
Taking the definition of the second map from the proof Proposition~\ref{PinchedHomologyProposition}, it follows that the composite is given by the zig-zag
\[H_\ast(\xi^\mathbb{P})\to H_\ast(\xi^\mathrm{P})\to H_\ast(\xi^\mathrm{P},X)\xrightarrow{\cong} H_\ast(\xi^\mathrm{P},\xi^\mathrm{P}\setminus A)\xleftarrow{\cong} H_\ast((I,\partial I)\times \mathbb{S}\xi)\xrightarrow{\Theta} H_{\ast-1}(\mathbb{S}\xi)\]
where $A\subset \xi\subset \xi^\mathrm{P}$ is the subset consisting of vectors whose length lies in $(\frac{1}{2},1)$.  We can equally well regard $A$ as a subset of $\xi^\mathbb{P}$, and so there is a commutative diagram
\[\xymatrix{
(\xi^\mathbb{P},X\sqcup\mathbb{P}\xi)\ar@{^(->}[r]\ar[d]&    (\xi^\mathbb{P},\xi^\mathbb{P}\setminus A)\ar[d]  & (I,\partial I)\times\mathbb{S}\xi  \ar@{_(->}[l]\ar@{=}[d]\\
(\xi^\mathrm{P},X)\ar@{^(->}[r]  &  (\xi^\mathrm{P},\xi^\mathrm{P}\setminus A)  & (I,\partial I)\times\mathbb{S}\xi
\ar@{_(->}[l] 
}\]
whose horizontal maps are homology equivalences.  The zig-zag above is therefore equal to
\begin{equation}\label{NewZigZagEquation}H_\ast(\xi^\mathbb{P})\to H_\ast(\xi^\mathbb{P},X\sqcup\mathbb{P}\xi)\xrightarrow{\cong} H_\ast(\xi^\mathbb{P},\xi^\mathbb{P}\setminus A)\xleftarrow{\cong} H_\ast((I,\partial I)\times \mathbb{S}\xi)\xrightarrow{\Theta} H_{\ast-1}(\mathbb{S}\xi)\end{equation}
With this new description of the composite we can prove the proposition.

In the light of the last paragraph, to prove the first claim it will suffice to show that 
\[ H_\ast(\xi^\mathbb{P})\xrightarrow{u_{\xi}\cap -}H_{\ast-2}(\xi^\mathbb{P})\to H_{\ast-2}(\xi^\mathbb{P},X\sqcup\mathbb{P}\xi)\]
vanishes.  Since the restriction of $\tau_{\xi}$ to $X=\mathbb{P}(\mathbb{C})$ is trivial, the class $u_{\xi}$ lifts to a class $u\in H^2(\xi^\mathbb{P},X)$.  It follows that the first map above factors through ${u\cap-}\colon H_{\ast}(\xi^\mathbb{P},X)\to H_{\ast-2}(\xi^\mathbb{P})$ so it will suffice to show that
\[ H_\ast(\xi^\mathbb{P},X)\xrightarrow{u\cap -}H_{\ast-2}(\xi^\mathbb{P})\to H_{\ast-2}(\xi^\mathbb{P},X\sqcup\mathbb{P}\xi)\]
vanishes.  This will follow immediately from the fact that $u\cap -$ factors through $H_{\ast-2}(\mathbb{P}\xi)\to H_{\ast-2}(\xi^\mathbb{P})$, which we will now prove.

There is a homology equivalence $\phi\colon(\mathbb{B}(\tau_\xi),\mathbb{S}(\tau_\xi))\to (\xi^\mathbb{P},X)$.  A point in $\mathbb{B}(\tau_\xi)$ is a triple $(v,l,x)$ where $v$ is a vector in a line $l$ in a fibre $\xi_x$.  If $v\neq 0$ then $\phi$ sends this point to the line in $(\xi\oplus\mathbb{C})_x$ spanned by $((\frac{1}{\|v\|}-1)v,\|v\|)$.  If $v=0$ then $\phi$ sends the point to $l\oplus 0$. The composition of $\phi$ with the zero section $\mathbb{P}\xi\hookrightarrow\mathbb{B}(\tau_\xi)$ is just the inclusion $\mathbb{P}\xi\hookrightarrow\xi^\mathbb{P}$.  We now have a commutative diagram
\[\xymatrix{
H_\ast(\mathbb{B}(\tau_\xi),\mathbb{S}(\tau_\xi))\ar[r]^-{\phi^\ast u\cap-}\ar[d]_{\phi_\ast}^{\cong}   & H_\ast(\mathbb{B}(\tau_\xi))\ar[d]_{{\phi|}_\ast} & H_\ast(\mathbb{P}\xi)\ar[l]_-{\cong}\ar@{=}[d]\\
H_\ast(\xi^\mathbb{P},X)\ar[r]_{u\cap -} & H_\ast(\xi^\mathbb{P}) & H_\ast(\mathbb{P}\xi)\ar[l]
}\]
so that $u\cap -$ factors through $H_\ast(\mathbb{P}\xi)\to H_\ast(\xi^\mathbb{P})$ as required.

To prove the second part we must show that the zig-zag \eqref{NewZigZagEquation} sends $[\xi^\mathbb{P}]$ to $[\mathbb{S}\xi]$.  It is easy to see that the first three maps taken together send $[\xi^\mathbb{P}]$ to $[I\times \mathbb{S}\xi,\partial I\times\mathbb{S}\xi]$, where $I\times\mathbb{S}\xi$ is oriented as a subset of $\xi^\mathbb{P}$.  Since $\Theta$ is the composition of $\partial_\ast$ with the projection $H_{\ast-1}(\partial I\times\mathbb{S}\xi)\to H_{\ast-1}(\mathbb{S}\xi)$ onto the summand corresponding to $1\in\partial I$, it will suffice to show that $\partial_\ast[I\times\mathbb{S}\xi,\partial I\times\mathbb{S}\xi]=[1]\times [\mathbb{S}\xi] - [\frac{1}{2}]\times[\mathbb{S}\xi]$.  

Recall that $I\times\mathbb{S}\xi$ is oriented as a submanifold of $\xi^\mathbb{P}$.  The inclusion $I\times\mathbb{S}\xi\hookrightarrow\xi^\mathbb{P}$ factors as $I\times\mathbb{S}\xi\to\mathbb{B}\xi\to \xi^\mathbb{P}$.  The first map here sends $(r,v)\in I\times\mathbb{S}\xi$ to $rv$, and the second map is obtained from the inclusion $\xi\subset\xi^\mathbb{P}$.  In particular, the second map is an embedding of almost complex manifolds of the same dimension, so that the orientations coincide.  It follows that the orientation that $I\times\mathbb{S}\xi$ inherits from $\xi^\mathbb{P}$ is the same as the one it inherits from $\mathbb{B}\xi$.  But $\mathbb{S}\xi$ is oriented as the boundary of $\mathbb{B}\xi$, so that $\partial_\ast[\mathbb{B}\xi,\mathbb{S}\xi]=[\mathbb{S}\xi]$, and correspondingly $\partial_\ast[I\times\mathbb{S}\xi,\partial I\times\mathbb{S}\xi]=[1]\times [\mathbb{S}\xi] - [\frac{1}{2}]\times[\mathbb{S}\xi]$.
\end{proof}

\begin{lemma}\label{PinchedHomologyGroupActionLemma}
We refer to the situation of Proposition~\ref{PinchedHomologyProposition}.
Suppose that a group $G$ acts on $\xi^\mathrm{P}$ and that this action restricts to an action on $\xi$ that preserves the length of vectors. Then $G$ acts on each of $X$, $\xi^\mathrm{P}$ and $\mathbb{S}\xi$.  Thus each of the groups in \eqref{ShortExactHomologySequence} is a $H_\ast(G)$ module, and \eqref{ShortExactHomologySequence} is a short exact sequence of $H_\ast(G)$ modules, so long as we twist the module structure on $H_{\ast-1}(\mathbb{S}\xi)$ so that $\alpha\in H_\ast(G)$ acts on $H_{\ast-1}(\mathbb{S}\xi)$ by $(-1)^{|\alpha|}$ times the morphism defined by the action of $G$ on $\mathbb{S}\xi$.  If the action of $G$ is such that the projection $\xi^\mathrm{P}\to X$ is equivariant, then \eqref{ShortExactHomologySequence} is split as $H_\ast(G)$ modules.
\end{lemma}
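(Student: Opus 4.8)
The plan is to revisit the proof of Proposition~\ref{PinchedHomologyProposition} and check that each ingredient respects the $H_\ast(G)$-module structures, keeping careful track of Koszul signs.

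First I would record the relevant $G$-invariant subspaces. Since the action of $G$ on $\xi^\mathrm{P}$ restricts to a length-preserving action on $\xi$, it carries length-$0$ vectors to length-$0$ vectors and hence preserves the zero section $X$; likewise it preserves the unit sphere bundle $\mathbb{S}\xi$, the open shell $A$ of vectors of length in $(\tfrac12,1)$, its closure $\overline A$, and each of the two pieces $\{\|v\|=\tfrac12\}$ and $\{\|v\|=1\}=\mathbb{S}\xi$ of $\partial\overline A$. Thus $G$ acts on $X$ (through the $G$-invariant zero section), on $\xi^\mathrm{P}$, and on $\mathbb{S}\xi$, so each term of \eqref{ShortExactHomologySequence} is an $H_\ast(G)$-module; moreover every pair of spaces occurring in \eqref{ProofShortExactSequence}, \eqref{HomologyEquivalencesEquation} and \eqref{ZigZagEquation} is a $G$-invariant pair, once we replace the product $(I,\partial I)\times\mathbb{S}\xi$ of \eqref{ZigZagEquation} by its homeomorphic image $(\overline A,\partial\overline A)\subset\xi^\mathrm{P}$.

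Next I would check that all the maps assembling \eqref{ShortExactHomologySequence} out of \eqref{ProofShortExactSequence} and \eqref{ZigZagEquation} are maps of $H_\ast(G)$-modules, with exactly one Koszul sign appearing. The inclusion of the zero section $X\hookrightarrow\xi^\mathrm{P}$, the quotient map $H_\ast(\xi^\mathrm{P})\to H_\ast(\xi^\mathrm{P},X)$, and the inclusion-induced maps $H_\ast(\xi^\mathrm{P},X)\to H_\ast(\xi^\mathrm{P},\xi^\mathrm{P}\setminus A)$ and $H_\ast(\overline A,\partial\overline A)\to H_\ast(\xi^\mathrm{P},\xi^\mathrm{P}\setminus A)$ are all induced by maps of $G$-pairs, hence are $H_\ast(G)$-module maps. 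The remaining map $\Theta$ is the composite of the connecting homomorphism $\partial_\ast\colon H_\ast(\overline A,\partial\overline A)\to H_{\ast-1}(\partial\overline A)$ with the projection $H_{\ast-1}(\partial\overline A)\to H_{\ast-1}(\mathbb{S}\xi)$ onto the summand of $\{\|v\|=1\}$; this projection is $G$-equivariant because the decomposition $\partial\overline A=\{\|v\|=\tfrac12\}\sqcup\mathbb{S}\xi$ is $G$-invariant.

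The crucial point is the connecting homomorphism. Applying naturality of $\partial_\ast$ to the map of pairs $G\times(\overline A,\partial\overline A)\to(\overline A,\partial\overline A)$ given by the action, together with the fact that under the relative K\"unneth isomorphism the connecting map of $G\times(\overline A,\partial\overline A)$ is the Leibniz-signed operator $\alpha\times\omega\mapsto(-1)^{|\alpha|}\alpha\times\partial_\ast\omega$, one obtains $\partial_\ast(\alpha\cdot\omega)=(-1)^{|\alpha|}\,\alpha\cdot\partial_\ast\omega$ for $\alpha\in H_\ast(G)$ and $\omega\in H_\ast(\overline A,\partial\overline A)$. Hence $\Theta$, and so the surjection in \eqref{ShortExactHomologySequence}, is a map of $H_\ast(G)$-modules precisely after twisting the module structure on $H_{\ast-1}(\mathbb{S}\xi)$ by the sign $(-1)^{|\alpha|}$, as asserted. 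Since \eqref{ShortExactHomologySequence} is already exact by Proposition~\ref{PinchedHomologyProposition} and all of its maps are now seen to be (twisted) $H_\ast(G)$-module maps, it is a short exact sequence of $H_\ast(G)$-modules. Finally, the splitting of \eqref{ProofShortExactSequence} --- and therefore of \eqref{ShortExactHomologySequence} --- is induced by the projection $\xi^\mathrm{P}\to X$, so when that projection is $G$-equivariant the splitting is an $H_\ast(G)$-module map, which gives the last assertion. I expect the main obstacle to be getting this single Koszul sign exactly right: correctly identifying the connecting map of the product pair as the Leibniz-signed operator, and verifying that it yields precisely the $(-1)^{|\alpha|}$ twist on $H_{\ast-1}(\mathbb{S}\xi)$ and no other sign.
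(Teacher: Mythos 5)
Your proposal is correct and takes essentially the same approach as the paper: it tracks the zig-zag of Proposition~\ref{PinchedHomologyProposition} through $G$-invariant pairs, isolates the single Koszul sign $(-1)^{|\alpha|}$ as arising from the connecting homomorphism $\partial_\ast$, and notes that the splitting comes from the projection $\xi^{\mathrm{P}}\to X$. The only cosmetic difference is that you work directly with the annular shell $\overline A\subset\xi^{\mathrm{P}}$ rather than the homeomorphic product $(I,\partial I)\times\mathbb{S}\xi$, and you spell out the relative K\"unneth/Leibniz identification of the connecting map that the paper leaves implicit.
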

\begin{proof}
The proof is directly analogous to the proof of the final part of Proposition~\ref{PinchedHomologyProposition}.  Each of the pairs in \eqref{HomologyEquivalencesEquation} inherits a $G$-action, so that $H_\ast(\xi^\mathrm{P},X)\cong H_\ast((I,\partial I)\times\mathbb{S}\xi)$ as $H_\ast(G)$ modules.  That $\Theta$ is an isomorphism of $H_\ast(G)$ modules now follows immediately from the fact that, for $\alpha\in H_\ast(G)$ and $x\in H_{\ast-1}((I,\partial I)\times\mathbb{S}\xi)$, we have $\partial_\ast(\alpha\times x)=(-1)^{|\alpha|}\alpha\times\partial_\ast x$.  The splitting is provided by the projection $\xi^\mathrm{P}\to X$, so that if this is $G$-equivariant, then the splitting is a splitting of $H_\ast(G)$ modules.
\end{proof}

\section{Circle actions on sphere bundles}\label{SphereBundlesSection}
This section presents the last of the general results that we will be using later to prove Theorem A.  We will consider certain circle actions on unit sphere bundles, and show how to compute the corresponding degree-raising operators on homology groups.  By applying this to $\mathbb{S}T\CP^n$ and to several associated bundles we will gain useful information about the string topology BV algebra of $\CP^n$.  This is because, as we shall see, $H_\ast(\mathbb{S}T\CP^n)$ is in fact a summand of $\mathbb{H}_\ast(L\CP^n)$.  Throughout the section we consider homology with coefficients in any commutative ring $R$.

Let us consider complex vector bundles over a space $X$.  Given such a vector bundle $\xi\to X$ we will write $\pi\colon\mathbb{S}\xi\to X$ for the associated unit sphere bundle.   Recall that the homology of $\mathbb{S}\xi$ can be computed using the \emph{Gysin sequence}:
\[\cdots\to H_\ast(\mathbb{S}\xi)\xrightarrow{\pi_\ast} H_\ast(X)\xrightarrow{E_\xi\cap-}H_{\ast-2d}(X)\to H_{\ast -1}(\mathbb{S}\xi)\to\cdots\]
Here $d$ denotes the rank of $\xi$ and $E_\xi$ denotes the Euler class.

\begin{definition}\label{CircleActionDefinition}
Let $\xi$ and $\eta$ be complex vector bundles over a base-space $X$ and let $S^1$ act on $\mathbb{S}(\xi\oplus\eta)$ in the following way: $t\in S^1$ sends a pair $(v,w)\in\mathbb{S}(\xi\oplus\eta)$ to $(tv,w)$.   Here we are regarding $S^1$ as the unit complex numbers acting by scalar multiplication.  The associated \emph{degree raising operator} is the homomorphism
\[R\colon H_\ast(\mathbb{S}(\xi\oplus\eta))\to H_{\ast+1}(\mathbb{S}(\xi\oplus\eta))\]
that sends $x\in H_\ast(\mathbb{S}(\xi\oplus\eta))$ to $\rho_\ast([S^1]\times x)$, where $\rho\colon S^1\times \mathbb{S}(\xi\oplus\eta)\to\mathbb{S}(\xi\oplus\eta)$ denotes the action.
\end{definition}

\begin{theorem}\label{SphereBundlesTheorem}
In the situation of Definition~\ref{CircleActionDefinition}, the degree-raising operator is equal to the composite
\begin{equation}\label{CompositeEquation}H_\ast(\mathbb{S}(\xi\oplus\eta))\to H_{\ast}(X)\xrightarrow{c_{d-1}(\xi)c_e(\eta)\cap-}H_{\ast-2d-2e+2}(X)\to H_{\ast+1}(\mathbb{S}(\xi\oplus\eta))\end{equation}
in which $d=\mathrm{rank}(\xi)$, $e=\mathrm{rank}(\eta)$, and the unlabelled maps are taken from the Gysin sequence.
\end{theorem}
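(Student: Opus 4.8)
The plan is to reduce the computation to a universal example and then use the Gysin sequence together with known facts about circle actions on sphere bundles. First I would observe that the $S^1$-action in Definition~\ref{CircleActionDefinition} only involves the first summand $\xi$, so the operator $R$ is ``$\eta$-linear'' in a suitable sense: writing $\mathbb{S}(\xi\oplus\eta)$ as a fibrewise join over $X$ of $\mathbb{S}\xi$ and $\mathbb{S}\eta$ — more precisely using the cofibre/pushout description in which $\mathbb{S}(\xi\oplus\eta)$ is built from $\mathbb{B}\xi\times_X\mathbb{S}\eta$ and $\mathbb{S}\xi\times_X\mathbb{B}\eta$ glued along $\mathbb{S}\xi\times_X\mathbb{S}\eta$ — I would want to express $R$ through the degree-raising operator on $\mathbb{S}\xi$ itself, where $\eta$ has been replaced by a trivial bundle. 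The cleanest route, though, is to pass to the universal case: classify $\xi$ and $\eta$ by a map $X\to BU(d)\times BU(e)$, and note that both sides of the asserted identity \eqref{CompositeEquation} are natural with respect to such maps (the Gysin maps and the cap products with Chern classes are natural, and so is the degree-raising operator, since the $S^1$-action is pulled back). Hence it suffices to verify the formula when $X=BU(d)\times BU(e)$ and $\xi,\eta$ are the universal bundles; even better, by splitting principles one may further restrict to $X$ a product of $\CP^\infty$'s with $\xi,\eta$ sums of line bundles, or even to $X$ a point if one is willing to track the universal class — but I expect keeping $X$ general and arguing directly with the Gysin sequence is simpler than an explicit reduction.

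The core computation I would carry out is the following. Consider the map $\rho\colon S^1\times\mathbb{S}(\xi\oplus\eta)\to\mathbb{S}(\xi\oplus\eta)$ and the homotopy quotient / Borel construction: the operator $x\mapsto\rho_\ast([S^1]\times x)$ is, up to sign and a shift, the boundary in the Gysin sequence of the $S^1$-bundle associated with the free part of the action, or can be computed via the cofibre of the orbit map. Concretely, the $S^1$-action on $\mathbb{S}(\xi\oplus\eta)$ has fixed set exactly $\mathbb{S}\eta$ (the pairs $(0,w)$), and on the complement $\mathbb{S}(\xi\oplus\eta)\setminus\mathbb{S}\eta$ the action is free with quotient the projective-type bundle $\mathbb{P}(\xi)\times_X\mathbb{B}\eta$-ish space; more usefully, $\mathbb{S}(\xi\oplus\eta)\setminus\mathbb{S}\eta$ deformation retracts onto $\mathbb{S}\xi\times_X\mathbb{S}\eta$ only when $e\ge 1$, so I would instead use that $\mathbb{S}(\xi\oplus\eta)$ minus the \emph{other} fixed-type locus $\mathbb{S}\xi\times_X\{0\}$ (when $d\ge 1$) is the unit disc bundle of something. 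The technically cleanest statement is: the degree-raising operator for a free circle action is cap product with the Euler class of the circle bundle, and in general one gets a zig-zag through the fixed locus. I would set up the diagram relating the Gysin sequence of $\xi\oplus\eta$ over $X$ to the Gysin sequence of $\xi$ over $X$, using that $E_{\xi\oplus\eta}=E_\xi E_\eta = (c_d(\xi))(c_e(\eta))$ in $H^{2d+2e}(X)$, and identify the connecting map in \eqref{CompositeEquation} with $R$ by tracking a fundamental class / Thom class. The Chern class $c_{d-1}(\xi)$ enters precisely because rotating the $\xi$-direction replaces the top Chern class $c_d(\xi)$ by the ``next'' class $c_{d-1}(\xi)$ — this is the same mechanism by which, for a single line bundle pair $\xi=L$, $\eta=L'$, one has $\mathbb{S}(L\oplus L')$ a circle bundle and the rotation operator is cap with $1\cdot c_1(L')=c_0(L)c_1(L')$.

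The step I expect to be the main obstacle is the bookkeeping that identifies the geometrically-defined operator $x\mapsto\rho_\ast([S^1]\times x)$ with the algebraically-defined composite through the Gysin sequence, with the correct indices $d-1$ and $e$ and the correct signs — in other words, pinning down exactly which Gysin connecting homomorphisms appear in \eqref{CompositeEquation} and why the shift is $2d+2e-2$ rather than $2d+2e$. I would handle this by the universal-example reduction above: once both sides are natural in $(X,\xi,\eta)$, it is enough to check a single nondegenerate case, e.g. $X=\CP^\infty\times\cdots$, where one can compute $H_\ast(\mathbb{S}(\xi\oplus\eta))$ explicitly from the Gysin sequence, write down the $S^1$-action on the total space as an honest subspace of a product of $\CP^\infty$'s, and verify the formula on additive generators. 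A secondary obstacle is making the naturality of the degree-raising operator precise (it requires the classifying map to be covered by a bundle map intertwining the $S^1$-actions, which it is, since the action is defined purely in terms of the vector-bundle structure). With these two points settled the theorem follows by comparing generators, and the link to characteristic classes of $\CP^n$ promised in the introduction comes out of applying this to $\xi=$ appropriate subbundles of $T\CP^n\oplus\mathbb{C}$.
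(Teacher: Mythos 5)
Your main reduction does not work as stated, and it is exactly the step you lean on to dispose of what you yourself identify as the main obstacle. You propose to classify $\xi,\eta$ by a map $f\colon X\to BU(d)\times BU(e)$, observe that both the degree-raising operator $R$ and the Gysin composite are natural, and conclude that it suffices to check the identity over the universal base. But the two things being compared are operators on $H_\ast(\mathbb{S}(\xi\oplus\eta))$, the \emph{homology of the total space}, and naturality here only says that the map $\tilde f_\ast\colon H_\ast(\mathbb{S}(\xi\oplus\eta))\to H_\ast(\mathbb{S}(\xi_{\mathrm{univ}}\oplus\eta_{\mathrm{univ}}))$ intertwines the operators on the two sides. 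Knowing the identity universally therefore only gives $\tilde f_\ast\circ(R-C)=0$, where $C$ is the composite \eqref{CompositeEquation}; since $\tilde f_\ast$ is far from injective for a general $X$ (homology does not pull back, and plenty of classes of $X$ die in $BU(d)\times BU(e)$), this does not yield $R=C$ over $X$. A classifying-map reduction of this kind is sound for identities among characteristic classes in the cohomology of the base, but not for comparing operators on the homology of an auxiliary bundle. (A reduction in the correct direction is conceivable — e.g.\ pulling back along a flag bundle $Fl\to X$, where the induced map on homology of the sphere bundles is surjective and both operators commute with the pushforward — but that is not what you wrote, and it still leaves the split case to be computed.)

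Because the reduction fails, the substantive content of the theorem — why the operator is cap product with $c_{d-1}(\xi)c_e(\eta)$ rather than, say, the Euler class $c_d(\xi)c_e(\eta)$ — is never actually established; you defer precisely this "bookkeeping" to the universal-case check. The paper's proof supplies the missing mechanism directly, for arbitrary $X$ and arbitrary coefficients: the rotation $\rho$ extends to a map of pairs $(\mathbb{B}^2,S^1)\times\mathbb{S}(\xi\oplus\eta)\to(\mathbb{B}(\xi\oplus\eta),\mathbb{S}(\xi\oplus\eta))$, which after factoring through the decomposition $\mathbb{S}(\xi\oplus\eta)=(\mathbb{B}\xi\times_X\mathbb{S}\eta)\cup(\mathbb{S}\xi\times_X\mathbb{B}\eta)$ and excision reduces the identity to a relative Thom-class computation for the bundle map $\mathbb{C}\oplus\pi^\ast\eta\to\pi^\ast\xi\oplus\pi^\ast\eta$ over $\mathbb{S}\xi$, where $\mathbb{C}\hookrightarrow\pi^\ast\xi$ is the tautological line $z\mapsto zv$. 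The class $c_{d-1}(\xi)$ enters via its Milnor--Stasheff description: $\pi^\ast c_{d-1}(\xi)$ is the Euler class of the orthogonal complement $\xi^\perp$ of that tautological line in $\pi^\ast\xi$. Your sketch gestures at related ideas (the fibrewise decomposition, the line-bundle sanity check, "Euler class of the complement"), but without the disc-extension of the action and this identification of $c_{d-1}$, the index shift by $2d+2e-2$ and the appearance of $c_{d-1}(\xi)$ are asserted rather than proved; and the route you chose to avoid proving them does not close the gap.
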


The proof of Theorem~\ref{SphereBundlesTheorem} will be given in \S\ref{CircleActionsSubsection}.   Then in \S\ref{GysinSubsection} we establish some basic properties of the Gysin sequence that will be used later.

\subsection{Proof of Theorem~\ref{SphereBundlesTheorem}}\label{CircleActionsSubsection}

\begin{proof}[Proof of Theorem~\ref{SphereBundlesTheorem}]
Let us begin by recalling how to construct the Gysin sequence.  First form the pair $(\mathbb{B}\xi,\mathbb{S}\xi)$, where $\mathbb{B}\xi$ denotes the unit ball bundle of $\xi$, and take the associated long exact sequence of homology groups.  Then apply the isomorphisms
\[H_\ast(\mathbb{B}\xi)\xrightarrow{\pi_\ast}H_\ast(X),\qquad H_\ast(\mathbb{B}\xi,\mathbb{S}\xi)\xrightarrow{\Th}H_{\ast-d}(X)\]
to transform this sequence into the Gysin sequence.

The crucial point in the proof of the theorem is that we will use the definition of $c_{d-1}(\xi)$ given in \cite{\MilnorAndStasheff}, which we briefly recall.  The pullback bundle $\pi^\ast\xi$ admits a tautological inclusion $a\colon\mathbb{C}\hookrightarrow\pi^\ast\xi$ that in the fibre over $v\in\mathbb{S}\xi$ sends $z\in\mathbb{C}$ to $zv\in\xi_{\pi(v)}=(\pi^\ast\xi)_v$.  If we write $\xi^\perp$ for the orthogonal complement of this summand, then we have a splitting $\mathbb{C}\oplus\xi^\perp = \pi^\ast\xi$.  The cohomology Gysin sequence shows that $\pi^\ast\colon H^{2d-2}(X)\to H^{2d-2}(\mathbb{S}\xi)$ is an isomorphism, and $c_{d-1}(\xi)$ is defined to be the class whose image is $c_{d-1}(\xi^\perp)$.  Note that $c_{d-1}(\xi^\perp)$ is simply the Euler class of $\xi^\perp$.

To prove the theorem we must show that the degree raising operator $R$ coincides with \eqref{CompositeEquation}.  The proof will successively rephrase this problem until we can apply the fact described in the last paragraph.

The action map $\rho\colon S^1\times\mathbb{S}(\xi\oplus\eta)\to\mathbb{S}(\xi\oplus\eta)$ extends to a map of pairs, which we also denote by $\rho$,
\[\rho\colon (\mathbb{B}^2,S^1)\times\mathbb{S}(\xi\oplus\eta)\to(\mathbb{B}(\xi\oplus\eta),\mathbb{S}(\xi\oplus\eta))\]
which sends $(z,(v,w))\in\mathbb{B}^2\times\mathbb{S}(\xi\oplus\eta)$ to $(zv,w)\in\mathbb{B}(\xi\oplus\eta)$.  Here we are regarding $\mathbb{B}^2$ as the unit disc in the complex numbers.  Using this extended $\rho$ we can define an operator
\[R_1\colon H_\ast(\mathbb{S}(\xi\oplus\eta))\to H_{\ast+2}(\mathbb{B}(\xi\oplus\eta),\mathbb{S}(\xi\oplus\eta))\]
that sends $x\in H_\ast(\mathbb{S}(\xi\oplus\eta))$ to $\rho_\ast([\mathbb{B}^2,S^1]\times x)$.  This new $R_1$ is related to $R$ by the formula $\partial_\ast\circ R_1=R$.  The final map in \eqref{CompositeEquation} is $\partial_\ast\circ\Th^{-1}$.  These two facts mean that to prove the theorem we must show that the diagram
\begin{equation}\label{DiagramI}\xymatrix{
H_\ast(\mathbb{S}(\xi\oplus\eta))\ar[rr]^-{R_1}\ar[d]_{\pi_\ast}   & &  H_{\ast+2}(\mathbb{B}(\xi\oplus\eta),\mathbb{S}(\xi\oplus\eta))\ar[d]^\Th\\
H_{\ast}(X)\ar[r]_{c_{e}(\eta)\cap -}  & H_{\ast-2e}(X)\ar[r]_{c_{d-1}(\xi)\cap -} & H_{\ast-2d-2e+2}(X)
}\end{equation}
commutes.

By choosing an appropriate norm on $\xi\oplus\eta$ we can make identifications $\mathbb{B}(\xi\oplus\eta)=\mathbb{B}\xi\times_X\mathbb{B}\eta$ and $\mathbb{S}(\xi\oplus\eta)=(\mathbb{B}\xi\times_X\mathbb{S}\eta)\cup (\mathbb{S}\xi\times_X\mathbb{B}\eta)$.  There is an inclusion map
\[i\colon \mathbb{S}(\xi\oplus\eta)\to (\mathbb{S}(\xi\oplus\eta),\mathbb{B}\xi\times_X\mathbb{S}\eta),\]
and a projection map
\[p\colon(\mathbb{S}(\xi\oplus\eta),\mathbb{B}\xi\times_X\mathbb{S}\eta)\to(\mathbb{B}\eta,\mathbb{S}\eta)\]
The map $R_1$ appearing in \eqref{DiagramI} factors through $i_\ast$.  To see this, note that the extended $\rho$ can in fact be regarded as a map of pairs
\[\rho\colon (\mathbb{B}^2,S^1)\times(\mathbb{S}(\xi\oplus\eta), \mathbb{B}\xi\times_X\mathbb{S}\eta)\to(\mathbb{B}(\xi\oplus\eta),\mathbb{S}(\xi\oplus\eta))\]
defining an operator
\[R_2\colon H_\ast(\mathbb{S}(\xi\oplus\eta),\mathbb{B}\xi\times_X\mathbb{S}\eta)\to H_{\ast+2}(\mathbb{B}(\xi\oplus\eta),\mathbb{S}(\xi\oplus\eta))\]
that is related to $R_1$ by the equation $R_2\circ i_\ast=R_1$.  The map $c_{e}(\eta)\cap \pi_\ast(-)$ appearing in \eqref{DiagramI} also factors through $i_\ast$.  To see this, note that it can be written as
\[H_\ast(\mathbb{S}(\xi\oplus\eta))\xrightarrow{\pi_\ast}H_\ast(X)\xrightarrow{z_\ast} H_{\ast}(\mathbb{B}\eta)\to H_{\ast}(\mathbb{B}\eta,\mathbb{S}\eta)\xrightarrow{\Th} H_{\ast-2e}(X)\]
where $z\colon X\to\mathbb{B}\eta$ is the zero section.  But $z\circ\pi$ is homotopic to the projection $p\colon \mathbb{S}(\xi\oplus\eta)\to\mathbb{B}\eta$, and the composite $\mathbb{S}(\xi\oplus\eta)\to\mathbb{B}\eta\to(\mathbb{B}\eta,\mathbb{S}\eta)$ is equal to $p\circ i$.  Thus $c_{e}(\eta)\cap \pi_\ast(-)$ is equal to
\[H_\ast(\mathbb{S}(\xi\oplus\eta))\xrightarrow{i_\ast}H_\ast(\mathbb{S}(\xi\oplus\eta),\mathbb{B}\xi\times_X\mathbb{S}\eta)\xrightarrow{p_\ast} H_{\ast}(\mathbb{B}\eta,\mathbb{S}\eta)\xrightarrow{\Th} H_{\ast-2e}(X)\]
as claimed.

The last paragraph described how the two initial maps in \eqref{DiagramI} factor through $i_\ast$.   To show that \eqref{DiagramI} commutes it will therefore suffice to show that the diagram
\[\xymatrix{
H_\ast(\mathbb{S}(\xi\oplus\eta),\mathbb{B}\xi\times_X\mathbb{S}\eta)\ar[d]_{p_\ast}  \ar[rr]^-{R_2}& & H_{\ast+2}(\mathbb{B}(\xi\oplus\eta),\mathbb{S}(\xi\oplus\eta))\ar[d]^\Th  \\
H_{\ast}(\mathbb{B}\eta,\mathbb{S}\eta)\ar[r]_\Th &  H_{\ast-2e}(X)\ar[r]_-{c_{d-1}(\xi)\cap -} &  H_{\ast-2d-2e+2}(X)
}\]
commutes.  Moreover, since there is an excision-map
\begin{multline*}
j\colon (\mathbb{B}(\pi^\ast\eta),\mathbb{S}(\pi^\ast\eta))=(\mathbb{S}\xi\times_X\mathbb{B}\eta,\mathbb{S}\xi\times_X\mathbb{S}\eta)\to  \\
\to ((\mathbb{B}\xi\times_X\mathbb{S}\eta)\cup (\mathbb{S}\xi\times_X\mathbb{B}\eta),\mathbb{B}\xi\times_X\mathbb{S}\eta)=(\mathbb{S}(\xi\oplus\eta),\mathbb{B}\xi\times_X\mathbb{S}\eta)
\end{multline*}
it will suffice to show that the diagram
\begin{equation}\label{DiagramII}\xymatrix{
H_\ast(\mathbb{B}(\pi^\ast\eta),\mathbb{S}(\pi^\ast\eta))\ar[d]_{p_\ast\circ j_\ast}  \ar[rr]^-{R_2\circ j_\ast}& & H_{\ast+2}(\mathbb{B}(\xi\oplus\eta),\mathbb{S}(\xi\oplus\eta))\ar[d]^\Th  \\
H_{\ast}(\mathbb{B}\eta,\mathbb{S}\eta)\ar[r]_\Th &  H_{\ast-2e}(X)\ar[r]_-{c_{d-1}(\xi)\cap -} &  H_{\ast-2d-2e+2}(X)
}\end{equation}
commutes.

The definition of $R_2$, which appears in \eqref{DiagramII}, involves the pair $(\mathbb{B}^2,S^1)\times(\mathbb{B}(\pi^\ast\eta),\mathbb{S}(\pi^\ast\eta))$.   We can identify this pair with $(\mathbb{B}(\mathbb{C}\oplus\pi^\ast\eta),\mathbb{S}(\mathbb{C}\oplus\pi^\ast\eta))$.  We then have a commutative diagram
\[\xymatrix{
H_\ast(\mathbb{B}(\pi^\ast\eta),\mathbb{S}(\pi^\ast\eta))\ar[r]^{R_2\circ j_\ast}\ar[d]_{[\mathbb{B}^2,S^1]\times -} & H_{\ast+2}(\mathbb{B}(\xi\oplus\eta),\mathbb{S}(\xi\oplus\eta))\\
H_{\ast+2}(\mathbb{B}(\mathbb{C}\oplus\pi^\ast\eta),\mathbb{S}(\mathbb{C}\oplus\pi^\ast\eta))\ar[ur]_{A_\ast} &
}\]
where $A=\rho\circ(\mathrm{Id}\times j)$.  The composite $p\circ j$ which appears in \eqref{DiagramII} is just the projection $(\mathbb{B}(\pi^\ast\eta),\mathbb{S}(\pi^\ast\eta))\to(\mathbb{B}\eta,\mathbb{S}\eta)$.  Thus $\Th\circ p_\ast\circ j_\ast$ is equal to
\[H_\ast(\mathbb{B}(\pi^\ast\eta),\mathbb{S}(\pi^\ast\eta))\xrightarrow{\Th} H_{\ast-2e}(\mathbb{S}\eta)\xrightarrow{\pi_\ast} H_{\ast-2e}(X)\]
and we have a commutative diagram
\[\xymatrix{
H_\ast(\mathbb{B}(\pi^\ast\eta),\mathbb{S}(\pi^\ast\eta))\ar[r]^\Th\ar[d]_{[\mathbb{B}^2,S^1]\times -} & H_{\ast-2e}(\mathbb{S}\xi)\\
H_{\ast+2}(\mathbb{B}(\mathbb{C}\oplus\pi^\ast\eta),\mathbb{S}(\mathbb{C}\oplus\pi^\ast\eta))\ar[ur]_\Th & 
}\]
so that $\Th\circ q_\ast\circ j_\ast=\pi_\ast\circ \Th\circ ([\mathbb{B}^2,S^1]\times-)$.  Thus both composites in \eqref{DiagramII} factor through $([\mathbb{B}^2,S^1]\times -)$.  To show that the diagram commutes it thus suffices to show that
\begin{equation}\label{DiagramIII}\xymatrix{
{\scriptstyle H_{\ast+2}(\mathbb{B}(\mathbb{C}\oplus\pi^\ast\eta),\mathbb{S}(\mathbb{C}\oplus\pi^\ast\eta))}  \ar[rr]^{A_\ast}\ar[d]_\Th & & {\scriptstyle H_{\ast+2}(\mathbb{B}(\xi\oplus\eta),\mathbb{S}(\xi\oplus\eta))\ar[d]^{\Th} } \\
H_{\ast-2e}(\mathbb{S}\xi)\ar[r]_{\pi_\ast}  & H_{\ast-2e}(X)\ar[r]_-{c_{d-1}(\xi)\cap-}&  H_{\ast-2d-2e+2}(X)
 }\end{equation}
commutes.

The map $A\colon(\mathbb{B}(\mathbb{C}\oplus\pi^\ast\eta),\mathbb{S}(\mathbb{C}\oplus\pi^\ast\eta))\to (\mathbb{B}(\xi\oplus\eta),\mathbb{S}(\xi\oplus\eta))$ is just $\rho\circ(\mathrm{Id}\times j)$, but it is easy to see that it arises from the composite map of vector bundles
\[\xymatrix{
\mathbb{C}\oplus\pi^\ast\eta \ar[r]^{a\oplus 1}\ar[d] & \pi^\ast\xi\oplus\pi^\ast\eta\ar[d]\ar[r]^\pi & \xi\oplus\eta\ar[d]\\
\mathbb{S}\xi \ar@{=}[r] & \mathbb{S}\xi\ar[r]_\pi & X
}\]
where $a$ is the inclusion described at the start of the proof.  To show that \eqref{DiagramIII} commutes it therefore suffices to show that
\[\xymatrix{
H_{\ast+2}(\mathbb{B}(\mathbb{C}\oplus\pi^\ast\eta),\mathbb{S}(\mathbb{C}\oplus\pi^\ast\eta))\ar[r]^-{(a\oplus 1)_\ast}\ar[d]_\Th     & H_{\ast+2}(\mathbb{B}(\pi^\ast\xi\oplus\pi^\ast\eta),\mathbb{S}(\pi^\ast\xi\oplus\pi^\ast\eta)) \ar[d]^{\Th}  \\
H_{\ast-2e}(\mathbb{S}\xi) \ar[r]_-{\pi^\ast(c_{d-1}(\xi))\cap -} &  H_{\ast-2d-2e+2}(\mathbb{S}\xi)
}\]
commutes.  This is a standard result, so long as one replaces $\pi^\ast(c_{d-1}(\xi))$ in the above with the Euler class $E_{\mu}$, where $\mu$ is the complement to the sub-bundle $(a\oplus 1)(\mathbb{C}\oplus\pi^\ast\eta)$ of $\pi^\ast\xi\oplus\pi^\ast\eta$.  But this complement is just the bundle $\xi^\perp$ described at the start of the proof.  Its Euler class, by the definition of $c_{d-1}(\xi)$ explained there, is equal to $\pi^\ast(c_{d-1}(\xi))$.  This proves the theorem.
\end{proof}

\subsection{Computations with the Gysin sequence}\label{GysinSubsection}

In this subsection we will prove three simple lemmas to help with computations using the Gysin sequence
\begin{equation}\label{GysinSequenceEquation}\cdots\to H_\ast(\mathbb{S}\xi)\xrightarrow{\pi_\ast} H_\ast(X)\xrightarrow{E_\xi\cap-}H_{\ast-2d}(X)\to H_{\ast -1}(\mathbb{S}\xi)\to\cdots\end{equation}
associated to a rank $d$ complex vector bundle $\xi$ over $X$.  The lemmas are simple consequences of the construction of the Gysin sequence.  We include the proofs only for the sake of completeness.

Let us begin by recalling how to construct the Gysin sequence \eqref{GysinSequenceEquation}.  First form the pair $(\mathbb{B}\xi,\mathbb{S}\xi)$, where $\mathbb{B}\xi$ denotes the unit ball bundle of $\xi$, and take the associated long exact sequence of homology groups.  Then apply the isomorphisms
\begin{equation}\label{IsomorphismsEquation}H_\ast(\mathbb{B}\xi)\xrightarrow{\pi_\ast}H_\ast(X),\qquad H_\ast(\mathbb{B}\xi,\mathbb{S}\xi)\xrightarrow{\Th}H_{\ast-d}(X)\end{equation}
to transform this sequence into \eqref{GysinSequenceEquation}.

\begin{lemma}\label{GysinCapProductLemma}
Let $\xi\to X$ be a complex vector bundle of rank $d$ over $X$ and let $u\in H^\ast(X)$.  Then the diagram
\[\xymatrix{
\ar[r] & H_\ast(\mathbb{S}\xi)\ar[r]^{\pi_\ast}\ar[d]^{\pi^\ast u\cap -}  & H_\ast(X)\ar[r]^-{E_\xi\cap-}\ar[d]^{u\cap-} & H_{\ast-2d}(X)\ar[r]\ar[d]^{u\cap-} &  H_{\ast -1}(\mathbb{S}\xi)\ar[r]\ar[d]^{\pi^\ast u\cap -} & \\
\ar[r] & H_{\ast-|u|}(\mathbb{S}\xi)\ar[r]_{\pi_\ast}  & H_{\ast-|u|}(X)\ar[r]_-{E_\xi\cap-} & H_{\ast-2d-|u|}(X)\ar[r] &  H_{\ast -1-|u|}(\mathbb{S}\xi)\ar[r] & 
}\]
commutes up to sign.  To be precise, the first two squares commute strictly and the third commutes up to sign $(-1)^{|u|}$.
\end{lemma}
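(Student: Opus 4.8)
The plan is to deduce the lemma directly from the construction of the Gysin sequence as the long exact homology sequence of the pair $(\mathbb{B}\xi,\mathbb{S}\xi)$, together with the basic naturality properties of the cap product. Write $\bar\pi\colon\mathbb{B}\xi\to X$ for the projection and $j\colon\mathbb{S}\xi\hookrightarrow\mathbb{B}\xi$ for the inclusion, so that $\pi=\bar\pi\circ j$ and hence $\pi^\ast u=j^\ast\bar\pi^\ast u$. The first step is to observe that, under the isomorphisms \eqref{IsomorphismsEquation}, the operation $u\cap-$ on the copy of $H_\ast(X)$ coming from $H_\ast(\mathbb{B}\xi)$ corresponds to $\bar\pi^\ast u\cap-$, and the operation $u\cap-$ on the copy of $H_\ast(X)$ coming from $H_\ast(\mathbb{B}\xi,\mathbb{S}\xi)$ corresponds to the relative cap product $\bar\pi^\ast u\cap-$. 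For the first correspondence this is the projection formula $\bar\pi_\ast(\bar\pi^\ast u\cap y)=u\cap\bar\pi_\ast y$. For the second it amounts to saying that $\Th$ is an isomorphism of $H^\ast(X)$-modules; writing $\Th(z)=\bar\pi_\ast(U_\xi\cap z)$ for the Thom class $U_\xi$, this follows from the projection formula together with the fact that $U_\xi$ has even degree, so that $U_\xi\cup\bar\pi^\ast u=\bar\pi^\ast u\cup U_\xi$ with no sign. Consequently it suffices to prove the analogue of the lemma for the long exact sequence of $(\mathbb{B}\xi,\mathbb{S}\xi)$, with the vertical maps being $\pi^\ast u\cap-$ on the terms $H_\ast(\mathbb{S}\xi)$ and $\bar\pi^\ast u\cap-$ on the terms $H_\ast(\mathbb{B}\xi)$ and $H_\ast(\mathbb{B}\xi,\mathbb{S}\xi)$.

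The second step is to dispatch the two squares of this reduced diagram that do not involve the connecting homomorphism. The maps $H_\ast(\mathbb{S}\xi)\xrightarrow{j_\ast}H_\ast(\mathbb{B}\xi)$ and $H_\ast(\mathbb{B}\xi)\to H_\ast(\mathbb{B}\xi,\mathbb{S}\xi)$ are induced by maps of (pairs of) spaces, so these squares commute strictly: for the first, $j_\ast\big((j^\ast\bar\pi^\ast u)\cap x\big)=\bar\pi^\ast u\cap j_\ast x$ by naturality of the cap product; for the second, $H_\ast(\mathbb{B}\xi,\mathbb{S}\xi)$ is a module over $H^\ast(\mathbb{B}\xi)$ via the relative cap product and $H_\ast(\mathbb{B}\xi)\to H_\ast(\mathbb{B}\xi,\mathbb{S}\xi)$ is a map of such modules. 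Composing with the translation isomorphisms of the first step, this yields the strict commutativity of the first two squares in the statement of Lemma~\ref{GysinCapProductLemma}.

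The third step handles the remaining square, which is the only one carrying a sign. The relevant map is the connecting homomorphism $\partial\colon H_\ast(\mathbb{B}\xi,\mathbb{S}\xi)\to H_{\ast-1}(\mathbb{S}\xi)$, and the required fact is the standard compatibility of $\partial$ with cap products, $\partial(v\cap z)=(-1)^{|v|}(j^\ast v)\cap\partial z$ for $v\in H^\ast(\mathbb{B}\xi)$ and $z\in H_\ast(\mathbb{B}\xi,\mathbb{S}\xi)$ (for the sign conventions see \cite[p.~255]{\Spanier}). Taking $v=\bar\pi^\ast u$ and using $j^\ast\bar\pi^\ast u=\pi^\ast u$ produces exactly the factor $(-1)^{|u|}$ claimed. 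I do not expect any genuine obstacle here; the only thing requiring care is the bookkeeping of signs, in particular checking that the Euler-class square and the two maps into relative homology are truly sign-free — which, as above, rests on the Thom class of a complex bundle having even degree — so that the sign appears once and only once, at the connecting homomorphism.
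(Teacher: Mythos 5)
Your proposal is correct and follows essentially the same route as the paper: the paper's proof likewise deduces the lemma from the construction of the Gysin sequence via the pair $(\mathbb{B}\xi,\mathbb{S}\xi)$, using the commutation of cap products with the even-degree classes $E_\xi$ and $U_\xi$ and the formula $\partial_\ast(\pi^\ast u\cap x)=(-1)^{|u|}\pi^\ast u\cap\partial_\ast x$ to produce the single sign in the third square. Your write-up simply spells out the bookkeeping (projection formula, Thom isomorphism as an $H^\ast(X)$-module map) that the paper leaves implicit.
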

\begin{proof}
This is a simple consequence of the construction of the Gysin sequence, which we recalled earlier, and the formulas
\begin{eqnarray*}
u\cap(E_\xi\cap x)&=&E_\xi\cap (u\cap x),\\
\partial_\ast(\pi^\ast u\cap x)&=&(-1)^{|u|}\pi^\ast u\cap\partial_\ast x,\\
\pi^\ast u\cap(U_\xi\cap x)&=&U_\xi\cap (\pi^\ast u\cap x). 
\end{eqnarray*}
Here $U_\xi$ denotes the Thom class of $\xi$.
\end{proof}

\begin{lemma}\label{DirectSumGysinSequenceLemma}
Let $\xi$ and $\eta$ be complex vector bundles over a space $X$.  Then the Gysin sequences for $\xi$ and $\xi\oplus\eta$ are related by the following commutative diagram:
\[\xymatrix{
\ar[r] & H_\ast(\mathbb{S}\xi)\ar[r]^{}\ar[d]^{i_\ast}  & H_\ast(X)\ar[r]^-{}\ar@{=}[d] & H_{\ast-2d}(X)\ar[r]\ar[d]^{E_\eta\cap -} &  H_{\ast -1}(\mathbb{S}\xi)\ar[r]\ar[d]^{i_\ast} & \\
\ar[r] & H_\ast(\mathbb{S}(\xi\oplus\eta))\ar[r]_{}  & H_\ast(X)\ar[r]_-{} & H_{\ast-2d-2e}(X)\ar[r] &  H_{\ast -1}(\mathbb{S}(\xi\oplus\eta))\ar[r] &  
}\]
Here $i\colon\mathbb{S}\xi\hookrightarrow\mathbb{S}(\xi\oplus\eta)$ is the inclusion map, and $d=\rank\xi$ and $e=\rank\eta$.  A similar diagram relates the Gysin sequences for $\eta$ and $\xi\oplus\eta$, in which the vertical map $H_{\ast-q}(X)\to H_{\ast-p-q}(X)$ is just $E_\xi\cap-$.
\end{lemma}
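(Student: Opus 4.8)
The plan is to obtain the diagram by comparing the long exact sequences of pairs from which the two Gysin sequences are built. Recall that \eqref{GysinSequenceEquation} is, by construction, the long exact sequence of $(\mathbb{B}\xi,\mathbb{S}\xi)$ transported along the isomorphisms \eqref{IsomorphismsEquation}, and similarly for $\xi\oplus\eta$. The zero section of $\eta$ defines a fibrewise inclusion of complex vector bundles $\xi\hookrightarrow\xi\oplus\eta$ over $X$; since a unit vector of $\xi$ is a unit vector of $\xi\oplus\eta$, this induces a map of pairs
\[j\colon(\mathbb{B}\xi,\mathbb{S}\xi)\longrightarrow(\mathbb{B}(\xi\oplus\eta),\mathbb{S}(\xi\oplus\eta))\]
whose restriction $\mathbb{S}\xi\to\mathbb{S}(\xi\oplus\eta)$ is exactly the map $i$. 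The morphism of long exact sequences induced by $j$, once transported along \eqref{IsomorphismsEquation}, will be the required commutative ladder --- provided we can identify the three resulting vertical maps.

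First I would identify these vertical maps. The map $H_\ast(\mathbb{B}\xi)\to H_\ast(\mathbb{B}(\xi\oplus\eta))$ commutes with the bundle projections to $X$, so under the identifications $\pi_\ast$ of \eqref{IsomorphismsEquation} it becomes the identity of $H_\ast(X)$. The map $H_\ast(\mathbb{S}\xi)\to H_\ast(\mathbb{S}(\xi\oplus\eta))$ is $i_\ast$ by construction of $j$. Granting that the third map $j_\ast\colon H_\ast(\mathbb{B}\xi,\mathbb{S}\xi)\to H_\ast(\mathbb{B}(\xi\oplus\eta),\mathbb{S}(\xi\oplus\eta))$ becomes $E_\eta\cap-$ under the Thom isomorphisms, commutativity of every square in the ladder then follows at once from naturality of the maps and of the connecting homomorphism in the long exact sequence of a pair; in particular the square built from the maps $E_\xi\cap-$ and $E_{\xi\oplus\eta}\cap-$ is then just the Whitney formula $E_{\xi\oplus\eta}=E_\xi\smile E_\eta$.

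The one substantive point, and the main obstacle, is therefore the claim that $j_\ast$ is $E_\eta\cap-$ under the Thom isomorphisms. This is precisely the standard fact, already invoked in the proof of Theorem~\ref{SphereBundlesTheorem}, that the map of Thom spaces induced by a sub-bundle inclusion becomes, under the Thom isomorphisms, cap product with the Euler class of the orthogonal complement. I would prove it by first computing $j^\ast$ on cohomology: choosing a compatible norm so that the Thom class of $\xi\oplus\eta$ is the relative cup product $U_\xi\smile U_\eta$ of the pulled-back Thom classes, restriction along $j$ fixes $U_\xi$ and sends $U_\eta$ to its image $\pi^\ast E_\eta$ in ordinary cohomology, whence $j^\ast U_{\xi\oplus\eta}=U_\xi\smile\pi^\ast E_\eta$ in $H^{2d+2e}(\mathbb{B}\xi,\mathbb{S}\xi)$, where $\pi\colon\mathbb{B}\xi\to X$. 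The homology statement then follows from the projection formula: for $x\in H_\ast(\mathbb{B}\xi,\mathbb{S}\xi)$ one has $j_\ast x\cap U_{\xi\oplus\eta}=j_\ast(x\cap(U_\xi\smile\pi^\ast E_\eta))=j_\ast((x\cap U_\xi)\cap\pi^\ast E_\eta)$, and pushing this forward to $X$ along $j$, which covers the identity of $X$, and applying the projection formula once more converts it into $\Th_\xi(x)\cap E_\eta=E_\eta\cap\Th_\xi(x)$, as wanted. Finally, the analogous diagram relating the Gysin sequences of $\eta$ and $\xi\oplus\eta$ is produced in the same way from the inclusion $\eta\hookrightarrow\xi\oplus\eta$ given by the zero section of $\xi$, whose complement is now $\xi$, so that the middle vertical map becomes $E_\xi\cap-$.
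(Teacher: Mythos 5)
Your proposal is correct and follows essentially the same route as the paper: the inclusion of pairs $(\mathbb{B}\xi,\mathbb{S}\xi)\hookrightarrow(\mathbb{B}(\xi\oplus\eta),\mathbb{S}(\xi\oplus\eta))$, the induced map of long exact sequences transported along the isomorphisms \eqref{IsomorphismsEquation}, and the identification of the third vertical map via $j^\ast U_{\xi\oplus\eta}=U_\xi\smile\pi^\ast E_\eta$ together with the projection formula. The only difference is that you spell out the Thom-class multiplicativity step that the paper takes for granted, which is a harmless elaboration.
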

\begin{proof}
There is an inclusion of pairs $k\colon(\mathbb{B}\xi,\mathbb{S}\xi)\hookrightarrow(\mathbb{B}(\xi\oplus\eta),\mathbb{S}(\xi\oplus\eta))$ and a corresponding commutative diagram of homology groups whose rows are the exact sequences of homology groups of the pairs  $(\mathbb{B}\xi,\mathbb{S}\xi)$ and $(\mathbb{B}(\xi\oplus\eta),\mathbb{S}(\xi\oplus\eta))$.  After applying the isomorphisms \eqref{IsomorphismsEquation} in both rows, we obtain the required commutative diagram, so long as we can verify that the vertical maps are of the form claimed.  This is clear except for the third vertical map.  However, by our assumptions we have that $k^\ast U_{\xi\oplus\eta}=U_\xi \cup \pi^\ast E_\eta$, so that $k^\ast U_{\xi\oplus\eta}\cap x=\pi^\ast E_\eta\cap(U_\xi\cap x)$.  Using this fact we can verify that the diagram
\[\xymatrix{
H_\ast(\mathbb{B}\xi,\mathbb{S}\xi)\ar[r]^-{U_{\xi}\cap-}\ar[d]^{k_\ast} & H_{\ast-p}(\mathbb{B}\xi)\ar[r]^{\pi_\ast}\ar[d]^{k_\ast(\pi^\ast E_\eta\cap -)} &  H_{\ast -p}(X)\ar[d]^{E_\eta\cap -} \\
H_\ast(\mathbb{B}(\xi\oplus\eta),\mathbb{S}(\xi\oplus\eta))\ar[r]_-{U_{\xi\oplus\eta}\cap-} & H_{\ast-p-q}(\mathbb{B}(\xi\oplus\eta))\ar[r]_-{\pi_\ast} &  H_{\ast-p-q}(X)
}\]
commutes.  The description of the third vertical map follows.  The proof of the second case is similar.
\end{proof}

\begin{lemma}\label{PullbackGysinSequenceLemma}
Let $f\colon X\to Y$ be a map of spaces and let $\xi$ be a complex vector bundle over $X$ of rank $d$.  Then the Gysin sequences for $\xi$ and $f^\ast\xi$ are related by the following commutative diagram:
\[\xymatrix{
\ar[r] & H_\ast(\mathbb{S}(f^\ast \xi))\ar[r]^{}\ar[d]^{}  & H_\ast(Y)\ar[r]^-{}\ar[d]^{f_\ast} & H_{\ast-2d}(Y)\ar[r]\ar[d]^{f_\ast} &  H_{\ast -1}(\mathbb{S}(f^\ast\xi))\ar[r]\ar[d]^{} & \\
\ar[r] & H_{\ast}(\mathbb{S}\xi)\ar[r]_{}  & H_{\ast}(X)\ar[r]_-{} & H_{\ast-2d}(X)\ar[r] &  H_{\ast -1}(\mathbb{S}\xi)\ar[r] & 
}\]
\end{lemma}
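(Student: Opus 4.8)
The plan is to argue exactly as in the proof of Lemma~\ref{DirectSumGysinSequenceLemma}, using the naturality of the long exact sequence of a pair. The pullback bundle $f^\ast\xi$ comes with a canonical bundle map to $\xi$ lying over $f$, and restricting this to unit ball bundles and unit sphere bundles produces a map of pairs
\[\tilde f\colon \bigl(\mathbb{B}(f^\ast\xi),\,\mathbb{S}(f^\ast\xi)\bigr)\longrightarrow\bigl(\mathbb{B}\xi,\,\mathbb{S}\xi\bigr).\]
Since the long exact homology sequence of a pair is functorial, $\tilde f$ induces a commutative ladder relating the long exact sequence of $(\mathbb{B}(f^\ast\xi),\mathbb{S}(f^\ast\xi))$ to that of $(\mathbb{B}\xi,\mathbb{S}\xi)$, with every vertical arrow induced by $\tilde f$.

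Next I would transform both rows into Gysin sequences by applying the isomorphisms \eqref{IsomorphismsEquation}, exactly as in the construction recalled at the start of this subsection, and check that the ladder stays commutative. The squares coming from the projection isomorphisms commute because $\tilde f$ respects the bundle projections; under these isomorphisms the map induced by $\tilde f$ on ball bundles becomes $f_\ast$ on the base spaces, which is the labelled vertical arrow in the statement. The squares coming from the Thom isomorphisms require the naturality of the Thom class under pullback, $\tilde f^\ast U_\xi = U_{f^\ast\xi}$ (see \cite{\MilnorAndStasheff}); combining this with the projection formula $\tilde f_\ast(\tilde f^\ast U_\xi\cap x)=U_\xi\cap\tilde f_\ast x$ yields $\Th\circ\tilde f_\ast = f_\ast\circ\Th$, so these squares commute and again carry $\tilde f_\ast$ to $f_\ast$. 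The terms $H_\ast(\mathbb{S}(f^\ast\xi))$ are left untouched, so the remaining vertical arrows are just the maps $\tilde f_\ast$ on the sphere bundles, which are the unlabelled maps appearing in the statement.

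I do not expect any real obstacle here: the argument is of the same routine, diagram-chasing kind already used in Lemmas~\ref{GysinCapProductLemma} and \ref{DirectSumGysinSequenceLemma}, and it introduces no new idea. If I had to single out the one ingredient that is not purely formal, it is the naturality of the Thom class under pullback, for which I would simply cite \cite{\MilnorAndStasheff}.
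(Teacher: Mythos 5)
Your proposal is correct and follows essentially the same route as the paper: a bundle map of pairs $\tilde f\colon(\mathbb{B}(f^\ast\xi),\mathbb{S}(f^\ast\xi))\to(\mathbb{B}\xi,\mathbb{S}\xi)$ over $f$, naturality of the long exact sequence of a pair, and the identity $\tilde f^\ast U_\xi=U_{f^\ast\xi}$ (with the projection formula) to transport the ladder through the isomorphisms \eqref{IsomorphismsEquation}. Your write-up in fact spells out the compatibility with the Thom and projection isomorphisms a little more explicitly than the paper does.
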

\begin{proof}
The construction of the Gysin sequence was recalled at the beginning of this subsection.  There is a map of pairs $\tilde f\colon (\mathbb{B}f^\ast\xi,\mathbb{S}f^\ast\xi)\to (\mathbb{B}\xi,\mathbb{S}\xi)$, which lies over $f\colon X\to Y$ and satisfies $\tilde f^\ast U_{\xi}=U_{f^\ast\xi}$.  The resulting commutative diagram, whose rows are the long exact sequences for $(\mathbb{B}f^\ast\xi,\mathbb{S}f^\ast\xi)$ and $(\mathbb{B}\xi,\mathbb{S}\xi)$, is then converted into the required diagram after applying the isomorphisms \eqref{IsomorphismsEquation}.
\end{proof}

\section{Statement of 
Theorem B and proof of 
Theorem A}\label{FirstReformulationSection}

In this section we will use the results of \S\ref{GroupActionsSection} to reformulate 
Theorem A.  Since $\CP^n$ is the space of lines in $\mathbb{C}^{n+1}$ it admits a natural action of $U(n+1)$.  Thus $\mathbb{H}_\ast(L\CP^n)$ is a module over both $H_\ast(U(n+1))$ and $H_\ast(\Omega U(n+1))$, and Theorem~\ref{ActionTheorem} explains how these module structures relate to the loop product and BV operator.  We begin by recalling the homology of $U(n+1)$ and of its loop space.  Throughout this section, and for the rest of the paper, homology groups are taken with coefficients in $\mathbb{Z}$.

\begin{definition}\label{RotationDefinition}
Let $R\colon S^1\times\CP^n\to U(n+1)$ be the map that sends a pair $(t,l)$ to the transformation that multiplies vectors in $l$ by ${t}$ and that leaves vectors orthogonal to $l$ unchanged.  (We are regarding $S^1$ as the unit complex numbers.)  Thus
\begin{equation}\label{RotationEquation}
R(t,l)\mathbf{v}=({t}-1)\langle \mathbf{l},\mathbf{v}\rangle\mathbf{l} + \mathbf{v}
\end{equation}
for $t\in S^1$, $l\in\CP^n$ and $\mathbf{v}\in\mathbb{C}^{n+1}$, where $\mathbf{l}$ is a unit vector in $l$.  Let $S\colon\CP^n\to\Omega U(n+1)$ denote the adjoint to $R$.
\end{definition}

\begin{definition}
Define $E_0,E_2,\ldots,E_{2n}$ in $H_\ast(\Omega U(n+1))$ by $E_{2i}=S_\ast[\CP^i]$ and define $e_1,e_3,\ldots,e_{2n+1}$ by $e_{2i+1}=R_\ast([S^1]\times[\CP^i])$.  It is well known that
\[H_\ast(\Omega U(n+1))=\mathbb{Z}[E_0,\ldots,E_{2n}], \qquad H_\ast(U(n+1))=\Lambda_\mathbb{Z}[e_1,\ldots,e_{2n+1}],\]
and that $e_{2i+1}=\sigma(E_{2i})$ and $D_\ast E_{2i}=\sum_{j=0}^i E_{2j}\otimes E_{2i-2j}$.
\end{definition}

We can now reformulate 
Theorem A in terms of the actions of $H_\ast(U(n+1))$ and $H_\ast(\Omega U(n+1))$ on $\mathbb{H}_\ast(L\CP^n)$.  This reformulation is necessarily rather detailed, but the crucial point is that the generator $v\in \mathbb{H}_{2n}(L\CP^n)$ can be chosen to be $E_{2n}1$.  As an algebra over $\mathbb{Z}[v]$ the BV algebra $\mathbb{H}_\ast(L\CP^n)$ is generated by $c$ and $w$, which themselves generate a finite dimensional subring.  Therefore, by using Theorem~\ref{ActionTheorem}, the task of computing $\Delta$ is reduced to the problem of computing the value of $\Delta$ and the effect of $H_\ast(U(n+1))$ and $H_\ast(\Omega U(n+1))$ on this finite dimensional subring.  In detail:

\begin{TheoremB}
The generators $c$, $w$, $v$ of $\mathbb{H}_\ast(L\CP^n)$ can be chosen so that:
\begin{enumerate}
\item
$E_{2i}1$ is equal to  $c^{n-i}\cdot v$ if $i>0$ and is equal to $1+c^n\cdot v$ if $i=0$.
\item
$e_{2j+1}c=0$ for all $j$, while $e_{2j+1}w= (j+1)c^{n-j}\cdot v$.
\item
$\Delta(c^i)=0$ and there are integers $\mu_{n-1},\ldots,\mu_0=1$ such that 
\[\Delta(c^i\cdot w)=\binom{n+1}{2} c^{n+i}\cdot v+ \mu_i c^i\]
for $i\geqslant 0$.  (Since $c^{n+1}\cdot v=0$, the first term is redundant unless $i=0$.)
\end{enumerate}
\end{TheoremB}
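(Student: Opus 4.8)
The plan is to avoid the infinite-dimensional space $L\CP^n$ altogether and to work instead with a finite-dimensional model. Concretely, I would build a finite-dimensional space $\mathbb{L}^n$ out of the maps $R$ and $S$ of Definition~\ref{RotationDefinition}, together with a map $\mathbb{L}^n\to L\CP^n$, and arrange matters so that $\mathbb{L}^n$ inherits a residual $U(n+1)$-action, receives the map $S\colon\CP^n\to\Omega U(n+1)$ compatibly (so that the classes $E_{2i}1$ factor through it), and carries the restriction of the loop-rotation circle action that defines $\Delta$. The first real task is then to prove that the induced map $H_\ast(\mathbb{L}^n)\to H_\ast(L\CP^n)$ is injective and that its image is large enough to contain every class occurring in the statement of Theorem~B --- that is, $c$, $w$, $v$ (with $v=E_{2n}1$), the products $c^p\cdot v^q$ and $c^p\cdot w\cdot v^q$ that appear there, and the values of $\Delta$, $E_{2i}$ and $e_{2j+1}$ on them. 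This step would be carried out by comparing $\mathbb{L}^n$ with $L\CP^n$ through the relevant fibrations.

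Granting this, every assertion of Theorem~B becomes a purely finite-dimensional statement about $H_\ast(\mathbb{L}^n)$, equipped with its $U(n+1)$-module structure and its rotation operator: the identities may be checked in $H_\ast(\mathbb{L}^n)$ and then transported to $\mathbb{H}_\ast(L\CP^n)$ using the injectivity. I would record this finite-dimensional statement as an intermediate theorem, ``Theorem~C'', and deduce Theorem~B from it.

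To prove Theorem~C I would identify $\mathbb{L}^n$ with the pinched compactification $(T\CP^n)^{\mathrm P}$ of the tangent bundle, the constant loops forming the pinch locus and the non-constant loops corresponding to the nonzero tangent vectors. Then \S\ref{CompactificationsSection} applies directly: Proposition~\ref{PinchedHomologyProposition} and Lemma~\ref{PinchedHomologyGroupActionLemma} describe $H_\ast(\mathbb{L}^n)$, both as a module over $H^\ast(\CP^n)$ and as a module over $H_\ast(U(n+1))$, by a split short exact sequence built from $H_\ast(\CP^n)$ and $H_{\ast-1}(\mathbb{S}T\CP^n)$. This reduces the whole computation to the homology of $\mathbb{S}T\CP^n$ with its various structures, which I would record as a further intermediate statement, ``Theorem~D''. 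Finally, under the identification $\mathbb{L}^n=(T\CP^n)^{\mathrm P}$ the loop-rotation becomes fibrewise scalar multiplication on $T\CP^n$, and the circle factors built into the maps $R$ and $S$ become scalar multiplications on summands of sphere bundles pulled back from $\CP^n$; so Theorem~\ref{SphereBundlesTheorem}, applied several times and combined with the Gysin-sequence lemmas of \S\ref{GysinSubsection}, expresses all of Theorem~D --- and hence Theorem~B --- through the Chern classes of $T\CP^n$ and its summands. The classes that enter are the Euler class and $c_{n-1}(T\CP^n)$, and evaluating these against the fundamental classes of $\CP^n$ and $\CP^{n-1}$ produces exactly the coefficients $\binom{n+1}{2}$, $(j+1)$ and $\mu_i$ appearing in Theorem~B.

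I expect two steps to be the substantial ones. The first is the construction of $\mathbb{L}^n$ and the proof that its homology injects into $H_\ast(L\CP^n)$ with sufficiently large image: this is where one has to get genuine hold of a tractable piece of the free loop space. The second, more calculational, is organizing the several applications of Theorem~\ref{SphereBundlesTheorem} in the proof of Theorem~D so that the output matches the precise coefficients in Theorem~B, and in particular pinning down the signs and the normalizations of $c$, $w$ and $v$. The conceptual crux, and the point that makes the whole scheme work, is the recognition of $\mathbb{L}^n$ as a pinched compactification, which is exactly what brings the general machinery of \S\S\ref{CompactificationsSection}--\ref{SphereBundlesSection} to bear.
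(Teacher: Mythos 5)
Your proposal is essentially the paper's own argument: it constructs the finite-dimensional model $\mathbb{L}^n$ from the maps $R$ and $S$, proves $H_\ast(\mathbb{L}^n)\to H_\ast(L\CP^n)$ is (split) injective via the fibration comparison, identifies $\mathbb{L}^n$ with the pinched compactification $(T\CP^n)^{\mathrm P}$ so that \S\ref{CompactificationsSection} reduces everything to $H_\ast(\mathbb{S}T\CP^n)$, and then computes that via Theorem~\ref{SphereBundlesTheorem} and the Gysin lemmas, exactly as in the chain Theorem~B $\Leftarrow$ Theorem~C $\Leftarrow$ Theorem~D of \S\S\ref{ApproximationSection}--\ref{STCPnHomologySection}. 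The only details not anticipated are minor ones the paper supplies along the way (Tamanoi's cap-product theorem to convert the $H^\ast(\CP^n)$-action into multiplication by $c$, and the explicit cycle computation pinning down $\mu_0=1$).
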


This theorem will be proved in \S\ref{SecondReformulationSection}.  The rest of this section is given to showing that 
Theorem A follows from it.  This follows by direct calculation from two simple lemmas: the first exploits the Batalin-Vilkovisky identity \eqref{BVIdentityEquation} to determine the $\mu_i$, and the second uses Theorem~\ref{ActionTheorem} to compute the effect of $\Delta$ on multiples of $v$.

\begin{lemma}\label{DeltaLemma}
$\mu_i=n-i$, so that
$\Delta(c^i\cdot w)=\binom{n+1}{2} c^{n+i}\cdot v+ (n-i) c^i$
for $i\geqslant 0$.
\end{lemma}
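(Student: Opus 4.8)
The plan is to apply the Batalin--Vilkovisky identity \eqref{BVIdentityEquation} to the triple $x=c$, $y=c^{i-1}$, $z=w$ and to extract a recursion for the $\mu_i$. First I would record the facts from Theorem B that will be used: $\Delta(c^j)=0$ for every $j$; $\Delta(w)=\binom{n+1}{2}c^n\cdot v+\mu_0$; $\Delta(c^i\cdot w)=\mu_i\,c^i$ for $i\geqslant 1$, the term $\binom{n+1}{2}c^{n+i}\cdot v$ disappearing there because $c^{n+1}=0$ in \eqref{RingDescription}; and the normalisation $\mu_{n-1}=1$ supplied by Theorem B. Since $\Delta$ kills $c$, $c^{i-1}$ and $c^i$, and since $c$ and $c^{i-1}$ have even degree so that every sign occurring in \eqref{BVIdentityEquation} is $+1$, the identity reduces to
\[\Delta(c^i\cdot w)=c\cdot\Delta(c^{i-1}\cdot w)+c^{i-1}\cdot\Delta(c\cdot w)-c^i\cdot\Delta(w)\qquad(i\geqslant 1).\]

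For $2\leqslant i\leqslant n-1$ I would substitute the formulas above. Every term involving $v$ is a multiple of $c^{n+i}$ and so vanishes, leaving $\mu_i\,c^i=(\mu_{i-1}+\mu_1-\mu_0)\,c^i$; since $c^i$ has infinite order in $\mathbb{H}_\ast(L\CP^n)$ by \eqref{RingDescription}, this gives $\mu_i=\mu_{i-1}+\mu_1-\mu_0$. Hence the successive differences $\mu_i-\mu_{i-1}$ are all equal to $d:=\mu_1-\mu_0$, so that $\mu_i=\mu_0+id$ for $0\leqslant i\leqslant n-1$.

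It remains to pin down $d$ and $\mu_0$, and for this I would use the ring relation $w\cdot c^n=0$: it forces $\Delta(c^n\cdot w)=0$, and putting $i=n$ into the displayed relation (using $c^{2n}=0$ to discard the $v$-terms) gives $0=(\mu_{n-1}+\mu_1-\mu_0)\,c^n$, hence $\mu_{n-1}+\mu_1-\mu_0=0$. Substituting $\mu_{n-1}=\mu_0+(n-1)d$ and $\mu_1=\mu_0+d$ leaves $\mu_0+nd=0$, so $\mu_i=\mu_0+id=(i-n)d$; in particular $\mu_{n-1}=-d$. Since $\mu_{n-1}=1$ by Theorem B we conclude $d=-1$, and therefore $\mu_i=n-i$ for $0\leqslant i\leqslant n-1$ --- and trivially for $i\geqslant n$, where $c^i\cdot w=0$ and both sides of the asserted identity vanish. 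The formula for $\Delta(c^i\cdot w)$ stated in the lemma then follows at once. The computation is routine apart from the sign bookkeeping in \eqref{BVIdentityEquation}, which turns out to be trivial; the only step that is not automatic is the use of $w\cdot c^n=0$ at $i=n$, since the BV identity alone shows merely that $(\mu_i)$ is an arithmetic progression, and it is this relation together with the normalisation $\mu_{n-1}=1$ that fixes the progression completely.
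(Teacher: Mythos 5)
Your proposal is correct and follows essentially the same route as the paper: the paper applies the BV identity \eqref{BVIdentityEquation} to $x=c^{i-1}$, $y=c$, $z=w$ (your ordering $x=c$, $y=c^{i-1}$ is equivalent since all signs are $+1$), obtains the same recursion $\mu_i=\mu_{i-1}+\mu_1-\mu_0$, and pins down the arithmetic progression using $\Delta(c^n\cdot w)=0$ (i.e.\ $w\cdot c^n=0$, encoded there as $\mu_n=0$) together with the normalisation $\mu_{n-1}=1$ from Theorem B.
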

\begin{proof}
Introduce a new integer $\mu_n$ determined by the identity $\Delta(c^n\cdot w)=\mu_n c^n$.  Of course, since $c^n\cdot w=0$ we must have $\mu_n=0$.  Now take the Batalin-Vilkovisky identity \eqref{BVIdentityEquation} in the case $x=c^{i-1}$, $y=c$, $z=w$, with $1\leqslant i\leqslant n$ to find that
\begin{align*}
\Delta(c^{i}\cdot w)=&\Delta(c^{i})\cdot w + c^{i-1}\cdot \Delta(c\cdot w)+c\cdot\Delta(c^{i-1}\cdot w)\\
&-\Delta(c^{i-1})\cdot c\cdot w-\Delta(c)\cdot c^{i-1}\cdot w - c^{i}\cdot\Delta(w).
\end{align*}
Using the description of $\Delta$ given in the third part of 
Theorem B this equation becomes $\mu_{i}=\mu_{i-1}+\mu_1-\mu_0$
so that $\mu_i=i(\mu_1-\mu_0)+\mu_0$  for $1\leqslant i\leqslant n$.
Taking $i=(n-1)$, $i=n$ and using the fact that $\mu_{n-1}=1$ from the third part of 
Theorem B, and that fact that $\mu_n=0$ as remarked above, we can solve to find that $\mu_0=n$ and $\mu_1=(n-1)$.  Consequently $\mu_i=n-i$ as required.
\end{proof}

\begin{lemma}\label{PartialLemma}
Let us write $\partial_w\colon \mathbb{H}_\ast(L\CP^n)\to\mathbb{H}_{\ast+1}(L\CP^n)$ for the linear map that sends monomials $c^p\cdot v^q$ to $0$ and that sends monomials $c^p\cdot w\cdot v^q$ to $c^p\cdot v^q$.  Then for any $x\in\mathbb{H}_{\ast+1}(L\CP^n)$ we have
\[\Delta(v\cdot x) =\left[(n+1)v + \binom{n+1}{2} c^n\cdot v^2\right]\cdot\partial_w x + v\cdot\Delta(x).\]
\end{lemma}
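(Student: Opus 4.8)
The plan is to deduce the formula for $\Delta(v\cdot x)$ from equation~\eqref{OmegaDeltaEquation} of Theorem~\ref{ActionTheorem}, applied to the element $a=E_{2n}\in H_\ast(\Omega U(n+1))$. By part~(1) of Theorem B we have chosen the generators so that $v=E_{2n}1$, and by equation~\eqref{OmegaProductEquation} the action of $E_{2n}$ on $\mathbb{H}_\ast(L\CP^n)$ is exactly multiplication by $v$: indeed $E_{2n}x = (E_{2n}1)\cdot x = v\cdot x$ for any $x$. So $\Delta(v\cdot x)=\Delta(E_{2n}x)$, and~\eqref{OmegaDeltaEquation} gives
\[\Delta(E_{2n}x) = E_{2n}\Delta(x) + \sum (E_{2n})_1\,\sigma((E_{2n})_2)\,x = v\cdot\Delta(x) + \sum (E_{2n})_1\,\sigma((E_{2n})_2)\,x.\]
The coproduct is known: $D_\ast E_{2n} = \sum_{j=0}^n E_{2j}\otimes E_{2n-2j}$, and the homology suspension satisfies $\sigma(E_{2i})=e_{2i+1}$. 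So the correction term is $\sum_{j=0}^n E_{2j}\,e_{2n-2j+1}\,x$, where we read $E_{2j}$ and $e_{2n-2j+1}$ as operators on $\mathbb{H}_\ast(L\CP^n)$. The task is thus to evaluate this operator and identify it with $\bigl[(n+1)v+\binom{n+1}{2}c^n\cdot v^2\bigr]\cdot\partial_w(-)$.

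The key computational step is to understand the operator $\sum_{j=0}^n E_{2j}e_{2n-2j+1}$ on an arbitrary monomial. Since the ring $\mathbb{H}_\ast(L\CP^n)$ is generated over $\mathbb{Z}[v]$ by $c$ and $w$, and every element is a $\mathbb{Z}$-linear combination of $c^p\cdot v^q$ and $c^p\cdot w\cdot v^q$, it suffices to evaluate on these. First, using~\eqref{OmegaProductEquation} repeatedly, $E_{2j}$ acting on anything is multiplication by $E_{2j}1$, which by Theorem B(1) is $c^{n-j}\cdot v$ for $j>0$ and $1+c^n\cdot v$ for $j=0$. The element $e_{2n-2j+1}\in H_\ast(U(n+1))$ acts via the $H_\ast(G)$-module structure; by Theorem B(2), $e_{2k+1}c=0$ and $e_{2k+1}w=(k+1)c^{n-k}\cdot v$. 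I will need to combine this with the Leibniz-type formula~\eqref{GProductEquation} to compute $e_{2n-2j+1}$ on a general monomial. Because $H_\ast(U(n+1))=\Lambda_\mathbb{Z}[e_1,\dots,e_{2n+1}]$ is primitively generated, the coproduct on each $e_{2k+1}$ is $e_{2k+1}\otimes 1 + 1\otimes e_{2k+1}$, so~\eqref{GProductEquation} says $e_{2k+1}$ is a (signed) derivation of the loop product. Applied to $c^p\cdot w\cdot v^q$, a derivation that kills $c$ and $v$ and sends $w\mapsto (k+1)c^{n-k}\cdot v$ yields $e_{2k+1}(c^p\cdot w\cdot v^q)=(k+1)c^{n-k+p}\cdot v^{q+1}$ (up to a sign I will need to track from the grading), and it annihilates $c^p\cdot v^q$. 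Note the appearance of $\partial_w$: the derivation $e_{2k+1}$ factors as $w\mapsto$(something)$\cdot v$ composed with $\partial_w$, essentially.

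Assembling: on $c^p\cdot v^q$ the whole correction term vanishes (each $e_{2n-2j+1}$ kills it), matching $\partial_w(c^p\cdot v^q)=0$. On $c^p\cdot w\cdot v^q$, I get $\sum_{j=0}^n (E_{2j}1)\cdot e_{2n-2j+1}(c^p\cdot w\cdot v^q) = \sum_{j=0}^n (E_{2j}1)\cdot(n-j+1)\,c^{n-(n-j)+p}\cdot v^{q+1}$, keeping in mind the two cases of $E_{2j}1$. For $j\ge 1$ the $j$-th term contributes $(c^{n-j}\cdot v)\cdot (n-j+1)c^{j+p}\cdot v^{q+1}=(n-j+1)c^{n+p}\cdot v^{q+2}$; this is zero since $c^{n+1}\cdot v=0$. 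For $j=0$ the term is $(1+c^n\cdot v)\cdot(n+1)c^{p}\cdot v^{q+1} = (n+1)c^p\cdot v^{q+1} + (n+1)c^{n+p}\cdot v^{q+2}$, and again the last piece dies. Hmm — that gives only $(n+1)v\cdot\partial_w(c^p\cdot w\cdot v^q)$ and not the $\binom{n+1}{2}c^n\cdot v^2$ term. So the main obstacle, and the subtle point, is that my naive "derivation" computation of $e_{2k+1}$ on monomials is incomplete: the sign in~\eqref{GProductEquation} and, more importantly, the possibility that $\partial_w$ is not literally compatible with the ring structure means I must be careful. The honest route is to first establish, as an auxiliary lemma, the exact formula for $\sum_{j=0}^n E_{2j}e_{2n-2j+1}$ as an operator — most cleanly by checking it agrees with $\bigl[(n+1)v+\binom{n+1}{2}c^n\cdot v^2\bigr]\partial_w$ on the generators $c$ and $w$ and then using that both sides interact predictably with multiplication by $v$ (which is $E_{2n}$-action) and with the product structure via the Leibniz rules~\eqref{OmegaProductEquation} and~\eqref{GProductEquation}. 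The $\binom{n+1}{2}$ must enter through the failure of $\partial_w$ to be a derivation together with the term $\Delta(c^i\cdot w)$ carrying a $\binom{n+1}{2}c^{n+i}\cdot v$ summand from Theorem B(3); so I expect the cleanest proof to run the Batalin–Vilkovisky identity~\eqref{BVIdentityEquation} with $x=v$ (or rather, recognizing $v=E_{2n}1$ and reducing to a statement already packaged in Theorem B(3) and Lemma~\ref{DeltaLemma}), rather than a bare-hands suspension computation. Thus I would structure the proof as: (i) identify $v\cdot x = E_{2n}x$; (ii) apply~\eqref{OmegaDeltaEquation}; (iii) reduce evaluation of the correction operator to the generators using~\eqref{OmegaProductEquation} and~\eqref{GProductEquation}; (iv) plug in Theorem B(1),(2) and Lemma~\ref{DeltaLemma} to pin down all constants, the $\binom{n+1}{2}$ emerging precisely from the $\binom{n+1}{2}c^{n+i}\cdot v$ terms in $\Delta(c^i\cdot w)$; (v) reassemble into the stated formula. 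The hard part is step (iii)–(iv): getting the signs right in~\eqref{GProductEquation} and correctly accounting for why the $c^n\cdot v^2$ term survives, which I expect forces careful bookkeeping of the $E_{2j}1 = 1 + c^n\cdot v$ ($j=0$) correction against the $w$-dependence.
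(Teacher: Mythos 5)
Your opening moves are exactly the paper's: write $v\cdot x=E_{2n}x$ using Theorem B(1) and \eqref{OmegaProductEquation}, apply \eqref{OmegaDeltaEquation} with the coproduct $D_\ast E_{2n}=\sum E_{2j}\otimes E_{2n-2j}$ and $\sigma(E_{2i})=e_{2i+1}$, and evaluate $e_{2k+1}$ on monomials as a (signed) derivation killing $c$ and $v$ and sending $w\mapsto(k+1)c^{n-k}\cdot v$ --- this gives $e_{2k+1}x=(k+1)c^{n-k}\cdot v\cdot\partial_w x$, with no sign trouble since $c$ and $v$ have even degree and $e_{2k+1}$ is primitive. Had you pushed this through you would have recovered the paper's proof verbatim. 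The genuine gap is the computational claim that the $j\geqslant 1$ terms vanish. For $x=c^p\cdot w\cdot v^q$ the $j$-th term is $(c^{n-j}\cdot v)\cdot(n-j+1)c^{j+p}\cdot v^{q+1}=(n-j+1)c^{n+p}\cdot v^{q+2}$, and when $p=0$ this is $(n-j+1)c^{n}\cdot v^{q+2}$: the exponent of $c$ is $n$, not $n+1$, so no relation kills it --- it is a nonzero element of the $\mathbb{Z}/(n+1)$ summand. Summing over $j=1,\dots,n$ gives $\sum_{j=1}^{n}(n-j+1)=\binom{n+1}{2}$, i.e.\ precisely the term $\binom{n+1}{2}c^n\cdot v^2\cdot\partial_w x$ you found missing (the statement with $\partial_w$ is consistent because $c^n\cdot v^2\cdot c^p\cdot v^q=0$ whenever $p\geqslant 1$, which is exactly when your vanishing claim was correct). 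The only other care needed is at $j=0$, where $E_01=1+c^n\cdot v$ contributes $(n+1)v\cdot\partial_w x$ plus a term killed by $(n+1)c^n\cdot v=0$.

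Because of this slip you diagnosed a nonexistent problem with the derivation computation and pivoted to a speculative route through the BV identity and Lemma~\ref{DeltaLemma}. That route is not viable as sketched: the BV identity expresses $\Delta$ of triple products in terms of $\Delta$ of pairwise products, so to get $\Delta(v\cdot x)$ you would already need $\Delta$ on products involving $v$, which Theorem B does not supply; moreover Lemma~\ref{DeltaLemma} plays no role here, and in the correct argument the constant $\binom{n+1}{2}$ arises from the arithmetic sum $\sum_{i=0}^{n-1}(i+1)$ over the coproduct terms of $E_{2n}$, not from the $\binom{n+1}{2}c^{n+i}\cdot v$ summand of $\Delta(c^i\cdot w)$. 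Fix the $p=0$ bookkeeping and your original computation is a complete proof, identical to the paper's.
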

\begin{proof}
By the second part of 
Theorem B we know the effect of $e_{2j+1}$ on $c$ and $w$, and by the first part of 
Theorem B we have that $e_{2j+1}v=e_{2j+1}E_{2n}1=E_{2n}(e_{2j+1}1)=0$.  By equation \eqref{GProductEquation} from Theorem~\ref{ActionTheorem} we therefore have that $e_{2i+1}x= (i+1) c^{n-i}\cdot v\cdot\partial_w x$.

Recall from the first part of 
Theorem B that $E_{2i}1=c^{n-i}\cdot v$ for $i>0$ and that $E_01=1+c^n\cdot v$.
Recall also that $D_\ast E_{2n} = \sum_{i=0}^n E_{2n-2i}\times E_{2i}$ and that $\sigma(E_{2i})=e_{2i+1}$.
Thus, by equation \eqref{OmegaDeltaEquation} of Theorem~\ref{ActionTheorem} we have
\begin{eqnarray*}
\Delta(v\cdot x)
&=& \Delta(E_{2n}x)\\
&=& \sum_{i=0}^n E_{2n-2i} e_{2i+1} x + E_{2n}\Delta(x)\\
&=& (1+c^n\cdot v)\cdot e_{2n+1} x + \sum_{i=0}^{n-1} c^i\cdot v\cdot  e_{2i+1} x + v\cdot\Delta(x)
\end{eqnarray*}
Now using our description of $e_{2i+1}x$ and the fact that $(n+1)c^n\cdot v=0$, this simplifies to give 
\begin{eqnarray*}
\Delta(v\cdot x)&=& (n+1)v\cdot\partial_w x + \sum_{i=0}^{n-1}(i+1) c^n\cdot v^2\cdot\partial_w x + v\cdot\Delta(x)\\
&=& \left[(n+1)v + \binom{n+1}{2} c^n\cdot v^2\right]\cdot\partial_w x + v\cdot\Delta(x)\\
\end{eqnarray*}
as required.
\end{proof}

\begin{proof}[Proof of 
Theorem A]
We begin by showing that $\Delta(c^p\cdot v^q)=0$ for all $p,q\geqslant 0$.  Note that $\partial_w(c^p\cdot v^{q})=0$ so that by Lemma~\ref{PartialLemma} we have  $\Delta(c^p\cdot v^{q+1}) = v\Delta(c^p\cdot v^{q})$.  Using this result and the third part of 
Theorem B, we have $\Delta(c^p\cdot v^q)=0$ by induction.

Now we compute $\Delta(c^p\cdot w\cdot v^q)$.  Note that $\partial_w(c^p\cdot w\cdot v^{q-1})=c^p\cdot v^{q-1}$, so that by Lemma~\ref{PartialLemma} we have
\begin{eqnarray*}
\Delta(c^p\cdot w\cdot v^q)
&=&\left[(n+1)v + \binom{n+1}{2} c^n\cdot v^2\right]\cdot c^p\cdot v^{q-1} + v\cdot\Delta(c^p\cdot w\cdot v^{q-1})\\
&=&\left[(n+1)c^p\cdot v^q + \binom{n+1}{2} c^{n+p}\cdot v^{q+1}\right]+ v\cdot\Delta(c^p\cdot w\cdot v^{q-1}).
\end{eqnarray*}
By induction we therefore have
\[\Delta(c^p\cdot w\cdot v^q)=q\left[(n+1)c^p\cdot v^q + \binom{n+1}{2} c^{n+p}\cdot v^{q+1}\right]+ v^q\cdot\Delta(c^p\cdot w)\]
and applying the description of $\Delta(c^p\cdot w)$ given by Lemma~\ref{DeltaLemma} this becomes
\begin{eqnarray*}
\Delta(c^p\cdot w\cdot v^q)
&=&q\left[(n+1)c^p\cdot v^q + \binom{n+1}{2} c^{n+p}\cdot v^{q+1}\right]\\
&&+ v^q\cdot \left[\binom{n+1}{2} c^{n+p}\cdot v+ (n-p) c^p\right]\\
&=& \left[q(n+1)+(n-p)\right] c^p\cdot v^q + (q+1)\binom{n+1}{2}c^{n+p}v^{q+1}
\end{eqnarray*}
as required.  This completes the proof.
\end{proof}

\section{A finite dimensional approximation to $L\CP^n$}\label{ApproximationSection}

We have seen in \S\ref{FirstReformulationSection} that Theorem A follows from Theorem B.  Our goal now is to prove Theorem B. This is a finite dimensional result, in the sense that it describes the effect of $H_\ast(U(n+1))$, $H_\ast(\Omega U(n+1))$ and $\Delta$ on the finite dimensional subring of $\mathbb{H}_\ast(L\CP^n)$ generated by $c$ and $w$.  Our strategy is to rephrase Theorem B as a result about a finite dimensional space that we can understand in explicit terms.

We begin this strategy in the present section.  In \S\ref{LnConstructionSubsection} we construct a finite dimensional space $\mathbb{L}^n$ and a map $\mathbb{L}^n\to L\CP^n$.  Then in \S\ref{gInjectiveSubsection} we see that this map induces an \emph{injection} $H_\ast(\mathbb{L}^n)\to H_\ast(L\CP^n)$, so that we may regard $\mathbb{L}^n$ as approximating $L\CP^n$ in low dimensions.  In \S\ref{SecondReformulationSection} we will see that 
Theorem B can be reformulated in terms of $\mathbb{L}^n$.  Again, all homology groups are taken with coefficients in $\mathbb{Z}$.

\subsection{The space $\mathbb{L}_n$}\label{LnConstructionSubsection}

The free loop space $L\CP^n$ has the following structure:
\begin{enumerate}
\item The projection map $L\CP^n\to \CP^n$ that sends a loop to its basepoint.
\item The action of $U(n+1)$ induced from the natural action of $U(n+1)$ on $\CP^n$.
\item The circle action given by rotating loops.
\end{enumerate}
The two actions commute, and the projection map is equivariant with respect to the $U(n+1)$ actions.
Our aim in this subsection is to produce a finite dimensional space $\mathbb{L}^n$ equipped with analogous structure \emph{and} a map into $L\CP^n$ that preserves the structure.  We begin by giving an intermediate construction, based on the generating map $S\colon\CP^n\to\Omega U(n+1)$ described in Definition~\ref{RotationDefinition}.

\begin{definition}
We equip $\CP^n\times\CP^n$ with the following structure:
\begin{enumerate}
\item The projection $\CP^n\times\CP^n\to\CP^n$ onto the second factor.
\item The $U(n+1)$ action where $U(n+1)$ acts simultaneously on each factor in the natural way.
\item The circle action defined by $t(l,m)=(l,R(t,l)m)$ for $t\in S^1$ and $(l,m)\in\CP^n\times\CP^n$.  Here $R\colon S^1\times\CP^n\to U(n+1)$ is the rotation map of Definition~\ref{RotationDefinition}.
\end{enumerate}
\end{definition}

\begin{definition}\label{fDefinition}
We define
\[f\colon \CP^n\times\CP^n\to L\CP^n\]
to be the map that sends $(l,m)\in\CP^n\times\CP^n$ to the loop $t\mapsto R(t,l)m$.  The significance of $f$ is that it can be regarded as the composite 
\[\CP^n\times\CP^n\xrightarrow{S\times c}\Omega U(n+1)\times L\CP^n\to L\CP^n\]
where $S\colon\CP^n\to \Omega U(n+1)$ is the generating map of Definition~\ref{RotationDefinition} and $c\colon\CP^n\to L\CP^n$ is the inclusion of the constant loops.  Since $S_\ast[\CP^i]$ is $E_{2i}$ and $c_\ast[\CP^n]$ is the unit of $\mathbb{H}_\ast(L\CP^n)$, $f_\ast([\CP^i]\times[\CP^n])$ is in fact the element $E_{2i}1$ that we wish to identify in the first part of 
Theorem B.
\end{definition}

\begin{lemma}
The $S^1$ and $U(n+1)$ actions on $\CP^n\times\CP^n$ commute.  The map $f\colon\CP^n\times\CP^n\to L\CP^n$ intertwines the $S^1$ and $U(n+1)$ actions and commutes with the projections to $\CP^n$.
\end{lemma}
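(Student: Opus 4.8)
The plan is to reduce the whole lemma to two elementary algebraic properties of the rotation map $R$ of Definition~\ref{RotationDefinition}, and then verify the three assertions by direct substitution. The first property I would establish is that for each fixed line $l$ the assignment $t\mapsto R(t,l)$ is a group homomorphism $S^1\to U(n+1)$; in particular $R(1,l)=\mathrm{id}$ and $R(t,l)R(s,l)=R(ts,l)$. This is a short computation from the explicit formula \eqref{RotationEquation}, using that the chosen representative $\mathbf{l}$ is a unit vector, so that $\langle\mathbf{l},\langle\mathbf{l},\mathbf{v}\rangle\mathbf{l}\rangle=\langle\mathbf{l},\mathbf{v}\rangle$. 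The second property is the $U(n+1)$-equivariance $gR(t,l)g^{-1}=R(t,gl)$ for every $g\in U(n+1)$; this again follows from \eqref{RotationEquation}, because $g$ is unitary, so that $\langle\mathbf{l},g^{-1}\mathbf{v}\rangle=\langle g\mathbf{l},\mathbf{v}\rangle$ and $g\mathbf{l}$ is a unit vector spanning $gl$.

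Granting these, I would first check that the two actions on $\CP^n\times\CP^n$ commute by computing both composites: $g\cdot(t\cdot(l,m))=(gl,gR(t,l)m)$ while $t\cdot(g\cdot(l,m))=(gl,R(t,gl)gm)$, and these agree by the equivariance property. Next I would check that $f$ intertwines the $U(n+1)$ actions: since $U(n+1)$ acts on $L\CP^n$ by post-composition, $(g\cdot f(l,m))(t)=gR(t,l)m$, which by equivariance equals $R(t,gl)(gm)=f(gl,gm)(t)=f(g\cdot(l,m))(t)$. For the $S^1$ actions, with the convention that rotating a loop $\delta$ by $s\in S^1$ (regarded as the unit complex numbers) yields the loop $t\mapsto\delta(st)$, I would compute $(s\cdot f(l,m))(t)=f(l,m)(st)=R(st,l)m=R(t,l)R(s,l)m$, which is exactly $f(l,R(s,l)m)(t)=f(s\cdot(l,m))(t)$ by the homomorphism property; so $f$ intertwines the $S^1$ actions. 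Finally, the compatibility with projections is immediate: evaluating $f(l,m)$ at the basepoint of $S^1$, i.e.\ at the identity of the circle group, gives $R(1,l)m=m$, which is the image of $(l,m)$ under the projection $\CP^n\times\CP^n\to\CP^n$ onto the second factor.

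I do not expect a genuine obstacle here: once the two properties of $R$ are in hand, each of the three assertions is a routine substitution. The only place demanding a little care is the derivation of those two properties, namely the bookkeeping with the Hermitian inner product in \eqref{RotationEquation} — in particular getting the homomorphism identity $R(t,l)R(s,l)=R(ts,l)$ exactly right — together with fixing the circle-action convention so that it matches the multiplicative description used here (as opposed to the additive one used for the BV operator elsewhere in the paper).
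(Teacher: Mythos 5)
Your proposal is correct and follows essentially the same route as the paper: the paper's proof likewise reduces everything to the conjugation identity $AR(t,l)A^{-1}=R(t,Al)$, the one-parameter-subgroup property of $t\mapsto R(t,l)$, and the fact that $R$ at the identity of the circle is the identity transformation, with the verifications being the same routine substitutions you carry out (and your care about the multiplicative versus additive circle convention is warranted, since the paper itself switches between them).
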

\begin{proof}
That the actions commute and that $f$ is equivariant follow from the fact that for $l\in\CP^n$ we have $AR(t,l)A^{-1}=R(t,Al)$, $R(t_1,l)R(t_2,l)=R(t_1+t_2,l)$ for $A\in U(n+1)$ and $t_1,t_2\in S^1$.  The final property follows from the fact that $R(0,l)$ is the identity element.
\end{proof}

\begin{definition}\label{LnDefinition}
Given $l\in\CP^n$, let $\CP^{n-1}_{l^\perp}\subset\CP^n$ be the subset of lines perpendicular to $l$.  We define $\mathbb{L}^n$ to be the space obtained from $\CP^n\times\CP^n$ by collapsing, for each $l\in\CP^n$, the subset $\CP^{n-1}_{l^\perp}\times\{l\}\sqcup \{l\}\times\{l\}$ to a point $p_{l}$.  Note that the various points $p_l$ are not identified with one another.
\end{definition}

\begin{proposition}\label{LnStructuresExistProposition}
\begin{enumerate}
\item
The projection $\CP^n\times\CP^n\to \CP^n$ reduces to a projection $\mathbb{L}^n\to\CP^n$ and the actions of $U(n+1)$ and $S^1$ on $\CP^n\times\CP^n$ reduce to actions of $U(n+1)$ and $S^1$ on $\mathbb{L}^n$.
\item
The map $f\colon\CP^n\times\CP^n\to L\CP^n$ reduces to a map $g\colon\mathbb{L}^n\to L\CP^n$ that intertwines the $U(n+1)$ and $S^1$ actions and that commutes with the projections to $\CP^n$.
\end{enumerate}
\end{proposition}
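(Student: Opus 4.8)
The proof will rest on the universal property of the quotient map $q\colon\CP^n\times\CP^n\to\mathbb{L}^n$. Write $C_l=\CP^{n-1}_{l^\perp}\times\{l\}\sqcup\{l\}\times\{l\}$ for the subset collapsed to the point $p_l$. These subsets have pairwise distinct second coordinates, hence are pairwise disjoint (as already noted in Definition~\ref{LnDefinition}), so $q$ is the quotient by the equivalence relation whose classes are the $C_l$ together with all remaining singletons. Consequently a continuous map out of $\CP^n\times\CP^n$ descends to $\mathbb{L}^n$ exactly when it is constant on each $C_l$, and a continuous self-map or group action descends exactly when it preserves the family $\{C_l\}$. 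All the assertions of the proposition therefore follow from a handful of explicit verifications using the formula \eqref{RotationEquation} for $R(t,l)$ together with the fact that a unitary transformation preserves orthogonality of lines.

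For part~(1): the projection onto the second factor is identically $l$ on $C_l$, hence constant there, so it descends to a projection $\mathbb{L}^n\to\CP^n$. Given $A\in U(n+1)$, orthogonality is preserved, so $A$ carries $\CP^{n-1}_{l^\perp}$ onto $\CP^{n-1}_{(Al)^\perp}$ and hence $C_l$ bijectively onto $C_{Al}$; thus the family $\{C_l\}$ is permuted by $U(n+1)$ and the action descends. For the circle action $t\cdot(l,m)=(l,R(t,l)m)$, formula \eqref{RotationEquation} shows that $R(t,l)$ fixes every vector orthogonal to $l$ and sends the line $l$ to itself; since the first coordinate is left untouched, every point of $C_l$ is in fact fixed by the circle action, so $\{C_l\}$ is preserved and the $S^1$ action descends. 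The two descended actions commute because $q$ is equivariant for both and the actions upstairs commute.

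For part~(2): if a line $a$ is orthogonal to $l$, then $R(t,a)$ fixes every vector of $l$ by \eqref{RotationEquation}, so $f$ sends each point of $\CP^{n-1}_{l^\perp}\times\{l\}$ to the constant loop at $l$; and $f(l,l)$ is the loop $t\mapsto R(t,l)l$, which is again the constant loop at $l$. Hence $f$ is constant on each $C_l$, so it descends to a map $g\colon\mathbb{L}^n\to L\CP^n$ with $g\circ q=f$. Since $q$ is surjective and, by part~(1), equivariant for both actions and compatible with the projections to $\CP^n$, the equivariance and compatibility of $f$ recorded in the preceding lemma transfer to $g$ by a routine diagram chase. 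I do not expect a genuine obstacle here: the only care required is in matching up which of the two $\CP^n$ factors plays which role in the definitions of $f$, of the circle action, and of the sets $C_l$, and in keeping track of the universal property of the quotient map $q$.
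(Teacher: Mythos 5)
Your argument is correct and matches the paper's own proof: both verify that the projection, the $U(n+1)$ action, the $S^1$ action, and $f$ are constant on (or permute, or pointwise fix) each collapsed set $\CP^{n-1}_{l^\perp}\times\{l\}\sqcup\{l\}\times\{l\}$, using the key fact that $R(t,a)$ preserves a line $l$ when $a=l$ or $a\perp l$, and then let the quotient's universal property and surjectivity of $q$ transfer the equivariance and compatibility from $f$ to $g$. No substantive difference from the paper's argument.
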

\begin{proof}
Note that:
\begin{enumerate}
\item The projection $\CP^n\times\CP^n\to\CP^n$ sends each subset $\CP^{n-1}_{l^\perp}\times\{l\}\sqcup\{l\}\times\{l\}$ to the single point $l$, and so reduces to a projection map $\mathbb{L}^n\to\CP^n$.
\item An element $A\in U(n+1)$ sends $\CP^{n-1}_{l^\perp}\times\{l\}\sqcup\{l\}\times\{l\}$ into $\CP^{n-1}_{Al^\perp}\times\{Al\}\sqcup\{Al\}\times\{Al\}$, so that the $U(n+1)$ action on $\CP^n\times\CP^n$ reduces to a $U(n+1)$ action on $\mathbb{L}^n$.
\item The circle action on $\CP^n\times\CP^n$ fixes each subset $\CP^{n-1}_{l^\perp}\times\{l\}\sqcup\{l\}\times\{l\}$ pointwise, and so reduces to a circle action on $\mathbb{L}^n$.
\end{enumerate}
The first two statements are completely clear.  The final statement relies on the fact that the operation $R(t,m)$, which multiplies vectors in $m$ by the complex number $t$ and preserves vectors orthogonal to $m$, sends a line $l$ into itself when $l=m$ and when $l$ and $m$ are perpendicular.  The same fact guarantees that $f\colon\CP^n\times\CP^n\to L\CP^n$, which sends $(m,l)$ to  the loop $t\mapsto R(t,m)l$, sends the subset $\{l\}\times\{l\}\sqcup\CP^{n-1}_{l^\perp}\times\{l\}$ to the constant loop based at $l$.  It follows that $f$ reduces to the required map $g$.  The three required properties of $g$ follow from the same properties of $f$.
\end{proof}

\subsection{The map $g_\ast\colon\mathbb{L}^n\to L\CP^n$}\label{gInjectiveSubsection}
We have now obtained a space $\mathbb{L}^n$, equipped with three structures analogous to the three structures on $L\CP^n$ described at the start of \S\ref{LnConstructionSubsection}, and that is equipped with a map to $L\CP^n$ that intertwines these structures.  This is our desired approximation of $L\CP^n$.  Our use of the word `approximation' is justified by the following proposition.

\begin{proposition}\label{gInjectiveProposition}
The map $g_\ast\colon H_\ast(\mathbb{L}^n)\to H_\ast(L\CP^n)$ is split injective.
\end{proposition}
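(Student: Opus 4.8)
The plan is to compute the rational homology of $L\CP^n$ and of $\mathbb{L}^n$ and show that $g_\ast$ is injective already rationally, then upgrade this to a splitting over $\mathbb{Z}$. First I would recall that $\mathbb{H}_\ast(L\CP^n;\mathbb{Q})$ is the ring described by Yang above, so its Poincar\'e series is easy to write down: in each degree the rational homology of $L\CP^n$ is free of known rank. Next I would compute $H_\ast(\mathbb{L}^n;\mathbb{Q})$ directly from the construction in \S\ref{LnConstructionSubsection}. The space $\mathbb{L}^n$ is obtained from $\CP^n\times\CP^n$ by collapsing, fibrewise over the second $\CP^n$ factor, the subspace $\CP^{n-1}_{l^\perp}\sqcup\{l\}$ inside the fibre $\CP^n\times\{l\}$; since $\CP^{n-1}_{l^\perp}$ is a hyperplane and $\{l\}$ is the point complementary to it, the fibre $\CP^n$ of the projection $\mathbb{L}^n\to\CP^n$ is $\CP^n$ with a hyperplane and a disjoint point both collapsed to a single point, which is homotopy equivalent to $S^{2n}$ with two points identified, i.e.\ $S^{2n}\vee S^1$ up to homology. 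One then identifies $\mathbb{L}^n$ as a fibre bundle over $\CP^n$ with this fibre, computes its homology by the Leray--Hirsch theorem or a direct spectral sequence argument (the bundle has a section, namely the collapsed points $p_l$, and the classes pull back from $\CP^n$ and the tautological line bundle), and arrives at a Poincar\'e series for $H_\ast(\mathbb{L}^n;\mathbb{Q})$.

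The key step is then to show that $g_\ast\colon H_\ast(\mathbb{L}^n;\mathbb{Q})\to H_\ast(L\CP^n;\mathbb{Q})$ is injective. I would do this by exhibiting enough homology classes in the image: the composite $f=g\circ(\text{collapse})$ factors as $\CP^n\times\CP^n\xrightarrow{S\times c}\Omega U(n+1)\times L\CP^n\to L\CP^n$, so $f_\ast([\CP^i]\times[\CP^j])$ is, up to the identifications, $E_{2i}\cdot(\text{stuff from }[\CP^j])$ acting on $1$; combining this with the $S^1$-rotation action (which is preserved by $g$) and with cap products by the generator $c$ of $H^2(\CP^n)$ pulled back along the projection $\mathbb{L}^n\to\CP^n$, I expect to produce in the image a set of classes whose images in $\mathbb{H}_\ast(L\CP^n;\mathbb{Q})$ are linearly independent — essentially the classes $c^l\cdot v^k$, $u\cdot c^l\cdot v^k$ in Yang's presentation. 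A dimension count then shows the image has rank equal to the rank of $H_\ast(\mathbb{L}^n;\mathbb{Q})$, so $g_\ast$ is rationally injective. Since $H_\ast(L\CP^n;\mathbb{Z})$ is computed by Cohen--Jones--Yan and its torsion is well understood (only $2$-torsion coming from $(n+1)c^n\cdot v$, $w\cdot c^n$), and since $H_\ast(\mathbb{L}^n;\mathbb{Z})$ should likewise be computed directly from the bundle structure, one checks that $g_\ast$ carries the integral homology injectively onto a direct summand.

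For the splitting I would argue as follows: once $g_\ast$ is shown to be injective with image a direct summand of $H_\ast(L\CP^n;\mathbb{Z})$, a retraction can be constructed degreewise by choosing a complementary summand. More structurally, one can try to use that the projections $\mathbb{L}^n\to\CP^n$ and $L\CP^n\to\CP^n$ are compatible via $g$, together with the $U(n+1)$- and $S^1$-equivariance, to pin down the image as the subgroup generated by $H_\ast(\CP^n)$ under the module structures, which is manifestly a summand. The main obstacle I anticipate is the explicit computation of $H_\ast(\mathbb{L}^n;\mathbb{Z})$ as a bundle over $\CP^n$ and the careful bookkeeping of which classes $g_\ast$ hits in Yang's presentation, including the behaviour of the $2$-torsion classes for $n$ odd; the rational statement is comparatively routine, but getting the integral splitting requires one to match torsion subgroups on the nose. (In the paper this is presumably done by a clean argument identifying $\mathbb{L}^n$ with the pinched compactification of $T\CP^n$ — which is exactly the content of \S\ref{LnIsACompactificationSection} — and then invoking Proposition~\ref{PinchedHomologyProposition}; if I wanted the shortest route I would prove the compactification statement first and deduce split injectivity of $g_\ast$ from the splitting in \eqref{ShortExactHomologySequence}.)
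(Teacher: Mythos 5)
There is a genuine gap: your strategy is essentially rational, but the content of the proposition is integral, and the two cannot be bridged by a dimension count. The delicate classes are precisely the ones rational homology cannot see: the torsion in $\mathbb{H}_\ast(L\CP^n;\mathbb{Z})$ is $\mathbb{Z}/(n+1)$ (coming from the relation $(n+1)c^n\cdot v=0$), not $2$-torsion as you assert, and the corresponding class $b_0\in H_{2n}(\mathbb{L}^n)$ of order $n+1$ must be shown to inject --- a statement with no rational shadow. Even on the free parts, rational injectivity together with equality of ranks does not give a splitting over $\mathbb{Z}$ (think of multiplication by $2$ on $\mathbb{Z}$): you would still have to prove that $g_\ast$ sends generators to indivisible classes and matches torsion subgroups exactly, and this is exactly the step your proposal defers to ``one checks''. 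Two further ingredients of the plan also fail as stated: Leray--Hirsch does not apply to $\mathbb{L}^n\to\CP^n$, because that spectral sequence is \emph{not} degenerate --- there is a nontrivial $d^{2n}$ differential, and its cokernel is what produces the relation $(n+1)b_0=0$; and the suggested shortcut of invoking Proposition~\ref{PinchedHomologyProposition} only splits the sequence \eqref{ShortExactHomologySequence} describing $H_\ast(\mathbb{L}^n)$ internally --- it says nothing about the map $g_\ast$ into $H_\ast(L\CP^n)$, so it cannot by itself yield split injectivity.

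The paper's argument avoids all of this and is integral from the start: it compares the fibration $V_n\to\mathbb{L}^n\to\CP^n$ (whose fibre $V_n$ you correctly identified, with homology $\mathbb{Z}$ in degrees $0$, $1$, $2n$ and nothing above $2n$) with the free loop fibration $\Omega_{l_0}\CP^n\to L\CP^n\to\CP^n$ via the map $g$. Lemma~\ref{LoopHomologyInjective} shows by an explicit cycle-level computation that the fibre map $\gamma_\ast\colon H_\ast(V_n)\to H_\ast(\Omega_{l_0}\CP^n)=\mathbb{Z}[a_1,a_{2n}]/\langle a_1^2\rangle$ is an isomorphism in degrees $\leqslant 2n$. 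Since $H_\ast(V_n)$ vanishes above degree $2n$, the induced map of Serre spectral sequences is an isomorphism on the entire region where the source is nonzero, and comparing the filtrations gives $H_k(\mathbb{L}^n)\cong H_k(L\CP^n)/B_{k-2n-1}$, which is exactly a splitting of $g_\ast$. This requires no knowledge of $H_\ast(L\CP^n;\mathbb{Z})$ or of Yang's rational computation, and it handles torsion and divisibility automatically; if you want to salvage your approach, the piece you must add is an integral argument of this kind (or an explicit integral computation of $H_\ast(\mathbb{L}^n)$ together with a proof that each generator maps to a generator of a free or $\mathbb{Z}/(n+1)$ summand), not a refinement of the rational bookkeeping.
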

\begin{proof}
Write $l_0\in\CP^n$ for the span of the first standard basis vector.  There is a commutative diagram
\begin{equation}\label{FibrationsDiagram}\xymatrix{
V_n\ar[r]\ar[d]_{\gamma}  & \mathbb{L}^n\ar[r]^\pi\ar[d]_g & \CP^n\ar@{=}[d]\\
\Omega_{l_0}\CP^n \ar[r] & L\CP^n \ar[r]_\pi & \CP^n.
}\end{equation}
Here $V_n$ is the space obtained from $\CP^n$ by identifying the subset $\CP^{n-1}_{l_0^\perp}\times\{l_0\}\sqcup\{l_0\}\times\{l_0\}$ to a single point.  The maps $\pi$ are the projections of fibre bundles.  For the bottom row of the diagram this is well known.  For the top row this is reasonably clear, but will in any case be proved in Proposition~\ref{LnIsACompactificationProposition}.

We note that $H_\ast(V_n)$ vanishes in degrees greater than $2n$.  Further, we claim that $\gamma_\ast\colon H_\ast(V_n)\to H_\ast(\Omega_{l_0}\CP^n)$ is an injection that is isomorphic in degrees up to and including $2n$.  This is proved in Lemma~\ref{LoopHomologyInjective} below.  The proposition now follows from the routine spectral sequence argument that we present below.

Let $(E^r,d_E^r)$ denote the Serre spectral sequence in homology for the top row of \eqref{FibrationsDiagram} and let $(F^r,d_F^r)$ denote the Serre spectral sequence for the bottom row of \eqref{FibrationsDiagram}.  Thus $E^2_{\ast,\ast}=H_\ast(\CP^n)\otimes H_\ast(V_n)$ and $F^2_{\ast,\ast}=H_\ast(\CP^n)\otimes H_\ast(\Omega_{l_0}\CP^n)$.  There is an induced map $g_\ast\colon E^r\to F^r$ of spectral sequences in which $g_\ast\colon E^2\to F^2$ is given by $\mathrm{Id}\otimes\gamma_\ast$.  Thus $E^2$ is concentrated in bidegrees $(i,j)$ with $j\leqslant 2n$ and $g_\ast\colon E^2\to F^2$ is an isomorphism in these bidegrees.  The same property then holds for $E^r$ and $g_\ast\colon E^r\to F^r$ for all $r\geqslant 2$, and consequently for $E^\infty$ and $g_\ast\colon E^\infty\to F^\infty$.   

Now fix a degree $k\geqslant 0$.  Let $A_0\leqslant A_1\leqslant \cdots \leqslant A_n$ denote the filtration of $H_k(\mathbb{L}^n)$ induced by the CW filtration of $\CP^n$.  Let $B_0\leqslant B_1\leqslant\cdots\leqslant B_n$ denote the filtration of $H_k(L\CP^n)$ induced by the CW filtration of $\CP^n$.  The map $g_\ast\colon H_\ast(\mathbb{L}^n)\to H_\ast(L\CP^n)$ respects these filtrations and induces maps $A_i/A_{i-1}\to B_i/B_{i-1}$ that under the identifications $A_i/A_{i-1}\cong E^\infty_{i,k-i}$, $B_i/B_{i-1}\cong F^\infty_{i,k-i}$ correspond to $g_\ast\colon E^\infty\to F^\infty$.  

We have established that $g_\ast$ induces isomorphisms $A_i/A_{i-1} \to B_i/B_{i-1}$ for $i\geqslant k-2n$ so that it therefore induces an isomorphism $A_{2n}/A_{k-2n-1}\to B_{2n}/B_{k-2n-1}$.  We have also established that $A_0=\cdots=A_{k-2n-1}=0$ so that $A_{2n}/A_{k-2n-1}=H_k(\mathbb{L}^n)$ and therefore this last isomorphism has the form $H_{k}(\mathbb{L}^n){\cong} H_k(L\CP^n)/B_{k-2n-1}$.
This gives us the required splitting $H_k(L\CP^n)\to (H_k(L\CP^n)/B_{k-2n-1})\cong H_k(\mathbb{L}^n)$ of $g_\ast$.  This completes the proof.
\end{proof}

\begin{lemma}\label{LoopHomologyInjective}
$\gamma_\ast\colon H_\ast(V_n)\to H_\ast(\Omega_{l_0}\CP^n)$ is an isomorphism in degrees $\ast\leqslant 2n$.
\end{lemma}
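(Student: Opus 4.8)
The plan is to compare the two sides degree by degree, exploiting that both have very small homology in the range $\ast\leqslant 2n$. First I would identify the homotopy type of $V_n$. Collapsing the hyperplane $\CP^{n-1}_{l_0^\perp}$ inside $\CP^n$ gives $\CP^n/\CP^{n-1}=S^{2n}$, in which the surviving point $l_0$ is distinct from the collapse point, so forming $V_n$ just identifies two distinct points of $S^{2n}$; hence $V_n\simeq S^{2n}\vee S^1$, and $H_\ast(V_n)$ is free of rank one in degrees $0$, $1$ and $2n$ and zero otherwise. On the other side I would quote the standard low-degree computation of $H_\ast(\Omega\CP^n;\mathbb{Z})$ --- for instance from the fibration sequence $\Omega S^{2n+1}\xrightarrow{\Omega p}\Omega\CP^n\xrightarrow{\partial}S^1$ coming from the Hopf bundle $p\colon S^{2n+1}\to\CP^n$, together with $H_\ast(\Omega S^{2n+1})=\mathbb{Z}[x]$, $|x|=2n$ --- which shows $H_\ast(\Omega_{l_0}\CP^n)$ is likewise free of rank one in degrees $0$, $1$, $2n$ and vanishes in degrees $2,\dots,2n-1$. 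Thus it suffices to prove $\gamma_\ast$ is an isomorphism in degrees $1$ and $2n$, the degree $0$ case being trivial.

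For degree $2n$, write $q\colon\CP^n\to V_n$ for the quotient and $A=\CP^{n-1}_{l_0^\perp}\sqcup\{l_0\}$. Since $H_{2n}(A)=H_{2n-1}(A)=0$, the map $H_{2n}(\CP^n)\to H_{2n}(\CP^n,A)\cong\tilde H_{2n}(V_n)$ is an isomorphism, so $q_\ast[\CP^n]$ generates $H_{2n}(V_n)$ and I need only show that $\hat\gamma_\ast[\CP^n]$ generates $H_{2n}(\Omega_{l_0}\CP^n)$, where $\hat\gamma\colon\CP^n\to\Omega_{l_0}\CP^n$, $m\mapsto(t\mapsto R(t,m)l_0)$, induces $\gamma$ (here $R$ is the rotation map of Definition~\ref{RotationDefinition}). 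Using \eqref{RotationEquation} one checks that $R(t,m)\mathbf{l}_0$ has unit norm for all $t\in S^1$, so $\hat\gamma$ lifts along $p$ to $\tilde\gamma\colon\CP^n\to\Omega_{\mathbf{l}_0}S^{2n+1}$, $m\mapsto(t\mapsto R(t,m)\mathbf{l}_0)$, with $\hat\gamma=(\Omega p)\circ\tilde\gamma$. In the fibration sequence above $(\Omega p)_\ast$ is an edge homomorphism of the Serre spectral sequence and is an isomorphism on $H_{2n}$, since the adjacent term $H_1(S^1;H_{2n-1}(\Omega S^{2n+1}))$ on the line $p+q=2n$ vanishes and comparison with the known value $H_{2n}(\Omega\CP^n)=\mathbb{Z}$ forces the monodromy to act trivially on $H_{2n}(\Omega S^{2n+1})$. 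So I am reduced to showing $\tilde\gamma_\ast[\CP^n]$ generates $H_{2n}(\Omega S^{2n+1})\cong\mathbb{Z}$. The homology suspension $\sigma\colon H_{2n}(\Omega S^{2n+1})\to H_{2n+1}(S^{2n+1})$ carries the James generator to $[S^{2n+1}]$ and is an isomorphism, and from the definition of $\sigma$ via the evaluation map, $\sigma(\tilde\gamma_\ast[\CP^n])=\Phi_\ast([S^1]\times[\CP^n])$ for the map $\Phi\colon S^1\times\CP^n\to S^{2n+1}$, $(t,m)\mapsto R(t,m)\mathbf{l}_0$, which sends $\{1\}\times\CP^n$ to the basepoint and hence factors through $(S^1\times\CP^n)/(\{1\}\times\CP^n)$. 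So it suffices that this last map have degree $\pm1$, and that is an explicit computation: $R(t,m)\mathbf{l}_0=-\mathbf{l}_0$ forces $\mathbf{m}$ parallel to $\mathbf{l}_0$ and then $t=-1$, so $-\mathbf{l}_0$ has the single preimage $[{-1},l_0]$, at which a short Jacobian check shows $\Phi$ is a local diffeomorphism.

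For degree $1$, the connecting homomorphism $H_1(\CP^n,A)\to\tilde H_0(A)$ is an isomorphism, so a generator of $H_1(V_n)$ is represented by $q\circ\beta$ for a path $\beta$ in $\CP^n$ from $l_0$ to a point $p_1\in\CP^{n-1}_{l_0^\perp}$. Both endpoints lie in $A$, so $\hat\gamma\circ\beta$ is a loop in $\Omega_{l_0}\CP^n$ based at the constant loop, and under $H_1(\Omega\CP^n)\cong\pi_1(\Omega\CP^n)\cong\pi_2(\CP^n)$ its class is the homotopy class of the adjoint map $S^2\to\CP^n$, $(s,t)\mapsto R(t,\beta(s))l_0$. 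Choosing $\beta$ to run inside a projective line $\CP^1$ containing $l_0$ and $p_1$, the operator $R(t,\beta(s))$ preserves that $\CP^1$, so this adjoint factors through $\CP^1$; the same kind of elementary computation as above --- locate the unique preimage of the point of $\CP^1$ orthogonal to $l_0$ --- shows it has degree $\pm1$ onto $\CP^1$, hence generates $\pi_2(\CP^n)$. Therefore $\gamma_\ast$ is an isomorphism on $H_1$, completing the proof.

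I expect the degree-$2n$ step to be the real obstacle: one must pin down the generator of $H_{2n}(V_n)$, reduce the claim through the Hopf bundle and the homology suspension to a degree statement about the concrete map $\Phi$ into $S^{2n+1}$, and then carry out the Jacobian computation at its unique preimage. The identification of $H_\ast(\Omega\CP^n)$ in low degrees and the edge-homomorphism fact for $\Omega p$ are standard and I would dispatch them quickly.
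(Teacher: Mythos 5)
Your proof is correct, and its overall skeleton agrees with the paper's: both $H_\ast(V_n)$ and $H_\ast(\Omega_{l_0}\CP^n)$ are free of rank one in degrees $0$, $1$, $2n$ in the relevant range, so it suffices to check that generators map to generators in degrees $1$ and $2n$, which you do by explicit degree counts. Your degree-$1$ step is essentially the paper's argument in different clothing: the paper evaluates the homology suspension of its explicit $1$-cycle on a map $S^1\times[0,\infty]\to\CP^1$ and checks degree at the unique preimage of a point, while you pass through $\pi_1(\Omega_{l_0}\CP^n)\cong\pi_2(\CP^n)$ and perform the same unique-preimage Jacobian check on the adjoint map into a $\CP^1$. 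Where you genuinely diverge is degree $2n$: the paper disposes of it in one line by taking the generator $a_{2n}$ of $H_{2n}(\Omega_{l_0}\CP^n)$ to be $\Omega\alpha_\ast E_{2n}$ (quoted as ``not hard to see''), so that the factorisation $\gamma\circ q=\Omega\alpha\circ S$ gives $\gamma_\ast q_\ast[\CP^n]=a_{2n}$ at once; you instead re-prove that input from scratch, lifting $\hat\gamma$ through the Hopf bundle to $\Omega S^{2n+1}$, showing $(\Omega p)_\ast$ is an isomorphism on $H_{2n}$ via the Serre spectral sequence of $\Omega S^{2n+1}\to\Omega\CP^n\to S^1$, and then combining the homology suspension for $S^{2n+1}$ with a regular-value count for $\Phi\colon S^1\times\CP^n\to S^{2n+1}$ (your unique preimage $(-1,l_0)$ and the local-diffeomorphism check are correct). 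Your route is more self-contained, since it does not invoke the relation between the Bott generator $E_{2n}$ and $H_{2n}(\Omega\CP^n)$, at the cost of more machinery; the paper's route is shorter but leans on that quoted fact. Two minor points: your appeal to ``the known value $H_{2n}(\Omega\CP^n)=\mathbb{Z}$'' to rule out nontrivial monodromy is slightly circular as phrased if that value is read off the same fibration --- it is cleaner to note that the connecting map $S^1\to S^{2n+1}$ is nullhomotopic, so the fibration is trivial and $\Omega\CP^n\simeq S^1\times\Omega S^{2n+1}$; and you pin your generators only up to sign, which suffices for this lemma, whereas the paper's degree-$1$ computation fixes the sign exactly, a precision it reuses later in the proof of Proposition~\ref{LnHomologyProposition}.
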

\begin{proof}
Recall that $V_n$ consists of $\CP^n$ with the subset $\CP^{n-1}_{l_0^\perp}\times\{l_0\}\sqcup\{l_0\}\times\{l_0\}$ identified to a single point and recall from Proposition~\ref{LnStructuresExistProposition} that $\gamma$ is the map that makes the triangle 
\[\xymatrix{
\CP^n \ar[r]^-S\ar[dr]_{q} & \Omega U(n+1) \ar[r]^{\Omega\alpha}  & \Omega_{l_0}\CP^n \\
{} & V_n\ar[ur]_\gamma & {}
}\]
commute.  Here $S$ is the map of Definition~\ref{RotationDefinition}, $\alpha\colon U(n+1)\to \CP^n$ is $A\mapsto Al_0$, and $q$ is the identification map.

The homology groups of $V_n$ are zero except in degrees $0$, $1$ and $2n$.  In degree $2n$ a generator is given by $q_\ast[\CP^n]$.  In degree $1$ we form the infinite closed interval $[0,\infty]$, which maps into $\CP^n$ by $r\mapsto [1,r,0,\ldots,0]$.  The initial point of this map is $l_0$ and the terminal point is $l_1\in\CP^{n-1}$.  Applying $q$ we obtain a path $\delta\colon[0,\infty]\to V_n$ that identifies $0$ and $\infty$.  The corresponding cycle in $V_n$ represents a generator of $H_1(V_n)$ that we denote by $[\delta]$.

The homology ring $H_\ast(\Omega_{l_0}\CP^n)$ is $\mathbb{Z}[a_1,a_{2n}]/\langle a_1^2\rangle$, where $a_1$ and $a_{2n}$ lie in degrees $1$ and $2n$ respectively.  Here $a_1\in H_1(\Omega_{l_0}\CP^n)$ is the unique class whose homology suspension $\sigma(a_1)\in H_2(\CP^n)$ is the fundamental class $[\CP^1]$, and it is not hard to see that $a_{2n}\in H_{2n}(\Omega_{l_0}\CP^n)$ can be taken to be the image of $E_{2n}$ under $\Omega\alpha_\ast$.

We will prove the proposition by showing that $\gamma_\ast[\delta]=a_1$ and that $\gamma_\ast q_\ast[\CP^n]=a_{2n}$.  The second of these is immediate from the commutative diagram above.  To prove the first we will show that the homology suspension $\sigma([\delta])$ is equal to $[\CP^1]$.  The homology suspension $\sigma([\delta])$ is represented by the cycle $S^1\times[0,\infty]\to\CP^1\hookrightarrow\CP^n$, where the first map sends $(t,r)$ to
\[\left[\frac{t+r^2}{1+r^2},\frac{r(t-1)}{1+r^2}\right].\]
We must therefore show that the degree of $S^1\times[0,\infty]\to\CP^1$ is $1$.  To see this, note that $[0,1]$ has a single preimage $(-1,1)$.   For $t\neq 1$ the map sends $(t,r)$ to
\[\left[\frac{t+r^2}{r(t-1)},1\right]\]
and so the derivative of $S^1\times[0,\infty]\to\CP^1$ at $(-1,1)$ is the map $\mathbb{R}^2\to\mathbb{C}=T_{[0,1]}\CP^1$ that sends $\partial/\partial t$ to $i/2$ and $\partial/\partial r$ to $-1$.  This is an isomorphism that preserves orientations, and so the degree is $+1$ as claimed.
\end{proof}

\section{Statement of Theorem C and proof of Theorem B}\label{SecondReformulationSection}

In \S\ref{ApproximationSection} we constructed a finite-dimensional approximation $\mathbb{L}^n$ of $L\CP^n$.  In this section we will state Theorem C, which describes the homology of $\mathbb{L}^n$, and use it to prove Theorem B.  Homology groups are taken with coefficients in $\mathbb{Z}$.

\begin{definition}
Let $\gamma_n$ denote the tautological complex line bundle on $\CP^n$.  Let $u\in H^2(\CP^n)$ denote the first Chern class of $\gamma_n^\ast$.  For any space $X$ equipped with a map $\pi\colon X\to\CP^n$, the pullback $\pi^\ast u\in H^2(X)$ will also be denoted by $u$.  In particular, we have classes $u\in H^2(\mathbb{L}^n)$ and $u\in H^2(L\CP^n)$.
\end{definition}

Consider the chain of standard inclusions $\CP^0\subset\CP^1\subset\cdots\subset\CP^n$, where each space is regarded as the collection of points in the next whose final homogeneous coordinate vanishes.  The fundamental classes $[\CP^0],\ldots,[\CP^n]$ form a basis for $H_\ast(\CP^n)$.  It is well known that the normal bundle of $\CP^{n-1}$ in $\CP^n$ is $\gamma_{n-1}^\ast$, and from this it follows that $u\cap[\CP^i] = [\CP^{i-1}]$ for $1\leqslant i\leqslant n$.

Recall from Proposition~\ref{LnStructuresExistProposition} that $\mathbb{L}^n$ admits a projection to $\CP^n$ and actions of $U(n+1)$ and $S^1$.  Therefore $H_\ast(\mathbb{L}^n)$ admits a homomorphism to $H_\ast(\CP^n)$ and an action of $H^\ast(\CP^n)$, an action of $H_\ast(U(n+1))$, and a degree-raising operator associated to the action of $S^1$.  Also, since $\mathbb{L}^n$ was constructed as a quotient of $\CP^n\times\CP^n$ there is a homomorphism $H_\ast(\CP^n\times\CP^n)\to H_\ast(\mathbb{L}^n)$.

\begin{TheoremC}
$H_\ast(\mathbb{L}^n)$ has generators
\[[\CP^0],\ldots,[\CP^n],\quad a_0,\ldots,a_{n-1},\quad b_0,\ldots,b_n\]
with degrees $|[\CP^i]|=2i$,  $|a_i|=2i+1$,  $|b_i|=2i+2n$.  These generators are subject to the single relation $(n+1)b_0=0$.  Moreover:
\begin{enumerate}
\item
The projection $H_\ast(\mathbb{L}^n)\to H_\ast(\CP^n)$ sends $[\CP^i]$ to $[\CP^i]$ and sends the $a_i$ and $b_i$ to $0$.
\item
For $i\geqslant 0$ the cap product with $u$ sends $[\CP^{i+1}]$ to $[\CP^i]$, $a_{i+1}$ to $a_i$, and $b_{i+1}$ to $b_i$.  It annihilates $[\CP^0]$, $a_0$ and $b_0$.
\item
The action of $H_\ast(U(n+1))$ on $H_\ast(\mathbb{L}^n)$ satisfies
\[e_{2j+1}b_n=0,\qquad e_{2j+1}[\CP^n]=0,\qquad e_{2j+1}a_{n-1}=(j+1)b_{j} \]
for all $j\geqslant 0$.
\item
The degree-raising operator associated to the action of $S^1$ on $\mathbb{L}^n$ can be described as follows.  It annihilates the $[\CP^i]$ and $b_i$, and there are integers $\mu_{n-1},\ldots,\mu_1,\mu_0=1$ such that 
\[a_i\mapsto\mu_i[\CP^{i+1}]\]
for $0\leqslant i<(n-1)$ and such that
\[a_{n-1}\mapsto\binom{n+1}{2} b_0+\mu_{n-1}[\CP^n].\]
\item
The homomorphism $H_\ast(\CP^n\times\CP^n)\to H_\ast(\mathbb{L}^n)$ sends $[\CP^i]\times[\CP^n]$ to $b_{i}$ if $i>0$ and to $b_0+[\CP^n]$ if $i=0$.
\end{enumerate}
\end{TheoremC}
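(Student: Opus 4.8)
The plan is to identify $\mathbb{L}^n$ with the pinched compactification of $T\CP^n$ and then read the five assertions off the machinery of \S\ref{CompactificationsSection}, together with a parallel description of $H_\ast(\mathbb{S}T\CP^n)$ and the structures it carries, which I would state separately as Theorem~D (the last sections of the paper reflect exactly this division of labour). The first step is to show (this is Proposition~\ref{LnIsACompactificationProposition}) that $\CP^n\times\CP^n$, fibred over $\CP^n$ by the second projection, is the projective compactification $(T\CP^n)^{\mathbb{P}}$, and that under this identification the quotient map of Definition~\ref{LnDefinition} becomes the collapse $q\circ p\colon(T\CP^n)^{\mathbb{P}}\to(T\CP^n)^{\mathrm{P}}$, so that $\mathbb{L}^n=(T\CP^n)^{\mathrm{P}}$. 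The essential input is the natural isomorphism $T\CP^n\oplus\mathbb{C}\cong\mathrm{Hom}(\gamma_n,\mathbb{C}^{n+1})$: its fibrewise projectivisation over $m\in\CP^n$ sends a point $(x,m)$ to the line of homomorphisms $m\to\mathbb{C}^{n+1}$ with image $x$, and under this the subset $\CP^{n-1}_{m^\perp}\times\{m\}\sqcup\{m\}\times\{m\}$ collapsed over $m$ becomes precisely $\mathbb{P}(T_m\CP^n)$ together with the origin $0_m$, which is the point-preimage of $q\circ p$. One checks that this identification respects the projections to $\CP^n$ and is $U(n+1)$-equivariant.

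Granting this, Proposition~\ref{PinchedHomologyProposition} (with $X=\CP^n$, $\xi=T\CP^n$) provides a split short exact sequence
\[0\to H_\ast(\CP^n)\to H_\ast(\mathbb{L}^n)\to H_{\ast-1}(\mathbb{S}T\CP^n)\to 0\]
of $H^\ast(\CP^n)$-modules, and since $U(n+1)$ acts on $\CP^n$ by isometries (so preserves lengths in $T\CP^n$) with the projection $\mathbb{L}^n\to\CP^n$ equivariant, Lemma~\ref{PinchedHomologyGroupActionLemma} upgrades it to a split short exact sequence of $H_\ast(U(n+1))$-modules, with the sign twist on the quotient. This reduces the additive structure of $H_\ast(\mathbb{L}^n)$, hence the relation $(n+1)b_0=0$, together with part~(1) (the splitting is the one induced by the projection, which therefore kills the $a_i$ and $b_i$), part~(2) (the cap-product action, computed on the quotient from the Gysin sequence via Lemma~\ref{GysinCapProductLemma}), and part~(3) to the corresponding statements for $H_\ast(\mathbb{S}T\CP^n)$. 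These are the content of Theorem~D: the additive structure and cap-product action come from the Gysin sequence of $T\CP^n$, whose Euler class is $c_n(T\CP^n)=(n+1)u^n$, giving free summands in degrees $0,\dots,2n-2$ and $2n+1,\dots,4n-1$ and a $\mathbb{Z}/(n+1)$ in degree $2n-1$, matching the $a_i$, $b_i$ and the torsion relation after the degree shift; the $H_\ast(U(n+1))$-action (in particular $e_{2j+1}a_{n-1}=(j+1)b_j$) is computed directly from the rotation map of Definition~\ref{RotationDefinition}, the identities $e_{2j+1}[\CP^n]=0$ and $e_{2j+1}b_n=0$ holding because $[\CP^n]$ lies in the image of the zero section and $H_\ast(\CP^n)$ vanishes above degree $2n$, and because $b_n$ lies in the top degree $4n$. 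For part~(5) I would identify the map $H_\ast(\CP^n\times\CP^n)\to H_\ast(\mathbb{L}^n)$ with $(q\circ p)_\ast$ and evaluate it on $[\CP^i]\times[\CP^n]$ using Proposition~\ref{ProjectiveToPinchedProposition}: writing these classes through cap products with $u_{T\CP^n}$ and the fundamental class $[\CP^n]\times[\CP^n]=[(T\CP^n)^{\mathbb{P}}]$, the cap-product terms vanish in the quotient $H_{\ast-1}(\mathbb{S}T\CP^n)$ and the fundamental class maps to $[\mathbb{S}T\CP^n]$, producing $b_i$ for $i>0$; the extra summand $[\CP^n]$ for $i=0$ is detected by projecting to the base, using $\pi\circ(q\circ p)=\mathrm{pr}_2$ (equivalently $g\circ z=c$).

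The step I expect to be the real obstacle is part~(4), the degree-raising operator for the $S^1$-action. Since the circle rotates loops it moves the basepoint, so on $\mathbb{L}^n$ it does not respect the bundle projection or the splitting of the sequence above, and the ``base component'' $\mu_i[\CP^{i+1}]$ of its value on $a_i$ is genuine extra data invisible from $\mathbb{S}T\CP^n$ by itself. One nonetheless checks that each $R(t,x)\in U(n+1)$ acts isometrically on $\CP^n$, so the action preserves lengths in $T\CP^n$, fixes the zero section pointwise — whence the operator annihilates the $[\CP^i]$ — and restricts to $\mathbb{S}T\CP^n$. I would then rephrase the induced operator on $H_\ast(\mathbb{S}T\CP^n)$, and on the auxiliary sphere bundles appearing when the Gysin sequence of $T\CP^n$ is broken into pieces (using Lemmas~\ref{DirectSumGysinSequenceLemma} and~\ref{PullbackGysinSequenceLemma}), in terms of the scalar circle actions of \S\ref{SphereBundlesSection}, and apply Theorem~\ref{SphereBundlesTheorem} repeatedly; this is exactly the point at which the Chern class $c_{n-1}(T\CP^n)=\binom{n+1}{2}u^{n-1}$ enters and produces the term $\binom{n+1}{2}b_0$. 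The vanishing on the $b_i$ is then automatic, since their images in $H_\ast(\mathbb{S}T\CP^n)$ lie in odd degrees and the composite of Theorem~\ref{SphereBundlesTheorem} begins with $\pi_\ast$, which kills odd degrees here; what remains is bookkeeping — fixing the normalisation of the $\mu_i$ and tracking orientation and sign conventions through the homology equivalences of Proposition~\ref{PinchedHomologyProposition} and the Gysin connecting homomorphisms.
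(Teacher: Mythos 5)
Your overall route---identifying $\mathbb{L}^n$ with $(T\CP^n)^{\mathrm{P}}$, feeding this into Propositions~\ref{PinchedHomologyProposition} and \ref{ProjectiveToPinchedProposition} and Lemma~\ref{PinchedHomologyGroupActionLemma} to obtain the split exact sequence and part (5), and reducing parts (1)--(3) and the fibrewise half of part (4) to a separate description of $H_\ast(\mathbb{S}T\CP^n)$ proved via Gysin sequences, the homotopy to the scalar circle action and Theorem~\ref{SphereBundlesTheorem}---is essentially the paper's own (Proposition~\ref{LnIsACompactificationProposition}, Corollary~\ref{LnCorollary}, Proposition~\ref{LnHomologyProposition} and Theorem D). The genuine gap is the normalisation $\mu_0=1$ in part (4). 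This is part of the statement being proved, and it is essential downstream: transported through the proof of Theorem B it becomes the condition $\mu_{n-1}=1$ that Lemma~\ref{DeltaLemma} combines with $\mu_n=0$ and the BV identity to solve for all the $\mu_i$; without it $\Delta$ is only determined up to a one-parameter family. You correctly note that the base components $\mu_i[\CP^{i+1}]$ are invisible from $\mathbb{S}T\CP^n$---they measure exactly the failure of the sequence to split over $H_\ast(S^1)$---but you then consign their determination to ``bookkeeping'' of orientations and signs, and none of the tools you invoke can produce this number: Theorem~\ref{SphereBundlesTheorem} and the Gysin lemmas only ever compute the component of the degree-raising operator valued in the quotient $H_{\ast-1}(\mathbb{S}T\CP^n)$.

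The paper supplies this missing input by a separate geometric computation, stated as part 3 of Proposition~\ref{LnHomologyProposition}: take the explicit arc $r\mapsto([\mathbf{e}_0+r\mathbf{e}_1],[\mathbf{e}_0])$, $r\in[0,\infty]$, which closes up to a $1$-cycle in $\mathbb{L}^n$ through the pinch point and represents the (unique, since $H_1(\CP^n)=0$) class $D=a_0$ mapping to the point class in $H_0(\mathbb{S}T\CP^n)$; rotating this cycle by the circle action and projecting to $\CP^n$ yields an explicit map $S^1\times[0,\infty]\to\CP^1$ whose degree is computed directly to be $+1$ (this is the calculation at the end of Lemma~\ref{LoopHomologyInjective}), giving $\pi_\ast(Ra_0)=[\CP^1]$, i.e.\ $\mu_0=1$. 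Some such direct evaluation on a cycle transverse to the fibres (or an equivalent argument) must be added to your proposal; as written it establishes part (4) only up to undetermined integers $\mu_i$, with no normalisation.
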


\begin{proof}[Proof of 
Theorem B]
To begin, let us recall from Proposition~\ref{LnStructuresExistProposition}  that we have a map
\[g_\ast\colon H_\ast(\mathbb{L}^n)\to H_\ast(L\CP^n)\]
that intertwines the $H_\ast(S^1)$ and $H_\ast(U(n+1))$ actions, and that satisfies $u\cap g_\ast(x)=g_\ast(u\cap x)$ for any $x\in H_\ast(\mathbb{L}^n)$.  Moreover, $g_\ast$ is split-injective by Proposition~\ref{gInjectiveProposition}.  Finally, the map $f\colon\CP^n\times\CP^n\to L\CP^n$ from Definition~\ref{fDefinition}, which describes the classes $E_{2i}1=f_\ast([\CP^i]\times[\CP^n])$, factors as $g\circ q$ where $q\colon\CP^n\times\CP^n\to\mathbb{L}^n$ denotes the quotient map.

Theorem C shows that $[\CP^{n-1}]$, $a_{n-1}$ and $b_n$ generate free summands of $H_\ast(\mathbb{L}^n)$ in degrees $2n-2$, $2n-1$ and $4n$ respectively, so that $g_\ast[\CP^{n-1}]$, $g_\ast a_{n-1}$ and $g_\ast b_n$ generate free summands of $H_\ast(L\CP^n)$ in degrees $2n-2$, $2n-1$ and $4n$ respectively.  Shifting degrees we find that $g_\ast[\CP^{n-1}]$, $g_\ast a_{n-1}$ and $g_\ast b_n$ generate free summands of $\mathbb{H}_\ast(L\CP^n)$ in degrees $-2$, $-1$ and $2n$ respectively.  As explained in the introduction, the generators $c$, $w$ and $v$ of $\mathbb{H}_\ast(L\CP^n)$ may be chosen to be any classes that generate a free summand in the relevant degrees.  We may therefore take
\[ c=g_\ast[\CP^{n-1}],\quad w=g_\ast a_{n-1},\quad v= g_\ast b_n.\]

The first part of 
Theorem C shows that $c$ is the image of $[\CP^{n-1}]$ under the inclusion $H_\ast(\CP^n)\to H_\ast(L\CP^n)$.  We also know that  $[\CP^{n-1}]=u\cap[\CP^n]$.  In other words, $c=u\cap 1$ in $\mathbb{H}_\ast(L\CP^n)$.  By a theorem of Tamanoi  \cite[Theorem A]{\TamanoiCap} we therefore have $c\cdot x= u\cap x$ for any $x\in\mathbb{H}_\ast(L\CP^n)$.  Therefore, using the description of the map $u\cap -$ on $H_\ast(\mathbb{L}^n)$ given by the second part of 
Theorem C,  we have
\[ g_\ast [\CP^{n-i}] = c^{i},\quad g_\ast a_{n-i-1} =  c^{i}\cdot w,\quad g_\ast b_{n-i} = c^{i}\cdot v.\]

We can now prove the theorem.  First, $E_{2i}1 = f_\ast([\CP^i]\times[\CP^n]) = g_\ast\circ q_\ast([\CP^i]\times[\CP^n])$, which by the final part of 
Theorem C is equal to $g_\ast(b_i)=c^{n-i}\cdot v$ if $i>0$ and is equal to $g_\ast([\CP^n]+ b_0)=1+c^n\cdot v$ if $i=0$.  Second, since $g_\ast$ is a $H_\ast(U(n+1))$ module homomorphism, the third part of 
Theorem C shows that $e_{2j+1}v=0$, $e_{2j+1}c=0$ and $e_{2j+1}w=(j+1)c^{n-j}\cdot v$.  Finally, since $g_\ast$ is a map of $H_\ast(S^1)$ modules we have that $g_\ast(R x)=\Delta(g_\ast x)$ for any $x\in H_\ast(\mathbb{L}^n)$.   The fourth part of 
Theorem C now gives us $\Delta(c^i)=0$ and $\Delta(c^i\cdot v)=0$, and integers $\mu_{n-1},\ldots,\mu_0=1$ such that $\Delta(c^i\cdot w)=\mu_i c^i$ for $i>0$ and
\[\Delta(w)=\binom{n+1}{2} c^n\cdot v+\mu_{n-1}1.\]
This completes the proof.
\end{proof}

\section{$\mathbb{L}^n$ as a compactification of $T\CP^n$}\label{LnIsACompactificationSection}
We have established that Theorem A follows from Theorem C, which describes the homology of the space $\mathbb{L}^n$ introduced in \S\ref{ApproximationSection}.  We now work towards a proof of Theorem C.   In \S\ref{LnPinchedSubsection} we see that $\mathbb{L}^n$ can be identified with the pinched compactification $(T\CP^n)^\mathrm{P}$.  Then in \S\ref{LnPinchedComputationsSubsection} we use the techniques of \S\ref{CompactificationsSection} to compute the homology of $\mathbb{L}^n$ in terms of $\mathbb{S}T\CP^n$.

\subsection{$\mathbb{L}^n$ and $(T\CP^n)^\mathrm{P}$}\label{LnPinchedSubsection}

$\mathbb{L}^n$ was constructed in Definition~\ref{LnDefinition} as the quotient of $\CP^n\times\CP^n$ obtained by identifying, for each $l\in\CP^n$, the subset $\{l\}\times\{l\}\sqcup\CP^{n-1}_{l^\perp}\times\{l\}$ to a single point.  In this section we will identify $\mathbb{L}^n$ as the pinched compactification of $T\CP^n$ introduced in \S\ref{CompactificationsSection}.  We begin with a concrete description of $T\CP^n$.

\begin{note}
Recall that $\gamma_n\to\CP^n$ denotes the tautological line bundle and that $u\in H^2(\CP^n)$ denotes the first Chern class of $\gamma_n^\ast$.
By regarding $\gamma_n$ as a sub-bundle of the trivial bundle $\mathbb{C}^{n+1}\to\CP^n$ we may form its orthogonal complement $\gamma_n^\perp\to \CP^n$.  Recall that $T\CP^n =\gamma_n^\perp\otimes \gamma_n^\ast$.  From this it is easy to see that
\[c(T\CP^n) = (1+u)^{n+1}.\]
Given a nonzero vector $\mathbf{u}\in\mathbb{C}^{n+1}$, we will write $[\mathbf{u}]\in\CP^n$ for the line spanned by $\mathbf{u}$.
We can describe $T\CP^n$ as the quotient space
\begin{equation}\label{AlternativeTCPnEquation}
T\CP^n=\left\{(\mathbf{u},\mathbf{v})\in\mathbb{C}^{n+1}\times\mathbb{C}^{n+1}\mid\|\mathbf{u}\|=1,\langle \mathbf{u},\mathbf{v}\rangle=0\right\}\big\slash S^1
\end{equation}
where $S^1$ acts by the formula $t(\mathbf{u},\mathbf{v})=({t}\mathbf{u}, {t}\mathbf{v})$.  We write the equivalence class of a pair $(\mathbf{u},\mathbf{v})$ as $[\mathbf{u},\mathbf{v}]$.  The projection $T\CP^n\to\CP^n$ is given by $[\mathbf{u},\mathbf{v}]\mapsto[\mathbf{u}]$.  The isomorphism of this space with $\gamma_n^\perp\otimes\gamma_n^\ast$ sends $[\mathbf{u},\mathbf{v}]$ to $\mathbf{v}\otimes\mathbf{u}^\ast\in(\gamma_n^\perp\otimes\gamma_n^\ast)_{[\mathbf{u}]}$.  Here $\mathbf{u}^\ast$ denotes the functional $\langle\mathbf{u},-\rangle$.
\end{note}

\begin{proposition}\label{LnIsACompactificationProposition}
There is a diffeomorphism of almost-complex manifolds
\[\phi\colon\CP^n\times\CP^n\to (T\CP^n)^\mathbb{P}\]
over $\CP^n$ with the following properties:
\begin{enumerate}
\item The pullback $\phi^\ast\tau_{T\CP^n}$ is the exterior tensor product $\gamma_n\boxtimes\gamma_n^\ast$.
\item The point $(l,l)\in\CP^n\times\CP^n$ is sent by $\phi$ to the origin in $T_l\CP^n$.
\item The subset $\CP^{n-1}_{l^\perp}\times\{l\}$ is sent by $\phi$ to the subset $\mathbb{P}(T_l\CP^n)$ of $(T\CP^n)^\mathbb{P}$.
\item The action of $U(n+1)$ on $\CP^n\times\CP^n$ corresponds under $\phi$ to the action on $(T\CP^n)^\mathbb{P}$ that is induced from the natural action of $U(n+1)$ on $T\CP^n$.
\item Under $\phi$ the circle action on $\CP^n\times\CP^n$ corresponds to a circle action on $(T\CP^n)^\mathbb{P}$.  This circle action restricts to the action on $T\CP^n$ given by
\begin{equation}\label{CircleActionEquation}
t[\mathbf{u},\mathbf{v}]=\left[\frac{t+\|\mathbf{v}\|^2}{1+\|\mathbf{v}\|^2}\mathbf{u}+\frac{t-1}{1+\|\mathbf{v}\|^2}\mathbf{v},\frac{(t-1)\|\mathbf{v}\|^2}{1+\|\mathbf{v}\|^2}\mathbf{u}+\frac{{t}\|\mathbf{v}\|^2+1}{1+\|\mathbf{v}\|^2}\mathbf{v}\right].
\end{equation}
\end{enumerate}
\end{proposition}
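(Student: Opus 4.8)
The plan is to write the diffeomorphism $\phi$ down explicitly via fibrewise orthogonal projection, read off properties (1)--(4) directly from the formula, and treat property (5) by transporting the circle action through $\phi$ and computing.

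\emph{Construction of $\phi$.} Fix $(l,m)\in\CP^n\times\CP^n$; since $\phi$ is to lie over the \emph{second} factor, $\phi(l,m)$ should be a line in the fibre $T_m\CP^n\oplus\mathbb{C}$. Choose a unit vector $\mathbf{m}\in m$ and any nonzero $\mathbf{l}\in l$, and form the orthogonal decomposition $\mathbf{l}=z\mathbf{m}+\mathbf{v}$ with $z=\langle\mathbf{m},\mathbf{l}\rangle$, $\mathbf{v}=\mathbf{l}-z\mathbf{m}\perp\mathbf{m}$ and $(\mathbf{v},z)\neq(0,0)$. Let $\phi(l,m)$ be the line spanned by $([\mathbf{m},\mathbf{v}],z)$ in $T_m\CP^n\oplus\mathbb{C}$, where $[\mathbf{m},\mathbf{v}]$ is read off the model \eqref{AlternativeTCPnEquation}. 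Rescaling $\mathbf{l}$ or replacing $\mathbf{m}$ by $t\mathbf{m}$ with $|t|=1$ only rescales $([\mathbf{m},\mathbf{v}],z)$, so $\phi$ is well defined; and over a fixed $m$ the map $\mathbf{l}\mapsto([\mathbf{m},\mathbf{v}],z)$ is a linear isomorphism $\mathbb{C}^{n+1}\xrightarrow{\sim}T_m\CP^n\oplus\mathbb{C}$ (orthogonal splitting along $\mathbf{m}^\perp\oplus\mathbb{C}\mathbf{m}$, followed by $\mathbf{v}\mapsto\mathbf{v}\otimes\mathbf{m}^\ast$ on the first summand). Hence $\phi$ is fibrewise the projectivisation of a linear isomorphism, covers $\mathrm{id}_{\CP^n}$, and is therefore a diffeomorphism; being fibrewise $\mathbb{C}$-linear over the complex base $\CP^n$, it respects the natural almost-complex structures of the two $\CP^n$-bundles, which is what is used later.

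\emph{Properties (1)--(4).} For (1), the fibre of $\phi^\ast\tau_{T\CP^n}$ over $(l,m)$ is the line $\phi(l,m)\subset T_m\CP^n\oplus\mathbb{C}$, and $\mathbf{l}\otimes\mathbf{m}^\ast\mapsto(\mathbf{v}\otimes\mathbf{m}^\ast,z)$ (using $\mathbf{m}^\ast=\langle\mathbf{m},-\rangle$ and $\mathbf{m}\otimes\mathbf{m}^\ast\leftrightarrow1$) is a well-defined injective bundle map from $\gamma_n\boxtimes\gamma_n^\ast$ into the pullback of $T\CP^n\oplus\mathbb{C}$ over $\CP^n\times\CP^n$, with image $\phi^\ast\tau_{T\CP^n}$; so $\phi^\ast\tau_{T\CP^n}\cong\gamma_n\boxtimes\gamma_n^\ast$. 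For (2), at $(l,l)$ taking the same unit vector in both slots gives $z=1$, $\mathbf{v}=0$, so $\phi(l,l)$ is the span of $(0,1)$, the image of the origin of $T_l\CP^n$. For (3), when the first coordinate is perpendicular to the base coordinate one has $z=0$, so $\phi$ sends $\CP^{n-1}_{l^\perp}\times\{l\}$ into the set at infinity $\mathbb{P}(T_l\CP^n)$, via the standard identification $\mathbb{P}(\gamma_{n,l}^\perp)=\mathbb{P}(\gamma_{n,l}^\perp\otimes\gamma_{n,l}^\ast)=\mathbb{P}(T_l\CP^n)$. For (4), for $A\in U(n+1)$ unitarity gives $\langle A\mathbf{m},A\mathbf{l}\rangle=\langle\mathbf{m},\mathbf{l}\rangle$, so the decomposition of $A\mathbf{l}$ relative to $A\mathbf{m}$ is $z(A\mathbf{m})+A\mathbf{v}$ and $\phi(Al,Am)=A\cdot\phi(l,m)$, $A$ acting on $(T\CP^n)^\mathbb{P}$ through its natural action on $T\CP^n$.

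\emph{Property (5), the main obstacle.} Transport the circle action $\sigma_t(l,m)=(l,R(t,l)m)$ through $\phi$ to define $\Psi_t=\phi\circ\sigma_t\circ\phi^{-1}$ on $(T\CP^n)^\mathbb{P}$. From $\langle\mathbf{l},R(t,l)\mathbf{m}\rangle=t\langle\mathbf{l},\mathbf{m}\rangle$ (for $\mathbf{l}$ a unit vector in $l$) one gets that $\sigma_t$ preserves $\{l\not\perp m\}$, which is exactly $\phi^{-1}$ of the open part $T\CP^n\subset(T\CP^n)^\mathbb{P}$, so $\Psi_t$ restricts to a self-map of $T\CP^n$. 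To identify that restriction, first check $\phi^{-1}[\mathbf{u},\mathbf{v}]=([\mathbf{u}+\mathbf{v}],[\mathbf{u}])$ (using $\|\mathbf{u}+\mathbf{v}\|^2=1+\|\mathbf{v}\|^2$), then apply $\sigma_t$ to obtain $([\mathbf{u}+\mathbf{v}],R(t,[\mathbf{u}+\mathbf{v}])[\mathbf{u}])$, then apply $\phi$; after verifying that the new $\mathbb{C}$-component comes out to $\bar t$, so that dividing through by it re-normalises the tangent vector, the result is precisely the right-hand side of \eqref{CircleActionEquation}. I expect this chain of substitutions --- tracking the moving basepoint, the norm $\|\mathbf{u}+\mathbf{v}\|$, and the normalising scalar --- to be the only genuinely laborious part; everything else is formal. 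I would also record, as part of this step, that the right-hand side of \eqref{CircleActionEquation} does have unit-norm first entry orthogonal to its second entry, so that it is a legitimate element of the model \eqref{AlternativeTCPnEquation}.
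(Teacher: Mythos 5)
Your proposal is correct and follows essentially the paper's own proof: your fibrewise formula $(l,m)\mapsto\mathrm{span}\bigl([\mathbf{m},\mathbf{v}],z\bigr)$ is precisely the coordinate form of the paper's map obtained from $\mathbb{P}(\mathbb{C}^{n+1})\cong\mathbb{P}(\mathbb{C}^{n+1}\otimes\gamma_n^\ast)=\mathbb{P}(T\CP^n\oplus\mathbb{C})$, and your key steps --- the identification $\phi^{-1}[\mathbf{u},\mathbf{v}]=([\mathbf{u}+\mathbf{v}],[\mathbf{u}])$ and the computation of $R(t,[\mathbf{u}+\mathbf{v}])\mathbf{u}$ --- are the same ones the paper uses. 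The only difference is presentational: the paper verifies \eqref{CircleActionEquation} by applying $\phi^{-1}$ to its right-hand side, whereas you push the action forward through $\phi$ and normalise by the $\mathbb{C}$-component $\bar t$; both computations come out correctly.
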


\begin{corollary}\label{LnCorollary}
The diffeomorphism $\phi\colon\CP^n\times\CP^n\to(T\CP^n)^\mathbb{P}$ reduces to a homeomorphism $\psi\colon\mathbb{L}^n\to(T\CP^n)^\mathrm{P}$.  This map commutes with the projections to $\CP^n$, is equivariant with respect to the $U(n+1)$ actions, and transports the $S^1$ action on $\mathbb{L}^n$ to an action on $(T\CP^n)^\mathrm{P}$ that restricts to the action on $T\CP^n$ described by \eqref{CircleActionEquation}.
\end{corollary}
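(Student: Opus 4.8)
The plan is to exhibit both $\mathbb{L}^n$ and $(T\CP^n)^{\mathrm{P}}$ as quotients of $(T\CP^n)^{\mathbb{P}}$ by one and the same equivalence relation, so that the diffeomorphism $\phi$ of Proposition~\ref{LnIsACompactificationProposition} descends to the required homeomorphism. Recall from the proposition of \S\ref{CompactificationsSection} the maps $(T\CP^n)^{\mathbb{P}}\xrightarrow{p}(T\CP^n)^{\mathrm{S}}\xrightarrow{q}(T\CP^n)^{\mathrm{P}}$. Both $p$ and $q$ are quotient maps: $p$ realises $(T\CP^n)^{\mathrm{S}}$ as the quotient of $(T\CP^n)^{\mathbb{P}}$ that collapses each fibrewise projective part $\mathbb{P}(T_l\CP^n)$ to a point, while $q$ identifies the two poles $(0,\pm 1)$ in each fibre of $(T\CP^n)^{\mathrm{S}}=\mathbb{S}(T\CP^n\oplus\mathbb{R})$. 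Since $(0,-1)$ is the image of the origin $0_l\in T_l\CP^n$ under the natural inclusion and $(0,1)$ is the point $p(\mathbb{P}(T_l\CP^n))$, the composite $q\circ p$ is a quotient map whose only nontrivial point-preimage over $l\in\CP^n$ is $\mathbb{P}(T_l\CP^n)\sqcup\{0_l\}$ (with $0_l$ viewed in the affine part $T\CP^n\subset(T\CP^n)^{\mathbb{P}}$). Moreover $p$ and $q$ commute with the projections to $\CP^n$ and with the inclusions from $T\CP^n$, and are given by natural fibrewise formulae, so $q\circ p$ inherits all of these properties; in particular any isometric bundle automorphism of $T\CP^n$ covering a self-map of $\CP^n$ induces compatible self-maps of the three compactifications commuting with $p$ and $q$.

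First I would form the composite $q\circ p\circ\phi\colon\CP^n\times\CP^n\to(T\CP^n)^{\mathrm{P}}$. By parts (2) and (3) of Proposition~\ref{LnIsACompactificationProposition}, $\phi$ carries $\{l\}\times\{l\}$ to $0_l$ and carries $\CP^{n-1}_{l^\perp}\times\{l\}$ bijectively onto $\mathbb{P}(T_l\CP^n)$, hence carries $\{l\}\times\{l\}\sqcup\CP^{n-1}_{l^\perp}\times\{l\}$ bijectively onto $\{0_l\}\sqcup\mathbb{P}(T_l\CP^n)$. Combined with the previous paragraph, this shows that $q\circ p\circ\phi$ is a quotient map (the composite of the homeomorphism $\phi$ with the quotient map $q\circ p$) whose point-preimages are precisely the subsets $\{l\}\times\{l\}\sqcup\CP^{n-1}_{l^\perp}\times\{l\}$ that are collapsed in Definition~\ref{LnDefinition}. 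Since the quotient map $\kappa\colon\CP^n\times\CP^n\to\mathbb{L}^n$ has exactly the same point-preimages, there is a unique homeomorphism $\psi\colon\mathbb{L}^n\to(T\CP^n)^{\mathrm{P}}$ with $\psi\circ\kappa=q\circ p\circ\phi$. (Alternatively, $\psi$ is a continuous bijection from the compact space $\mathbb{L}^n$ onto the Hausdorff space $(T\CP^n)^{\mathrm{P}}$, hence a homeomorphism.)

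It then remains to verify the three compatibilities, all of which follow formally from the identity $\psi\circ\kappa=q\circ p\circ\phi$ together with surjectivity of $\kappa$. Commutativity with the projections to $\CP^n$ is immediate, since $\phi$, $q\circ p$ and $\kappa$ all commute with their projections. For $U(n+1)$-equivariance: $\phi$ is equivariant by part (4) of Proposition~\ref{LnIsACompactificationProposition}, $\kappa$ is equivariant by Proposition~\ref{LnStructuresExistProposition}, and $q\circ p$ is equivariant because the $U(n+1)$-action on $T\CP^n$ is by isometric bundle automorphisms and $p$, $q$ are natural; hence $\psi$ is equivariant. For the circle action, I would transport the $S^1$-action on $\mathbb{L}^n$ (which exists by Proposition~\ref{LnStructuresExistProposition}) across $\psi$ to obtain an $S^1$-action on $(T\CP^n)^{\mathrm{P}}$, and then use the commuting square $\psi\circ\kappa=q\circ p\circ\phi$, part (5) of Proposition~\ref{LnIsACompactificationProposition} (which says that $\phi$ carries the $S^1$-action on $\CP^n\times\CP^n$ to one on $(T\CP^n)^{\mathbb{P}}$ restricting to \eqref{CircleActionEquation} on $T\CP^n$), and the fact that $q\circ p$ restricts on the affine part $T\CP^n\subset(T\CP^n)^{\mathbb{P}}$ to the natural inclusion $T\CP^n\hookrightarrow(T\CP^n)^{\mathrm{P}}$, to conclude that the transported action restricts to \eqref{CircleActionEquation} on $T\CP^n$.

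The bulk of this is formal, and I expect the only genuinely delicate point to be this last step: keeping track of the two different (and non-homeomorphic) inclusions of $T\CP^n$ into $(T\CP^n)^{\mathbb{P}}$ and into $(T\CP^n)^{\mathrm{P}}$, and checking that the property ``restricts to \eqref{CircleActionEquation} on $T\CP^n$'' is preserved under pushing the $S^1$-action forward along $q\circ p$. Once the identifications of Proposition~\ref{LnIsACompactificationProposition} and the basic properties of the three compactifications established in \S\ref{CompactificationsSection} are in hand, no new geometric input is needed.
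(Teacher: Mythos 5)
Your argument is correct and follows exactly the route the paper intends: the corollary is left as an immediate consequence of Proposition~\ref{LnIsACompactificationProposition} together with the quotient descriptions of $p$ and $q$ from \S\ref{CompactificationsSection}, and your proof simply spells out that deduction (matching the collapsed subsets via parts (2)--(3), then descending $\phi$ through the quotient maps and transporting the structures via parts (4)--(5)). No gaps; the compactness/Hausdorff alternative you note is a clean way to see $\psi$ is a homeomorphism.
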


\begin{proof}[Proof of Proposition~\ref{LnIsACompactificationProposition}]
We begin with a general note about projectivisations of complex vector bundles.
Let $\xi$ be a complex vector bundle over a space $X$.  Then the associated bundle $\mathbb{P}\xi\to X$ of complex projective spaces admits a tautological complex line bundle $\delta_\xi\to\mathbb{P}\xi$.  A point in $\mathbb{P}\xi$ consists of a line $l$ in a fibre $\xi_x$, and the fibre of $\delta_\xi$ over this point is nothing but the line $l$.
If $\gamma$ is a complex line bundle over $X$ then there is a natural isomorphism $\phi\colon\mathbb{P}\xi\to \mathbb{P}(\xi\otimes\gamma)$ that sends $l\subset\xi_x$ to $l\otimes\gamma_x\subset \xi_x\otimes\gamma_x=(\xi\otimes\gamma)_x$.  Note that $\phi^\ast \delta_{\xi\otimes\gamma}=\delta_\xi\otimes\pi^\ast\gamma$, where $\pi\colon\mathbb{P}\xi\to X$ is the projection.

Now $\CP^n\times\CP^n$ is the projectivisation of the trivial bundle $\mathbb{C}^{n+1}$ over $\CP^n$, while $T\CP^n$ is the vector bundle $\gamma_n^\perp\otimes\gamma_n^\ast$.  We therefore have an isomorphism of vector bundles
\begin{equation}\label{VectorBundleIsomorphism}\mathbb{C}^{n+1}\otimes\gamma_n^\ast\cong T\CP^n\oplus\mathbb{C}
\end{equation}
and, as in the last paragraph, an isomorphism
\begin{equation}\label{phiDefinition}
\phi\colon \CP^n\times\CP^n = \mathbb{P}(\mathbb{C}^{n+1})\xrightarrow{\cong}\mathbb{P}(T\CP^n\oplus\mathbb{C})=(T\CP^n)^\mathbb{P}
\end{equation}
as required.  Definition~\ref{TauDefinition} defined a tautological line bundle $\tau_\xi$ on $\xi^\mathbb{P}$ for any $\xi\to X$.  But $\xi^\mathbb{P}=\mathbb{P}(\xi\oplus\mathbb{C})$, and one can see that $\tau_\xi=\delta_{\xi\oplus\mathbb{C}}$.  Now $\tau_{T\CP^n}=\delta_{T\CP^n\oplus\mathbb{C}}$ so that $\phi^\ast\tau_{T\CP^n}=\delta_{\mathbb{C}^{n+1}}\otimes\pi^\ast\gamma_n^\ast$.  But $\delta_{\mathbb{C}^{n+1}}=\gamma_n\boxtimes\mathbb{C}$ and $\pi^\ast\gamma_n^\ast=\mathbb{C}\boxtimes\gamma_n^\ast$.  This proves the first assertion.

We now prove the second part.  Under the identification $\CP^n\times\CP^n=\mathbb{P}(\mathbb{C}^{n+1})$ the point $(l,l)$ is just the span of $l\subset\mathbb{C}^{n+1}$ in the fibre over $l$.  In other words, it is the image of $l$ under the inclusion $\CP^n=\mathbb{P}(\gamma_n)\hookrightarrow\mathbb{P}(\mathbb{C}^{n+1})$.  Similarly, the embedding $T\CP^n\hookrightarrow(T\CP^n)^\mathbb{P}$ sends $0\in T_l\CP^n$ to the image of $l$ under the embedding $\CP^n=\mathbb{P}(\mathbb{C})\hookrightarrow\mathbb{P}(T\CP^n\oplus\mathbb{C})=(T\CP^n)^\mathbb{P}$.  To prove the second assertion it therefore suffices to note that, as in the first paragraph, there is a diffeomorphism $\phi\colon\mathbb{P}(\gamma_n)\to\mathbb{P}(\mathbb{C})$ that fits into a commutative diagram
\[\xymatrix{
\mathbb{P}(\gamma_n)\ar[r]^{\phi}\ar@{^(->}[d]  & \mathbb{P}(\mathbb{C})\ar@{^(->}[d]\\
\mathbb{P}(\mathbb{C}^{n+1})\ar[r]_-\phi& \mathbb{P}(T\CP^n\oplus\mathbb{C}).
}\]
Thus $\phi$ does indeed identify the required points.

We now prove the third part.  This is similar to the proof of the second part.  Under the identification $\CP^n\times\CP^n=\mathbb{P}(\mathbb{C}^{n+1})$ the subset $\CP^{n-1}_{l^\perp}\times\{l\}$ is the image under $\mathbb{P}(\gamma_n^\perp)\hookrightarrow\mathbb{P}(\mathbb{C}^{n+1})$ of the fibre over $l$.  Similarly, the subset $\mathbb{P}(T_l\CP^n)$ of $(T\CP^n)^\mathbb{P}$ is the image under $\mathbb{P}(T\CP^n)\hookrightarrow\mathbb{P}(T\CP^n\oplus\mathbb{C})=(T\CP^n)^\mathbb{P}$ of the fibre over $l$.  However, as in the first paragraph there is a diffeomorphism $\phi\colon\mathbb{P}(\gamma_n^\perp)\to\mathbb{P}(T\CP^n)$ that fits into a commutative diagram
\[\xymatrix{
\mathbb{P}(\gamma_n^\perp)\ar[r]^{\phi}\ar@{^(->}[d]  & \mathbb{P}(T\CP^n)\ar@{^(->}[d]\\
\mathbb{P}(\mathbb{C}^{n+1})\ar[r]_-\phi & \mathbb{P}(T\CP^n\oplus\mathbb{C}),
}\]
so that $\phi$ identifies the required subsets.

Before proving the fourth and fifth claims, we show that the composite embedding
\[\varepsilon\colon T\CP^n\hookrightarrow(T\CP^n)^\mathbb{P}\xrightarrow{\phi^{-1}}\CP^n\times\CP^n\]
is given by  $\varepsilon[\mathbf{u},\mathbf{v}] = ([\mathbf{u}+\mathbf{v}],[\mathbf{u}])$.  The first map above sends $[\mathbf{u},\mathbf{v}]$ to the span of $(\mathbf{v}\otimes\mathbf{u}^\ast,1)\in(\gamma_n^\perp\otimes\gamma_n)\oplus\mathbb{C}$.  We must therefore check that $\phi$ sends $([\mathbf{u}+\mathbf{v}],[\mathbf{u}])$ to this span.  Regarding $\CP^n\times\CP^n$ as $\mathbb{P}(\mathbb{C}^{n+1})$, $([\mathbf{u}+\mathbf{v}],[\mathbf{u}])$ is just the span of $\mathbf{u}+\mathbf{v}$ in the fibre of $\mathbb{C}^{n+1}$ over $[\mathbf{u}]\in\CP^n$.  Applying $\phi$, this is transformed to the span of $\mathbf{v}\otimes\mathbf{u}^\ast + \mathbf{u}\otimes\mathbf{u}^\ast$ in the fibre of $\mathbb{C}^{n+1}\otimes\gamma_n^\ast$ over $[\mathbf{u}]$.  But the isomorphism $\mathbb{C}^{n+1}\otimes\gamma_n^\ast\cong T\CP^n\oplus\mathbb{C}$ sends this vector to $(\mathbf{v}\otimes\mathbf{u}^\ast,1)$ as required.

To prove the fourth claim it suffices to note that $\varepsilon$ is $U(n+1)$-equivariant.   
To prove the final claim we must show that, given a point $[\mathbf{u},\mathbf{v}]\in T\CP^n$,  the embedding $\varepsilon$ sends the pair \eqref{CircleActionEquation} to $t([\mathbf{u}+\mathbf{v}],[\mathbf{u}])=\left([\mathbf{u}+\mathbf{v}],[R(t,[\mathbf{u}+\mathbf{v}])\mathbf{u}]\right)$.  It is not difficult to compute that 
\[R(t,[\mathbf{u}+\mathbf{v}])\mathbf{u}=\frac{{t}+\|\mathbf{v}\|^2}{1+\|\mathbf{v}\|^2}\mathbf{u}+\frac{{t}-1}{1+\|\mathbf{v}\|^2}\mathbf{v}\]
as required.  To complete the proof we need only show that the sum of the two quantities on the right hand side of \eqref{CircleActionEquation} span the line $[\mathbf{u}+\mathbf{v}]$.  But one can compute the sum and verify this directly.
\end{proof}

\subsection{The homology of $\mathbb{L}^n$}\label{LnPinchedComputationsSubsection}
In this subsection we compute the homology of $\mathbb{L}^n$ in terms of the homology of $\mathbb{S}T\CP^n$.  Since Corollary~\ref{LnCorollary} identifies $\mathbb{L}^n$ as $(T\CP^n)^\mathrm{P}$, this is a routine consequence of the results of \S\ref{CompactificationsSection}.

\begin{definition}\label{STCPnModuleDefinition}
Proposition~\ref{LnIsACompactificationProposition} and Corollary~\ref{LnCorollary} show that $(T\CP^n)^\mathrm{P}$ admits actions of $U(n+1)$ and $S^1$.  The first action is obtained from the standard action on $T\CP^n$, so preserves the length of vectors.  One can also check that the second action preserves the length of vectors.  Thus $\mathbb{S}T\CP^n$ inherits actions of $U(n+1)$ and $S^1$.  In this way $H_{\ast-1}(\mathbb{S}T\CP^n)$ becomes a module over $H_\ast(U(n+1))$ and over $H_\ast(S^1)$.  We now twist these module structures, declaring that an element $\alpha$ in $H_\ast(U(n+1))$ or $H_\ast(S^1)$ acts on $H_{\ast-1}(\mathbb{S}T\CP^n)$ as $(-1)^{|\alpha|}$ times the operation defined by the actions above.  We also regard $H_{\ast-1}(\mathbb{S}T\CP^n)$ as a module over $H^\ast(\CP^n)$ but there is no twisting of this module structure.
\end{definition}

\begin{proposition}\label{LnHomologyProposition}
\begin{enumerate}
\item
There is a short exact sequence
\[0\to H_\ast(\CP^n)\to H_\ast(\mathbb{L}^n)\to H_{\ast-1}(\mathbb{S}T\CP^n)\to 0\]
of $H_\ast(U(n+1))$, $H_\ast(S^1)$ and $H^\ast(\CP^n)$ modules.  It is split as a sequence of $H_\ast(U(n+1))$ and of $H^\ast(\CP^n)$ modules, but not as $H_\ast(S^1)$ modules.
\item
The composition $H_\ast(\CP^n\times\CP^n)\xrightarrow{q_\ast}H_\ast(\mathbb{L}^n)\to H_{\ast-1}(\mathbb{S}T\CP^n)$ sends $[\CP^i]\times[\CP^n]$ to $u^{n-i}\cap[\mathbb{S}T\CP^n]$.
\item
Let $R\colon H_\ast(\mathbb{L}^n)\to H_{\ast+1}(\mathbb{L}^n)$ denote the degree-raising operator associated to the $S^1$ action on $\mathbb{L}^n$.
Let $D\in H_1(\mathbb{L}^n)$ be a class whose image in $H_0(\mathbb{S}T\CP^n)$ is the homology class of a point.  Then the composition
\[H_1(\mathbb{L}^n)\xrightarrow{R}H_2(\mathbb{L}^n)\xrightarrow{\pi_\ast}H_2(\CP^n)\]
sends $D$ to $[\CP^1]$.
\end{enumerate}
\end{proposition}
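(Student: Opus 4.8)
The plan is to identify $\mathbb{L}^n$ with the pinched compactification $(T\CP^n)^\mathrm{P}$ via Corollary~\ref{LnCorollary}, and then read off all three parts from the general results of \S\ref{CompactificationsSection} together with Proposition~\ref{LnIsACompactificationProposition}. Write $\psi\colon\mathbb{L}^n\xrightarrow{\cong}(T\CP^n)^\mathrm{P}$ and $\phi\colon\CP^n\times\CP^n\xrightarrow{\cong}(T\CP^n)^\mathbb{P}$ for the maps of those two results, and $q'_\ast\colon H_\ast(\mathbb{L}^n)\to H_{\ast-1}(\mathbb{S}T\CP^n)$ for the surjection in question. For part~(1), Proposition~\ref{PinchedHomologyProposition} applied to $\xi=T\CP^n$ over $X=\CP^n$ gives the short exact sequence and its splitting as a sequence of $H^\ast(\CP^n)$ modules. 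The $U(n+1)$ action on $(T\CP^n)^\mathrm{P}$ restricts to the standard length-preserving action on $T\CP^n$ and makes the projection to $\CP^n$ equivariant, so Lemma~\ref{PinchedHomologyGroupActionLemma} supplies the $H_\ast(U(n+1))$ module structure (with the twist of Definition~\ref{STCPnModuleDefinition}) together with a splitting; the same lemma applied to the $S^1$ action, which by Corollary~\ref{LnCorollary} restricts to the length-preserving action \eqref{CircleActionEquation}, gives the $H_\ast(S^1)$ module structure, but since the circle action moves basepoints the projection $(T\CP^n)^\mathrm{P}\to\CP^n$ is not $S^1$ equivariant and no splitting is produced. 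To see that none exists, note that $S^1$ fixes the zero section $\CP^n\hookrightarrow\mathbb{L}^n$ pointwise (set $\mathbf{v}=0$ in \eqref{CircleActionEquation}), so its image is an $H_\ast(S^1)$ submodule on which the degree-raising operator $R$ vanishes; relative to the abelian-group splitting, $R$ on $H_\ast(\mathbb{L}^n)$ is then block upper-triangular, and an $H_\ast(S^1)$ splitting would force the off-diagonal piece $f\colon H_{\ast-1}(\mathbb{S}T\CP^n)\to H_\ast(\CP^n)$ to factor as $g\circ R_{\mathbb{S}}$ through the degree-raising operator $R_{\mathbb{S}}$ of $\mathbb{S}T\CP^n$. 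Evaluating at the point class of $H_0(\mathbb{S}T\CP^n)$ and invoking part~(3), this yields $[\CP^1]=f(\mathrm{pt})=g(R_{\mathbb{S}}(\mathrm{pt}))\in g\bigl(H_1(\mathbb{S}T\CP^n)\bigr)$; but the Gysin sequence shows $H_1(\mathbb{S}T\CP^n)$ is torsion ($0$ for $n\geqslant 2$, $\mathbb{Z}/2$ for $n=1$, the obstruction being $\langle c_1(T\CP^n),[\CP^1]\rangle=2$), while $[\CP^1]$ generates $H_2(\CP^n)\cong\mathbb{Z}$ --- a contradiction.

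For part~(2), I will use that $\phi$ fits into a commuting square over $q\colon\CP^n\times\CP^n\to\mathbb{L}^n$ and the collapse map $q\circ p\colon(T\CP^n)^\mathbb{P}\to(T\CP^n)^\mathrm{P}$ (this is the content of Corollary~\ref{LnCorollary} combined with parts (2),(3) of Proposition~\ref{LnIsACompactificationProposition}), so that $q'_\ast\circ q_\ast$ is the composite of Proposition~\ref{ProjectiveToPinchedProposition} precomposed with $\phi_\ast$. Writing $u_1,u_2$ for the pullbacks of $u\in H^2(\CP^n)$ along the two projections of $\CP^n\times\CP^n$, the first part of Proposition~\ref{LnIsACompactificationProposition} gives $\phi^\ast u_{T\CP^n}=c_1(\gamma_n\boxtimes\gamma_n^\ast)=u_2-u_1$, and since $\phi$ is fibred over $\CP^n$ via the second projection this rearranges to $u_1=\phi^\ast(\pi^\ast u-u_{T\CP^n})$. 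Now $[\CP^i]\times[\CP^n]=u_1^{\,n-i}\cap[\CP^n\times\CP^n]$, and since $\phi$ is an orientation-preserving diffeomorphism $\phi_\ast$ carries this to $(\pi^\ast u-u_{T\CP^n})^{n-i}\cap[(T\CP^n)^\mathbb{P}]$. Expanding the power binomially, every term containing a factor of $u_{T\CP^n}$ is annihilated by the composite of Proposition~\ref{ProjectiveToPinchedProposition}, leaving only $(\pi^\ast u)^{n-i}\cap[(T\CP^n)^\mathbb{P}]$; as that composite is $H^\ast(\CP^n)$ linear and sends $[(T\CP^n)^\mathbb{P}]$ to $[\mathbb{S}T\CP^n]$, the answer is $u^{n-i}\cap[\mathbb{S}T\CP^n]$.

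For part~(3), I take $D$ to be the image in $H_1(\mathbb{L}^n)$ of the generator $[\delta]\in H_1(V_n)$ built in the proof of Lemma~\ref{LoopHomologyInjective}, where $V_n\hookrightarrow\mathbb{L}^n$ is the fibre over $l_0$; since $H_1(\CP^n)=0$, part~(1) shows $H_1(\mathbb{L}^n)\cong\mathbb{Z}$ and that the fibre inclusion $H_1(V_n)\to H_1(\mathbb{L}^n)$ and the map $q'_\ast$ on $H_1$ are isomorphisms, and a check of the orientation conventions in the proof of Proposition~\ref{PinchedHomologyProposition} (matching the direction in which $\delta$ runs along the radial coordinate with the orientation chosen on $I=[\tfrac12,1]$) confirms that this $D$ is the class whose image is the positive point class. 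Under $\psi$, the cycle $\delta$ is a meridian through the pinch point of the fibre $(T_{l_0}\CP^n)^\mathrm{P}$, running radially outward along the tangent direction $(0,1,0,\dots,0)$; applying \eqref{CircleActionEquation} to this cycle and projecting to $\CP^n$ produces exactly the map $S^1\times[0,\infty]\to\CP^1\subset\CP^n$,
\[(t,r)\mapsto\Bigl[\tfrac{t+r^2}{1+r^2},\tfrac{r(t-1)}{1+r^2},0,\dots,0\Bigr],\]
which the proof of Lemma~\ref{LoopHomologyInjective} shows has degree $+1$ onto $\CP^1$. Hence $\pi_\ast R(D)=[\CP^1]$, and the argument of part~(1) is completed.

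The step I expect to be the main obstacle is the orientation bookkeeping in part~(3): one must verify both that $[\delta]$ maps to the point class (rather than its negative) under $q'_\ast$ and that the sense of the circle orbit in \eqref{CircleActionEquation} agrees with the $+1$ in the degree computation of Lemma~\ref{LoopHomologyInjective}, since a sign error there would falsify the statement rather than merely rescale it. A secondary point needing care is checking that \eqref{CircleActionEquation} preserves the lengths of tangent vectors, so that $S^1$ genuinely acts on $\mathbb{S}T\CP^n$ and Lemma~\ref{PinchedHomologyGroupActionLemma} applies --- this is the routine computation already flagged in Definition~\ref{STCPnModuleDefinition}.
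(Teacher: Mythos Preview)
Your proposal is correct and follows essentially the same route as the paper: identify $\mathbb{L}^n$ with $(T\CP^n)^\mathrm{P}$ via Corollary~\ref{LnCorollary}, invoke Proposition~\ref{PinchedHomologyProposition} and Lemma~\ref{PinchedHomologyGroupActionLemma} for part~(1), use Proposition~\ref{ProjectiveToPinchedProposition} together with $\phi^\ast u_{T\CP^n}=u_2-u_1$ for part~(2), and for part~(3) track the explicit cycle $\delta$ through $\psi$ and appeal to the degree computation already carried out in Lemma~\ref{LoopHomologyInjective}.

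One point worth noting: you actually supply more than the paper does. The paper's proof of part~(1) simply cites Proposition~\ref{PinchedHomologyProposition} and Lemma~\ref{PinchedHomologyGroupActionLemma} and does not explicitly justify the clause ``but not as $H_\ast(S^1)$ modules''; your block--upper-triangular argument, deducing from part~(3) that an $H_\ast(S^1)$ splitting would force $[\CP^1]$ to lie in the image of the torsion group $H_1(\mathbb{S}T\CP^n)$, is a genuine and correct addition. Your algebra in part~(2) is a mild repackaging of the paper's (writing $u_1=\phi^\ast(\pi^\ast u-u_{T\CP^n})$ and expanding the power, versus the paper's explicit subtraction of a multiple of $u\times 1-1\times u$), but the content is identical. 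The orientation checks you flag in part~(3) are exactly the ones the paper carries out by unwinding the zig-zag \eqref{ZigZagEquation}.
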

\begin{proof}
By Corollary~\ref{LnCorollary} there is an isomorphism $\psi_\ast\colon H_\ast(\mathbb{L}^n)\cong H_\ast((T\CP^n)^\mathrm{P})$ of $H^\ast(\CP^n)$, $H_\ast(U(n+1))$ and $H_\ast(S^1)$ modules.  The actions of $U(n+1)$ and $S^1$ preserve the lengths of vectors, and the action of $U(n+1)$ commutes with the projection, so the first part is an immediate consequence of Proposition~\ref{PinchedHomologyProposition} and Lemma~\ref{PinchedHomologyGroupActionLemma}.

Let us recall from Proposition~\ref{ProjectiveToPinchedProposition} that the composite $H_\ast((T\CP^n)^\mathbb{P})\to H_\ast((T\CP^n)^\mathrm{P})\to H_{\ast-1}(\mathbb{S}T\CP^n)$ sends $[(T\CP^n)^\mathbb{P}]$ to $[\mathbb{S}T\CP^n]$, sends classes $u_{T\CP^n}\cap x$ to zero, and is a homomorphism of $H^\ast(\CP^n)$ modules.  Now apply the isomorphisms $\phi\colon\CP^n\times\CP^n\to(T\CP^n)^\mathbb{P}$, $\psi\colon\mathbb{L}^n\to(T\CP^n)^\mathrm{P}$ of \S\ref{LnPinchedSubsection}.  The composition just mentioned becomes the composition of part 2 of the proposition; $[(T\CP^n)^\mathbb{P}]$ becomes $[\CP^n]\times[\CP^n]$; $u_{T\CP^n}$ becomes $c_1(\gamma_n\boxtimes\gamma_n^\ast)=1\times u-u\times 1$ by Proposition~\ref{LnIsACompactificationProposition}.  Now
\begin{eqnarray*}
[\CP^i]\times[\CP^n]
&=&(u^{n-i}\times 1)\cap([\CP^n]\times[\CP^n])\\
&=&(1\times u^{n-i})\cap([\CP^n]\times[\CP^n])\\
&&+(u\times 1-1\times u)\cap\lambda\cap([\CP^n]\times[\CP^n])\\
&=&\pi^\ast u^{n-i}\cap([\CP^n]\times[\CP^n])\\
&&+(u\times 1-1\times u)\cap\lambda\cap([\CP^n]\times[\CP^n])
\end{eqnarray*}
where $\lambda=(u^{n-i-1}\times 1+\cdots+1\times u^{n-i-1})$.  By the results of Proposition~\ref{ProjectiveToPinchedProposition} described earlier in the paragraph, this class is sent to $u^{n-i}\cap[\mathbb{S}T\CP^n]$.  This proves the second assertion.

It suffices to prove the third claim for a single choice of $D$.  We will begin by giving an explicit representative for such a class on which we can compute $\pi_\ast\circ R$ directly.  Let $\delta\colon[0,\infty]\to\mathbb{L}^n$ be the map that sends $r$ to the point $([1,r,0,\ldots,0],[1,0,\ldots,0])=([\mathbf{e}_0+r\mathbf{e}_1],[\mathbf{e}_0])$.  Its value at $0$ is $([\mathbf{e}_0],[\mathbf{e}_0])$ and its value at $\infty$ is $([\mathbf{e}_1],[\mathbf{e}_0])$.  These two points are identified in $\mathbb{L}^n$, so $\delta$ is a $1$-cycle in $\mathbb{L}^n$.  One can easily verify that the isomorphism $\psi\colon\mathbb{L}^n\to (T\CP^n)^\mathrm{P}$ of Proposition~\ref{LnIsACompactificationProposition} sends $\delta$ to a cycle $\delta'\colon[0,\infty]\to(T\CP^n)^\mathrm{P}$ with $\delta'(r)=[\mathbf{e}_0,r\mathbf{e}_1]\in T\CP^n$ for $r\in[0,\infty)$.

Now we refer to the proof of Proposition~\ref{PinchedHomologyProposition} where the map $H_\ast((T\CP^n)^\mathrm{P})\to H_{\ast-1}(\mathbb{S}T\CP^n)$ was constructed.  It is clear that the composition of $H_\ast((T\CP^n)^\mathrm{P})\to H_\ast((T\CP^n)^\mathbf{P},\CP^n)$ with the first two isomorphisms of \eqref{ZigZagEquation} sends $[\delta']$ to the homology class of the relative cycle $(I,\partial I)\to (I,\partial I)\times\mathbb{S}T\CP^n$, $r\mapsto (r,[\mathbf{e}_0,\mathbf{e}_1])$.  This is sent by $\Theta\colon H_\ast((I,\partial I)\times\mathbb{S}T\CP^n)\to H_{\ast-1}(\mathbb{S}T\CP^n)$ to the homology class of the point $[\mathbf{e}_0,\mathbf{e}_1]$.  Thus $[\delta]$ is indeed a class of the form $D$ described.

Now one sees immediately that $\pi_\ast\circ R_\ast [\delta]$ is represented by the cycle described in the final paragraph of the proof of Lemma~\ref{LoopHomologyInjective}, and it is shown there that this cycle represents $[\CP^1]$.  This completes the proof.
\end{proof}

\section{Statement of Theorem D and proof of Theorem C}\label{ThirdReformulationSection}

We now state Theorem D, which describes the homology of $\mathbb{S}T\CP^n$, and use it to prove Theorem C.  Homology groups are taken with coefficients in $\mathbb{Z}$ throughout.

\begin{definition}\label{SecondSTCPnModuleDefinition}
Recall from Definition~\ref{STCPnModuleDefinition} that $\mathbb{S}T\CP^n$ is equipped with actions of $U(n+1)$ and $S^1$.  It also admits the projection to $\CP^n$.  In this way $H_{\ast}(\mathbb{S}T\CP^n)$ becomes a module over $H_\ast(U(n+1))$, $H_\ast(S^1)$ and $H^\ast(\CP^n)$.  
Definition~\ref{STCPnModuleDefinition} also explained certain twistings of the module structures, which we \emph{do not} use here.  It is convenient to regard the $H_\ast(S^1)$ module structure in terms of the degree-raising operator $R_\mathbb{S}\colon H_\ast(\mathbb{S}T\CP^n)\to H_{\ast+1}(\mathbb{S}T\CP^n)$ given by applying $[S^1]$.
\end{definition}

\begin{TheoremD}
$H_\ast(\mathbb{S}T\CP^n)$ is generated by classes
\[a_0,\ldots,a_{n-1},\qquad b_0,\ldots,b_n\] 
in degrees $|a_i|=2i$ and $|b_i|=2i+2n-1$, subject to the single relation $(n+1)b_0=0$.  The degree 0 generator $a_0$ is just the homology class of a point.
Furthermore $u\cap a_{i+1}=a_{i}$ and $b_{i+1}=b_{i}$ for $i\geqslant 0$, while $u\cap a_0=0$ and $u\cap b_0=0$.
The action of $H_\ast(U(n+1))$ on $H_\ast(\mathbb{S}T\CP^n)$ is determined by the formulas
\[e_{2j+1}b_n=0,\qquad e_{2j+1}a_{n-1}=-(j+1)b_{j}\]
and $R_\mathbb{S}$ is determined by the formulas
\[R_\mathbb{S} a_i=0\mathrm{\ for\ }0\leqslant i<(n-1),\quad R_\mathbb{S} a_{n-1}=\binom{n+1}{2} b_0,\quad R_\mathbb{S} b_i=0\mathrm{\ for\ }0\leqslant i\leqslant n.\]
\end{TheoremD}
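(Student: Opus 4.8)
The plan is to read off everything from the Gysin sequence of the bundle $T\CP^n\to\CP^n$ and from two applications of Theorem~\ref{SphereBundlesTheorem}.

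First I would establish the additive structure, the $H^\ast(\CP^n)$-module structure and the single relation. Since $c(T\CP^n)=(1+u)^{n+1}$, the Euler class is $c_n(T\CP^n)=(n+1)u^n$, so in the Gysin sequence the map $E_{T\CP^n}\cap-$ sends $[\CP^n]$ to $(n+1)[\CP^0]$ and kills $[\CP^i]$ for $i<n$. Hence $\pi_\ast$ is an isomorphism in degrees $\leqslant 2n-2$, giving generators $a_i:=\pi_\ast^{-1}[\CP^i]$ ($0\leqslant i\leqslant n-1$, with $a_0$ the class of a point); the connecting map $\partial$ is onto $H_{2i+2n-1}(\mathbb{S}T\CP^n)$ with kernel $\operatorname{im}(E_{T\CP^n}\cap-)$, giving $b_i:=\partial[\CP^i]$ ($0\leqslant i\leqslant n$) with $b_0$ of order $n+1$ and $b_1,\dots,b_n$ free; and all remaining homology vanishes. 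The cap-product formulas follow from $u\cap[\CP^i]=[\CP^{i-1}]$ together with Lemma~\ref{GysinCapProductLemma}. Because $\mathbb{S}T\CP^n$ is a closed $(4n-1)$-manifold whose homology lies in even degrees $\leqslant 2n-2$ and odd degrees $\geqslant 2n-1$, a degree count shows $R_\mathbb{S}$ kills every generator except possibly $a_{n-1}$ (in $H_{2n-1}=\mathbb{Z}/(n+1)\langle b_0\rangle$), and $e_{2j+1}$ kills every $b_i$; moreover, since $U(n+1)$ is connected it acts trivially on $H^\ast(\CP^n)$, so the $H_\ast(U(n+1))$-action is $H^\ast(\CP^n)$-linear up to sign and it suffices to compute $e_{2j+1}a_{n-1}$.

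Next I would compute $R_\mathbb{S}$. The key is to put the rotation action into the shape of Definition~\ref{CircleActionDefinition}. In the presentation \eqref{AlternativeTCPnEquation} the assignment $[\mathbf{u},\mathbf{v}]\mapsto[\mathbf{u}+\mathbf{v}]$ defines a map $\mathbb{S}T\CP^n\to\CP^n$ that is homotopic to the projection $\pi$ and, by the computation in Proposition~\ref{LnIsACompactificationProposition}, is invariant under the rotation action; relative to this model $\mathbb{S}T\CP^n=\mathbb{S}(T\CP^n)$ the rotation action is, up to the orientation of $S^1$, scalar multiplication on $T\CP^n$. Applying Theorem~\ref{SphereBundlesTheorem} with $\xi=T\CP^n$ and $\eta=0$ identifies $R_\mathbb{S}$ with $\partial\circ\bigl(c_{n-1}(T\CP^n)\cap-\bigr)\circ\pi_\ast$, up to a sign that is harmless because $n+1$ divides $2\binom{n+1}{2}$. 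As $c_{n-1}(T\CP^n)=\binom{n+1}{2}u^{n-1}$ and $\pi_\ast a_{n-1}=[\CP^{n-1}]$ this gives $R_\mathbb{S}a_{n-1}=\binom{n+1}{2}\partial[\CP^0]=\binom{n+1}{2}b_0$; this is where the characteristic number $\langle c_{n-1}(T\CP^n),[\CP^{n-1}]\rangle$ enters the BV operator, exactly as advertised in the introduction.

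Finally, the action of $H_\ast(U(n+1))$. Here $e_{2j+1}\cdot x=\bigl(\mathrm{act}\circ(R\times 1)\bigr)_\ast([S^1]\times[\CP^j]\times x)$, where $R\colon S^1\times\CP^n\to U(n+1)$ is the rotation map of Definition~\ref{RotationDefinition} and $\mathrm{act}$ is the standard action on $\mathbb{S}T\CP^n$. For fixed $l$ the map $t\mapsto R(t,l)$ is an $S^1$-action on $\mathbb{S}T\CP^n$, and over the fixed locus of $R(t,l)$ on $\CP^n$ — which contains the hyperplane $\CP^{n-1}_{l^\perp}$ — this action is, on each $T_m\CP^n$ with $m\perp l$, scalar multiplication on the rank-one summand $l\otimes m^\ast$ and the identity on $(l^\perp\cap m^\perp)\otimes m^\ast$. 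The plan is to assemble these over $l\in\CP^j$ into one circle action, of the form in Definition~\ref{CircleActionDefinition}, on a sphere bundle $\mathbb{S}(\xi_j\oplus\eta_j)$ over the incidence space $Z_j=\{(l,m):l\in\CP^j,\ m\perp l\}$; then apply Theorem~\ref{SphereBundlesTheorem}, compute $c_{n-1}(\eta_j)$ from the Whitney formula and $c(l^\perp\cap m^\perp)=c(\mathbb{C}^{n+1})/(c(l)c(m))$, and push the answer forward to $\mathbb{S}T\CP^n$ using naturality of the Gysin sequence (Lemmas~\ref{DirectSumGysinSequenceLemma} and~\ref{PullbackGysinSequenceLemma}), the upshot being $e_{2j+1}a_{n-1}=-(j+1)b_j$. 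Combined with the $H^\ast(\CP^n)$-linearity noted above this pins down the whole $H_\ast(U(n+1))$-action and completes Theorem D, hence Theorem C and ultimately Theorem A.

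I expect this last step to be the main obstacle. The delicate point is not Theorem~\ref{SphereBundlesTheorem} itself but the bookkeeping: one must lift the class $[\CP^j]\times a_{n-1}$ to the auxiliary sphere bundle and follow it through $\pi_\ast$, the cap product and $\partial$ to land on $-(j+1)b_j$ with the correct sign and coefficient. A naive representative supported on the fixed locus does not exist in general (the relevant Euler/monodromy class is nonzero), so one has to work with the full fixed locus of $R(t,l)$, or exploit the invariant presentation $[\mathbf{u},\mathbf{v}]\mapsto[\mathbf{u}+\mathbf{v}]$ already used for $R_\mathbb{S}$, and it is this careful tracking of homology classes that forms the substantive content of the argument.
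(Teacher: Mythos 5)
Your first half is sound and runs close to the paper: the Gysin-sequence computation of the groups, the relation $(n+1)b_0=0$, the $u\cap-$ formulas, and the reduction by degree counting and $u$-linearity of the $U(n+1)$-action to the two values $R_\mathbb{S}a_{n-1}$ and $e_{2j+1}a_{n-1}$ all correspond to Lemma~\ref{STCPnHomologyLemma} and Lemma~\ref{ActionCapProductLemma}. Your treatment of $R_\mathbb{S}$ --- re-identify the rotation action as (inverse) fibrewise scalar multiplication using the invariant map $[\mathbf{u},\mathbf{v}]\mapsto[\mathbf{u}+\mathbf{v}]$, apply Theorem~\ref{SphereBundlesTheorem} with $\eta=0$, and discard the orientation sign because $(n+1)$ divides $2\binom{n+1}{2}$ --- is essentially the paper's Lemma~\ref{CircleActionsHomotopyLemma} in a different guise and is acceptable.

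The genuine gap is the step you yourself flag: $e_{2j+1}a_{n-1}=-(j+1)b_j$. Theorem~\ref{SphereBundlesTheorem} computes the degree-raising operator of a \emph{single} circle action which is scalar multiplication in a summand of a fixed sphere bundle, whereas $e_{2j+1}$ is built from the whole family $\{R(\cdot,l)\}_{l\in\CP^j}$: the map $S^1\times\CP^j\times\mathbb{S}T\CP^n\to\mathbb{S}T\CP^n$ does not preserve the fibres over $\CP^n$ (for $m$ outside the fixed locus $\{l\}\sqcup\CP^{n-1}_{l^\perp}$ the transformation $R(t,l)$ moves $m$), and it does not factor through any sphere bundle over your incidence space $Z_j$; so there is no evident class in $H_\ast(\mathbb{S}(\xi_j\oplus\eta_j))$ mapping onto $[\CP^j]\times a_{n-1}$ to which the theorem could be applied. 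The ``careful tracking'' that would produce the coefficient $j+1$ together with its sign is exactly what your outline does not supply, and it is the substantive content of Theorem D. The paper avoids any such assembly: it shows the coefficient is a constant $\lambda_j$ independent of $n$ using the inclusions $\mathbb{S}T\CP^n\hookrightarrow\mathbb{S}T\CP^{n+1}|\CP^n\hookrightarrow\mathbb{S}T\CP^{n+1}$ (Lemmas~\ref{ThreeHomologiesLemma}--\ref{ConstantsExistLemma}), obtains $\lambda_j=\pm(j+1)$ from the Serre spectral sequence of $F_n\to\mathbb{S}T\CP^n\to BS^1$ (Lemma~\ref{PlusMinusNPlusOneLemma}), and fixes the sign from two cases where the relevant circle action genuinely is fibrewise scalar multiplication over a smaller base, namely $e_{2n-1}a_0=b_0$ over $\CP^0$ (Lemma~\ref{ETwoNMinusOneLemma}) and $e_1a_{n-1}=-b_0$ over $\CP^{n-1}$ via $T\CP^n|\CP^{n-1}=T\CP^{n-1}\oplus\gamma_{n-1}^\ast$ (Lemma~\ref{EOneLemma}), together with arithmetic modulo $j+1$. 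If you want to rescue your plan, restricting to sub-bases where the action is honest scalar multiplication, as in those two lemmas, and then propagating the resulting constant by a stabilisation argument is the route that actually closes the gap you identified.
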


\begin{proof}[Proof of 
Theorem C]
By taking the splitting of the exact sequence of Proposition~\ref{LnHomologyProposition}, there is an isomorphism
\[\Phi\colon H_\ast(\mathbb{L}^n)\cong H_{\ast}(\CP^n) \oplus H_{\ast-1}(\mathbb{S}T\CP^n)\]
of abelian groups.  Taking the standard generators of $H_\ast(\CP^n)$ and the generators of $H_{\ast-1}(\mathbb{S}T\CP^n)$ described in Theorem D, we obtain the required generators and relation.

It is precisely the projection map that provides the splitting in Proposition~\ref{LnHomologyProposition}, so that part 1 follows immediately.

Give $H_{\ast-1}(\mathbb{S}T\CP^n)$ the twisted $H_\ast(U(n+1))$ and $H_\ast(S^1)$ module structures described in Definition~\ref{STCPnModuleDefinition}.  Then, since the sequence of Proposition~\ref{LnHomologyProposition} is a split short exact sequence of $H^\ast(\CP^n)$ and $H_\ast(U(n+1))$ modules, the isomorphism above is an isomorphism of $H^\ast(\CP^n)$ and $H_\ast(U(n+1))$ modules, which using Theorem D gives us part 2 and part 3.

The short exact sequence of Proposition~\ref{LnHomologyProposition} is a short exact sequence of $H_\ast(S^1)$ modules, but is not split.  The degree-raising operator, which we denote by $R$, therefore satisfies $Rx=0$ for $x\in H_\ast(\CP^n)$ and $Ry=-R_\mathbb{S}y\mathrm{\ modulo\ }H_\ast(\CP^n)$ for $y\in H_{\ast-1}(\mathbb{S}T\CP^n)$.  Here $R_\mathbb{S}$ denotes the degree-shifting operator on $H_{\ast}(\mathbb{S}T\CP^n)$.  The description of $R$ in terms of some integers $\mu_{n-1},\ldots,\mu_0$ follows from this remark and Theorem D.  We now note that $a_0$ serves as the class $D$ of Proposition~\ref{LnHomologyProposition}, and that $\pi_\ast\circ R a_0 = [\CP^1]$, so that $\mu_0=1$ by the result of that proposition.

The final part is the conclusion of part 2 of Proposition~\ref{LnHomologyProposition}.
\end{proof}

\section{Proof of Theorem D}\label{STCPnHomologySection}

We have now established that Theorem A follows from Theorem D.  It therefore remains, in this final section, to give the proof of Theorem D.  Recall from Definitions~\ref{STCPnModuleDefinition} and \ref{SecondSTCPnModuleDefinition} that $H_\ast(\mathbb{S}T\CP^n)$ is a module over $H_\ast(S^1)$, $H_\ast(U(n+1))$ and $H^\ast(\CP^n)$.  Theorem D describes $H_\ast(\mathbb{S}T\CP^n)$ and these module structures.  We will prove the theorem by combining a series of lemmas, several of which are instances of the general theory established in \S\ref{SphereBundlesSection}.  

\begin{lemma}\label{STCPnHomologyLemma}
$H_\ast(\mathbb{S}T\CP^n)$ is generated by classes
\[a_0,\ldots,a_{n-1},\qquad b_0,\ldots,b_n\] 
in degrees $|a_i|=2i$ and $|b_i|=2i+2n-1$, subject to the single relation $(n+1)b_0=0$.
Furthermore $u\cap a_{i+1}=a_{i}$ and $b_{i+1}=b_{i}$ for $i\geqslant 0$, while $u\cap a_0=0$ and $u\cap b_0=0$.
The Gysin sequence maps
\[ H_{2i}(\mathbb{S}T\CP^n)\to H_{2i}(\CP^n),\qquad
H_{2i}(\CP^n)\to H_{2i+2n-1}(\mathbb{S}T\CP^n)\]
send $a_i$ to $\CP^i$ and $\CP^i$ to $b_i$ respectively.
\end{lemma}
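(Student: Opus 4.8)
The plan is to compute $H_\ast(\mathbb{S}T\CP^n)$ directly from the Gysin sequence of the bundle $T\CP^n\to\CP^n$, which has complex rank $n$ and therefore Euler class $E_{T\CP^n}=c_n(T\CP^n)=(n+1)u^n$, using the fact that $c(T\CP^n)=(1+u)^{n+1}$. Recall that $H_\ast(\CP^n)$ is free on $[\CP^0],\dots,[\CP^n]$ with $u\cap[\CP^i]=[\CP^{i-1}]$ for $i\geqslant 1$ and $u\cap[\CP^0]=0$. The one genuinely substantive observation is that the map $(n+1)u^n\cap-\colon H_\ast(\CP^n)\to H_{\ast-2n}(\CP^n)$ vanishes in every degree except $2n$, where it is the multiplication-by-$(n+1)$ map $H_{2n}(\CP^n)\to H_0(\CP^n)$; this map is injective with cokernel $\mathbb{Z}/(n+1)$.

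Feeding this into the Gysin sequence
\[\cdots\to H_\ast(\mathbb{S}T\CP^n)\xrightarrow{\pi_\ast}H_\ast(\CP^n)\xrightarrow{(n+1)u^n\cap-}H_{\ast-2n}(\CP^n)\xrightarrow{\partial}H_{\ast-1}(\mathbb{S}T\CP^n)\to\cdots\]
the sequence breaks up completely. In even degrees $\pi_\ast$ is always injective (the preceding term vanishes) and is surjective whenever the outgoing Euler map is zero, so $\pi_\ast\colon H_{2i}(\mathbb{S}T\CP^n)\xrightarrow{\cong}H_{2i}(\CP^n)$ for $0\leqslant i\leqslant n-1$; I define $a_i$ to be the preimage of $[\CP^i]$, and note that $a_0$ is the class of a point since $\mathbb{S}T\CP^n$ is connected. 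In odd degrees the connecting map $\partial\colon H_{2i}(\CP^n)\to H_{2i+2n-1}(\mathbb{S}T\CP^n)$ is always surjective (the next term $H_{2i+2n-1}(\CP^n)$ vanishes), it is injective for $i\geqslant 1$ because the incoming Euler map $H_{2i+2n}(\CP^n)\to H_{2i}(\CP^n)$ has source $0$, and for $i=0$ its kernel is the image $(n+1)\mathbb{Z}$ of the Euler map. Setting $b_i=\partial[\CP^i]$ I obtain the stated generators, with $H_{2n-1}(\mathbb{S}T\CP^n)\cong\mathbb{Z}/(n+1)$ carrying the single relation $(n+1)b_0=0$, and with all remaining homology groups zero. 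The two asserted Gysin-map identities are then simply the definitions of $a_i$ and $b_i$.

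It remains to pin down the $H^\ast(\CP^n)$-module structure, i.e.\ the operator $u\cap-$. For this I invoke Lemma~\ref{GysinCapProductLemma}: $\pi_\ast$ commutes strictly with $u\cap-$, and the connecting map $\partial$ commutes with $u\cap-$ up to the sign $(-1)^{|u|}=1$, hence also strictly. From $u\cap[\CP^{i+1}]=[\CP^i]$ and the injectivity of $\pi_\ast$ in even degrees I deduce $u\cap a_{i+1}=a_i$ for $0\leqslant i\leqslant n-2$, while $u\cap a_0=0$ for degree reasons. From the same identity and the commutation of $\partial$ with $u\cap-$ I deduce $u\cap b_{i+1}=u\cap\partial[\CP^{i+1}]=\partial(u\cap[\CP^{i+1}])=\partial[\CP^i]=b_i$, and $u\cap b_0=\partial(u\cap[\CP^0])=0$. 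This establishes every claim in the lemma.

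I do not expect a serious obstacle here: once the Euler class $(n+1)u^n$ is identified and one notices that the Euler cap-product is concentrated in a single degree, the computation is mechanical. The only place that needs a little care is transporting a relation from $H_\ast(\CP^n)$ back to $H_\ast(\mathbb{S}T\CP^n)$ — which is why one must record that $\pi_\ast$ is injective in even degrees and that $\partial$ is injective in odd degrees above the bottom one — together with keeping track of the edge cases $i=0$ and $i=n-1$.
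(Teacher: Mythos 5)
Your argument is correct and follows the same route as the paper: the paper's proof also runs the Gysin sequence for $T\CP^n\to\CP^n$, uses $\langle c_n(T\CP^n),[\CP^n]\rangle=n+1$ to get the lone relation $(n+1)b_0=0$, and cites Lemma~\ref{GysinCapProductLemma} for the behaviour of $u\cap-$. Your write-up simply spells out the degree-by-degree splitting of the sequence that the paper leaves implicit.
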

\begin{proof}
The Gysin sequence \eqref{GysinSequenceEquation} restricts to isomorphisms
$H_i(\mathbb{S}T\CP^n)\cong H_i(\CP^n)$ for $i\leqslant 2n-2$, $H_{i-2n+1}(\CP^n)\cong H_i(\mathbb{S}T\CP^n)$ for $i\geqslant 2n-1$, and a short exact sequence
\[0\to H_{2n}(\CP^n)\xrightarrow{c_n(T\CP^n)\cap -} H_0(\CP^n)\to H_{2n-1}(\mathbb{S}T\CP^n)\to 0.\]
Note that $c_n(T\CP^n)\cap[\CP^n]=(n+1)$.  The description of the groups $H_\ast(\mathbb{S}T\CP^i)$, and of the elements $a_i$, $b_i$, is now immediate.  The effect of taking cap products with $u$ is immediate from Lemma~\ref{GysinCapProductLemma}.
\end{proof}

Let us consider the following inclusions of spaces:
\[ \mathbb{S}T\CP^n \hookrightarrow \mathbb{S}T\CP^{n+1}|\CP^n\hookrightarrow \mathbb{S}T\CP^{n+1}\]
The left-hand space has a natural $U(n+1)$ action, and the right-hand space has a natural $U(n+2)$ action that restricts to a $U(n+1)$ action.  The middle space inherits a $U(n+1)$ action from the right-hand space, and both inclusions are then $U(n+1)$-equivariant.

\begin{lemma}\label{ThreeHomologiesLemma}
$H_\ast(\mathbb{S}T\CP^{n+1}|\CP^{n})$ is free with basis $a_0,\ldots,a_n$, $b_0,\ldots,b_{n}$.  The induced maps
\[ H_\ast(\mathbb{S}T\CP^n) \to H_\ast(\mathbb{S}T\CP^{n+1}|\CP^n)\to H_\ast(\mathbb{S}T\CP^{n+1})\]
are given by $a_i\mapsto a_i,$ $b_0\mapsto 0$, $b_{i+1}\mapsto b_i$ and
$a_i\mapsto a_i$, $b_i\mapsto b_i$ respectively.
\end{lemma}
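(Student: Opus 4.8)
The plan is to realise all three spaces as unit sphere bundles of explicit complex vector bundles over $\CP^n$, so that the two maps become instances of the situations treated in Lemmas~\ref{DirectSumGysinSequenceLemma} and \ref{PullbackGysinSequenceLemma}, and then to extract everything from the Gysin sequence. Since $\CP^n$ is a complex submanifold of $\CP^{n+1}$ with normal bundle $\gamma_n^\ast$, there is a splitting of complex vector bundles $T\CP^{n+1}|_{\CP^n}\cong T\CP^n\oplus\gamma_n^\ast$, so that $\mathbb{S}T\CP^{n+1}|\CP^n=\mathbb{S}(T\CP^n\oplus\gamma_n^\ast)$ as sphere bundles over $\CP^n$. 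Under this identification the left-hand inclusion $\mathbb{S}T\CP^n\hookrightarrow\mathbb{S}T\CP^{n+1}|\CP^n$ is the canonical inclusion $\mathbb{S}(T\CP^n)\hookrightarrow\mathbb{S}(T\CP^n\oplus\gamma_n^\ast)$ of Lemma~\ref{DirectSumGysinSequenceLemma} (with $\xi=T\CP^n$, $\eta=\gamma_n^\ast$), while the right-hand inclusion $\mathbb{S}T\CP^{n+1}|\CP^n\hookrightarrow\mathbb{S}T\CP^{n+1}$ is the inclusion of the sphere bundle of $f^\ast T\CP^{n+1}$ into that of $T\CP^{n+1}$, for $f\colon\CP^n\hookrightarrow\CP^{n+1}$ the inclusion, which is precisely the situation of Lemma~\ref{PullbackGysinSequenceLemma}.

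Next I would determine $H_\ast(\mathbb{S}T\CP^{n+1}|\CP^n)$ from the Gysin sequence of the rank-$(n+1)$ bundle $T\CP^n\oplus\gamma_n^\ast$. The crucial observation is that its Euler class equals $E_{T\CP^n}\cup u\in H^{2n+2}(\CP^n)=0$, hence vanishes for dimensional reasons, so the Gysin sequence degenerates into short exact sequences
\[0\to H_{\ast-2n-1}(\CP^n)\xrightarrow{\ \partial\ } H_\ast(\mathbb{S}T\CP^{n+1}|\CP^n)\xrightarrow{\ \pi_\ast\ } H_\ast(\CP^n)\to 0,\]
which split because $H_\ast(\CP^n)$ is free; in particular the middle group is free. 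I would then define $a_i$ ($0\le i\le n$) to be the unique class with $\pi_\ast a_i=[\CP^i]$ --- unique because $\pi_\ast$ is an isomorphism in degrees $\le 2n$ --- and $b_i$ ($0\le i\le n$) to be $\partial[\CP^i]$, which is nonzero and generates a free summand since $\partial$ is injective. These $2(n+1)$ classes form the asserted basis, and Lemma~\ref{GysinCapProductLemma} gives $u\cap a_{i+1}=a_i$, $u\cap b_{i+1}=b_i$ and $u\cap a_0=u\cap b_0=0$, so that the notation is consistent with that of Lemma~\ref{STCPnHomologyLemma}.

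It then remains to read off the two maps by chasing the diagrams supplied by the cited lemmas. For the right-hand map, Lemma~\ref{PullbackGysinSequenceLemma} gives a morphism of Gysin sequences lying over $f_\ast\colon H_\ast(\CP^n)\to H_\ast(\CP^{n+1})$; since $f_\ast[\CP^i]=[\CP^i]$ and, by Lemma~\ref{STCPnHomologyLemma} applied with $n+1$ in place of $n$, the Gysin maps for $\mathbb{S}T\CP^{n+1}$ carry $a_i$ to $[\CP^i]$ and $[\CP^i]$ to $b_i$, one obtains $a_i\mapsto a_i$ and $b_i\mapsto b_i$. For the left-hand map, Lemma~\ref{DirectSumGysinSequenceLemma} gives a morphism of Gysin sequences which is the identity on the copy of $H_\ast(\CP^n)$ appearing in the $\pi_\ast$-column --- whence $a_i\mapsto a_i$ --- and is cap product with $E_{\gamma_n^\ast}=u$ on the copy of $H_\ast(\CP^n)$ that is the source of the connecting homomorphism; since $u\cap[\CP^i]=[\CP^{i-1}]$ for $i\ge1$ and $u\cap[\CP^0]=0$, chasing $b_i=\partial[\CP^i]$ around the relevant square yields $b_{i+1}\mapsto b_i$ and $b_0\mapsto 0$.

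I expect the only real content to be the vanishing of the Euler class of $T\CP^n\oplus\gamma_n^\ast$ over $\CP^n$; after that the argument is bookkeeping. The step needing the most care is tracking the degree shifts: the fibres of $\mathbb{S}T\CP^n$, $\mathbb{S}T\CP^{n+1}|\CP^n$ and $\mathbb{S}T\CP^{n+1}$ are $S^{2n-1}$, $S^{2n+1}$ and $S^{2n+1}$, so ``$b_i$'' carries a different degree shift for $\mathbb{S}T\CP^n$ than for the other two spaces, which is exactly why the left-hand map shifts the $b$-index while the right-hand map preserves it.
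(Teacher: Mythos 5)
Your argument is correct and follows the same route as the paper: identify $T\CP^{n+1}|\CP^n\cong T\CP^n\oplus\gamma_n^\ast$, compute $H_\ast(\mathbb{S}T\CP^{n+1}|\CP^n)$ from the Gysin sequence (noting the Euler class lies in $H^{2n+2}(\CP^n)=0$), and then read off the two maps from the naturality diagrams of Lemmas~\ref{DirectSumGysinSequenceLemma} and~\ref{PullbackGysinSequenceLemma}. You have simply supplied in full the diagram chases that the paper leaves implicit.
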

\begin{proof}
The computation of the homology groups is directly analogous to the computation in Lemma~\ref{STCPnHomologyLemma} but somewhat simpler.  Note that $T\CP^{n+1}|\CP^n=T\CP^n\oplus\gamma_n^\ast$.  The claim about the first map now follows from Lemma~\ref{DirectSumGysinSequenceLemma}.  The description of the second map is immediate from Lemma~\ref{PullbackGysinSequenceLemma}.
\end{proof}

\begin{lemma}\label{ActionCapProductLemma}
Let $n\geqslant 1$ and let $0\leqslant i\leqslant n$.  Then for $x\in H_\ast (\mathbb{S}T\CP^n)$ we have 
\[ u\cap (e_{2i+1}x) = e_{2i+1}(u\cap x).\]
The same formula holds for $x\in H_\ast(\mathbb{S}(T\CP^{n+1}|T\CP^{n})$.
\end{lemma}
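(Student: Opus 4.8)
The plan is to show that the cap product with $u\in H^2(\mathbb{S}T\CP^n)$ and the action of $e_{2i+1}\in H_{2i+1}(U(n+1))$ on $H_\ast(\mathbb{S}T\CP^n)$ commute, by tracing both operations back to maps of spaces that are manifestly compatible. The class $u$ is the pullback of $u\in H^2(\CP^n)$ along the projection $\pi\colon \mathbb{S}T\CP^n\to\CP^n$, and the $H_\ast(U(n+1))$-action is defined via the action map $\alpha\colon U(n+1)\times \mathbb{S}T\CP^n\to\mathbb{S}T\CP^n$ (more precisely, $e_{2i+1}x = (-1)^{|e_{2i+1}|}\alpha_\ast(e_{2i+1}\times x)$ with the twist of Definition~\ref{STCPnModuleDefinition}, but the sign is harmless for this identity). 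The key geometric fact is that $\alpha$ covers the action $\bar\alpha\colon U(n+1)\times\CP^n\to\CP^n$, so that $\pi\circ\alpha = \bar\alpha\circ(1\times\pi)$, and $\bar\alpha$ preserves the class $u$ in the sense that $\bar\alpha^\ast u = 1\times u \in H^2(U(n+1)\times\CP^n)$. This last point holds because $U(n+1)$ is connected, so the automorphism of $H^2(\CP^n)\cong\mathbb{Z}$ induced by any element of $U(n+1)$ is the identity; equivalently, $\bar\alpha^\ast\gamma_n^\ast$ and $\mathrm{pr}_2^\ast\gamma_n^\ast$ have the same first Chern class.

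First I would write down, for $\beta\in H_\ast(U(n+1))$ and $x\in H_\ast(\mathbb{S}T\CP^n)$, the computation
\[
\pi^\ast u\cap\alpha_\ast(\beta\times x) = \alpha_\ast\bigl((\alpha^\ast\pi^\ast u)\cap(\beta\times x)\bigr)
= \alpha_\ast\bigl(((1\times\pi)^\ast\bar\alpha^\ast u)\cap(\beta\times x)\bigr),
\]
using the projection formula for the cap product together with $\pi\circ\alpha=\bar\alpha\circ(1\times\pi)$. Then substituting $\bar\alpha^\ast u = 1\times u$ gives $(1\times\pi^\ast u)\cap(\beta\times x)$, and the standard formula for the cap product of a cross product (see \cite[p.~255]{\Spanier}) rewrites this as $\pm\,\beta\times(\pi^\ast u\cap x)$, where the sign depends only on $|u|=2$ and hence is $+1$. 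Applying $\alpha_\ast$ yields $\alpha_\ast(\beta\times(\pi^\ast u\cap x))$, which is exactly $\pm\,\beta(\pi^\ast u\cap x)$; since the degree-$2$ shift contributes no sign and the twisting sign $(-1)^{|\beta|}$ is the same on both sides, we obtain $u\cap(\beta x)=\beta(u\cap x)$. Specialising $\beta=e_{2i+1}$ gives the lemma.

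For the second statement, concerning $\mathbb{S}(T\CP^{n+1}|_{\CP^n})$, the argument is identical: this sphere bundle also carries a $U(n+1)$-action covering the $U(n+1)$-action on $\CP^n$ (it is the restriction of the $U(n+2)$-action on $\mathbb{S}T\CP^{n+1}$, and $U(n+1)$ preserves $\CP^n\subset\CP^{n+1}$), the class $u$ is again pulled back from $\CP^n$, and connectivity of $U(n+1)$ again forces $\bar\alpha^\ast u = 1\times u$. The same projection-formula manipulation applies verbatim. I do not anticipate a serious obstacle here; the only point requiring a little care is bookkeeping of the twisting signs from Definition~\ref{STCPnModuleDefinition} and the Koszul sign in the cross-product/cap-product formula, but since $|u|$ is even both are trivial, so the bare identity $u\cap(e_{2i+1}x)=e_{2i+1}(u\cap x)$ holds on the nose.
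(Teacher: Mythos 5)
Your argument is correct, and its core is the same as the paper's: write $e_{2i+1}x$ as the image of $e_{2i+1}\times x$ under the action map, apply the projection formula, and use the identity $A^\ast u=1\times u$ to move the cap product across the cross product. Where you differ is in how that identity is verified. The paper checks it directly on the total space, using that restriction $H^2(U(n+1)\times\mathbb{S}T\CP^n)\to H^2(\mathbb{S}T\CP^n)$ is an isomorphism; since this needs $H^1(\mathbb{S}T\CP^n)=0$ it only applies for $n\geqslant 2$, and the paper must then reduce the $n=1$ case and the case of $\mathbb{S}(T\CP^{n+1}|\CP^n)$ to that one. You instead use $\pi\circ A=\bar\alpha\circ(1\times\pi)$ to descend to the base and check $\bar\alpha^\ast u=1\times u$ on $U(n+1)\times\CP^n$, which treats all $n\geqslant 1$ and both sphere bundles uniformly --- a small but genuine simplification, since the relevant vanishing happens on $\CP^n$ rather than on the bundle. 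One point you should make explicit: connectedness of $U(n+1)$ only shows that $\bar\alpha^\ast u$ restricts to $u$ on each slice $\{g\}\times\CP^n$, and your alternative phrasing via Chern classes merely restates the claim. To conclude $\bar\alpha^\ast u=1\times u$ you should add that, by the K\"unneth theorem, $H^2(U(n+1)\times\CP^n)\cong H^2(U(n+1))\oplus\bigl(H^1(U(n+1))\otimes H^1(\CP^n)\bigr)\oplus H^2(\CP^n)$, and the first two summands vanish because $H^\ast(U(n+1))$ is exterior on odd generators (so $H^2=0$) and $H^1(\CP^n)=0$; hence restriction to the slice $\{e\}\times\CP^n$ detects the class. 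With that one line added the proof is complete, and your remark that all Koszul and twisting signs are trivial because $|u|=2$ is correct.
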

\begin{proof}
Let $A\colon U(n+1)\times\mathbb{S}T\CP^n\to\mathbb{S}T\CP^n$ and $A\colon U(n+1)\times\mathbb{S}T\CP^{n+1}|\CP^{n}\to\mathbb{S}T\CP^{n+1}|\CP^{n}$ denote the action maps.  Then in both cases $A^\ast u = 1\times u$.  It suffices to prove this only in the first case, and only for $n\geqslant 2$; the remaining cases follow from this.  To see this in the first case and for $n\geqslant 2$, note that the restriction map $H^2(U(n+1)\times\mathbb{S}T\CP^n)\to H^2(\mathbb{S}T\CP^n)$ is an isomorphism that, since $A$ is an action, sends $A^\ast u$ to $u$.  

Now $u\cap(e_{2i+1}x)=u\cap A_\ast(e_{2i+1}\times x)=A_\ast(A^\ast u \cap (e_{2i+1}\times x))=A_{\ast}((1\times u)\cap (e_{2i+1}\times x))=A_{\ast}(e_{2i+1}\times (u\cap x))=e_{2i+1}(u\cap x)$ as required.
\end{proof}

\begin{lemma}\label{ConstantsExistLemma}
For each $i\geqslant 1$ there is a constant $\lambda_i\in\mathbb{Z}$ such that for all $n\geqslant i$ and $0\leqslant j\leqslant n-1$ the equation
\[ e_{2i+1} a_j = \left\{\begin{array}{cl} \lambda_i b_{i+j-n+1} & i+j\geqslant (n-1)\\ 0 & i+j<(n-1) \end{array}\right.\]
holds in $H_\ast(\mathbb{S}T\CP^n)$.
\end{lemma}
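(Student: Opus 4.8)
The plan is to identify, for each fixed $n$, the single integer that governs all the products $e_{2i+1}\cdot a_j$ in $H_\ast(\mathbb{S}T\CP^n)$, to prove the corresponding formula inside that one $\mathbb{S}T\CP^n$, and then to show the integer does not depend on $n$ by comparing consecutive projective spaces via the inclusions of Lemma~\ref{ThreeHomologiesLemma}.

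\textbf{Degree bookkeeping and consolidation inside a fixed $\mathbb{S}T\CP^n$.} Fix $n\geq i$. The product $e_{2i+1}\cdot a_{n-1}$ lies in $H_{2i+2n-1}(\mathbb{S}T\CP^n)$, and by Lemma~\ref{STCPnHomologyLemma} the only generator in that degree is $b_i$, which is a free generator since $i\geq 1$; hence $e_{2i+1}\cdot a_{n-1}=\lambda_i^{(n)}\,b_i$ for a well-defined integer $\lambda_i^{(n)}$. For arbitrary $0\leq j\leq n-1$ write $a_j=u^{n-1-j}\cap a_{n-1}$ (Lemma~\ref{STCPnHomologyLemma}) and use Lemma~\ref{ActionCapProductLemma}, which commutes $u\cap-$ past the $e_{2i+1}$-action, to obtain $e_{2i+1}\cdot a_j=\lambda_i^{(n)}\,(u^{n-1-j}\cap b_i)=\lambda_i^{(n)}\,b_{i+j-n+1}$, where $b_k$ is read as $0$ once its index would drop below $0$ (using $u\cap b_0=0$). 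This is precisely the statement of the lemma inside a single $\mathbb{S}T\CP^n$, with $\lambda_i$ replaced by $\lambda_i^{(n)}$; note there is no ambiguity coming from the torsion class $b_0$, because $e_{2i+1}\cdot a_j$ has been written directly as a cap product of the unambiguous class $e_{2i+1}\cdot a_{n-1}$.

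\textbf{Independence of $n$, the generic case $i\geq 2$.} Use the $U(n+1)$-equivariant inclusions $\mathbb{S}T\CP^n\to\mathbb{S}T\CP^{n+1}|\CP^n\to\mathbb{S}T\CP^{n+1}$ of Lemma~\ref{ThreeHomologiesLemma}, together with the fact that the group inclusion $U(n+1)\hookrightarrow U(n+2)$ carries $e_{2i+1}$ to $e_{2i+1}$, since both equal $R_\ast([S^1]\times[\CP^i])$ and the maps $R$ are compatible. Equivariance gives that $e_{2i+1}$ applied to the image of $a_{n-1}$ in $H_\ast(\mathbb{S}T\CP^{n+1})$ equals the image of $e_{2i+1}\cdot a_{n-1}$. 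By Lemma~\ref{ThreeHomologiesLemma} the composite sends $a_{n-1}\mapsto a_{n-1}$ and $b_k\mapsto b_{k-1}$ (with $b_0\mapsto 0$), so one gets $\lambda_i^{(n)}\,b_{i-1}=\lambda_i^{(n+1)}\,b_{i-1}$ in $H_\ast(\mathbb{S}T\CP^{n+1})$, the right-hand side being evaluated by the formula of the previous paragraph inside $\mathbb{S}T\CP^{n+1}$. When $i\geq 2$ the class $b_{i-1}$ is a free generator there, hence $\lambda_i^{(n)}=\lambda_i^{(n+1)}$; by induction $\lambda_i^{(n)}$ is independent of $n$, and $\lambda_i:=\lambda_i^{(i)}$ works.

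\textbf{The case $i=1$ --- the main obstacle.} When $i=1$ the class $b_{i-1}=b_0$ occurring above is the torsion class of order $n+2$ in $H_\ast(\mathbb{S}T\CP^{n+1})$, so the comparison step produces only the congruence $\lambda_1^{(n)}\equiv\lambda_1^{(n+1)}\pmod{n+2}$, which does not by itself force the $\lambda_1^{(n)}$ to agree; this is where I expect the real work to be. I would settle it by computing $e_3\cdot a_{n-1}\in H_{2n+1}(\mathbb{S}T\CP^n)\cong\mathbb{Z}$ directly and uniformly in $n$: unwinding $e_3=R_\ast([S^1]\times[\CP^1])$ and the action map reduces this to evaluating the map $S^1\times\CP^1\times\mathbb{S}T\CP^n\to\mathbb{S}T\CP^n$, $(t,l,v)\mapsto R(t,l)v$, on an explicit cycle representing $a_{n-1}$ and reading off the degree of the resulting map onto the generator of $H_{2n+1}$; alternatively one can use the homogeneous-space description $\mathbb{S}T\CP^n\cong U(n+1)/(U(1)\times U(n-1))$ to compute the $H_\ast(U(n+1))$-module structure there. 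Either way this pins $\lambda_1^{(n)}$ down as a fixed integer (in fact $-2$) for all $n$, and combined with the previous two steps this finishes the proof.
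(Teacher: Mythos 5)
Your consolidation step and your comparison for $i\geqslant 2$ are sound and essentially the same as the paper's: define $\lambda_i^{(n)}$ by $e_{2i+1}a_{n-1}=\lambda_i^{(n)}b_i$, propagate to all $j$ by capping with $u$ via Lemma~\ref{ActionCapProductLemma}, and compare consecutive $n$ through the equivariant maps of Lemma~\ref{ThreeHomologiesLemma}. The genuine gap is exactly where you place it, at $i=1$, and you have not closed it: mapping all the way into $H_\ast(\mathbb{S}T\CP^{n+1})$ lands the comparison on the torsion class $b_0$ of order $n+2$, so you only get a congruence, and your proposed repair --- a direct, $n$-uniform evaluation of $e_3a_{n-1}$ on an explicit cycle, or via the homogeneous-space description of $\mathbb{S}T\CP^n$ --- is only a sketch, not an argument. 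It is also not a routine omission: the paper never computes any $\lambda_i$ directly at this point; the values $\lambda_j=-(j+1)$ (your ``in fact $-2$'') are extracted only later, in the proof of Theorem D, by combining the present lemma with Lemmas~\ref{PlusMinusNPlusOneLemma}, \ref{ETwoNMinusOneLemma} and \ref{EOneLemma}, so quoting that value here is unsupported (and deriving it from Theorem D would be circular).

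The paper avoids the problem without any extra computation by doing the comparison in two stages through $H_\ast(\mathbb{S}T\CP^{n+1}|\CP^n)$, which by Lemma~\ref{ThreeHomologiesLemma} is free. The first map of that lemma sends $e_{2i+1}a_{n-1}=\lambda_i b_i$ to $\lambda_i b_{i-1}$ there, and $b_{i-1}$ is a \emph{free} generator of $H_\ast(\mathbb{S}T\CP^{n+1}|\CP^n)$ even when $i=1$. In that group one writes $e_{2i+1}a_n=\mu b_i$ (the only generator in the relevant degree), caps both sides with $u$ and applies Lemma~\ref{ActionCapProductLemma} to get $\mu b_{i-1}=\lambda_i b_{i-1}$, hence $\mu=\lambda_i$; pushing forward along the second map of Lemma~\ref{ThreeHomologiesLemma}, which sends $b_i\mapsto b_i$, gives $e_{2i+1}a_n=\lambda_i b_i$ in $H_\ast(\mathbb{S}T\CP^{n+1})$, i.e.\ the induction step for all $i\geqslant 1$ at once. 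Replacing your one-step composite comparison with this detour through the torsion-free restricted bundle is what your proposal is missing; with it, your argument is complete and the ad hoc computation of $e_3a_{n-1}$ becomes unnecessary.
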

\begin{proof}
First let us suppose that there is a constant $\lambda_i$ such that for each $n\geqslant 1$ the equation
\begin{equation}\label{LambdaEquation}
e_{2i+1}a_{n-1}=\lambda_i b_i
\end{equation}
holds in $H_{2n+2i+1}(\mathbb{S}T\CP^n)$.  Then by Lemma~\ref{ActionCapProductLemma} we have $e_{2i+1}a_j=e_{2i+1}(u^{n-1-j}\cap a_{n-1})=u^{n-1-j}\cap(e_{2i+1}a_{n-1})=\lambda_i u^{n-1-j}\cap b_i=\lambda_{i+j-n+1}b_{i+j-n+1}$ for all $0\leqslant j\leqslant (n-1)$ as required.

We must now find $\lambda_i\in\mathbb{Z}$ such that \eqref{LambdaEquation} holds for all $n\geqslant i$.  Let $\lambda_i$ be determined by the equation $e_{2i+1}a_{i-1}=\lambda_i b_{i}$ in $H_{4i-1}(\mathbb{S}T\CP^i)$.  In other words we define $\lambda_i$ by the instance $n=i$ of \eqref{LambdaEquation}.  We will now show that with this choice of $\lambda_i$ the equation \eqref{LambdaEquation} holds for all $n\geqslant i$.

Fix $n\geqslant i$ and suppose that \eqref{LambdaEquation} holds.  Then by Lemma~\ref{ThreeHomologiesLemma} we have $e_{2i+1}a_{n-1}=\lambda_i b_{i-1}$ in $H_{2n+2i-1}(\mathbb{S}T\CP^{n+1}|\CP^n)$.  There is some constant $\mu$ such that $e_{2i+1}a_n=\mu b_i$ in $H_{2n+2i+1}(\mathbb{S}T\CP^{n+1}|\CP^n)$.  By taking the cap product with $u$ on both sides of this equation and applying Lemma~\ref{ActionCapProductLemma} we find $\mu=\lambda_i$.  Thus $e_{2i+1}a_n=\lambda_i b_i$ in $H_{2n+2i+1}(\mathbb{S}T\CP^{n+1}|\CP^n)$.  Applying Lemma~\ref{ThreeHomologiesLemma} again we find that $e_{2i+1}a_n=\lambda_i b_i$ in $H_{2n+2i+1}(\mathbb{S}T\CP^{n+1})$.  This is equation \eqref{LambdaEquation} with $n$ replaced by $(n+1)$.

Equation~\eqref{LambdaEquation} therefore holds for all $n$ by induction, and this completes the proof.
\end{proof}

\begin{lemma}\label{PlusMinusNPlusOneLemma}
$e_{2n+1}a_0=\pm(n+1)b_1$ in $H_\ast(\mathbb{S}T\CP^n)$.
\end{lemma}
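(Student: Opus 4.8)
The plan is to realise $e_{2n+1}a_0$ as the image of a primitive generator under a $U(n+1)$-orbit map, and then to route that orbit map through the complex Stiefel manifold $V$ of orthonormal $2$-frames in $\mathbb{C}^{n+1}$. Using the model \eqref{AlternativeTCPnEquation}, I take $a_0$ to be the homology class of the point $x_0=[\mathbf{e}_0,\mathbf{e}_1]\in\mathbb{S}T\CP^n$. Since the $H_\ast(U(n+1))$-module structure of Definition~\ref{SecondSTCPnModuleDefinition} is induced by the action map $A\colon U(n+1)\times\mathbb{S}T\CP^n\to\mathbb{S}T\CP^n$, and $a_0$ is carried by $\{x_0\}$, the restriction of $A$ to $U(n+1)\times\{x_0\}$ is the orbit map $o\colon U(n+1)\to\mathbb{S}T\CP^n$, $o(B)=[B\mathbf{e}_0,B\mathbf{e}_1]$, so $e_{2n+1}a_0=o_\ast(e_{2n+1})$. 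Restricting \eqref{AlternativeTCPnEquation} to unit vectors identifies $\mathbb{S}T\CP^n$ with $V/S^1$, where $V=\{(\mathbf{u},\mathbf{v})\mid\|\mathbf{u}\|=\|\mathbf{v}\|=1,\ \mathbf{u}\perp\mathbf{v}\}$ and $S^1$ acts freely by $t\cdot(\mathbf{u},\mathbf{v})=(t\mathbf{u},t\mathbf{v})$. Writing $\kappa\colon V\to\mathbb{S}T\CP^n$ for this principal $S^1$-bundle and $\iota\colon U(n+1)\to V$ for $B\mapsto(B\mathbf{e}_0,B\mathbf{e}_1)$ --- which is the quotient map $U(n+1)\to U(n+1)/U(n-1)$ --- we have $o=\kappa\circ\iota$, hence $e_{2n+1}a_0=\kappa_\ast\iota_\ast(e_{2n+1})$.

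It remains to analyse $\iota_\ast$ and $\kappa_\ast$ in degree $2n+1$. On the one hand $H_{2n+1}(V)\cong\mathbb{Z}$, and it is classical --- from the structure of $H^\ast(U(n+1))$ and $H^\ast(V)$ as exterior Hopf algebras, under which the projection $U(n+1)\to V$ restricts to the inclusion of the top two exterior generators --- that $\iota_\ast$ carries the generator $e_{2n+1}$ to a generator of $H_{2n+1}(V)$. On the other hand, $V$ is canonically the unit sphere bundle $\mathbb{S}L$ of the complex line bundle $L=V\times_{S^1}\mathbb{C}$ over $\mathbb{S}T\CP^n$, and the Hermitian metric identifies $L$ with the pullback along $\pi\colon\mathbb{S}T\CP^n\to\CP^n$ of $\gamma_n$ or of $\gamma_n^\ast$, so $E_L=\pm u$. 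Feeding this into the Gysin sequence \eqref{GysinSequenceEquation} for $L$: the term preceding $H_{2n+1}(V)$ is $H_{2n}(\mathbb{S}T\CP^n)=H_1(\CP^n)=0$, so $\kappa_\ast\colon H_{2n+1}(V)\to H_{2n+1}(\mathbb{S}T\CP^n)$ is injective with image $\ker\bigl(\pm u\cap-\colon H_{2n+1}(\mathbb{S}T\CP^n)\to H_{2n-1}(\mathbb{S}T\CP^n)\bigr)$. By Lemma~\ref{STCPnHomologyLemma}, $H_{2n+1}(\mathbb{S}T\CP^n)=\mathbb{Z}\langle b_1\rangle$ and $u\cap b_1=b_0$, which has order exactly $n+1$; hence this kernel is $(n+1)\mathbb{Z}\langle b_1\rangle$. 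Combining the two computations, $e_{2n+1}a_0=\kappa_\ast\iota_\ast(e_{2n+1})$ is $\kappa_\ast$ of a generator of $H_{2n+1}(V)$, and therefore equals $\pm(n+1)b_1$.

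The main obstacle is the claim that $\iota_\ast(e_{2n+1})$ is a generator of $H_{2n+1}(V)$ rather than a proper multiple of it; this requires knowing that $e_{2n+1}$, defined in the paper via the rotation map $R$, really is a primitive exterior generator of $H_\ast(U(n+1))$ (as recorded just after Definition~\ref{RotationDefinition}), together with the compatibility of the fibration $U(n-1)\to U(n+1)\to U(n+1)/U(n-1)$ with the Hopf-algebra structure. The identification $L\cong\pi^\ast\gamma_n$ or $\pi^\ast\gamma_n^\ast$ and the resulting Gysin bookkeeping are routine given \S\ref{GysinSubsection} and Lemma~\ref{STCPnHomologyLemma}; note also that the sign is genuinely left undetermined here, but this is harmless since the generators of $\mathbb{H}_\ast(L\CP^n)$ may be negated.
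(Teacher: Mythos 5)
Your argument is correct, and its second half takes a genuinely different route from the paper's. The first half coincides: both you and the paper identify $e_{2n+1}a_0$ as the image of the exterior generator $e_{2n+1}$ under the orbit map $U(n+1)\to F_n\to \mathbb{S}T\CP^n$, where $F_n=U(n+1)/U(n-1)$ is your Stiefel manifold $V$, and both invoke the classical fact that $e_{2n+1}$ maps to a generator of $H_{2n+1}(V)\cong\mathbb{Z}$. The divergence is in how the factor $n+1$ is extracted. The paper runs the Serre spectral sequence of the Borel-type fibration $F_n\to\mathbb{S}T\CP^n\to BS^1$, back-solves the differential $d^{2n}$ from the known torsion $H_{2n-1}(\mathbb{S}T\CP^n)\cong\mathbb{Z}/(n+1)$, and reads off the image of the fibre class from the resulting filtration. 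You instead view $V\to\mathbb{S}T\CP^n$ as the unit circle bundle of the associated line bundle $L$, identify $L$ up to dualisation with $\pi^\ast\gamma_n$ --- this is most cleanly justified not by ``the Hermitian metric'' but by the $S^1$-equivariant map $(\mathbf{u},\mathbf{v})\mapsto\mathbf{u}$ covering $\pi$, which exhibits $V$ as the pullback of the Hopf bundle $S^{2n+1}\to\CP^n$, so that $E_L=\pm u$ --- and then read the answer off the Gysin sequence \eqref{GysinSequenceEquation} using $H_{2n}(\mathbb{S}T\CP^n)=0$, $u\cap b_1=b_0$, and the fact that $b_0$ has order exactly $n+1$ from Lemma~\ref{STCPnHomologyLemma}. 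Your route trades the spectral-sequence bookkeeping for an exact-sequence argument that meshes directly with the cap-product formulas already established, at the cost of the extra (easy) geometric identification of the Euler class; the paper's route never identifies the Euler class but must track the differential and the filtration. The sign you leave undetermined is exactly the $\pm$ in the statement, so that ambiguity is harmless.
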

\begin{proof}
Recall that we have described $T\CP^n$ as the space
\[\left\{(\mathbf{u},\mathbf{v})\in\mathbb{C}^{n+1}\times\mathbb{C}^{n+1}\mid\|\mathbf{u}\|=1,\langle \mathbf{u},\mathbf{v}\rangle=0\right\}\big\slash S^1\]
where $S^1$ acts by the formula $t(\mathbf{u},\mathbf{v})=({t}\mathbf{u}, {t}\mathbf{v})$.   We can therefore describe $\mathbb{S}T\CP^n\subset T\CP^n$ as the space $F_n/S^1$, where $F_n=\{(\mathbf{u},\mathbf{v})\in\mathbb{C}^{n+1}\times\mathbb{C}^{n+1}\mid\|\mathbf{u}\|=1,\|\mathbf{v}\|=1,\langle \mathbf{u},\mathbf{v}\rangle=0\}$ is the Stiefel manifold $U(n+1)/U(n-1)$.  

Recall that $H_\ast(U(n+1))=\Lambda_\mathbb{Z}(e_1,\ldots,e_{2n+1})$.  It is well-known that as a module over this ring $H_\ast(F_n)=\Lambda_\mathbb{Z}(e_1,\ldots,e_{2n+1})/\langle e_1,\ldots,e_{2n-1}\rangle$, so that its homology groups are nonzero only in degrees $0,2n-1,2n+1,4n$.  The class $e_{2n+1}a_0$ is the image of $e_{2n+1}$ under $H_\ast(F_n)\to H_\ast(\mathbb{S}T\CP^n)$.

Let us consider the Serre spectral sequence $(E^r,d^r)$ of the fibration $F_n\to \mathbb{S}T\CP^n\to BS^1$.  This has $E^2_{\ast,\ast}=H_\ast(BS^1)\otimes H_\ast(F_n)$.  For each $i\geqslant 0$ we write $a_i$ for a generator of $H_{2i}(BS^1)$.  For degree reasons we have $E^2=E^3=\cdots =E^{2n}$.  From our computation of $H_\ast(\mathbb{S}T\CP^n)$ in Lemma~\ref{STCPnHomologyLemma} we know that $H_{2n-1}(\mathbf{S}T\CP^n)=\mathbb{Z}/(n+1)$, and this forces $d^{2n}(a_n\otimes 1)=\pm(n+1)1\otimes e_{2n-1}$.  It follows that $d^{2n}(a_{n+i}\otimes 1)=\pm(n+1)a_i\otimes e_{2n-1}$ for $i\geqslant 0$.  No further differentials effect the groups in total degree $(2n+1)$.

From the last paragraph we find that $E^\infty_{0,2n+1}=\mathbb{Z}$ with generator $1\otimes e_{2n+1}$, that $E^\infty_{2,2n-1}=\mathbb{Z}/(n+1)$, and that all other terms of $E^\infty$ in total degree $2n+1$ vanish.  But $H_{2n+1}(\mathbb{S}T\CP^n)$ is a copy of $\mathbb{Z}$ with generator $b_0$, and so $E^\infty_{0,2n+1}\to H_{2n+1}(\mathbb{S}T\CP^n)$ must send $1\otimes e_{2n+1}$ to $\pm(n+1)b_0$.  By the first paragraph, $1\otimes e_{2n+1}$ represents the element $e_{2n+1}a_0$, and so this completes the proof.
\end{proof}

\begin{lemma}\label{ETwoNMinusOneLemma}
$e_{2n-1}a_0=b_0$ in $H_\ast(\mathbb{S}T\CP^n)$.
\end{lemma}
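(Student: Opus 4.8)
The plan is to compute $e_{2n-1}a_0$ by realising it geometrically and identifying it with the fundamental class of a fibre of $\mathbb{S}T\CP^n\to\CP^n$. Fix the point $p=[\mathbf{e}_0,\mathbf{e}_1]\in\mathbb{S}T\CP^n$, in the model $\mathbb{S}T\CP^n=\{(\mathbf{u},\mathbf{v})\mid\|\mathbf{u}\|=\|\mathbf{v}\|=1,\ \mathbf{u}\perp\mathbf{v}\}/S^1$ from the proof of Lemma~\ref{PlusMinusNPlusOneLemma}, and use it as a representative of $a_0$. Unravelling the definitions of $e_{2n-1}=R_\ast([S^1]\times[\CP^{n-1}])$, of $a_0$, and of the (untwisted) $H_\ast(U(n+1))$-action, we obtain
\[ e_{2n-1}a_0 = c_\ast([S^1]\times[\CP^{n-1}]),\qquad c(t,l)=R(t,l)\cdot p, \]
so that $e_{2n-1}a_0=(\ev_p)_\ast(e_{2n-1})$, where $\ev_p\colon U(n+1)\to\mathbb{S}T\CP^n$ is $A\mapsto A\cdot p$.

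Since $U(n+1)$ acts transitively on $\mathbb{S}T\CP^n$ with stabiliser $S^1\times U(n-1)$ at $p$, the map $\ev_p$ factors as
\[ U(n+1)\xrightarrow{\ q\ } U(n+1)/U(n-1)=V_2(\mathbb{C}^{n+1})\xrightarrow{\ r\ } V_2(\mathbb{C}^{n+1})/S^1=\mathbb{S}T\CP^n, \]
where $r$ is the quotient by the residual free circle action $(\mathbf{u},\mathbf{v})\mapsto(s\mathbf{u},s\mathbf{v})$. Now $q_\ast$ sends $e_{2n-1}$ to a generator of $H_{2n-1}(V_2(\mathbb{C}^{n+1}))\cong\mathbb{Z}$, by the standard fact that $U(n+1)\to U(n+1)/U(n-1)$ surjects in cohomology onto the exterior algebra on the top two generators; and that generator is the class of the fibre $\{\mathbf{e}_0\}\times S^{2n-1}$ of the bundle $V_2(\mathbb{C}^{n+1})\to S^{2n+1}$, $(\mathbf{u},\mathbf{v})\mapsto\mathbf{u}$. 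Under $r$ this subset maps homeomorphically onto the fibre $\mathbb{S}(T_{l_0}\CP^n)$ of $\mathbb{S}T\CP^n\to\CP^n$ over $l_0=[\mathbf{e}_0]$; in fact, via the identification $T_{l_0}\CP^n\cong\mathbf{e}_0^\perp$, $[\mathbf{e}_0,\mathbf{v}]\leftrightarrow\mathbf{v}$, this restriction of $r$ is the identity of $S^{2n-1}\subset\mathbf{e}_0^\perp$. On the other hand, by the construction of the Gysin sequence, $b_0$ is precisely the class of $\mathbb{S}(T_{l_0}\CP^n)$ oriented as the boundary of the complex disc $\mathbb{B}(T_{l_0}\CP^n)$. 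Hence $e_{2n-1}a_0=\pm b_0$, and all that remains is to determine the sign. As a consistency check: by Lemma~\ref{ConstantsExistLemma} (with $i=n-1$, $j=0$) we have $e_{2n-1}a_0=\lambda_{n-1}b_0$ with $\lambda_{n-1}$ the universal constant, and $b_0$ has order $n+1$ by Lemma~\ref{STCPnHomologyLemma}; together with Lemma~\ref{PlusMinusNPlusOneLemma}, which forces $\lambda_{n-1}=\pm n$, getting the sign right is equivalent to showing that the $\pm$ of Lemma~\ref{PlusMinusNPlusOneLemma} is a minus.

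To settle the sign I would carry out an explicit orientation computation in the style of the end of the proof of Lemma~\ref{LoopHomologyInjective}: track the complex orientations on $U(n+1)$ and on $V_2(\mathbb{C}^{n+1})$ under which $e_{2n-1}$ maps to the fibre class $[\{\mathbf{e}_0\}\times S^{2n-1}]$, and compare with the boundary-of-complex-disc orientation on $\mathbb{S}(T_{l_0}\CP^n)$ transported across the identity homeomorphism above; or, more robustly, choose a regular value of the cycle $c$ lying on a transverse fibre of $\mathbb{S}T\CP^n\to\CP^n$ and verify that its signed count of preimages is $+1$. I expect this orientation bookkeeping to be the only real difficulty; everything preceding it is formal.
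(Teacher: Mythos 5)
Your reduction is sound as far as it goes: identifying $e_{2n-1}a_0$ with $(\ev_p)_\ast(e_{2n-1})$, factoring through $V_2(\mathbb{C}^{n+1})=U(n+1)/U(n-1)$, and matching the fibre class of $V_2(\mathbb{C}^{n+1})\to S^{2n+1}$ with the fibre sphere $\mathbb{S}(T_{l_0}\CP^n)$ does show $e_{2n-1}a_0=\pm b_0$. But the sign, which you explicitly defer to a planned orientation computation, is not a loose end --- it is the entire content of the lemma. In $H_{2n-1}(\mathbb{S}T\CP^n)\cong\mathbb{Z}/(n+1)$ one already knows from Lemma~\ref{ConstantsExistLemma} and Lemma~\ref{PlusMinusNPlusOneLemma} that $e_{2n-1}a_0=\lambda_{n-1}b_0$ with $\lambda_{n-1}=\pm n$, and since $\pm n\equiv\mp 1\pmod{n+1}$, the statement ``$e_{2n-1}a_0=\pm b_0$'' is automatically consistent with both choices and rules out nothing. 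The lemma is used in the proof of Theorem D precisely to conclude $\lambda_{j-1}\equiv 1\pmod{j+1}$, hence $\lambda_{j}=-(j+1)$; with only the up-to-sign version this step collapses. So what you have written is a correct (and genuinely different, more homogeneous-space-theoretic) derivation of a statement that carries no new information, plus a sketch of where the real work would be.

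Carrying out that sign determination is exactly the delicate part: one must compare the convention defining $e_{2n-1}=R_\ast([S^1]\times[\CP^{n-1}])$ (so, in effect, compute the degree of an explicit map $S^1\times\CP^{n-1}\to S^{2n-1}$ with specified orientations) against the Thom-class/Gysin-boundary convention that makes $b_0$ the fibre sphere with its complex-boundary orientation, and neither ``tracking orientations'' nor ``counting signed preimages at a regular value'' avoids fixing both conventions compatibly. The paper's proof is built to sidestep this ad hoc bookkeeping: it restricts to $\mathbb{S}T\CP^n|\CP^0$ (where $a_0,b_0$ generate freely and map to the same-named classes), recognises the relevant operator as the degree-raising operator of the circle action on $\mathbb{S}(\gamma_{n-1}^\perp\oplus\gamma_{n-1})\cong\CP^{n-1}\times S^{2n-1}$ by scalar multiplication in the $\gamma_{n-1}$ summand, and applies Theorem~\ref{SphereBundlesTheorem}, whose orientation conventions were fixed once and for all; the sign-sensitive input then becomes the evaluation $\langle E_{\gamma_{n-1}^\perp},[\CP^{n-1}]\rangle=1$, a manifestly positive Chern number. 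To complete your argument you would either need to execute the orientation comparison in full detail, or reroute the endgame through Theorem~\ref{SphereBundlesTheorem} as the paper does.
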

\begin{proof}
Consider the inclusion $\mathbb{S}T\CP^n|\CP^0\hookrightarrow \mathbb{S}T\CP^n$.  If we regard $U(n)\subset U(n+1)$ as the subgroup consisting of transformations that leave the first coordinate unchanged, then the action of $U(n)$ on $\mathbb{S}T\CP^n$ restricts to an action on $\mathbb{S}T\CP^n|\CP^0$.  Indeed, under the identification of $\mathbb{S}T\CP^n|\CP^0$ with the sphere in $\mathbb{C}^n$ this is just the standard action.

Now $H_\ast(\mathbb{S}T\CP^n|\CP^0)$ is free on generators $a_0$ and $b_0$ that are mapped to the elements of the same name in $H_\ast(\mathbb{S}T\CP^n)$ (compare with Lemma~\ref{STCPnHomologyLemma} or Lemma~\ref{ThreeHomologiesLemma}).  We can therefore prove the lemma by showing that $e_{2n-1}a_0=b_0$ in $H_\ast(\mathbb{S}T\CP^n|\CP^0)$.

Recall the rotation map $R\colon S^1\times\CP^{n-1}\to U(n)$ from Definition~\ref{RotationDefinition} and consider the map $U\colon S^1\times\CP^{n-1}\times S^{2n-1}\to \CP^{n-1}\times S^{2n-1}$ that sends $(t,l,\mathbf{v})$ to $(l,R(t,l)\mathbf{v})$.  By the definition of $e_{2n-1}$ we have $e_{2n-1}a_0={\pi_2}_\ast U_\ast([S^1]\times[\CP^{n-1}]\times\mathrm{pt})$.  We must therefore show that $U_\ast([S^1]\times[\CP^{n-1}]\times\mathrm{pt})=\mathrm{pt}\times[S^{2n-1}]$.  But $U([S^1]\times -)$ is the degree-raising operator associated to the action of $S^1$ on
\[\CP^{n-1}\times S^{2n-1}=\mathbb{S}(\CP^{n-1}\times\mathbb{C}^n)=\mathbb{S}(\gamma_{n-1}^\perp\oplus\gamma_{n-1})\]
given by scalar multiplication in the second summand.  By Theorem~\ref{SphereBundlesTheorem}, $U_\ast([S^1]\times -)$ is therefore equal to the composite
\begin{multline*}
H_{2n-2}(\CP^{n-1}\times S^{2n-1})\xrightarrow{\pi_\ast}H_{2n-2}(\CP^{n-1})
\xrightarrow{E_{\gamma_{n-1}^\perp}\cap -}\\ H_{0}(\CP^{n-1})\to H_{2n-1}(\CP^{n-1}\times S^{2n-1})
\end{multline*}
so that it suffices to show that $\langle E_{\gamma_{n-1}^\perp},[\CP^{n-1}]\rangle =1$.  But $c(\gamma_{n-1}^\perp)=c(\gamma_{n-1})^{-1}=(1-u)^{-1}=1+u+\cdots+u^{n-1}$, so that $\langle E_{\gamma_{n-1}^\perp},[\CP^{n-1}]\rangle =1$ as required.
\end{proof}

\begin{lemma}\label{EOneLemma}
$e_1 a_{n-1}=-b_0$ in $H_\ast(\mathbb{S}T\CP^n)$.
\end{lemma}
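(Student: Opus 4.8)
The plan is to restrict $\mathbb{S}T\CP^n$ to the hyperplane $Y=\mathbb{P}(\mathrm{span}(\mathbf{e}_1,\ldots,\mathbf{e}_n))\subset\CP^n$ and then to apply Theorem~\ref{SphereBundlesTheorem}. By Definition~\ref{RotationDefinition}, $e_1=R_\ast([S^1]\times[\CP^0])$ is carried by the loop $t\mapsto D_t$, where $D_t=R(t,[\mathbf{e}_0])$ scales the first coordinate by $t$ and fixes $\mathbf{e}_1,\ldots,\mathbf{e}_n$; hence for $x\in H_\ast(\mathbb{S}T\CP^n)$ the class $e_1 x$ is $\rho_\ast([S^1]\times x)$, where $\rho(t,-)=D_t(-)$ is the circle action on $\mathbb{S}T\CP^n$ induced by $D_\bullet$. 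Since $D_t$ fixes $\mathrm{span}(\mathbf{e}_1,\ldots,\mathbf{e}_n)$ pointwise, it fixes $Y$ pointwise and preserves the subspace $\mathbb{S}T\CP^n|_Y$, so $e_1$ may be computed there. Just as with the splitting $T\CP^{n+1}|\CP^n=T\CP^n\oplus\gamma_n^\ast$ used in Lemma~\ref{ThreeHomologiesLemma}, restriction of $T\CP^n$ along the hyperplane $Y$ gives a splitting $T\CP^n|_Y=TY\oplus\nu$ into the tangent bundle and the normal bundle $\nu$, a complex line bundle whose fibre over a line $l$ is $\mathrm{Hom}(l,\mathbb{C}^{n+1}/\mathrm{span}(\mathbf{e}_1,\ldots,\mathbf{e}_n))=\mathrm{Hom}(l,\mathbb{C}\mathbf{e}_0)$. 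Because $D_t$ fixes $Y$ pointwise (hence acts trivially on $TY$) and scales $\mathbb{C}\mathbf{e}_0$ by $t$ while fixing each $l\subset\mathrm{span}(\mathbf{e}_1,\ldots,\mathbf{e}_n)$, it acts trivially on the summand $TY$ and by scalar multiplication by $t$ on $\nu$. Thus $\mathbb{S}T\CP^n|_Y=\mathbb{S}(\nu\oplus TY)$ and $\rho$ is precisely the circle action of Definition~\ref{CircleActionDefinition} for $\xi=\nu$ (rank $1$) and $\eta=TY$ (rank $n-1$).

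Next I would lift $a_{n-1}$: let $a'\in H_{2n-2}(\mathbb{S}T\CP^n|_Y)$ be the unique class mapping to $[Y]$ under the Gysin projection to $H_{2n-2}(Y)$, which is an isomorphism in degree $2n-2$. Writing $\iota\colon\mathbb{S}T\CP^n|_Y\hookrightarrow\mathbb{S}T\CP^n$ for the inclusion and $\pi$ for the projection to $\CP^n$, we have $\pi_\ast\iota_\ast a'=[Y]$, the standard generator of $H_{2n-2}(\CP^n)$ since $Y$ is a hyperplane; as $\pi_\ast$ is injective in degree $2n-2$ by the Gysin sequence, $\iota_\ast a'=a_{n-1}$. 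Since $\iota$ is $\rho$-equivariant, $e_1 a_{n-1}=\iota_\ast(e_1 a')$. I would then apply Theorem~\ref{SphereBundlesTheorem} with $\xi=\nu$, $\eta=TY$, which computes $e_1 a'$ as the image of $[Y]$ under capping with $c_0(\nu)c_{n-1}(TY)=c_{n-1}(TY)$ followed by the Gysin connecting map $H_0(Y)\to H_{2n-1}(\mathbb{S}T\CP^n|_Y)$. Since $\langle c_{n-1}(TY),[Y]\rangle=\chi(\CP^{n-1})=n$, capping yields $n$ times the class of a point, and the connecting map sends the class of a point to the class $b_0$ of the bundle $\nu\oplus TY$ (in the sense of Lemma~\ref{STCPnHomologyLemma}); by naturality of the Gysin sequence along $Y\hookrightarrow\CP^n$ (Lemma~\ref{PullbackGysinSequenceLemma}), $\iota_\ast$ carries this to $b_0\in H_{2n-1}(\mathbb{S}T\CP^n)$. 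Hence $e_1 a_{n-1}=n b_0$, and since $(n+1)b_0=0$ by Lemma~\ref{STCPnHomologyLemma} we conclude $e_1 a_{n-1}=-b_0$.

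The step needing the most care is the $\rho$-equivariant splitting $T\CP^n|_Y=TY\oplus\nu$ and the verification that the resulting circle action on $\mathbb{S}T\CP^n|_Y$ is literally the scalar action of Definition~\ref{CircleActionDefinition}; once that is in hand the remainder is routine bookkeeping with the Gysin sequence, and no signs intervene because the only characteristic number used is the positive Euler characteristic of $\CP^{n-1}$.
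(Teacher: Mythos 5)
Your proof is correct and is essentially the paper's own argument: restrict the sphere bundle to a hyperplane fixed pointwise by the circle, split the restricted tangent bundle as tangent plus normal line bundle with the circle acting by scalars on the line-bundle summand, and apply Theorem~\ref{SphereBundlesTheorem} to get $\langle E_{T\CP^{n-1}},[\CP^{n-1}]\rangle\, b_0 = n\,b_0 = -b_0$ modulo $(n+1)b_0=0$. The only cosmetic difference is that you work with the defining loop $t\mapsto R(t,[\mathbf{e}_0])$ and the hyperplane orthogonal to $\mathbf{e}_0$, while the paper uses the homotopic loop scaling the final coordinate and the hyperplane $\CP^{n-1}$ of points with vanishing last coordinate.
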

\begin{proof}
Consider the actions
\begin{align*}
A\colon& S^1\times\mathbb{S}T\CP^n\to\mathbb{S}T\CP^n, \\
B\colon& S^1\times\mathbb{S}T\CP^n|\CP^{n-1}\to\mathbb{S}T\CP^n|\CP^{n-1}.
\end{align*}
obtained by scalar multiplication in the final coordinate.  By the definition of $e_1$ (Definition~\ref{RotationDefinition}) we have $e_1a_{n-1}=A_\ast([S^1]\times a_{n-1})$.  We claim that $B_\ast([S^1]\times a_{n-1})=n b_0$.  This will prove the lemma, because then $A_\ast([S^1]\times a_{n-1})=n b_0=-b_0$.  (Recall that $H_{2n-1}(\mathbb{S}T\CP^n)=\mathbb{Z}/(n+1)$.)

Note that $T\CP^n|\CP^{n-1}=T\CP^{n-1}\oplus\gamma_{n-1}^\ast$, and with respect to this splitting $B$ is just the action given by scalar multiplication in the second summand.  The operator $B_\ast([S^1]\times -)$ is then the associated degree-raising operator, which by Theorem~\ref{SphereBundlesTheorem} is equal to the composite
\begin{multline*}
H_{2n-2}(\mathbb{S}T\CP^n|\CP^{n-1})\to H_{2n-2}(\CP^{n-1})\xrightarrow{E_{T\CP^{n-1}}\cap-}\\
H_0(\CP^{n-1})\to H_{2n-1}(\mathbb{S}T\CP^n|\CP^{n-1}).
\end{multline*}
This map sends $a_{n-1}$ to $\langle E_{T\CP^{n-1}},[\CP^{n-1}]\rangle b_0=n b_0$ as required.
\end{proof}

\begin{lemma}\label{CircleActionsHomotopyLemma}
The action of $S^1$ that $\mathbb{S}T\CP^n$ inherits from $(T\CP^n)^\mathrm{P}$ is homotopic to the action given by scalar multiplication in the fibres.
\end{lemma}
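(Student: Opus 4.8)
The plan is to exhibit both circle actions on $\mathbb{S}T\CP^n$ as restrictions of a single action of $U(2)$ and then use the connectedness of $U(2)$ to interpolate between them.

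\emph{Step 1 (the model).} Following the proof of Lemma~\ref{PlusMinusNPlusOneLemma}, I would write $\mathbb{S}T\CP^n = F_n/S^1$, where
\[
F_n = \bigl\{(\mathbf{u},\mathbf{v})\in\mathbb{C}^{n+1}\times\mathbb{C}^{n+1} \bigm| \|\mathbf{u}\|=\|\mathbf{v}\|=1,\ \langle\mathbf{u},\mathbf{v}\rangle = 0\bigr\}
\]
and $S^1$ acts diagonally, $t(\mathbf{u},\mathbf{v}) = (t\mathbf{u},t\mathbf{v})$; this uses the description of $T\CP^n$ from the Note preceding Proposition~\ref{LnIsACompactificationProposition} together with the fact that the tangent vector $\mathbf{v}\otimes\mathbf{u}^\ast$ has length $\|\mathbf{v}\|$, so that $\mathbb{S}T\CP^n$ is cut out by the extra condition $\|\mathbf{v}\|=1$.

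\emph{Step 2 (the $U(2)$-action).} For $A = (a_{ij})\in U(2)$ set $\Phi_A(\mathbf{u},\mathbf{v}) = (a_{11}\mathbf{u}+a_{12}\mathbf{v},\ a_{21}\mathbf{u}+a_{22}\mathbf{v})$. Since $\mathbf{u},\mathbf{v}$ are orthonormal, $\Phi_A(\mathbf{u},\mathbf{v})$ is again orthonormal precisely because $A$ is unitary, so $\Phi_A$ is a self-map of $F_n$; one checks $\Phi_A\Phi_B = \Phi_{AB}$, and $\Phi_A$ commutes with the diagonal circle action, so $\Phi$ descends to a continuous action $\bar\Phi\colon U(2)\times\mathbb{S}T\CP^n\to\mathbb{S}T\CP^n$.

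\emph{Step 3 (locating the two actions inside $\bar\Phi$).} By Corollary~\ref{LnCorollary} the action of $S^1$ that $\mathbb{S}T\CP^n$ inherits from $(T\CP^n)^\mathrm{P}$ is the restriction to $\mathbb{S}T\CP^n$ of \eqref{CircleActionEquation}. Substituting $\|\mathbf{v}\|^2=1$ there, this action becomes $t\cdot[\mathbf{u},\mathbf{v}] = \bar\Phi_{M(t)}[\mathbf{u},\mathbf{v}]$ with
\[
M(t) = \frac{1}{2}\begin{pmatrix} t+1 & t-1 \\ t-1 & t+1\end{pmatrix} = P\,D(t)\,P^{-1},\qquad P = \frac{1}{\sqrt{2}}\begin{pmatrix}1 & 1\\ 1 & -1\end{pmatrix},\qquad D(t) = \begin{pmatrix}t & 0\\ 0 & 1\end{pmatrix};
\]
in particular $M(t)\in U(2)$ for $t\in S^1$. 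On the other hand, scalar multiplication in the fibres sends $[\mathbf{u},\mathbf{v}]$ to $[\mathbf{u},t\mathbf{v}] = \bar\Phi_{N(t)}[\mathbf{u},\mathbf{v}]$, where
\[
N(t) = \begin{pmatrix}1 & 0\\ 0 & t\end{pmatrix} = S\,D(t)\,S^{-1},\qquad S = \begin{pmatrix}0 & 1\\ 1 & 0\end{pmatrix}.
\]

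\emph{Step 4 (the homotopy).} Since $U(2)$ is path-connected, choose a path $Q\colon[0,1]\to U(2)$ with $Q(0) = P$ and $Q(1) = S$, and define $H(s,t,x) = \bar\Phi_{Q(s)\,D(t)\,Q(s)^{-1}}(x)$. Because $Q(s)\,D(t)\,Q(s)^{-1}\in U(2)$, this $H$ is well-defined and continuous; for each fixed $s$ the map $t\mapsto Q(s)D(t)Q(s)^{-1}$ is again a homomorphism $S^1\to U(2)$ (conjugation by $Q(s)$ is an automorphism of $U(2)$), so $H(s,\cdot,\cdot)$ is a circle action; and $H(0,\cdot,\cdot)$ is the action inherited from $(T\CP^n)^\mathrm{P}$ while $H(1,\cdot,\cdot)$ is scalar multiplication in the fibres. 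This proves the lemma, indeed producing a homotopy through circle actions.

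I do not anticipate a genuine obstacle. The real content is the observation that the two one-parameter subgroups $\{M(t)\}$ and $\{N(t)\}$ of $U(2)$ are conjugate, combined with the connectedness of $U(2)$; everything else is bookkeeping. The only mildly fiddly point is the direct verification that the representative of \eqref{CircleActionEquation} used to read off $M(t)$ has both of its vectors of unit length, which reduces to the identity $|t+1|^2 + |t-1|^2 = 4$.
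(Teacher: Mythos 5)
Your proof is correct, and it is essentially the paper's argument in structural form: the paper simply writes down an explicit homotopy $h(\mu,t,[\mathbf{u},\mathbf{v}])$, $\mu\in[0,\tfrac12]$, which is precisely your $\bar\Phi_{Q(\mu)D(t)Q(\mu)^{-1}}$ for the concrete path $Q(\mu)=\begin{pmatrix}\sqrt{\mu} & \sqrt{1-\mu}\\ \sqrt{1-\mu} & -\sqrt{\mu}\end{pmatrix}$ joining $S$ (at $\mu=0$) to $P$ (at $\mu=\tfrac12$) inside $U(2)$. Your repackaging via the $U(2)$-action on $F_n/S^1$ and the connectedness of $U(2)$ adds only the pleasant extra observation that the homotopy can be taken through genuine circle actions.
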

\begin{proof}
By Proposition~\ref{LnIsACompactificationProposition} the first action is given by the formula
\[t[\mathbf{u},\mathbf{v}]=\left[\frac{t+1}{2}\mathbf{u}+\frac{t-1}{2}\mathbf{v},\frac{t-1}{2}\mathbf{u}+\frac{{t}+1}{2}\mathbf{v}\right]\]
while the second action is given by $t[\mathbf{u},\mathbf{v}]=[\mathbf{u},t\mathbf{v}]$.  A homotopy $h\colon[0,{1/2}]\times S^1\times\mathbb{S}T\CP^n\to \mathbb{S}T\CP^n$ between the two is given by sending $(\mu,t,[\mathbf{u},\mathbf{v}])$ to
\[\left[
\left(\mu t+(1-\mu)\right)\mathbf{u}+\sqrt{\mu(1-\mu)}(t-1)\mathbf{v},
\left(t(1-\mu)+\mu\right)\mathbf{v}+\sqrt{\mu(1-\mu)}(t-1)\mathbf{u}
\right].\]
\end{proof}

\begin{proof}[Proof of Theorem D]
The first part of the theorem, which describes the groups $H_\ast(\mathbb{S}T\CP^n)$, follows directly from Lemma~\ref{STCPnHomologyLemma}.  The rest of the theorem gives formulas describing the various module structures on $H_\ast(\mathbb{S}T\CP^n)$.  Those formulas that state that a certain quantity vanishes are all true for degree reasons.  We must prove the remaining two formulas.

Let us show that $e_{2j+1}a_{n-1}=-(j+1)b_{j}$ in $H_\ast(\mathbb{S}T\CP^n)$.  In the case $j=0$ this is the result of Lemma~\ref{EOneLemma}.  For $j\geqslant 1$ we must show that the constant $\lambda_j$ whose existence is guaranteed by Lemma~\ref{ConstantsExistLemma} is equal to $-(j+1)$.  Lemma~\ref{PlusMinusNPlusOneLemma} implies that $\lambda_j=\pm(j+1)$, while Lemma~\ref{ETwoNMinusOneLemma} implies that $\lambda_{j-1}=1$ modulo $(j+1)$.  These together imply that $\lambda_j=-(j+1)$ for all $j$, as required.

We now show that $R_\mathbb{S} a_{n-1}=\binom{n+1}{2} b_0$.  Recall that $R_\mathbb{S}$ is defined using the action of $S^1$ on $\mathbb{S}T\CP^n$ inherited from the action of $S^1$ on $T\CP^n$ described in Proposition~\ref{LnIsACompactificationProposition}.  By Lemma~\ref{CircleActionsHomotopyLemma} this is homotopic to the action given by scalar multiplication in each fibre.  Theorem~\ref{SphereBundlesTheorem} then states that $R_\mathbb{S}$ is the composite
\[H_\ast(\mathbb{S}T\CP^n)\xrightarrow{\pi_\ast} H_\ast(\CP^n)\xrightarrow{c_{n-1}(T\CP^n)}H_{\ast-2n+2}(\CP^n)\to H_{\ast+1}(\mathbb{S}T\CP^n)\]
so that by the properties of $a_{n-1}$ and $b_0$ listed in Lemma~\ref{STCPnHomologyLemma} we need only show that $\langle c_{n-1}(T\CP^n),[\CP^{n-1}]\rangle=\binom{n+1}{2}$.  But this is immediate from the fact that $c(T\CP^n)=(1+u)^{n+1}$ and that $\langle u^{n-1},[\CP^{n-1}]\rangle=1$.
\end{proof}

\bibliographystyle{alpha}
\bibliography{StringTopologyBibliography}
\end{document}